\pgfplotsset{
compat=newest,
colormap={blackwhite}{gray(0cm)=(.25); gray(1cm)=(1)}
}
\newcommand{\N}{\mathbb{N}}
\newcommand{\R}{\mathbb{R}}
\newcommand{\norm}[2][]{\left\|#2\right\|_{#1}}
\newcommand{\abs}[1]{\left\vert#1\right\vert}
\def\({\left(}
\def\){\right)}
\newcommand{\one}{\mathbbm{1}}
\newcommand{\ep}{\varepsilon}
\newtheorem{thm}{Theorem}[section]
\newtheorem{prop}[thm]{Proposition}
\newtheorem{cor}[thm]{Corollary}
\newtheorem{lem}[thm]{Lemma}
\theoremstyle{definition}
\newtheorem{defn}[thm]{Definition}
\newtheorem{rem}[thm]{Remark}
\numberwithin{equation}{section}
\author{Erisa Hasani}
\email{ehasani@utexas.edu}
\author{Stefania Patrizi} 
\address{Department of Mathematics\\
The University of Texas at Austin\\
2515 Speedway, Austin\\
TX 78712, United States of America}
\email{spatrizi@math.utexas.edu}
\keywords{Phase transitions, 
nonlocal integro-differential equations,
mean curvature, 
fractional mean curvature, 
dislocation dynamics.}
\subjclass[2010]{Primary: 35R09, 74N20, 53C44.
Secondary: 35R11, 47G20.}
\begin{document}

\title[The strongly nonlocal Allen--Cahn problem]{The strongly nonlocal Allen--Cahn problem}

\begin{abstract}
We study the sharp interface limit of the fractional Allen–Cahn equation
\begin{equation*}
\ep \partial_t u^{\ep} = \mathcal{I}^s_n [u^{\ep}]  -\frac{1}{\ep^{2s}}  W'(u^\ep) \quad \hbox{in}~(0,\infty)\times \R^n, ~n \geq 2,
\end{equation*} 
where $\ep>0$, 
$\mathcal{I}^s_n=-c_{n,s}(-\Delta )^s$ is  the fractional  Laplacian of order  $2s\in(0,1)$  in $\R^n$,  
and $W$ is a smooth double-well potential with minima at  0 and 1. 
We focus on the singular regime $s\in(0,\frac12)$, corresponding to strongly nonlocal diffusion. For suitably prepared initial data, we prove that the solution 
 $ u^\ep$ converges, as $\ep\to0$, to the minima of $W$ with the interface evolving by fractional mean curvature flow.
  This establishes the first rigorous convergence result in this regime, complementing and completing previous work for $s\geq \frac12$. 

\end{abstract}


\maketitle

\section{Introduction}

We study the fractional Allen--Cahn equation
\begin{equation} \label{eq:pde}
\ep \partial_t u^{\ep} = \mathcal{I}^s_n [u^{\ep}]  -\frac{1}{\ep^{2s}}  W'(u^\ep) \quad \hbox{in}~(0,\infty)\times \R^n, ~n \geq 2,
\end{equation}
where $\ep>0$ is a small parameter, 
$\mathcal{I}^s_n=-c_{n,s}(-\Delta )^s$ denotes, up to a constant, the fractional Laplacian of order  $2s\in(0,1)$  in $\R^n$,  
and $W$ is a smooth double-well potential with wells at 0 and 1 (see \eqref{eq:operator} and \eqref{eq:W} respectively). 

Equation \eqref{eq:pde}  is the (time-rescaled) $L^2$-gradient flow associated with the Allen--Cahn--Ginzburg--Landau--type energy 
\begin{equation}\label{energyintr}E_\ep(u)=\frac12[u]^2_{H^s(\R^n)}+\frac{1}{\ep^{2s}}\int_{\R^n}W(u)\,dx\end{equation}
where the first term represents the nonlocal interaction energy, given by the squared Gagliardo semi-norm in
 $H^{s}(\R^n)$,  and the second term is the potential energy, which forces minimizers to stay close to the wells 0 and 1.  
 
 We specifically consider the case $s\in(0,\frac12)$, which accounts for a strongly nonlocal elastic term: the smaller the value of 
  $s$,  the stronger the contribution of long-range interactions to the energy.

Equation \eqref{eq:pde} arises naturally, for instance, in the study of the Peierls–Nabarro model for crystal dislocations \cite{PN1, PN2}; see also the one-dimensional and higher-dimensional formulations in \cite{PatriziVaughan, MonneauPatrizi, MonneauPatrizi2}. 

 We show that, for well-prepared initial data (see \eqref{initial_data}), the solution $u^\ep$ to \eqref{eq:pde} converges, as $\ep\to0$, to  0 and 1, and that the  interface between the two phases evolves  by fractional mean curvature.
 
 In the stationary setting, this limiting behavior was previously established by Savin--Valdinoci \cite{SavinValdinoci}, who proved that  the energy 
$E_\ep$, when restricted to functions with the same values on the complement of a  bounded  domain $\Omega$, $\Gamma$-converges to the so-called fractional perimeter functional of order $2s$  in $\Omega$.
Minimizers of this limit functional, characteristic functions of nonlocal minimal surfaces,   were  studied by Caﬀarelli, Roquejoﬀre and Savin \cite{CRS}.
In that work, analogously to the classical (local) theory, a natural notion of fractional mean curvature was introduced, and nonlocal minimal surfaces were characterized as those with zero fractional mean curvature.

The evolution  problem \eqref{eq:pde} was previously studied  by Imbert--Souganidis in the preprint \cite{Imbert}, 
where they developed a framework for singular limits of nonlocal reaction--diffusion equations. 
Their approach successfully handled the fractional Allen-Cahn problem in the  case $s\in[\frac12,1)$,  
under certain additional assumptions. This analysis was recently  completed and extended to cover the case of multiple interfaces for
 $s=\frac12$ in \cite{PatriziVaughan2}.  The regime $s\in(0, \frac12)$, though partially addressed in \cite{Imbert}, 
remained open.
Our result fills this gap by rigorously establishing the sharp interface limit and the motion by fractional mean curvature in the previously unresolved regime $s\in(0, \frac12)$.

Before further discussing the significance of our main result and its connections to prior work, we now formalize the problem.
\subsection{Setting of the problem and main result} The operator $\mathcal{I}^s_n$ is a nonlocal integro-differential operator and is defined on functions $u \in C^{0,1}(\mathbb{R}^n)$ by
\begin{equation}\label{eq:operator}
\mathcal{I}_n^s u(x) 
 = \int_{\R^n} \left( u(x+z) - u(x)\right) \,\frac{dz}{\abs{z}^{n+2s}}, \quad x \in \R^n.
\end{equation}
For further background on fractional Laplacians, see for example \cites{Hitchhikers,Stinga}.

The potential $W:[0,1]\to\R$ satisfies
\begin{equation}\label{eq:W}
\begin{cases}
W \in C^{3, \beta} ([0,1]) & \hbox{for some}~0 < \beta <1 \\
W>W(0) = W(1)=0& \hbox{on}~(0,1)\\
W'(0)=W'(1)=0\\
W''(0)=W''(1) >0.
\end{cases}
\end{equation}

We let $u^\ep$ be the solution to \eqref{eq:pde} when the initial datum is given in terms of the layer solution. The layer solution (also called the phase transition) $\phi :\R \to (0,1)$ is the unique solution to 
\begin{equation} \label{eq:standing wave}
\begin{cases}
 C_{n,s}  \mathcal{I}_1^s[\phi] = W'(\phi) & \hbox{in}~\R\\
 \dot{\phi}>0 & \hbox{in}~\R\\
\phi(-\infty) = 0, \quad \phi(+\infty)=1,\quad \phi(0) = \frac{1}{2},
\end{cases}
\end{equation}
where $\mathcal{I}_1^s$ denotes the nonlocal operator in \eqref{eq:operator} with $n=1$ and the constant $C_{n,s}>0$ (given explicitly in \eqref{eq:Cns}) depends only on $s \in (0,\frac{1}{2})$ and on the dimension $n \geq 2$. Further discussion on $\phi$ is presented in Section \ref{sec:phi}.

Let $\Omega_0$ denote a bounded open subset in $\mathbb{R}^n$ with smooth boundary $\Gamma_0 = \partial \Omega_0$, and let $d^0(x)$ be its  signed distance function, given by
\begin{equation}\label{def:signed_distance_function}
    d^0(x) = \begin{cases}
        d(x, \Gamma_0) & \text{ if } x \in \Omega_0 \\
        -d(x, \Gamma_0) & \text{ otherwise}.
    \end{cases}
\end{equation}
For the initial condition to be well-prepared, we set $u_0^\ep = \phi \left( \frac{d^0(x)}{\ep} \right)$, see Figure \ref{fig:initial}.

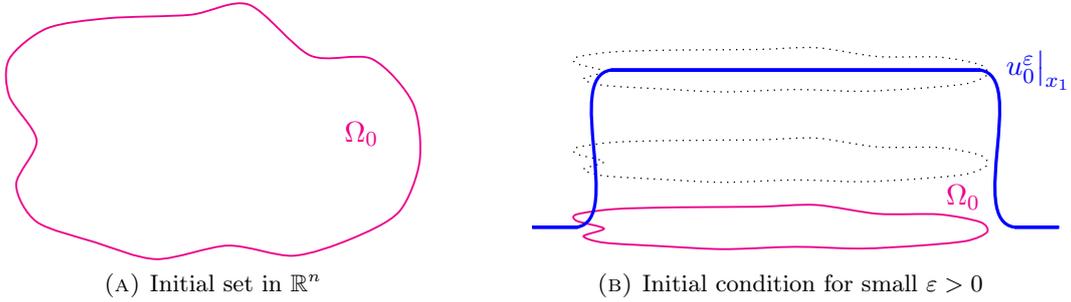
\begin{figure}[h]
\centering
\subfloat[Initial set in $\R^n$]{
 \begin{tikzpicture}[scale=0.45, use Hobby shortcut, closed=true]
    \draw[magenta, line width=.70pt] 
        plot[smooth cycle] coordinates {
            (1.5,3.5)  (3.2,3.8)  (5.0,2.3)  (6.8,2.2)
            (8.0,0.9)  (8.2,-0.8)  (7.6,-2.3)  (6.2,-3.1)
            (4.5,-3.6)  (2.6,-3.3)  (0.5,-3.7)  (-1.3,-3.2)
            (-3.0,-2.6)  (-3.6,-1.5)  (-3.0,-0.2)  (-3.8,1.1)
            (-3.8,2.2)  (-2.7,3.1)  (-0.9,3.4)
        };
    \node[magenta] at (6.5,0) {$\Omega_0$};

\end{tikzpicture}
}
\qquad
\subfloat[Initial condition for small $\ep>0$]{

\tdplotsetmaincoords{80}{0} 
 \begin{tikzpicture}[scale=0.45, tdplot_main_coords]
 
\def\bottom{
    (1.5,-0.5,0)  (3.2,-0.2,0)  (5.0,-1.7,0)  (6.8,-1.8,0)
    (8.0,-3.1,0)  (8.2,-4.8,0)  (7.6,-6.3,0)  (6.2,-7.1,0)
    (4.5,-7.6,0)  (2.6,-7.3,0)  (0.5,-7.7,0)  (-1.3,-7.2,0)
    (-3.0,-6.6,0)  (-3.6,-5.5,0)  (-3.0,-4.2,0)  (-3.8,-2.9,0)
    (-3.8,-1.8,0)  (-2.7,-0.9,0)  (-0.9,-0.6,0)
}

\def\mid{
    (1.5,-0.5,2)  (3.2,-0.2,2)  (5.0,-1.7,2)  (6.8,-1.8,2)
    (8.0,-3.1,2)  (8.2,-4.8,2)  (7.6,-6.3,2)  (6.2,-7.1,2)
    (4.5,-7.6,2)  (2.6,-7.3,2)  (0.5,-7.7,2)  (-1.3,-7.2,2)
    (-3.0,-6.6,2)  (-3.6,-5.5,2)  (-3.0,-4.2,2)  (-3.8,-2.9,2)
    (-3.8,-1.8,2)  (-2.7,-0.9,2)  (-0.9,-0.6,2)
}

\def\top{
    (1.5,3.5,4)  (3.2,3.8,4)  (5.0,2.3,4)  (6.8,2.2,4)
    (8.0,0.9,4)  (8.2,-0.8,4)  (7.6,-2.3,4)  (6.2,-3.1,4)
    (4.5,-3.6,4)  (2.6,-3.3,4)  (0.5,-3.7,4)  (-1.3,-3.2,4)
    (-3.0,-2.6,4)  (-3.6,-1.5,4)  (-3.0,-0.2,4)  (-3.8,1.1,4)
    (-3.8,2.2,4)  (-2.7,3.1,4)  (-0.9,3.4,4)
}

\def\topline{
    (8,0,4) 
    (0,0,4)  
    (-2.5,0,4) 
}

\def\bottomright{
    (9.035,0,-0.7) 
    (9.3,0,-0.7)  
    (10.3,0,-0.7) 
}

\def\bottomleft{
    (-3.8,0,-0.7) 
    (-4.1,0,-0.7)  
    (-5.1,0,-0.7) 
}

\draw[magenta, line width=.70pt] plot[smooth cycle] coordinates {\bottom};
\draw[dotted, line width=.5pt] plot[smooth cycle] coordinates {\top};
\draw[dotted, line width=.5pt] plot[smooth cycle] coordinates {\mid};

\draw[very thick, blue] plot[smooth cycle] coordinates {\topline};
\draw[very thick, blue] plot[smooth cycle] coordinates {\bottomright};
\draw[very thick, blue] plot[smooth cycle] coordinates {\bottomleft};

\draw[very thick, blue]
    (8,0,4) .. controls (9.4,0,3.8) and (7.6,0,-0.5) .. (9.0,0,-0.7);

\draw[very thick, blue]
    (-2.8,0,4) .. controls (-4.2,0,3.8) and (-2.4,0,-0.5) .. (-3.8,0,-0.7);

\node[magenta] at (7.5,0,0.2) {$\Omega_0$};
\node[very thick, blue] at (9.7,0,4) {$u_0^\ep{\big|}_{x_1}$};

\node[opacity=0] at (1,-8,0) {};

\end{tikzpicture}
}
\caption{Initial configuration in dimension $n=2$}
\label{fig:initial}
\end{figure}
Consider a continuous viscosity solution $u(t,x)$ to the fractional mean curvature equation (see \eqref{eq:fmc equation} with $c_0$ as in \eqref{def:c0}) whose positive, zero and negative sets at time $t=0$ are $\Omega_0$, $\Gamma_0$ and $(\overline{\Omega_0})^c$, respectively. If $^+\Omega_t$, $\Gamma_t$, and $^-\Omega_t$ are the positive, zero and negative sets, respectively, of $u(t, \cdot)$ at time $t>0$, then we say that the collection $(^+\Omega_t, \Gamma_t, ^-\Omega_t)_{t\geq0}$ is the level set evolution of $(\Omega_0,\Gamma_0,(\overline{\Omega_0})^c)$. See Section \ref{sec:FMC} for definitions and details on the level set approach to motion by fractional mean curvature.

We now present the main result of the paper.
\begin{thm}\label{thm:main_result}
    Let $u^\ep = u^\ep(t,x)$ be the unique solution of the reaction-diffusion equation \eqref{eq:pde} with initial datum $u_0^\ep:\mathbb{R}^n \to (0,1)$ defined by
    \begin{equation}\label{initial_data}
        u_0^\ep(x) = \phi \left( \frac{d^0(x)}{\ep} \right)
    \end{equation}
    where $\phi$ solves \eqref{eq:standing wave} and $d^0$ is given in \eqref{def:signed_distance_function}. Then, as $\ep \to 0$, the solution $u^\ep$ satisfies
    \begin{align*}
        u^\ep \to 
        \begin{cases}
1 &^+\Omega_t,\\
 \,\, \quad \hbox{locally uniformly in}~&\\
 0 & ^-\Omega_t.
\end{cases}
    \end{align*}
    where $(^+\Omega_t, \Gamma_t, ^-\Omega_t)_{t\geq0}$  denotes the level set evolution of $(\Omega_0,\Gamma_0,(\overline{\Omega_0})^c)$.
\end{thm}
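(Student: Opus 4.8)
The plan is to run the \emph{abstract method} for sharp interface limits of reaction--diffusion equations --- in the nonlocal form developed by Imbert--Souganidis \cite{Imbert} and used for $s=\tfrac12$ in \cite{PatriziVaughan2} --- adapting each step to the strongly nonlocal scaling $s\in(0,\tfrac12)$. First, since $\mathcal I^s_n$ is order preserving and, by \eqref{eq:W}, the constants $0$ and $1$ are stationary solutions of \eqref{eq:pde}, the comparison principle gives $0\le u^\ep\le 1$ for all $\ep>0$; hence the half-relaxed limits
\[
\overline u(t,x)=\limsup_{\ep\to0,\,(t',x')\to(t,x)}u^\ep(t',x'),\qquad \underline u(t,x)=\liminf_{\ep\to0,\,(t',x')\to(t,x)}u^\ep(t',x')
\]
are well defined, upper/lower semicontinuous and valued in $[0,1]$, with $\underline u\le\overline u$. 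The theorem is then equivalent to the two one-phase assertions $\underline u\equiv 1$ on $\bigcup_{t>0}\{t\}\times{}^+\Omega_t$ and $\overline u\equiv 0$ on $\bigcup_{t>0}\{t\}\times{}^-\Omega_t$; local uniform convergence to the wells follows because $0\le\underline u\le\overline u\le1$.

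To prove $\underline u\equiv1$ on ${}^+\Omega_{t}$, fix $(t_0,x_0)$ with $x_0\in{}^+\Omega_{t_0}$. Using that $({}^+\Omega_t,\Gamma_t,{}^-\Omega_t)$ is the level set evolution of $(\Omega_0,\Gamma_0,(\overline{\Omega_0})^c)$ for \eqref{eq:fmc equation}, and mollifying the level set solution, one obtains --- by the comparison principle for fractional mean curvature flow of Section \ref{sec:FMC} --- a smooth compact hypersurface $\Sigma_t=\{d(t,\cdot)=0\}$, with $d$ its signed distance, that is a \emph{strict} subsolution of the flow, stays inside ${}^+\Omega_t$, and whose enclosed region contains $x_0$ with $d(t_0,x_0)>0$ bounded below. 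Along $\Sigma_t$ we build a subsolution of \eqref{eq:pde} of the form
\[
w^\ep(t,x)=\phi\!\left(\frac{d(t,x)}{\ep}+\sigma(t)\right)-\eta^\ep,
\]
with $\sigma(t)$ a slowly increasing correction (here $\dot\phi>0$ is used) and $\eta^\ep\to0$ as $\ep\to0$. Because $u_0^\ep=\phi(d^0/\ep)$ is well prepared and $\Sigma_0$ lies strictly inside $\Omega_0$, monotonicity of $\phi$ gives $w^\ep(0,\cdot)\le u_0^\ep$ for $\ep$ small; the comparison principle for \eqref{eq:pde} propagates $w^\ep\le u^\ep$ to all $t>0$, and evaluating at $(t_0,x_0)$ and letting $\ep\to0$ yields $\underline u(t_0,x_0)=1$. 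The assertion $\overline u\equiv0$ on ${}^-\Omega_t$ is obtained symmetrically, with a supersolution $\phi(d/\ep-\sigma(t))+\eta^\ep$ tracking a strict supersolution of the flow.

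The crux --- and the part that is genuinely new for $s\in(0,\tfrac12)$ --- is verifying that $w^\ep$ is a subsolution of \eqref{eq:pde}, which amounts to an asymptotic expansion of $\mathcal I^s_n$ on the ansatz $\phi(d(t,\cdot)/\ep)$. Writing $z=z'+\tau\nabla d(t,x)$ and $d(t,x+z)=d(t,x)+\tau+\tfrac12\langle D^2d(t,x)z',z'\rangle+O(|z|^3)$, one splits the integral \eqref{eq:operator} into its normal ($\tau$) and transversal ($z'$) parts; for $s<\tfrac12$ the near-diagonal integrand is absolutely integrable (no principal value is needed), but the layer decays only algebraically, $\dot\phi(r)\sim c\,|r|^{-1-2s}$ and $\phi(r)-\one_{\{r>0\}}\sim -\,\mathrm{sgn}(r)\,c\,|r|^{-2s}$, so the long-range part of the integral is the dominant one and must be tracked carefully. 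Matching it against the profile equation $C_{n,s}\mathcal I^s_1[\phi]=W'(\phi)$ in \eqref{eq:standing wave} --- the constant $C_{n,s}$ of \eqref{eq:Cns} being exactly the factor produced by integrating out the $n-1$ transversal directions --- should give, uniformly on a neighborhood of $\Sigma_t$,
\[
\ep^{2s}\,\mathcal I^s_n\big[\phi(d(t,\cdot)/\ep)\big](x)=W'\!\big(\phi(d(t,x)/\ep)\big)+\ep^{2s}\,c_0\,\dot\phi\big(d(t,x)/\ep\big)\,\kappa_s[\Sigma_t](x)+E^\ep(t,x),
\]
where $\kappa_s$ denotes the fractional mean curvature of the level set of $d(t,\cdot)$ through $x$, $c_0$ is as in \eqref{def:c0}, and $E^\ep=o\big(\ep^{2s}\dot\phi(d(t,x)/\ep)\big)$. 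Plugging this together with $\ep\partial_t w^\ep=\dot\phi(\cdot)(\partial_t d+\ep\dot\sigma)$ into \eqref{eq:pde}, and using that $\Sigma_t$ is a strict subsolution of the flow (so that the corresponding inequality between $\partial_t d$ and $c_0\kappa_s[\Sigma_t]$ holds with a fixed margin), makes $w^\ep$ a subsolution for $\ep$ small, the margin absorbing $E^\ep$ and the lower-order $\ep\dot\sigma$ and $\eta^\ep$ contributions. I expect the \emph{uniform} error control in this expansion --- in particular near points where $\nabla d$ degenerates, and making $E^\ep$ small \emph{relative to} $\dot\phi$ in the far region where $\phi$ has only algebraic decay --- to be the principal technical obstacle; constructing the smooth strict sub/supersolutions of the fractional mean curvature flow (via sup-/inf-convolutions and the comparison principle of Section \ref{sec:FMC}) is the secondary one.
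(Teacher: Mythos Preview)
Your proposal has a genuine gap: the asymptotic expansion you rely on is incorrect, and as a consequence the simple barrier $w^\ep=\phi(d/\ep+\sigma(t))-\eta^\ep$ is \emph{not} a subsolution of \eqref{eq:pde}. You claim
\[
\ep^{2s}\,\mathcal I^s_n\big[\phi(d(t,\cdot)/\ep)\big](x)=W'\!\big(\phi\big)+\ep^{2s}\,c_0\,\dot\phi\,\kappa_s+E^\ep,\qquad E^\ep=o\big(\ep^{2s}\dot\phi\big),
\]
but this multiplicative structure (curvature times $\dot\phi$) is a feature of the \emph{local} Laplacian, where the chain rule produces $\dot\phi\,\Delta d$. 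For the fractional operator the correct result (Lemmas~\ref{lem:a_ep and frac laplacians} and~\ref{thm: b_ep fmc}) is
\[
\mathcal I^s_n\big[\phi(d/\ep)\big](x)-\tfrac{1}{\ep^{2s}}W'\!\big(\phi\big)=\bar a_\ep(t,x)+o_\ep(1),\qquad \bar a_\ep\to\kappa[x,d]\ \text{near the front},
\]
with \emph{no} $\dot\phi$ factor in front of $\kappa$. Plugging your ansatz into \eqref{eq:pde} therefore gives, near the front,
\[
\ep\partial_t w^\ep-\mathcal I^s_n w^\ep+\tfrac{1}{\ep^{2s}}W'(w^\ep)\simeq \dot\phi\,\partial_t d-\kappa[x,d]
\le (c_0\dot\phi-1)\,\kappa - c_0\sigma\,\dot\phi,
\]
which has no sign: wherever $\kappa<0$ and $\dot\phi$ is small (i.e.\ away from the transition in the fast variable $\xi$) this is $\simeq -\kappa>0$. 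The strict margin from the flow subsolution cannot absorb an $O(1)$ error, and the small shifts $\sigma(t)$, $\eta^\ep$ do not help.

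This is exactly why the paper cannot use your barrier and instead builds (Section~\ref{Ansatzsection}, Lemma~\ref{lem:barrier}) the corrected ansatz
\[
v^\ep=\phi\Big(\tfrac{d-\tilde\sigma}{\ep}\Big)+\ep^{2s}\psi\Big(\tfrac{d-\tilde\sigma}{\ep};t,x\Big)+\tfrac{\ep^{2s}}{W''(0)}\big(\bar a_\ep-\mu\big),
\]
where $\psi$ solves the linearized equation \eqref{eq:linearized wave}. The role of $\psi$ (together with the last term) is precisely to convert the naked $\bar a_\ep$ into $c_0\dot\phi\,\bar a_\ep$ plus a controlled remainder $-\mu$; see \eqref{eq:final_J_of_v_ep}. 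The additional function $\mu$, graded in $|d|$, is needed because away from the front $\bar a_\ep$ is still $O(1)$ but is no longer close to $\kappa$, and $\dot\phi$ is too small to provide any damping there. Your proposal acknowledges the far-field difficulty but offers no mechanism to handle it; the paper's mechanism is the combination of the corrector $\psi$ with its sharp $\xi$-decay \eqref{psi_far} and the auxiliary $\mu$. Without these ingredients the barrier argument does not close.
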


\begin{figure}[h]
\centering
 \begin{tikzpicture}[scale=0.5, use Hobby shortcut, closed=true]
\def\Omeg{(1.3,3.4)  (3.4,3.6)  (5.1,3.3)  (6.5,2.1)
            (8.0,0.9)  (8.2,-0.8)  (7.6,-2.3)  (6.2,-3.1)
            (4.3,-3.2)  (2.6,-3.2)  (0.5,-3.4)  (-1.3,-3.2)
            (-3.3,-2.3)  (-3.6,-1.2)  (-3.4,-0.2)  (-3.8,1.1)
            (-3.8,2.2)  (-2.7,3.1)  (-0.9,3.4)};

\draw[magenta, line width=.65pt] plot[smooth cycle] coordinates {\Omeg};
\fill[magenta!20, opacity=0.2] plot[smooth cycle] coordinates {\Omeg};

\node[] at (1,2.25) { $u^{\ep} \to 1$};
\node[] at (1,4.5) { $u^{\ep} \to 0$};
\node[magenta] at (9,0) {$\Gamma_t$};
\end{tikzpicture}
\caption{Convergence result in dimension $n=2$}
\label{fig:main thm}
\end{figure}
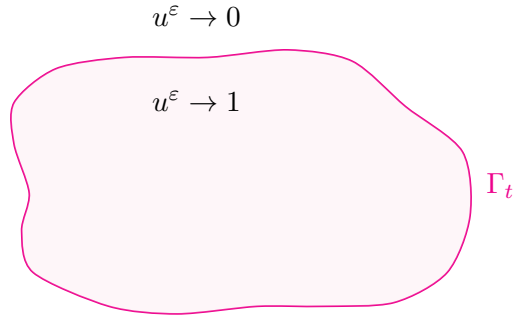

As illustrated in Figure \ref{fig:main thm}, Theorem \ref{thm:main_result}  says that the solution $u^{\ep}$ converges to 0 and 1 ``between'' the interface $\Gamma_t$.
Moreover, $\Gamma_t$ moves by fractional mean curvature. Specifically, it moves in the direction of the interior normal vector with 
 scalar velocity
\begin{equation*}\label{eq:velocity-intro}
v=-\frac{c_0}{2} H_{2s}(^+\Omega_t),
\end{equation*}
where $ H_{2s}(^+\Omega_t)$ is the fractional mean curvature of order $2s$ of $^+\Omega_t$ and $c_0>0$ is explicit (see \eqref{def:c0}).
See  Section \ref{sec:FMC} for the definition and properties of the fractional mean curvature of a set. 

We use the level set approach to handle possible singularities for large times $t>0$.

For the case in which 
the set $\Gamma_t$ doesn't  develop interior, i.e.~$\Gamma_t= \partial({^+}\Omega_t) = \partial ({^-}\Omega_t)$, 
the limiting function in Theorem  \ref{thm:main_result}  makes the jump on the surface $\Gamma_t$ and satisfies
\[
\lim_{\ep \to 0} u^{\ep} = \frac{1}{2} + \frac{1}{2}\( \one_{{^+}\Omega_t^i} - \one_{(\overline{{^+}\Omega_t^i})^c}\)  \quad \hbox{in}~\big((0,\infty)\times  \R^n\big)
\setminus  \bigg(\bigcup_{t>0}\{t\}\times \Gamma_t\bigg)
\]
where $\one_\Omega$ denotes the characteristic function of the set $\Omega \subset \R^n$. 
However,  
it is well known that  $\Gamma_t$ may develop interior in finite time,  even if $\Gamma_0$ has none, see~\cite{CesaroniDipierroNovagaVal}. In this situation,
the discontinuity set at time $t$ of the limiting function is contained in the set $\Gamma_t$, but we cannot say exactly where the jump occurs within this set.

\subsection{Strategies and prior work.}

We now discuss the key aspects of Theorem \ref{thm:main_result}, its proof, and some of the relevant literature.

Theorem \ref{thm:main_result} has been addressed in the literature in the local case.  
For instance, the classical Allen--Cahn equation for which \eqref{eq:pde} is instead driven by the usual Laplacian $\Delta$ was studied famously by Modica--Mortola \cite{MM}  for the stationary case. 
Chen studied the corresponding evolutionary Allen--Cahn problem and proved that the solution exhibits an interface moving by mean curvature \cite{Chen}. 
Using the framework of viscosity solutions and the level set method, Evans--Soner--Souganidis \cite{EvansSonerSouganidis} established convergence to mean curvature flow for all times, including beyond the formation of singularities. See Section \ref{sec:FMC} for more on the phase field theory. 

In the fractional setting,  for any $s\in(0,1)$, the stationary case was studied by  Savin--Valdinoci  \cite{SavinValdinoci} (see also  \cite{Alberti, AmbrosioDePhilippisMartinazzi,Contigarmul,GarroniMuller} for related $\Gamma$-convergence results). 
 They showed that for $s < \frac{1}{2}$, the fractional Allen–Cahn energy \eqref{energyintr}, when restricted to functions that agree outside a bounded domain $\Omega$, $\Gamma$-converges to the fractional perimeter functional in $\Omega$.
  For  $s \geq \frac{1}{2}$, a properly rescaled energy functional is considered:
\begin{equation}\label{energyintr2}E_\ep(u)=\frac{1}{\eta_s}\(\ep^{2s}\frac12[u]^2_{H^s(\R^n)}+\int_{\R^n}W(u)\,dx\)\end{equation}
where $\eta_s=\ep |\ln \ep|$ if $s=\frac12$ and $\eta_s=\ep$ if $s>\frac12$.
Under this rescaling, the energy 
$\Gamma$-converges to  the classical  (local) perimeter functional in $\Omega$.

The evolution problem for $s\in(0,1)$ was studied by Imbert--Souganidis in the preprint \cite{Imbert}.
In the case $s\geq \frac12$, they proved that the interface $\Gamma_t$ evolves according to the classical mean curvature flow.
However, their analysis assumes the existence of suitable one-dimensional solutions necessary for the convergence proof, without proving their existence.
 For the critical case  $s=\frac12$, their  work  was  completed and extended to  cover the case of multiple fronts by Patrizi--Vaughan  \cite{PatriziVaughan2}.  

When  $s<\frac12$,  only partial results were obtained in \cite{Imbert}, and the full convergence result remained an open problem. 
We now explain this in more detail.

The proof of Theorem \ref{thm:main_result}   
relies on the abstract method introduced by Barles--Da Lio \cite{Barles-DaLio} and Barles--Souganidis \cite{Barles-Souganidis} for the study of front propagation, and later extended to the fractional setting by Imbert \cite{Imbert09}.
To apply this method, we construct barriers in the form of strict subsolutions and supersolutions to \eqref{eq:pde}, see Section \ref{barriersection} for details. 
 In \cite{Imbert}, a subsolution is constructed near the interface $\Gamma_t$ using 
 the ansatz $\phi_c(d(t,x)/\ep)$,  
where $d(t,x)$ denotes the signed distance to  the evolving set ${^+}\Omega_t$,   and $\phi_c$  solves a traveling wave equation with speed $c$ (with $c=0$ corresponding to \eqref{eq:standing wave}).  However, the existence and asymptotic behavior of such traveling waves are assumed rather than proven.
In fact, the expected decay at infinity does not hold in the stationary case. 

Since the equation is nonlocal and nonlinear, a difficulty arises in dealing with $d(t,x)$ when $(t,x)$ is far from the front, since $d$ may not be smooth at such points. 
To address this,  \cite{Imbert} truncates and extends the subsolution away from the front, taking particular care when truncating from below in order to remain a subsolution. 
However,   when  $s<\frac12$ the equation is strongly nonlocal, and their method fails to produce a valid extension far from the interface in this case.

In \cite{PatriziVaughan2}, global subsolutions are  constructed for the critical case
 $s=\frac12$. Their construction uses the form  $\phi(\tilde{d}(t,x)/\ep)$ where $\tilde{d}$ is a smooth bounded extension of the signed distance function $d$ to ${^+}\Omega_t$ and $\phi$ solves the stationary equation \eqref{eq:standing wave}, whose existence, uniqueness, and asymptotic behavior are known  (see Lemma \ref{lem:asymptotics}). 

In this paper, for the case $s<\frac12$, we adopt the approach developed in    \cite{PatriziVaughan2}. 
However, additional difficulties arise because, roughly speaking, equation \eqref{eq:pde} is more singular than its $s = \frac12$ counterpart.
Specifically, the fractional Allen--Cahn equation with $s = \frac12$ includes an additional logarithmic term that is absent in our case (compare \eqref{energyintr} with   \eqref{energyintr2} with  $s = \frac12$). 

To prove the convergence result, it is necessary to introduce  a lower-order corrector to control the error as $\ep \to0$. 
This corrector is the solution   $\psi= \psi_\ep$ to the linearized equation
\begin{equation*}\label{eq:corrector-intro}
 -C_{n,s}\mathcal{I}^s_1[\psi] + {W}''(\phi) \psi =g
 \end{equation*}
 for some right-hand side $g = g_\ep$ depending on $\ep>0$ and on the signed distance function $d$. 
The explicit form of $g$ is somewhat technical (given in Section \ref{sec:phi}) and differs from the one in \cite{PatriziVaughan2} as well as in \cite{Imbert}. In  \cite{Imbert}, the existence of such correctors is assumed rather than proved.

Since the correctors depend on the parameter $\ep$ that tends to zero, a delicate analysis is required to obtain sharp estimates on their derivatives. These estimates, which  are essential for establishing  Theorem \ref{thm:main_result},   are specific for the case $s<\frac12$ and blow up when $s\to\frac12$.

The derivation of the corrector equation, a comparison with \cite{PatriziVaughan2}, and a heuristic proof of Theorem \ref{thm:main_result} are presented in 
 Section \ref{sec:Heuristics}.

 Another difficulty in constructing subsolutions and supersolutions to \eqref{eq:pde} is the presence of additional terms that do not decay far from the front. To control the resulting error in these regions, we introduce suitable auxiliary functions,  a step  not required for $s=\frac12$.

The one dimensional case with multiple fronts was studied by Gonzalez--Monneau  \cite{GonzalezMonneau}  for $s=\frac12$. The cases    $s\in(0,\frac12)$ and $s\in(\frac12,1)$
were later addressed in \cite{DipierroFigalliVald} and \cite{DipierroPalatucciValdinoci}, respectively. In this setting, the interfaces (i.e., the transition points between phases) evolve according to a long-range interaction potential determined by the fractional nature of the operator.
The case in which the solution is not monotone is investigated in \cite{MeursPatrizi,patval1, patval4}. The long time behavior of solutions is studied  in \cite{CozziDavilaDelPino, patval3}. Furthermore, the regime where the number of interfaces tends to infinity is explored in \cite{patsan, patsan2}.

The motion by fractional mean curvature has also been extensively studied in recent years. We refer the reader to  \cite{CesaroniDeLucaNovagaPonsiglione, CesaroniDipierroNovagaVal, CesaroniNovaga, ChambolleMoriniNovagaPonsiglione, ChambolleMoriniPonsiglione, CintiSinestrariValdinoci2,  CintiSinestrariValdinoci,SaezVal} and the references therein.
\subsection{Future directions}
We plan to extend the analysis in this paper to the case of multiple interfaces.
This will involve considering initial  datum given by the  superposition of functions of the form \eqref{initial_data},  and a multi-well potential $W$. 
Due to the nonlinearity and the strong nonlocality  of the problem, this extension is highly nontrivial.  
We expect that the limit configuration will consist of a superposition of characteristic  functions, with each interface evolving by fractional mean curvature plus an interaction potential depending on the distances to other interfaces. This behavior contrasts with the $s=\frac12$ case,  where fronts evolve independently by mean curvature, as shown in \cite{PatriziVaughan2}.

\subsection{Organization of the paper.}
The rest of the paper is organized as follows. In Section~\ref{sec:FMC} 
we recall the definition of fractional mean curvature and provide the necessary background on motion by fractional mean curvature and the level set formulation.  Section~\ref{sec:Heuristics} presents the heuristics for the proof of Theorem~\ref{thm:main_result} and for the equation satisfied by the corrector.  Section \ref{sec:ae-frac} contains preliminary results on fractional Laplacians and the 
solutions $u^\ep$. In Section \ref{sec:phi} we recall some  preliminary results for  the phase transition $\phi$, and establish 
preliminary results for the corrector $\psi$ and other auxiliary functions needed for the rest of the paper.  The construction of barriers is presented in Section~\ref{barriersection}. Section~\ref{sec:proof main_result} contains the proof of Theorem \ref{thm:main_result}. Lastly, since the proofs of some  auxiliary results in
Section~\ref{sec:phi} are rather technical; they are presented separately Sections~\ref{sec:proof of b_ep fmc}, \ref{sec: proof of a_ep and frac laplacians}, \ref{sec:proof of a_estimates}, and \ref{sec:proof of ae psi estimate}. 
\subsection{Notations}
Throughout the paper, we denote by $C>0$ any  constant independent of $\ep$ and the parameters $\delta$, $\sigma$, and $R$,  which will be introduced later. 

We write $B(x_0,r)$ and $\overline{B}(x_0,r)$ for the open and closed balls of radius $r>0$ centered at $x_0\in\R^n$, respectively, and $S^n$ for the unit sphere in $\R^{n+1}$. 

For $\beta \in (0,1]$,  $k \in \N \cup \{0\}$ and $m\in\N$, we denote by $C^{k,\beta}(\R^m)$ the usual class of functions with bounded $C^{k,\beta}$ norm over $\R^m$. 
For $\beta=0$ we simply write $C^{k}(\R^m)$.
For multi-variable functions $v(\xi;t,x)$, we write $v \in C_\xi^{k,\beta}(\R)$ if $v(\cdot;t,x) \in C^{k,\beta}(\R)$ for all $t,x$ in the domain of $v$. 
Moreover, we use the dot notation for derivatives with respect to the variable $\xi$, namely $\dot{v}(\xi;t,x) = v_\xi(\xi;t,x)$.

Given a function $\eta = \eta(t,x)$, defined on a set $A$,  we write $\eta = O(\ep)$ if there is $C>0$ such that
$|\eta(t,x)| \leq C \ep$ for all $(t,x)\in A$, and we write $\eta = o_\ep(1)$ if  $\lim_{\ep \to 0} \eta(t,x) = 0$, uniformly in $(t,x)\in A$. 

Given a sequence of functions $u^\ep(t,x)$, we define
\[
\liminf_{\ep \to 0}{_*} u^\ep(t,x) := \inf \left\{ \liminf_{\ep \to 0} u^\ep(t_\ep,x_\ep): ( t_\ep,x_\ep) \to (t,x)\right\}
\]
and
\[
\limsup_{\ep \to 0}{^*} u^\ep(t,x) :=\sup \left\{ \limsup_{\ep \to 0} u^\ep(t_\ep,x_\ep): ( t_\ep,x_\ep) \to (t,x)\right\}.
\]

For a set $A$, we denote by $\one_A$ the characteristic function of the set $A$.

\section{Motion by fractional mean curvature}\label{sec:FMC}

In this section, we present preliminary results concerning the evolution of fronts by fractional mean curvature.
\subsection{The fractional mean curvature} 

Let $\Omega$ be a smooth bounded subset of $\R^n$. For a point $x\in \partial \Omega$, the fractional mean curvature of order $2s$ of $ \Omega$ at $x$ is defined by
$$H_{2s}(\Omega)(x) =P.V.\int_{\R^n}\frac{\one_\Omega(z)-\one_{\Omega^c}(z)}{|z-x|^{n+2s}}\,dz,$$
where P.V.  denotes the Cauchy principal value. This quantity can also be expressed in terms of the signed  distance function $d$ to $\Omega$.
Indeed, since $$\Omega=\{z\,:\, d(z)>0\}$$ and using that 
$$P.V. \int_{\R^n}\frac{\one_{\{\nabla d(x)\cdot z>0\}}-\one_{\{\nabla d(x)\cdot z<0\}}}{|z|^{n+2s}}\,dz=0,$$ we can write
\begin{align*}
H_{2s}(\Omega)(x) &=P.V.\int_{\R^n}\frac{\one_\Omega(x+z)-\one_{\Omega^c}(x+z)}{|z|^{n+2s}}\,dz\\&
=P.V. \int_{\R^n}\frac{\one_{\{d(x+z)>0\}}-\one_{\{d(x+z)<0\}}+\one_{\{\nabla d(x)\cdot z<0\}}-\one_{\{\nabla d(x)\cdot z>0\}}}{|z|^{n+2s}}\,dz\\& 
=2 \int_{\{d(x+z)>0,\, \nabla d(x)\cdot z<0\}}\frac{dz}{|z|^{n+2s}}-2 \int_{\{d(x+z)<0,\, \nabla d(x)\cdot z>0\}}\frac{dz}{|z|^{n+2s}},
\end{align*}
where the last two integrals converge in the standard sense, as stated in Proposition \ref{prop:fmc_finite_result} below.
Assume $d$ is smooth in $Q_{2\rho}:=\{z\,:\,|d(z)|<2\rho\}$ for some $\rho>0$, then for $x\in Q_\rho$, define
\begin{equation}\label{kappa+de}\kappa^+[x,d]:=\int_{\{d(x+z)>d(x),\, \nabla d(x)\cdot z<0\}}\frac{dz}{|z|^{n+2s}},\quad 
\kappa^-[x,d] :=\int_{\{d(x+z)<d(x),\, \nabla d(x)\cdot z>0\}}\frac{dz}{|z|^{n+2s}},\end{equation}
and 
\begin{equation}\label{kappade}\kappa[x,d]:=\kappa^+[x,d]-\kappa^-[x,d].\end{equation}
From the discussion above, we obtain the identity
$$\kappa[x,d]=\frac12 H_{2s}(\{d>d(x)\})(x).$$ 
Roughly speaking, $\kappa[\cdot,d]$ plays the role of  $\Delta d$ in the local setting. 

Notice that, if $u$ is a smooth function such that 
$$\Omega=\{u>0\}\quad\text{and}\quad (\overline\Omega)^c=\{u<0\},$$ then  for all $x\in\partial \Omega$, 
\begin{equation*}\label{kappau=kappad}\kappa[x,u]=\kappa[x,d]. \end{equation*}
A proof of the following result can be found, for instance, in \cite[Lemma 7.3]{PatriziVaughan}. 
 \begin{prop}\label{prop:fmc_finite_result}
    Assume $d$ of class $C^2(Q_{2\rho})$, then for all $x\in Q_\rho$,  the  quantities  $ \kappa^+[x,d] $ and $\kappa^-[x,d]$ are finite. 
\end{prop}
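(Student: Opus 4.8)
The plan is to show the finiteness of $\kappa^+[x,d]$ and $\kappa^-[x,d]$ by splitting each integral into a near-diagonal part (small $z$) and a tail part (large $z$), and bounding each separately. The tail part is harmless for any measurable bounded-type domain: on $\{|z| > 1\}$ the integrand $|z|^{-n-2s}$ is integrable over $\R^n$ regardless of the defining set, so $\int_{\{|z|>1\}} |z|^{-n-2s}\,dz < \infty$ and contributes a finite amount. All the work is therefore in the region $\{|z| < 1\}$, and this is where the $C^2$ regularity of $d$ in the tubular neighborhood $Q_{2\rho}$ enters.

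For the near-diagonal part, I would fix $x \in Q_\rho$ and analyze the domain of integration $A^+ := \{z : d(x+z) > d(x),\ \nabla d(x)\cdot z < 0\}$ appearing in $\kappa^+$. Write a second-order Taylor expansion $d(x+z) = d(x) + \nabla d(x)\cdot z + O(|z|^2)$, valid for $|z|$ small since $d \in C^2(Q_{2\rho})$ and $x \in Q_\rho$ (so the segment from $x$ to $x+z$ stays in $Q_{2\rho}$ for $|z| < \rho$, say). On $A^+ \cap \{|z| < \rho\}$ we have simultaneously $\nabla d(x)\cdot z < 0$ and $\nabla d(x)\cdot z > -C|z|^2$ for a constant $C$ controlled by $\|d\|_{C^2(Q_{2\rho})}$; hence $A^+$ near the origin is contained in the thin region $\{z : -C|z|^2 < \nabla d(x)\cdot z < 0\}$. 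Since $|\nabla d| = 1$ on the smooth level sets, passing to coordinates in which $\nabla d(x)$ is the first axis, this set is $\{|z_1| < C|z|^2,\ z_1 < 0\}$, a paraboloid-type slab. The key estimate is then $\int_{\{|z|<\rho,\ |z_1|<C|z|^2\}} \frac{dz}{|z|^{n+2s}} < \infty$: writing $z = (z_1, z')$ with $z' \in \R^{n-1}$, the slice in $z_1$ has length $\lesssim |z'|^2$ (for $|z_1| \lesssim |z'|$, which holds near the origin), so the integral is bounded by $\int_{\{|z'|<\rho\}} \frac{|z'|^2}{|z'|^{n+2s}}\,dz' = \int_{\{|z'|<\rho\}} |z'|^{2 - n - 2s}\,dz'$, which converges precisely because $2 - 2s > 0$, i.e. $s < 1$ (and in particular for $s \in (0,\tfrac12)$). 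The argument for $\kappa^-[x,d]$ with $A^- := \{d(x+z) < d(x),\ \nabla d(x)\cdot z > 0\}$ is symmetric.

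The main obstacle — really the only delicate point — is making the near-diagonal geometric bound uniform in $x \in Q_\rho$ and handling the region where $|z_1|$ is comparable to $|z|$ rather than small. One must check that the constant $C$ in the paraboloid inclusion depends only on $\sup_{Q_{2\rho}} \|D^2 d\|$ and not on the particular point $x$, which follows from the uniform $C^2$ bound on $Q_{2\rho}$; and one must split the slab estimate according to whether $|z_1| \le |z'|$ or $|z_1| > |z'|$, noting that in the latter sub-case $|z| \asymp |z_1| < C|z|^2$ forces $|z|$ bounded below, so that sub-region is empty for $\rho$ small enough. Since the excerpt says a proof can be found in \cite[Lemma 7.3]{PatriziVaughan}, I would present this as a brief sketch and refer there for the full details, emphasizing that the convergence is exactly the statement that the paraboloidal neighborhood of the tangent plane has finite $2s$-fractional perimeter density, a standard fact for $C^{1,1}$ (here $C^2$) sets and any $s < 1$.
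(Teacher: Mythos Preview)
Your approach is correct and is the standard argument---the paper itself does not give a proof but simply cites \cite[Lemma~7.3]{PatriziVaughan}, and your sketch is essentially what one finds there: tail integrability for $|z|>1$, and for $|z|$ small the Taylor expansion traps the integration domain in the parabolic slab $\{-C|z|^2<\nabla d(x)\cdot z<0\}$, whose $|z|^{-n-2s}$-mass is finite.

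One correction, however: your convergence check is off by one. The integral $\int_{\{|z'|<\rho\}}|z'|^{2-n-2s}\,dz'$ is over $\R^{n-1}$, so in polar coordinates it becomes $\int_0^\rho r^{-2s}\,dr$, which converges iff $2s<1$, i.e.\ $s<\tfrac12$, \emph{not} $s<1$ as you wrote. (Equivalently, at radius $r$ the spherical cap $\{|z_1|\lesssim r^2\}\cap\{|z|=r\}$ has surface measure $\sim r^n$, giving $\int_0^\rho r^{-2s}\,dr$ again.) This is not a gap for the present paper, which works exclusively in the regime $s\in(0,\tfrac12)$, but your parenthetical ``and in particular for $s\in(0,\tfrac12)$'' should really read ``precisely for $s\in(0,\tfrac12)$'': for $s\ge\tfrac12$ the quantities $\kappa^\pm$ are in general individually infinite (only their difference, taken as a principal value, survives), so the proposition as stated genuinely requires the strong nonlocality assumption of the paper.
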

The fractional mean curvature of balls can be explicitly computed, see \cite[Lemma 2]{SaezVal} for a proof.
\begin{prop}\label{ballFMC}For $r>0$, let $d(x)=r-|x|$. Then, for $x\neq 0$, 
$$\kappa[x,d]=-\frac{\omega}{|x|^{2s}},$$
for some $\omega>0$.
\end{prop}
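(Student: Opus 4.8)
\emph{Proof proposal.} The plan is to reduce the statement to the fractional mean curvature of a standard ball and then exploit rotational invariance and scaling. For $d(x)=r-|x|$ the superlevel set is a ball centered at the origin,
\[
\{y:\ d(y)>d(x)\}=\{y:\ |y|<|x|\}=B(0,|x|),
\]
and $x\in\partial B(0,|x|)$, with inward unit normal $\nabla d(x)=-x/|x|$. Since $d$ differs from the signed distance $\tilde d$ to $B(0,|x|)$ only by the additive constant $d(x)$, while the definitions in \eqref{kappa+de} involve only the increments $d(x+z)-d(x)$ and the gradient $\nabla d(x)$, we have $\kappa^\pm[x,d]=\kappa^\pm[x,\tilde d]$. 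Choosing $\rho<|x|/2$, the function $\tilde d$ is of class $C^2$ on $Q_{2\rho}=\{\,|\,|x|-|y|\,|<2\rho\}$ (which avoids the origin) and $x\in Q_\rho$, so Proposition \ref{prop:fmc_finite_result} guarantees that $\kappa^+[x,d],\kappa^-[x,d]$ are finite, and the identity recorded after \eqref{kappade} gives
\[
\kappa[x,d]=\tfrac12\,H_{2s}\big(B(0,|x|)\big)(x).
\]

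Next I would use symmetry. By the rotational invariance of the kernel $|z|^{-n-2s}$ and of $B(0,\varrho)$, the quantity $H_{2s}(B(0,\varrho))(y)$ is independent of the boundary point $y\in\partial B(0,\varrho)$; denote it $h(\varrho)$. Performing the change of variables $y\mapsto\varrho y$ in the defining principal-value integral — which transforms the symmetric truncation $\R^n\setminus B(x,\delta)$ into $\R^n\setminus B(x/\varrho,\delta/\varrho)$ and is thus compatible with the Cauchy principal value — yields the homogeneity $h(\varrho)=\varrho^{-2s}h(1)$. Writing $-2\omega:=h(1)=H_{2s}(B(0,1))(e)$ for an arbitrary unit vector $e$, we conclude
\[
\kappa[x,d]=\tfrac12\,|x|^{-2s}h(1)=-\frac{\omega}{|x|^{2s}}.
\]

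It remains to verify $\omega>0$, i.e.\ $H_{2s}(B(0,1))(e)<0$. For this I would use the representation of $H_{2s}$ as the difference of the two convergent integrals displayed just before Proposition \ref{prop:fmc_finite_result}, evaluated at $x=e$, where $\nabla d(e)=-e$. The first (``outward near'') integral is over the set $\{z:\ e+z\in B(0,1),\ e\cdot z>0\}$; but $|e+z|<1$ forces $2\,e\cdot z+|z|^2<0$, hence $e\cdot z<0$, so this set is empty and the first integral vanishes. The second (``inward far'') integral is over $\{z:\ e\cdot z<0,\ |z|^2>-2\,e\cdot z\}$, which is open and nonempty (e.g.\ it contains $z=-te$ for $t>2$), so it is strictly positive. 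Therefore $H_{2s}(B(0,1))(e)=-2\times(\text{positive})<0$, that is $\omega>0$.

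None of these steps is deep; the only points requiring care are the bookkeeping of the principal value under dilation, needed to get the homogeneity $h(\varrho)=\varrho^{-2s}h(1)$ cleanly, and the elementary but sign-sensitive verification that $h(1)<0$. Alternatively, one may simply cite \cite[Lemma~2]{SaezVal} for the value of $h(1)$ and carry out only the reduction $\kappa[x,d]=\tfrac12 H_{2s}(B(0,|x|))(x)$ together with the scaling argument.
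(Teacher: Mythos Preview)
Your argument is correct. The paper does not actually prove this proposition; it merely cites \cite[Lemma~2]{SaezVal}. Your route --- identifying $\{d>d(x)\}=B(0,|x|)$ so that $\kappa[x,d]=\tfrac12 H_{2s}(B(0,|x|))(x)$, then using rotational invariance and the dilation $y\mapsto\varrho y$ to obtain the $|x|^{-2s}$ scaling, and finally checking the sign by showing that for the unit ball the ``$\kappa^+$'' integral is empty while the ``$\kappa^-$'' integral is strictly positive --- is exactly the natural self-contained argument and supplies the details that the cited reference contains.
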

\subsection{The level set approach} 
We review the level set approach for fractional mean curvature flows, a method originally introduced by Osher–Sethian \cite{OsherSethian}, Evans–Spruck \cite{EvansSpruck}, and Chen–Giga–Goto \cite{CGG} for the evolution of fronts under classical mean curvature flow.

Let  $u=u(t,x)$ be a smooth function, and  consider the level set $\Gamma_t = \{ x \in \mathbb{R}^n: u(t,x) = \ell \}$ of $u(t, \cdot)$ at the level $\ell \in \mathbb{R}$. Assume that $\Gamma_t$ is bounded and $\nabla u$ does not vanish on $\Gamma_t$. 
Then $n(t, x) := \nabla u(t, x)/|\nabla u(t,x)|$ is interior unit normal to  $\{ u > \ell \}$.
In a time-space neighborhood  $\mathcal{N}$ of $\Gamma_t$, the level set $\Gamma_t$,  as well as all the level sets of $u$  in $\mathcal{N}$ of $\Gamma_t$,    move in the direction of $n(t, x) $ with scalar velocity 
\begin{equation}\label{eq:velocity_of_fonts}
  v(t,x) =- c_0 \kappa[x,u(t,\cdot)]=-\frac{c_0}{2}H_{2s}(\{u>u(x)\})(x),
\end{equation} 
where $c_0$>0, if and only if $u$  satisfies  the fractional mean curvature equation
\begin{equation}\label{eq:fmc equation}
    \partial_t u = c_0 |\nabla u| \kappa[x,u],
\end{equation}
in $\mathcal{N}$. 

As in the classical case, the evolution of level sets by fractional mean curvature may develop singularities in finite time, see for instance \cite{CintiSinestrariValdinoci}. To account for such singularities and generalize the notion of evolving fronts,  Imbert \cite{Imbert09} introduced a weak formulation of fractional mean curvature flows based on the level set method and the theory of viscosity solutions for nonlocal degenerate equations.


More precisely,  for a bounded, open set $\Omega_0 \subset \mathbb{R}^n$, set $\Gamma_0 = \partial\Omega_0$ and consider the initial triplet $(\Omega_0, \Gamma_0, (\Omega_0)^c)$. Let $u_0(x)$ be a bounded and Lipschitz continuous function such that
\[
\Omega_0 = \{x : u_0(x) > 0\}, \quad \Gamma_0 = \{x : u_0(x) = 0\}, \quad (\Omega_0)^c = \{x : u_0(x) < 0\}. 
\]
Then, there exists a unique bounded uniformly continuous viscosity solution $u$ to \eqref{eq:fmc equation} in $(0, \infty) \times \mathbb{R}^n$ with initial datum $u(0, x) = u_0(x)$, see  \cite[Theorem 3]{Imbert09}. For the definition of viscosity solution of \eqref{eq:fmc equation}, see \cite[Definition 1]{Imbert09}. We define the time-evolving triplet as
\begin{equation}\label{eq:triplets}
{}^+\Omega_t := \{x : u(t, x) > 0\}, \quad \Gamma_t := \{x : u(t, x) = 0\}, \quad {}^-\Omega_t := \{x : u(t, x) < 0\}.
\end{equation}

The collection $({}^+\Omega_t, \Gamma_t, {}^-\Omega_t)_{t\geq 0}$ is called the level set evolution of the initial configuration $(\Omega_0, \Gamma_0, (\Omega_0)^c)$. As shown in \cite[Theorem 6]{Imbert09}, the interface $\Gamma_t$ depends only on the initial zero level set $\Gamma_0$, and not on the specific choice of $u_0$.

Assume  $\Gamma_t$  smooth for $t \in [t_0, t_0+h]$, and let 
$d(t,x)$ denote the signed distance function to the set $\{ u > 0 \}$, defined by 
\begin{align*}
d(t, x) =
\begin{cases}
d(x, \Gamma_t) & \text{for } u(t, x) \geq 0\\
-d(x, \Gamma_t) & \text{for } u(t, x) < 0.
\end{cases}
\end{align*}
If $u$ solves 
\begin{align*}
    \partial_t u = c_0 |\nabla u| \kappa[x,u] + \sigma,
\end{align*}
in a neighborhood of $\Gamma_t$ for some $\sigma = \sigma(t,x)$, then since 
$$\partial_t d = \dfrac{\partial_t u}{|\nabla u|}\quad\text{and}\quad \kappa[x,d]=\kappa[x,u]
\quad \text{in} \quad \bigcup_{t \in [t_0, t_0+h]} \{ t \} \times \Gamma_t,$$
the function $d$ solves 
\begin{equation}\label{meancurvature_intro_sigma}
    \partial_t d = c_0 \kappa[x, d] + \frac{\sigma}{|\nabla u|} \quad \text{in} \quad \bigcup_{t \in [t_0, t_0+h]} \{ t \} \times \Gamma_t.
\end{equation}

\subsection{Generalized Flows}\label{sec:flows}
We now present the definition of generalized flows for our problem as introduced in \cite{Imbert09}. 
Let us first define the singular measure 
\begin{equation*}\label{def:singular_measure}
\nu(dz) = \frac{dz}{|z|^{n+2s}}.
\end{equation*}
Let $D \subset \mathbb{R}^n$ be open and $E \subset \mathbb{R}^n$ be closed. For all $x,p\in \mathbb{R}^n$, let $F^\ast(x,p,D)$ and $F_\ast(x,p,E)$ be defined, respectively, as 
\begin{equation}\label{def:F_ast_definition}
\begin{aligned}
    F^\ast(x,p,D) &=
    \begin{cases}
        -c_0\Big[ \nu\left( D \cap \left\{ p \cdot z < 0 \right\} \right) - \nu\left( D^c \cap \left\{ p \cdot z \geq 0 \right\} \right) \Big] |p| & \text{ if } p \neq 0, \\[5pt]
        0 & \text{ if } p =0,
    \end{cases} \\
    F_\ast(x,p,E) &=
    \begin{cases}
        -c_0\Big[ \nu\left( E \cap \left\{ p \cdot z \le 0 \right\} \right) - \nu\left( E^c \cap \left\{ p \cdot z > 0 \right\} \right) \Big] |p| & \text{ if } p \neq 0, \\[5pt]
        0 & \text{ if } p =0.
    \end{cases} 
\end{aligned}
\end{equation}
\begin{defn}\label{def:generalized_flows} 
A family $ (D_t)_{t >0}$ (resp., $(E_t)_{t >0}$) of open (resp., closed)  subsets of $\R^n$ is a \emph{generalized super-flow (resp., sub-flow)} of the fractional mean curvature equation \eqref{eq:fmc equation} 
if for all $(t_0,x_0) \in (0,\infty) \times \R^n$,  $h,\, r>0$, and for all smooth functions $\varphi:(0,\infty) \times \R^n \to \R$ such that
\begin{enumerate}[start=1,label={(\roman*)}]
\item \label{item:i}
(Boundedness) for all $t\in[t_0,t_0+h]$, the set
 $$
\{ x \in \R^n : \varphi(t,x) >0\}\quad (\text{resp. }\{ x \in \R^n : \varphi(t,x) < 0\})
$$ is bounded and 
$$
\{ x \in \overline{B}(x_0,r)  : \varphi(t,x) >0\}\quad (\text{resp. }\{ x\in \overline{B}(x_0,r)  : \varphi(t,x) < 0\})
$$
is non-empty,
\item \label{item:ii}
 (Speed) 
there exists $\tau =\tau(\varphi)>0$ such that
\begin{align*}
\partial_t\varphi + F^*(x,\nabla\varphi, \{ z: \varphi(t,x+z) > \varphi(t,x) \} ) &\leq -\tau \quad \hbox{in}~[t_0, t_0 + h] \times \overline{B}(x_0,r),\\
(\text{resp., }\partial_t\varphi + F_\ast(x,\nabla\varphi, \{ z: \varphi(t,x+z) \geq \varphi(t,x) \} ) &\geq -\tau)
\end{align*}
\item  \label{item:iii}
(Non-degeneracy)
\[
\nabla\varphi \not= 0 \quad \hbox{on}~\{ (t,x) \in [t_0, t_0 + h] \times  \overline{B}(x_0,r) : \varphi(t,x) = 0\},
\] 
\item \label{item:iv}
(Initial condition)
\begin{align*}
\{x \in \R^n: \varphi(t_0,x) \geq 0\} &\subset D_{t_0}\\
(\text{resp., }\{x \in \R^n: \varphi(t_0,x) \leq 0\} &\subset \R^n \setminus E_{t_0}),
\end{align*}
\item \label{item:v}
(Boundary condition)  for all $t\in[t_0,t_0+h]$,
\begin{align*} 
\{x \in \R^n\setminus B(x_0,r) : \varphi(t,x) \geq 0\} &\subset  
D_t\\
(\text{resp., }\{x \in  \R^n\setminus B(x_0,r) : \varphi(t,x) \leq 0\} &\subset \R^n \setminus E_{t}),
\end{align*}
\end{enumerate}
it holds that
\begin{align*}
\{x \in B(x_0,r): \varphi(t_0+h,x)>0\}& \subset D_{t_0+h}\\
(\text{resp., }\{x \in B(x_0,r): \varphi(t_0+h,x)<0\}& \subset \R^n \setminus E_{t_0+h}).
\end{align*}
\end{defn}
In this paper, we will apply the abstract method developed in \cite{Barles-DaLio}-\cite{Barles-Souganidis} for generalized flows of local geometric equations and extended in \cite{Imbert09} to  flows of equation \eqref{eq:fmc equation}. Precisely, let $\Omega_0$ be the open set defined in \eqref{def:signed_distance_function}. We will show that there exist families of open sets $(D_t)_{t \geq 0}$ and $(E_t)_{t \geq 0}$ such that $(D_t)_{t \geq 0}$ and $((E_t)^c)_{t \geq 0}$ are generalized super and sub-flows of the fractional mean curvature equation \eqref{eq:fmc equation}, respectively. Moreover, $\Omega_0 \subset D_0$, $(\overline{\Omega}_0)^c \subset E_0$, and
\begin{align*}
    u^\ep(t,x) \to 1 \quad \text{if } x \in D_t \quad \text{and}\quad   u^\ep(t,x) \to 0 \quad \text{if } x \in E_t.
\end{align*}
Therefore, if $(^+\Omega_t, \Gamma_t, ^-\Omega_t)_{t \geq 0}$ denotes the level set evolution of $(\Omega_0, \Gamma_0, (\overline{\Omega_0})^c)$, then, by \cite[Corollary 1]{Imbert09}, 
 \[
{^+}\Omega_t \subset D_t \subset {^+}\Omega_t \cup \Gamma_t
\quad \hbox{and} \quad
{^-}\Omega_t \subset E_t \subset {^-}\Omega_t \cup \Gamma_t.
\]
In particular,  if the set $\Gamma_t$ doesn't  develop interior, 
 then 
 $$ {^+}\Omega_t=D_t \quad \hbox{and} \quad {^-}\Omega_t = E_t.$$
 Our main result, Theorem \ref{thm:main_result}, immediately follows.

\subsection{Extension of the signed distance function}

Recall that the signed distance function $d = d(t,x)$ associated to the front $\Gamma_t$ in \eqref{eq:triplets} is smooth in some neighborhood $Q_\rho = \{|d|<\rho\}$ of the front, provided $\Gamma_t$ is smooth. 
However, in general, $d$ is not smooth away from the front. 
Throughout the paper, we will use the following smooth extension of the distance function away from $\Gamma_t$. 

\begin{defn}[Extension of the signed distance function]\label{defn:extension}
For $t\in[t_0,t_0+h]$, let $\tilde{d}$ be the signed distance function from a bounded domain  $\Omega_t$ with boundary  $\Gamma_t$  and let   $\rho>0$ be such that $\tilde{d}(t,x)$  is smooth in 
\[
Q_{2\rho} := \{(t,x)\in [t_0,t_0+h] \times \R^n: |\tilde{d}(t,x)| < 2\rho\}.
\]
Let $\eta(t, x)$ be a smooth, bounded function such that 
\[
\eta = 1~\hbox{in}~\{|\tilde{d}|\leq \rho\}, \quad \eta = 0~\hbox{in}~\{|\tilde{d}|\geq 2\rho\},\quad 0 \leq \eta \leq 1.
\]
We extend $\tilde{d}(t,x)$ in the set $\{(t,x)\in [t_0,t_0+h] \times \R^n: |\tilde{d}(t,x)|\geq \rho\}$ with the smooth bounded function $d(t,x)$ given by
\[
d(t,x) 
= \begin{cases}
\tilde{d}(t,x) & \hbox{in}~Q_\rho=\{|\tilde{d}(t,x)|< \rho\}\\
\tilde{d}(t,x) \eta(t,x) + 2\rho(1-\eta(t,x)) & \hbox{in}~\{\rho \leq  \tilde{d}(t,x) \leq 2 \rho\}\\
\tilde{d}(t,x) \eta(t,x) -2\rho(1-\eta(t,x)) & \hbox{in}~\{-2\rho\leq\tilde{d}(t,x) \leq - \rho\}\\
2\rho &\hbox{in}~\{\tilde{d}(t,x)> 2\rho\}\\
-2\rho &\hbox{in}~\{\tilde{d}(t,x)< -2\rho\}.
\end{cases}
\]
Notice that, in $\{\rho \leq  \tilde{d} \leq  2\rho\}$, the function $d$ satisfies
\[
d = 2\rho + (\tilde{d}-2\rho)\eta \geq  2 \rho - \rho \eta \geq \rho,
\]
and, in $\{-2\rho \leq  \tilde{d} \leq -\rho\}$, the function $d$ satisfies
\[
d = -2\rho + (\tilde{d}+2\rho)\eta \leq - 2 \rho + \rho \eta \leq -\rho.
\]
\end{defn} 
\begin{rem}\label{Kdextensionrem}
By the definition of $\tilde d$,  we observe  that  for $(t,x)\in Q_\rho$, the following identities hold
 $$\partial_t d (t,x)=\partial_t\tilde d (t,x),\quad\nabla d (t,x)=\nabla \tilde d(t,x),$$
 $$\{z\,:\, d(t, x+z)> d(t, x)\}=\{z\,:\,\tilde d(t, x+z)>\tilde d(t, x)\}.$$
In particular, this implies 
$$\kappa[x, d(t,\cdot)]=\kappa[x, \tilde  d(t,\cdot)].$$
\end{rem}

\section{Heuristics}\label{sec:Heuristics}
Here, we give two formal computations relating to Theorem \ref{thm:main_result} and its proof. 
We use the notation $\simeq$ to denote equality up to adding terms that vanish as $\ep \to 0$.

\subsection{Derivation of the fractional mean curvature equation}\label{sec:heuristicspart1}
For the following formal computations, assume that the signed distance function $d(t,x)$ associated to $\Omega_t$ is smooth and   $\abs{\nabla d} = 1$.

Consider the following  ansatz for the solution of \eqref{eq:pde}-\eqref{initial_data}
\begin{equation} \label{eq:ansatz1}
u^{\ep}(t,x) \simeq \phi\( \frac{d(t,x)}{\ep}\),
\end{equation}
with $\phi$ the solution of \eqref{eq:standing wave}. 
Plugging the ansatz into \eqref{eq:pde}, the left-hand side gives
\begin{equation}\label{eq:antatz1-left}
\ep\partial_t u^{\ep}
 \simeq  \dot{\phi}\(\frac{d}{\ep}\) \partial_td.
\end{equation}
On the other hand, 
we use the equation for $\phi$  in  \eqref{eq:standing wave}
    to write 
the fractional Laplacian of the ansatz as
\begin{equation}\label{eq:antatz1-right}
\begin{aligned}
 \mathcal{I}^s _n [u^{\ep}]  
 &\simeq  \mathcal{I}^s _n\bigg[\phi\(\frac{d}{\ep}\)\bigg] \\&= \( \mathcal{I}^s_n\bigg[\phi\(\frac{d}{\ep}\)\bigg]  - \frac{C_{n,s}}{\ep^{2s}}\mathcal{I}^s_1[\phi]\(\frac{d}{\ep}\)  \)  +\frac{C_{n,s}}{\ep^{2s}}\mathcal{I}^{s}_1[\phi]\(\frac{d}{\ep}\)  \\
 & =\bar a_{\ep}
    +\frac{1}{\ep^{2s}}W'\(\phi\( \frac{d}{\ep}\)\)\\&\simeq \bar a_{\ep} 
       +\frac{1}{\ep^{2s}}W'\(u^{\ep}\),
\end{aligned}
\end{equation}
where 
\begin{equation}\label{oldaep}
\bar a_{\ep}:=\mathcal{I}^s_n\bigg[\phi\(\frac{d}{\ep}\)\bigg]  - \frac{C_{n,s}}{\ep^{2s}}\mathcal{I}^s_1[\phi]\(\frac{d}{\ep}\).
\end{equation}
By Lemma \ref{lem:1 to n facrional Laplacian}, applied to $v=\phi$ with $e=\nabla d(t,x)$, we have 
\begin{equation*}C_{n,s}\mathcal{I}^s_1[\phi]\(\frac{d(t,x)}{\ep}\)=\int_{\R^n}\(\phi\(\frac{d(t,x)}{\ep}+\nabla d(t,x)\cdot z\)-\phi\(\frac{d(t,x)}{\ep}\)\)\frac{dz}{|z|^{n+2s}}.\end{equation*}
Hence, since 
\begin{equation*}\mathcal{I}^s_n\bigg[\phi\(\frac{d(t,\cdot)}{\ep}\)\bigg] (x)=\int_{\R^n} \(\phi\(\frac{d(t,x+ z)}{\ep}\)-\phi\(\frac{d(t,x)}{\ep}\)\)\frac{dz}{|z|^{n+2s}},\end{equation*}
after a change of variables, we can write $\bar a_\ep$ as follows
\begin{align*}\label{aep:heuristicse}\bar a_\ep=\int_{\R^n} \(\phi\(\frac{d(t,x+ z)}{\ep}\)-\phi\(\frac{ d(t,x) + \nabla d(t,x)\cdot z}{\ep}\)\)\frac{dz}{|z|^{n+2s}}.
\end{align*}

Now freeze a point $(t,x)$ such that $x$ is  near the front $\Gamma_t$, and let 
 $\xi = d(t,x)/\ep$. Since $d$ grows linearly away from $\Gamma_t$, we can assume, at least formally, separation of scales. That is, assume that $\xi\in\R$ and $(t,x)$ are independent variables.

Define  
\begin{equation*}\label{oldaepbis}
A_{\ep}(\xi;t,x):=\frac{1}{\ep^{2s}}\int_{\R^n} \(\phi\(\xi+\frac{d(t,x+\ep z)-d(t,x)}{\ep}\)-\phi\(\xi +\nabla d(t,x)\cdot z\)\)\frac{dz}{|z|^{n+2s}}.
\end{equation*}
By making a change of variables, it is not hard to see that  
$$\bar a_\ep(t,x)=A_\ep\(\frac{d(t,x)}{\ep};t,x\).$$ 
The following convergence result  for $A_{\ep}$ is proven in \cite{PatriziVaughan}.
\begin{thm}\cite[Theorem 1.3]{PatriziVaughan}\label{aepconvergenceoldthm}
Assume $d(t,\cdot)$ smooth for $|d(t,x)| < 2\rho$, for   some $\rho>0$. Then, for $|d(t,x)| <\rho$, it holds that 
$$\lim_{\ep\to0}\int_{\R}  A_{\ep}(\xi;t,x)\dot \phi(\xi)\,d\xi=\kappa[x,d],$$
where $\kappa$ is defined in \eqref{kappade}. 
\end{thm}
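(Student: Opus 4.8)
The plan is to reduce, via Fubini's theorem and a scaling, the projected quantity $\int_\R A_\ep(\xi;t,x)\dot\phi(\xi)\,d\xi$ to an integral whose integrand converges pointwise to the one defining $\kappa[x,d]$, and then to pass to the limit by dominated convergence. Fix $(t,x)$ with $|d(t,x)|<\rho$ and write $d=d(t,\cdot)$, $e=\nabla d(t,x)$ (so $|e|=1$). First I would introduce the \emph{profile average}
\[
G(a):=\int_\R \phi(\xi+a)\,\dot\phi(\xi)\,d\xi ,\qquad a\in\R ,
\]
which, since $\phi(\pm\infty)=0,1$ and $\int_\R\dot\phi=1$, is smooth and increasing with $0\le G\le1$, $G(-\infty)=0$, $G(+\infty)=1$, $G'(a)=\int_\R\dot\phi(\xi+a)\dot\phi(\xi)\,d\xi>0$, $\int_\R G'=1$; moreover the sharp decay of the layer solution and its derivative (see Lemma~\ref{lem:asymptotics}), in particular $\dot\phi(\xi)\le C(1+|\xi|)^{-1-2s}$, yields the quantitative estimates $|G(a)-\one_{\{a>0\}}|\le C(1+|a|)^{-2s}$ and $|G'(a)|\le C(1+|a|)^{-1-2s}$. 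Unfolding the definition of $A_\ep$, applying Fubini (legitimate for each fixed $\ep>0$, since $|\phi(\xi+\tfrac{d(x+\ep z)-d(x)}{\ep})-\phi(\xi+e\cdot z)|\le\min\{1,C\ep|z|^2\}$ and $|z|^{-n-2s}$ is integrable at infinity), and substituting $y=\ep z$ (so that $\frac{dz}{|z|^{n+2s}}=\ep^{2s}\frac{dy}{|y|^{n+2s}}$ cancels the prefactor $\ep^{-2s}$), one obtains the clean identity
\[
\int_\R A_\ep(\xi;t,x)\,\dot\phi(\xi)\,d\xi
=\int_{\R^n}\Bigl(G\Bigl(\tfrac{d(x+y)-d(x)}{\ep}\Bigr)-G\Bigl(\tfrac{e\cdot y}{\ep}\Bigr)\Bigr)\frac{dy}{|y|^{n+2s}}=:\mathcal J_\ep .
\]

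Next I would identify the pointwise limit of the integrand. Since $d$ is the signed distance to a smooth set, $|\nabla d|=1$ a.e.\ on $\{d\neq0\}$, so a density-point argument shows that the level set $\{y:d(x+y)=d(x)\}$ is Lebesgue-null (as is the hyperplane $\{y:e\cdot y=0\}$); hence, using $G(-\infty)=0$, $G(+\infty)=1$,
\[
G\Bigl(\tfrac{d(x+y)-d(x)}{\ep}\Bigr)-G\Bigl(\tfrac{e\cdot y}{\ep}\Bigr)\ \xrightarrow[\ep\to0]{}\ \one_{\{d(x+y)>d(x)\}}-\one_{\{e\cdot y>0\}}
\]
for a.e.\ $y\in\R^n$. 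Splitting $\R^n$ into the four regions determined by the signs of $d(x+y)-d(x)$ and of $e\cdot y$, the right-hand side integrated against $|y|^{-n-2s}\,dy$ equals exactly $\kappa^+[x,d]-\kappa^-[x,d]=\kappa[x,d]$, the integrals being absolutely convergent by Proposition~\ref{prop:fmc_finite_result}. So the result follows once the limit can be passed inside.

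It remains to produce a dominating function for $F_\ep(y):=G(\tfrac{d(x+y)-d(x)}{\ep})-G(\tfrac{e\cdot y}{\ep})$, uniform for all small $\ep$. For $|y|\ge\rho_0$ (any fixed $\rho_0\in(0,\rho)$) simply $|F_\ep|\le1\in L^1(\{|y|\ge\rho_0\},|y|^{-n-2s}dy)$. For $|y|<\rho_0$ set $r(y):=d(x+y)-d(x)-e\cdot y$, so $|r(y)|\le L|y|^2$ with $L=\tfrac12\|D^2d\|_{L^\infty(Q_{2\rho})}$ (take $\rho_0$ small so that the segment $[x,x+y]$ stays in the smoothness region). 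On $\{|e\cdot y|<2L|y|^2\}\cap\{|y|<\rho_0\}$ use $|F_\ep|\le1$; writing $y=(y',y_n)$ with $y_n=e\cdot y$, this set has $|y|^{-n-2s}$-mass $\lesssim\int_{|y'|<\rho_0}|y'|^{2-n-2s}\,dy'\sim\int_0^{\rho_0}\tau^{-2s}\,d\tau<\infty$ since $2s<1$. On the complement $\{|e\cdot y|\ge2L|y|^2\ (\ge2|r(y)|)\}$, the mean value theorem together with the decay of $G'$ gives, for every $\ep$,
\[
|F_\ep(y)|=|G'(\theta_\ep)|\,\frac{|r(y)|}{\ep}\le C\Bigl(1+\frac{|e\cdot y|}{2\ep}\Bigr)^{-1-2s}\frac{|r(y)|}{\ep}\le C\,\frac{|r(y)|}{|e\cdot y|}\le C\,\frac{L|y|^2}{|e\cdot y|},
\]
where $\theta_\ep$ lies between $\tfrac{e\cdot y}{\ep}$ and $\tfrac{e\cdot y+r(y)}{\ep}$, hence $|\theta_\ep|\ge\tfrac{|e\cdot y|}{2\ep}$, and the middle inequality follows by distinguishing the cases $\ep\le|e\cdot y|$ and $\ep>|e\cdot y|$. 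Finally $y\mapsto L|y|^2/|e\cdot y|$ is integrable against $|y|^{-n-2s}$ over $\{|e\cdot y|\ge2L|y|^2,\ |y|<\rho_0\}$: in the coordinates $(y',y_n)$ the $y_n$-integration contributes at most a factor $\log(1/|y'|)$, and $\int_0^{\rho_0}\tau^{-2s}\log(1/\tau)\,d\tau<\infty$ (again $2s<1$). Collecting these bounds yields a dominating function in $L^1(|y|^{-n-2s}dy)$, and dominated convergence gives $\mathcal J_\ep\to\kappa[x,d]$.

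The main obstacle is precisely this last domination near $y=0$ — equivalently near the hyperplane $\{e\cdot y=0\}$ — where the arguments of $G$ are small: there the two crude bounds $|F_\ep|\le\|G'\|_\infty|r(y)|/\ep$ and $|F_\ep|\le1$ are each, by themselves, \emph{not} integrable against $|y|^{-n-2s}$ near the origin, and one must combine the quadratic vanishing of $r(y)=d(x+y)-d(x)-e\cdot y$ with the sharp algebraic decay $|G'(a)|\le C(1+|a|)^{-1-2s}$ inherited from $\dot\phi$. This is also where the standing hypothesis $2s<1$ is essential, both for the finiteness of $\kappa[x,d]$ and for the convergence of the dominating integrals.
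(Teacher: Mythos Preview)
Your proof is correct. Note, however, that the paper does not itself prove this statement—it is quoted as Theorem~1.3 of \cite{PatriziVaughan} and used only in the heuristic derivation of Section~\ref{sec:Heuristics}. The closest the present paper comes to a proof is Section~\ref{sec:proof of b_ep fmc}, which establishes the related \emph{pointwise} statement $\bar a_\ep(t,x)\to\kappa[x,d]$ (Lemma~\ref{thm: b_ep fmc}); the techniques there descend from \cite{PatriziVaughan}.

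Your route, via the averaged profile $G(a)=\int_\R\phi(\xi+a)\dot\phi(\xi)\,d\xi$, is genuinely different from and cleaner than that approach. In Section~\ref{sec:proof of b_ep fmc} the argument works directly with the integrand $\phi(\tfrac{d(x+z)}{\ep})-\phi(\tfrac{d(x)+e\cdot z}{\ep})$, splits $\R^n$ into the four sign regions $\{d(x+z)\gtrless d(x)\}\times\{e\cdot z\gtrless0\}$, and within each region further subdivides using auxiliary scales $\delta,\tau$ together with the strip estimates of Lemmas~\ref{lemm2s<1/2_1} and~\ref{0<y_n<tau_lemma} to control contributions near the hyperplane $\{e\cdot z=0\}$ and the level set $\{d(x+z)=d(x)\}$. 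Your argument absorbs all of this bookkeeping into the single decay bound $|G'(a)|\le C(1+|a|)^{-1-2s}$, which yields the uniform majorant $\min\bigl(1,\,C|y|^2/|e\cdot y|\bigr)$ and lets dominated convergence finish in one stroke. The trade-off is that your method is tailored to the $\dot\phi$-weighted quantity $\int A_\ep\dot\phi\,d\xi$ and does not directly give the pointwise convergence of $\bar a_\ep$ that the paper actually requires for its barrier constructions (Lemma~\ref{lem:barrier}); this is why the paper develops the more laborious pointwise analysis.
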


Using this result and assuming separation of scales, we can now complete our formal derivation.  
Equalities \eqref{eq:antatz1-left} and \eqref{eq:antatz1-right} become, respectively,
\begin{equation}\label{eq:antatz1-left_bis}
\ep\partial_t u^{\ep}
 \simeq  \dot{\phi}(\xi) \partial_td, 
\end{equation}
and 
\begin{equation}\label{eq:antatz1-right_bis}
\begin{aligned}
 \mathcal{I}^s _n [u^{\ep}] \simeq  A_\ep(\xi;t,x)+\frac{1}{\ep^{2s}}W'\(u^{\ep}\).
 \end{aligned}
 \end{equation}
Since the ansatz $u^{\ep}$  approximates  a solution of  \eqref{eq:pde}, we can multiply the equation
 by $\dot{\phi}(\xi)$ and integrate over $\xi \in \R$, obtaining 
\begin{equation}\label{eq:freeze}
\int_{\R} \ep \partial_t u^{\ep} \dot{\phi}(\xi) \, d \xi \simeq \int_{\R} \( \mathcal{I}^s_n[ u^{\ep}]  - \frac{1}{\ep^{2s} } W'(u^\ep) \) \dot{\phi}(\xi) \, d \xi. 
\end{equation}
For convenience, we will consider the left and right-hand sides separately again. First, the left-hand side of \eqref{eq:freeze} with \eqref{eq:antatz1-left_bis} gives
\begin{align*}
\int_{\R} \ep \partial_t u^{\ep}\, \dot{\phi}(\xi) \, d \xi
 &\simeq \partial_td(t,x) \int_{\R} [\dot{\phi}\(\xi\)]^2  \, d \xi= c_0^{-1}\partial_td(t,x),
\end{align*}
where
\begin{equation}\label{def:c0}
c_0^{-1} := \int_\R [\dot{\phi}(\xi)]^2 \, d \xi.
\end{equation}
Then, we look at the right-hand side of \eqref{eq:freeze} with \eqref{eq:antatz1-right_bis} and write
\begin{align*}
& \int_{\R}  \left[\mathcal{I}^s_n [u^{\ep}] - \frac{1}{\ep^{2s} }W'(u^{\ep})\right] \dot{\phi}(\xi) \, d \xi\simeq \int_{\R} A_{\ep}\(\xi;t,x\) \dot{\phi}(\xi)\, d \xi. \end{align*}
Combining these estimates with Theorem \ref{aepconvergenceoldthm}, from \eqref{eq:freeze} we finally obtain 
\begin{align*}
\partial_td(t,x) \simeq c_0 \kappa[x,d]\quad  \text{near}~\Gamma_t,
 \end{align*}
that is the front $\Gamma_t$ moves by fractional mean curvature.

\subsection{ Derivation of equation \eqref{eq:linearized wave}}\label{Ansatzsection}

 It is actually  necessary to add a lower-order correction to  \eqref{eq:ansatz1} for the ansatz to solve  the fractional Allen--Cahn equation \eqref{eq:pde}. This was already observed in the one-dimensional case in  \cite[Section 3.1]{GonzalezMonneau}.  The correction involves  a function of the form  $\psi=\psi(\xi;t,x)$  belonging  to the space 
 $\{v\in H^s(\R)\,:\,\int_\R v(\xi)\dot\phi(\xi)\,d\xi=0\}$, which satisfies an equation in the variable $\xi$ involving the linearized operator $\mathcal{L}$ associated with \eqref{eq:standing wave} around $\phi$, defined by
\begin{equation*}\label{eq:linearized operator}
\mathcal{L}[\psi] = -C_{n,s}\mathcal{I}^s_1[\psi] + {W}''(\phi) \psi.
\end{equation*}
The derivation of this corrector for the case $s = \frac12$ is carried out in  \cite[Section 5]{PatriziVaughan2}. 
However, their approach doesn't apply here because, as explained in the introduction, our equation \eqref{eq:pde} is more singular than its $s = \frac12$ counterpart.
 
 In order to showcase the equation for the corrector, for $\sigma\in\R$,  let $v^\ep$ be the solution to 
\begin{equation}\label{eq:pde-sub}
\ep \partial_t v^{\ep} = \mathcal{I}^s_n v^{\ep}  -\frac{1}{\ep^{2s}} W'(v^\ep)- \sigma.
\end{equation}
Assume that $d(t,x)$ is smooth with $\abs{\nabla d}=1$,  and solves
\begin{equation}\label{eq:MC for d}
 \partial_td
  = c_0 \kappa[x,d] -\ c_0 \sigma,
\end{equation}
in a neighborhood of $\Gamma_t$. 

We now use a modified version of Theorem \ref{aepconvergenceoldthm}. 
 As shown in Lemma~\ref{thm: b_ep fmc},   the quantity $\bar a_\ep$ defined in \eqref{oldaep} satisfies
\begin{equation}\label{baraepconheuristic}\bar a_\ep(t,x)\simeq \kappa[x,d]\quad\text{if }d(t,x)\simeq0.
\end{equation}
Consider the new ansatz 
\[
v^{\ep}(t,x) \simeq \phi\( \frac{d(t,x)}{\ep}\) + \ep^{2s} \psi\( \frac{d(t,x)}{\ep};t,x\) + \ep^{2s} w(t,x),
\]
where  $\psi(\xi;t,x)$ and $w(t,x)$  are smooth functions  to  be determined.  

Plugging the ansatz into \eqref{eq:pde-sub}, the left-hand side gives
\begin{equation*}\label{eq:ansats2-left}
\begin{aligned}
\ep \partial_t v^{\ep}
 &\simeq \dot{\phi} \partial_td
  +  \ep^{2s}   (\dot{\psi} \partial_td+ \ep\partial_t \psi+ \ep\partial_t w). 
\end{aligned}
\end{equation*}
Assuming 
\begin{equation}\label{timederivest_heursec}
\ep^{2s}   (\dot{\psi} \partial_td+\ep \partial_t \psi+\ep \partial_t w)\simeq 0, 
\end{equation}
we obtain 
\begin{equation}\label{eq:ansats2-left}
\begin{aligned}
\ep \partial_t v^{\ep}
 \simeq  \dot{\phi}\partial_td.
\end{aligned}
\end{equation}
Next, we look at the right-hand side of \eqref{eq:pde-sub} for the ansatz. We compute 
\begin{equation*}\label{eq:ansats2-right1}
\begin{aligned}
\mathcal{I}^s_n[v^\ep]
 &\simeq \mathcal{I}^s_n\left[ \phi\( \frac{d(t,\cdot)}{\ep}\) \right]
  + \ep^{2s} \mathcal{I}^s_n\left[ \psi\( \frac{d(t,\cdot)}{\ep};t,\cdot\) \right]+\ep^{2s} \mathcal{I}^s_nw\\
 &= \mathcal{I}^s_n\left[ \phi\( \frac{d(t,\cdot)}{\ep}\) \right]- \frac{C_{n,s}}{\ep^{2s}} \mathcal{I}^s_1[\phi]\( \frac{d}{\ep}\) 
 + \frac{C_{n,s}}{\ep^{2s}} \mathcal{I}^s_1[\phi]\( \frac{d}{\ep}\)\\
 &\quad +  \(\ep^{2s} \mathcal{I}^s_n\left[ \psi \( \frac{d(t,\cdot)}{\ep};t,\cdot\) \right]-C_{n,s} \mathcal{I}^s_1[\psi]\( \frac{d}{\ep}\) \)
 +C_{n,s} \mathcal{I}^s_1[\psi]\( \frac{d}{\ep}\)+\ep^{2s} \mathcal{I}^s_nw.
\end{aligned}
\end{equation*}
Using  the equation for $\phi$ in \eqref{eq:standing wave},  definition \eqref{oldaep} and assuming that
\begin{equation}
\label{psilemmaheuristics}
\ep^{2s} \mathcal{I}^s_n\left[ \psi\( \frac{d(t,\cdot)}{\ep};t,\cdot\) \right]-C_{n,s} \mathcal{I}^s_1[\psi]\( \frac{d}{\ep}\) \simeq 0,\quad \ep^{2s} \mathcal{I}^s_nw\simeq 0, 
\end{equation}
we find
\begin{equation}\label{eq:ansats2-right1}
\mathcal{I}^s_n[v^\ep]\simeq \bar a_{\ep}
  +  \frac{1}{\ep^{2s}}W'\(\phi\( \frac{d}{\ep}\)\)+C_{n,s} \mathcal{I}^s_1[\psi]\( \frac{d}{\ep}\).
\end{equation}
Next, we do a Taylor expansion for $W'$ around $\phi(d/\ep)$ to estimate
\begin{equation}\label{eq:ansats2-right2}
\begin{aligned}
\frac{1}{\ep^{2s}}W'(v^{\ep})
 &\simeq \frac{1}{\ep^{2s}} \left[W'\(\phi\( \frac{d}{\ep}\)\)+ W''\(\phi\( \frac{d}{\ep}\)\) \(\ep^{2s} \psi\( \frac{d}{\ep};t,x\) + \ep^{2s}  w(t,x)\)\right].
\end{aligned}
\end{equation}
Plugging \eqref{eq:ansats2-left},  \eqref{eq:ansats2-right1} and \eqref{eq:ansats2-right2} into \eqref{eq:pde-sub}, we get
\begin{equation}\label{eq:first corrector eqn}
\begin{aligned}
 \dot{\phi}\( \frac{d}{\ep}\) \partial_td
  &\simeq \bar a_{\ep}(t,x) +C_{n,s} \mathcal{I}^n_1[\psi]\( \frac{d}{\ep}\)\\
 &\quad- W''\(\phi\( \frac{d}{\ep}\)\)   \psi\( \frac{d}{\ep};t,x\)-W''\( \phi \(\frac{d}{\ep}\)\)w(t,x)-\sigma.
\end{aligned}
\end{equation}
Rearranging terms and using \eqref{eq:MC for d} and \eqref{baraepconheuristic},  for  $d(t,x)\simeq 0$ we find that   $\psi$ must satisfy 
\begin{align*}
\mathcal{L}[\psi]\( \frac{d}{\ep}\)&=-C_{n,s} \mathcal{I}^s_1[\psi]\( \frac{d}{\ep}\)
+W''\(\phi\( \frac{d}{\ep}\)\)  \psi\( \frac{d}{\ep};t,x\) \\
 &\simeq \bar a_{\ep}(t,x)- \dot{\phi}\( \frac{d}{\ep}\) \partial_td
  -W''\( \phi \(\frac{d}{\ep}\)\)w(t,x)-\sigma\\
&\simeq  \bar a_{\ep}(t,x)
  - \dot{\phi}\( \frac{d}{\ep}\)c_0 \bar{a}_{\ep} (t,x)
  + c_0 \sigma  \dot{\phi}\(\frac{d}{\ep}\)-W''\( \phi \(\frac{d}{\ep}\)\)w(t,x)-\sigma.
\end{align*}
Evaluating the equation when $|d(t,x)|>> \ep$, and using the asymptotic behavior of  $\phi$ and $\dot\phi$ (see  \eqref{eq:asymptotics for phi}, \eqref{eq:asymptotics for phi dot}),  as well as the fact  that $W''(0)=W''(1)$, we find that if $\psi(\pm\infty)=0$,  then  $w$ must satisfy
$$W''\( 0\)w(t,x) = \bar a_{\ep}(t,x)-\sigma.$$
Substituting this into the earlier expression yields the following equation  for $\psi=\psi(\xi;t,x)$ in the variable $\xi$, 
\begin{equation}\label{psiequationheuristic}\mathcal{L}[\psi](\xi) =\left( c_0\dot{\phi}(\xi) 
  + \frac{W''\left( \phi (\xi)\right) - W''(0)}{ W''(0)} \right)\left(\sigma -\bar{a}_\ep (t,x) \right).\end{equation}
        Thus, the final ansatz near the front takes the form
\begin{equation}\label{finalansatz:heuristic}
v^{\ep}(t,x) \simeq \phi\( \frac{d(t,x)}{\ep}\) + \ep^{2s} \psi\( \frac{d(t,x)}{\ep};t,x\) + \frac{\ep^{2s}}{W''(0)}(\bar  a_\ep(t,x)-\sigma). 
\end{equation}
Since equation  \eqref{psiequationheuristic} is in the variable $\xi$, we see that we can write $\psi$  as following 
\begin{equation*}
\psi(\xi;t,x)=v_\ep(t,x)\tilde\psi(\xi), \end{equation*} 
where $\tilde\psi$ solves 
\begin{equation}\label{psiequationheuristiconlyxi}
\mathcal{L}[\tilde\psi](\xi) = c_0\dot{\phi}( \xi) + \frac{W''\left( \phi (\xi)\right) - W''(0)}{W''(0)},
       \end{equation}
and for $d(t,x)\simeq 0$,
$$c_0v_\ep(t,x)=-c_0(\bar{a}_\ep (t,x)-\sigma)\simeq -c_0(\kappa[x,d]-\sigma)=-\partial d_t$$
is, up to vanishing errors, the scalar velocity of the front. 
Existence of a solution to \eqref{psiequationheuristiconlyxi} such that $\tilde\psi(\pm\infty)=0$ is proven in  \cite[Theorem 9.1]{DipierroFigalliVald}. 
Hence, the  $\ep^{2s}$-correction to the original ansatz \eqref{eq:ansatz1} is given by  
 \begin{equation*}\label{psiseparationheuristin}v_\ep(t,x)\tilde\psi(\xi)-\frac{v_\ep(t,x)}{ W''(0)}.\end{equation*}
This decomposition, separating the fast variable $\xi$ and the slow variables $(t,x)$, is a new feature not present in the case $s=\frac12$, and is essential for proving the main result, Theorem~\ref{thm:main_result}.

To rigorously justify the approximations used (in particular, \eqref{timederivest_heursec} and \eqref{psilemmaheuristics}), precise and delicate estimates on the derivatives of $\bar a_\ep$ and $\psi$ are required. These are established in Lemmas~\ref{lem:a_estimates}, \ref{lem:psi-reg}, \ref{lem:ae psi estimate}, and Corollary~\ref{cor: fractional a_ep estimate}.

The final corrected ansatz  \eqref{finalansatz:heuristic} will be used to construct subsolutions and supersolutions to  \eqref{eq:pde}, depending on the sign of $\sigma$ (see  Section~\ref{barriersection}).
In order to construct global in space subsolutions and supersolutions, it will be necessary to modify the definition of $\psi$ and $\bar a_\ep $ to control additional error terms that arise far from the interface $\Gamma_t$. This will involve the auxiliary function $\mu$
 defined in \eqref{def:mu_definition} and the refined definition of 
 $\bar{a}_\ep$ in \eqref{aepsilondef}.

\section{Preliminary results on the fractional Laplacian}\label{sec:ae-frac}
In this section, we recall a few basic properties of the operator $\mathcal{I}^s_n $, which will be used later in the paper.
Let $u\in C^{0,1}(\R^n)$.  Then, 
for any $R>0$, we can write
\begin{align*}
\mathcal{I}^s_n u(x) 
 &= \int_{\{\abs{z}<R\}} \left( u(x+z) - u(x)\right) \,\frac{dz}{\abs{z}^{n+2s}}
  + \int_{\{\abs{z}>R\}} \left( u(x+z) - u(x)\right) \,\frac{dz}{\abs{z}^{n+2s}}.
\end{align*}
In particular,  both integrals above are finite, and we can bound $\mathcal{I}^s_n u$ as follows,

\begin{equation}\label{Iubound-C11_bis}
|\mathcal{I}^s_n u(x)|\leq C\left(\|Du\|_\infty R^{1-2s}+\frac{\|u\|_\infty}{R^{2s}}\right), 
\end{equation}
where $C>0$ is a constant depending only on $n$ and $s$. 
The estimate \eqref{Iubound-C11_bis} follows from the following lemma, whose proof is a straightforward computation in polar coordinates.

\begin{lem}\label{kernellemma3}
There exist $C_1,\, C_2>0$ such that for any $R>0$,
$$\int_{\{|z|<R\}}\frac{dz}{|z|^{n+2s-1}}= C_1 R^{1-2s}
\quad \hbox{and} \quad \int_{\{|z|>R\}}\frac{dz}{|z|^{n+2s}}= \frac{C_2}{R^{2s}}.$$
\end{lem}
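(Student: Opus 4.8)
The plan is to pass to polar coordinates and reduce both identities to elementary one-dimensional radial integrals; since the integrands are nonnegative, Tonelli's theorem justifies all manipulations. Write $z = r\theta$ with $r = |z| > 0$ and $\theta \in \partial B(0,1) \subset \R^n$, and let $\omega_n$ denote the $(n-1)$-dimensional surface measure of $\partial B(0,1)$, so that $dz = r^{n-1}\, dr\, d\theta$.

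First I would treat the near-origin integral. Substituting,
\[
\int_{\{|z|<R\}}\frac{dz}{|z|^{n+2s-1}} = \omega_n \int_0^R r^{\,n-1-(n+2s-1)}\, dr = \omega_n \int_0^R r^{-2s}\, dr .
\]
Since we are in the regime $s\in(0,\tfrac12)$, the exponent satisfies $-2s > -1$, so the radial integral converges at $r=0$ and equals $\frac{R^{1-2s}}{1-2s}$. Hence the first identity holds with $C_1 = \frac{\omega_n}{1-2s} > 0$, a constant depending only on $n$ and $s$.

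Then I would treat the tail integral the same way:
\[
\int_{\{|z|>R\}}\frac{dz}{|z|^{n+2s}} = \omega_n \int_R^\infty r^{\,n-1-(n+2s)}\, dr = \omega_n \int_R^\infty r^{-1-2s}\, dr .
\]
Since $s>0$, the exponent $-1-2s < -1$, so the radial integral converges at $+\infty$ and equals $\frac{R^{-2s}}{2s}$, giving the second identity with $C_2 = \frac{\omega_n}{2s} > 0$.

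There is no real obstacle here; the only point requiring any attention is the integrability of the two radial integrals, which is precisely where the range of $s$ enters — the first uses $s<\tfrac12$ to control the singularity at $r=0$, and the second uses $s>0$ to control the decay as $r\to\infty$ — and both are automatic in our setting $s\in(0,\tfrac12)$. Both constants are positive and depend only on $n$ and $s$, as claimed.
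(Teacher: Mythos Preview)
Your proof is correct and is exactly the approach the paper indicates: the paper does not spell out a proof but simply remarks that it is ``a straightforward computation in polar coordinates,'' which is precisely what you have done. Your explicit identification of the constants $C_1=\omega_n/(1-2s)$ and $C_2=\omega_n/(2s)$ and the observation about where the hypothesis $s\in(0,\tfrac12)$ enters are fine additions.
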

We will frequently use Lemma \ref{kernellemma3}  throughout the paper without further reference.

We will also need   the following result, which provides a representation of  the one-dimensional fractional Laplacian of a function defined on $\mathbb{R}$ as an $n$-dimensional fractional Laplacian. 

\begin{lem} \cite[Lemma 3.2]{PatriziVaughan}\label{lem:1 to n facrional Laplacian}
For a vector $e \in \R^n$ and a function $v\in C^{1,1}(\R)$, let $v_e(x) = v(e\cdot x): \R^n \to \R$. Then,
\[
\mathcal{I}_n^s[v_e](x) =|e|^{2s} C_{n,s} \mathcal{I}_1^s[v](e\cdot x)
\]
where 
\begin{equation}\label{eq:Cns}
C_{n,s} =  \int_{\R^{n-1}} \frac{1}{(\abs{z}^2 + 1)^{\frac{n+2s}{2}} } \, dz.
\end{equation}
Consequently,
\begin{equation*}\label{eq:one to n}
|e|^{2s} C_{n,s} \mathcal{I}_1^s[v](\xi) = \int_{\R^n} \left( v(\xi + e \cdot z) - v(\xi)\right) \frac{dz}{\abs{z}^{n+2s}}, \quad \xi \in \R.
\end{equation*}
\end{lem}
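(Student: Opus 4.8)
The plan is to compute $\mathcal{I}_n^s[v_e](x)$ directly from the singular integral definition \eqref{eq:operator} and reduce it to a one-dimensional integral by slicing $\R^n$ along the direction $e$. First I would reduce to the case $|e|=1$: writing $e = |e|\,\hat{e}$ with $|\hat{e}|=1$, the substitution $z \mapsto z/|e|$ in the defining integral produces exactly the factor $|e|^{2s}$, so it suffices to handle unit vectors, and after a rotation we may assume $e = e_1$ is the first coordinate vector. Then $v_e(x) = v(x_1)$ depends only on $x_1$, and
\[
\mathcal{I}_n^s[v_e](x) = \int_{\R^n} \big( v(x_1 + z_1) - v(x_1) \big)\,\frac{dz}{|z|^{n+2s}}.
\]
The key step is to split the integration variable as $z = (z_1, z')$ with $z' \in \R^{n-1}$ and integrate out $z'$ first: for fixed $z_1 \neq 0$,
\[
\int_{\R^{n-1}} \frac{dz'}{(z_1^2 + |z'|^2)^{\frac{n+2s}{2}}} = |z_1|^{-(1+2s)} \int_{\R^{n-1}} \frac{dw}{(1+|w|^2)^{\frac{n+2s}{2}}} = \frac{C_{n,s}}{|z_1|^{1+2s}},
\]
using the scaling substitution $z' = |z_1| w$ and recognizing the constant $C_{n,s}$ from \eqref{eq:Cns}. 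Substituting this back and using Fubini (justified below) gives
\[
\mathcal{I}_n^s[v_e](x) = C_{n,s} \int_{\R} \big( v(x_1 + z_1) - v(x_1) \big)\,\frac{dz_1}{|z_1|^{1+2s}} = C_{n,s}\,\mathcal{I}_1^s[v](x_1),
\]
which is the claimed identity (with the $|e|^{2s}$ restored). The final displayed consequence then follows by writing $\mathcal{I}_1^s[v](\xi)$ back through Lemma's own formula, i.e.\ simply re-inserting the definition of $\mathcal{I}_n^s$ evaluated at a point with $e\cdot x = \xi$.

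The main technical obstacle — and the only genuinely delicate point — is justifying the interchange of the $z'$-integration with the (principal value / improper) $z_1$-integration, since the integrand $v(x_1+z_1)-v(x_1)$ is only $O(|z_1|)$ near $z_1 = 0$ while the measure after integrating out $z'$ behaves like $|z_1|^{-1-2s}\,dz_1$, which is the borderline-integrable singularity handled by the $C^{1,1}$ regularity of $v$ exactly as in \eqref{Iubound-C11_bis}. I would handle this by the standard device of symmetrizing: replace $v(x_1+z_1)-v(x_1)$ by $\tfrac12\big(v(x_1+z_1)+v(x_1-z_1)-2v(x_1)\big)$, which is $O(|z_1|^2)$ near the origin by the $C^{1,1}$ bound, making the full $n$-dimensional integrand absolutely integrable near $z=0$; away from the origin absolute integrability is immediate from boundedness of $v$ and the decay of $|z|^{-n-2s}$ (Lemma \ref{kernellemma3}). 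With absolute integrability in hand, Tonelli/Fubini applies without further comment and the slicing computation above goes through verbatim. Since this lemma is quoted from \cite[Lemma 3.2]{PatriziVaughan}, I would in fact just cite that reference and, if desired, include the short slicing computation as a reminder rather than a full re-proof.
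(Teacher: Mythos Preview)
Your argument is correct and is the standard proof of this identity. Note, however, that the paper does not give its own proof of this lemma: it is simply quoted from \cite[Lemma 3.2]{PatriziVaughan}, so there is nothing in the present paper to compare against beyond the statement itself.

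One small remark on your discussion of the ``only genuinely delicate point'': in the regime $s\in(0,\tfrac12)$ treated in this paper, the Fubini step is in fact not delicate at all. Since $v$ is Lipschitz, $|v(x_1+z_1)-v(x_1)|\le C|z_1|$, and after integrating out $z'$ the one-dimensional integrand is $O(|z_1|^{-2s})$ near the origin, which is already absolutely integrable because $2s<1$. (This is exactly the mechanism behind the bound \eqref{Iubound-C11_bis}, which the paper states for $u\in C^{0,1}$.) Your symmetrization trick is therefore unnecessary here, though it would be the right move for $s\ge\tfrac12$, which is presumably why the cited lemma assumes $v\in C^{1,1}$.
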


\subsection{Properties of solutions to \eqref{eq:pde}}

Here, we state existence, uniqueness, and comparison principles for viscosity solutions to \eqref{eq:pde} for a fixed $\ep>0$. 

First, the following comparison principles can be found in \cite{JakobsenKarlsen} and will be used throughout the paper without reference. 
For the definition of viscosity subsolutions, supersolutions, and solutions, see also \cite{UsersGuide}.
For ease, we denote by $USC_b([t_0,t_0+h] \times \R^n)$ (resp.~$LSC_b([t_0,t_0+h] \times \R^n)$) the set of upper (resp.~lower) semicontinuous functions on $[t_0,t_0+h] \times \R^n$ which are bounded on $[t_0,t_0+h] \times \R^n$. 

\begin{prop}[Comparison principle in $\R^n$]
Fix $\ep>0$. 
If $u \in USC_b([t_0,t_0+h] \times \R^n)$ is a viscosity subsolution and $v \in LSC_b([t_0,t_0+h] \times \R^n)$ is a viscosity supersolution of \eqref{eq:pde} such that $u(t_0,\cdot) \leq v(t_0,\cdot)$
on $\R^n$, then $u \leq v$ on $[t_0,t_0+h] \times \R^n$. 
\end{prop}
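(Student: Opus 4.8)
The statement to prove is the comparison principle for viscosity sub/supersolutions of the fractional Allen–Cahn equation \eqref{eq:pde} on all of $\R^n$.

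\medskip

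The plan is to invoke the general theory of viscosity solutions for nonlocal parabolic equations as developed in \cite{JakobsenKarlsen}, after checking that equation \eqref{eq:pde} fits into the structural framework required there. First I would rewrite \eqref{eq:pde} in the standard form $\partial_t u - \mathcal{F}(x, u, \mathcal{I}^s_n[u]) = 0$ with $\mathcal{F}(x,r,\ell) = \frac{1}{\ep}\bigl(\ell - \ep^{-2s} W'(r)\bigr)$, noting that for fixed $\ep > 0$ the operator $\mathcal{I}^s_n$ is a Lévy-type integro-differential operator with the standard singular kernel $|z|^{-n-2s}$, $2s \in (0,1)$, so it is covered by the hypotheses in \cite{JakobsenKarlsen}. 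The two structural conditions to verify are degenerate ellipticity (monotonicity of $\mathcal{F}$ in the nonlocal term, which is immediate since $\ell \mapsto \ell$ is nondecreasing) and a properness/Lipschitz condition in the $r$-variable: here $W \in C^{3,\beta}([0,1])$, and since we work with bounded sub/supersolutions valued in a neighborhood of $[0,1]$, $W'$ is Lipschitz on the relevant range, so $r \mapsto -\ep^{-2s} W'(r)$ satisfies a one-sided Lipschitz (hence the needed) bound after the usual trick of absorbing the linear-in-$r$ defect via the exponential change of unknown $\tilde u = e^{-\lambda t} u$ or working on finite time intervals $[t_0, t_0+h]$.

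\medskip

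The core of the argument is the standard doubling-of-variables technique adapted to the nonlocal setting. I would suppose, for contradiction, that $M := \sup_{[t_0,t_0+h]\times\R^n}(u - v) > 0$, and for parameters $\alpha, \gamma > 0$ consider the penalized function
\[
\Phi(t,x,y) = u(t,x) - v(t,y) - \frac{\alpha}{2}|x-y|^2 - \frac{\gamma}{t_0+h-t} - \delta\langle x\rangle - \delta\langle y \rangle,
\]
where $\langle x\rangle = (1+|x|^2)^{1/2}$ is a mild coercive term ensuring the supremum is attained at some interior point $(\bar t, \bar x, \bar y)$ — this is where boundedness of $u$ and $v$ is used, to guarantee $\Phi \to -\infty$ at spatial infinity. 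One then applies the nonlocal version of the Crandall–Ishii–Jensen maximum principle (the parabolic Jensen–Ishii lemma for integro-differential equations, available in \cite{JakobsenKarlsen}) at $(\bar t, \bar x, \bar y)$ to produce test functions touching $u$ from above at $(\bar t, \bar x)$ and $v$ from below at $(\bar t, \bar y)$, with matching gradients $\bar p = \alpha(\bar x - \bar y) + \delta\,\partial\langle\bar x\rangle$ (up to the $\delta$-corrections). Splitting the nonlocal operator into a near part $\{|z|<r\}$ and a far part $\{|z|\ge r\}$, the near part is controlled by the second-order information from the doubling (the Hessian bound gives that the difference of the near-integrals is $\lesssim \alpha \int_{|z|<r}|z|^2\,d\nu(z) \to 0$ appropriately), and the far part is bounded using boundedness of $u, v$ and integrability of $|z|^{-n-2s}$ on $\{|z|\ge r\}$. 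Combining the sub/supersolution inequalities then yields $\frac{\gamma}{(t_0+h-\bar t)^2} + \text{(terms}\to 0) \le \ell_W |u(\bar t,\bar x) - v(\bar t,\bar y)| + o(1)$, where $\ell_W$ is the Lipschitz constant of $\ep^{-2s}W'$; sending $\alpha \to \infty$, then $\delta \to 0$, then $\gamma \to 0$, and using $h$ small (or Gronwall) gives $M \le 0$, a contradiction.

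\medskip

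I expect the main obstacle — or rather the main thing to be careful about — to be the interplay between the coercive term $\delta\langle x \rangle$ needed for the existence of a maximum point and the nonlocal operator: $\mathcal{I}^s_n[\langle\cdot\rangle]$ must be shown to be bounded (which it is, since $\langle\cdot\rangle$ has bounded second derivatives and linear growth, and $2s<1$ makes the far-field tail integrable), so that this term contributes only an $O(\delta)$ error to the equation and can be sent to zero cleanly at the end. A secondary technical point is that since $u^\ep$ takes values in $(0,1)$ one must ensure the comparison is only claimed against sub/supersolutions within the range where $W'$ is defined and Lipschitz; this is automatic from the hypotheses as stated. Granting the machinery of \cite{JakobsenKarlsen}, none of these steps is genuinely difficult — this is why the paper states it as a citation — but the coercivity/nonlocality bookkeeping is the part that actually requires $2s<1$ and is worth flagging.
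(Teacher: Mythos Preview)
Your approach—invoking the general comparison theory of \cite{JakobsenKarlsen} after verifying the structural hypotheses—is exactly what the paper does; in fact the paper gives no proof at all and simply cites that reference, so your outline already goes further than the text.

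That said, there is one genuine error in the very point you flagged as the main obstacle. For $2s<1$, the quantity $\mathcal{I}^s_n[\langle\cdot\rangle]$ is \emph{not} bounded: since $\langle x+z\rangle - \langle x\rangle \sim |z|$ as $|z|\to\infty$, the far-field contribution behaves like $\int_{|z|>1} |z|\,|z|^{-n-2s}\,dz = \int_{|z|>1} |z|^{1-n-2s}\,dz$, which diverges precisely when $2s\le 1$. So the direction of your integrability claim is backwards, and the penalty $\delta\langle x\rangle$ cannot be absorbed into the equation as an $O(\delta)$ error in the way you describe. In practice this is handled either by using a \emph{bounded} spatial penalty (e.g.\ a smooth cutoff $\psi_R(x)=\chi(x/R)$, for which $\mathcal{I}^s_n[\psi_R]=O(R^{-2s})$ uniformly), or—since \eqref{eq:pde} has no explicit $x$-dependence—by dispensing with spatial localization altogether and working with approximate maxima of $u(t,x)-v(t,y)-\tfrac{\alpha}{2}|x-y|^2-\tfrac{\gamma}{t_0+h-t}$. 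Both routes are within the framework of \cite{JakobsenKarlsen}, so the overall strategy survives; only the specific choice of penalty needs to be changed.
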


\begin{prop}[Comparison principle in bounded domains]
Fix $\ep>0$ and let $\Omega \subset \R^n$ be a bounded domain. 
If $u \in USC_b([t_0,t_0+h] \times \R^n)$ is a viscosity subsolution and $v \in LSC_b([t_0,t_0+h] \times \R^n)$ is a viscosity supersolution of \eqref{eq:pde} such that $u(t_0,\cdot ) \leq v(t_0,\cdot )$ on $\R^n$ and $u \leq v$ on $[t_0, t_0+h] \times (\R^n \setminus \Omega)$, then $u \leq v$ on $[t_0,t_0+h] \times \R^n$. 
\end{prop}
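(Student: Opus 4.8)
The plan is to reduce the statement to the comparison principle in all of $\mathbb{R}^n$ by exploiting the fact that $u$ and $v$ are already ordered outside $\Omega$ at all times. Suppose, for contradiction, that $\sup_{[t_0,t_0+h]\times\mathbb{R}^n}(u-v)>0$. Since by hypothesis $u\le v$ on $[t_0,t_0+h]\times(\mathbb{R}^n\setminus\Omega)$ and $u(t_0,\cdot)\le v(t_0,\cdot)$ on all of $\mathbb{R}^n$, any such positive supremum must be attained (or approached) at a point $(\hat t,\hat x)$ with $\hat t\in(t_0,t_0+h]$ and $\hat x\in\overline{\Omega}$. The first step is thus to localize the would-be contact point to the compact set $[t_0,t_0+h]\times\overline{\Omega}$; boundedness of $u\in USC_b$ and $v\in LSC_b$ guarantees the supremum is finite and, after the standard penalization, is essentially achieved on this compact set.

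Next I would run the classical doubling-of-variables argument adapted to nonlocal parabolic equations, exactly as in \cite{JakobsenKarlsen}: for $\alpha,\beta>0$ consider
\[
\Phi_{\alpha,\beta}(t,x,y)=u(t,x)-v(t,y)-\frac{\alpha}{2}|x-y|^2-\frac{\beta}{t_0+h-t}-\text{(localizing term in $x$)},
\]
and let $(\bar t,\bar x,\bar y)$ be a maximum point. The localizing term (e.g.\ a smooth cutoff penalizing $x$ away from $\overline{\Omega}$, or simply using that the maximum of $u-v$ sits in $\overline\Omega$) forces $\bar x,\bar y$ to lie in a fixed neighborhood of $\overline{\Omega}$ for $\alpha$ large; standard estimates give $\alpha|\bar x-\bar y|^2\to0$ and $\bar t$ bounded away from $t_0$. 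One then tests the subsolution inequality for $u$ with the test function $x\mapsto v(\bar t,\bar y)+\frac{\alpha}{2}|x-\bar y|^2+\cdots$ and the supersolution inequality for $v$ with $y\mapsto u(\bar t,\bar x)-\frac{\alpha}{2}|\bar x-y|^2-\cdots$, using the nonlocal version of the Crandall--Ishii lemma (or the Jakobsen--Karlsen formulation that handles the singular kernel $|z|^{-n-2s}$ directly by splitting into $|z|<r$ and $|z|>r$). Subtracting the two inequalities, the nonlocal terms are controlled because the test functions touch $u-v$ from above at the maximum, so the kernel integrals contribute with a favorable sign up to an $o(1)$ error as $r\to0$, $\alpha\to\infty$; the $W'$ terms are handled by the Lipschitz (indeed $C^{3,\beta}$) regularity of $W$ together with $|u(\bar t,\bar x)-v(\bar t,\bar y)|$ being comparable to the positive supremum; and the $\beta/(t_0+h-t)^2$ term provides the strict sign needed to reach a contradiction as $\beta\to0$.

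The main obstacle is the correct handling of the singular nonlocal operator $\mathcal{I}^s_n$ in the doubling argument: unlike the local case one cannot simply evaluate second derivatives, and one must carefully split the integral in \eqref{eq:operator} at a small radius $r$, bound the inner part using the $C^2$ regularity of the quadratic test functions (which gives an $O(\alpha r^{2-2s})$ contribution via Lemma~\ref{kernellemma3}) and the outer part using the global bound $u-v\le \sup(u-v)$ together with the fact that this supremum is the maximum, so the outer integral of $(u-v)(\bar t,\bar x+z)-(u-v)(\bar t,\bar x)$ has the right sign. Since this is precisely the content of the comparison results proved in \cite{JakobsenKarlsen}, I would ultimately cite that reference for the technical core and present here only the localization step (that the contact point lies in $\overline\Omega$) which is the part specific to the bounded-domain formulation; the unbounded-domain comparison principle stated just above then applies verbatim after noting that the ordering outside $\Omega$ is preserved by the flow.
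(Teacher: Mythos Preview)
Your proposal is correct and aligns with the paper's approach: the paper itself gives no proof whatsoever of this proposition, stating only that it ``can be found in \cite{JakobsenKarlsen}'' and will be used without further reference. Your sketch of the doubling-of-variables argument and the localization step is consistent with that source, so you have in fact written more than the paper does.
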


Next, we prove existence and uniqueness of viscosity solutions. 

\begin{prop}[Existence and uniqueness]
Fix $\ep>0$ and let $u_0 \in C^{0,1}(\R^n)$. 
There exists a unique viscosity solution $u^\ep \in C([0,\infty) \times \R^n) \cap L^{\infty}([0,\infty)\times \R^n)$  to \eqref{eq:pde} with initial datum $u^\ep(0,x) = u_0(x)$. 
\end{prop}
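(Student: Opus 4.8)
The plan is to prove uniqueness first and then existence, using the comparison principle stated just above and the Perron method adapted to the nonlocal parabolic setting of \cite{JakobsenKarlsen}. Uniqueness is immediate: if $u^\ep$ and $v^\ep$ are two bounded continuous viscosity solutions with the same initial datum $u_0$, then applying the comparison principle in $\R^n$ on each time slab $[0,T]$ with $u = u^\ep$, $v = v^\ep$ gives $u^\ep \le v^\ep$, and by symmetry $v^\ep \le u^\ep$, so $u^\ep = v^\ep$ on $[0,\infty)\times\R^n$. Note that both functions are bounded, hence lie in $USC_b$ and $LSC_b$ as required; this uses that the comparison principle holds on every finite time interval and that $[0,\infty)$ is exhausted by such intervals.

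For existence I would construct explicit ordered barriers and then invoke Perron's method. Since $u_0 \in C^{0,1}(\R^n)$ is bounded, say $\|u_0\|_\infty \le M$, and since $W'$ is bounded on the relevant range, one can look for a supersolution of the form $\overline{u}(t,x) = \|u_0\|_\infty + Kt$ and a subsolution $\underline{u}(t,x) = -\|u_0\|_\infty - Kt$ for a constant $K>0$ chosen large depending on $\ep$, $\sup|W'|$, and $M$. Because $\mathcal{I}^s_n$ annihilates constants, for spatially constant functions the equation $\ep\partial_t u = \mathcal{I}^s_n[u] - \ep^{-2s}W'(u)$ reduces to checking $\ep K \ge -\ep^{-2s} W'(\overline{u})$ (and the reversed inequality for $\underline{u}$), which holds once $K$ is large enough. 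One must be slightly careful that $W$ is only defined on $[0,1]$; this is handled by extending $W$ to a $C^{3,\beta}$ function on all of $\R$ with bounded derivatives, which does not change the equation on the range where solutions will be shown to live, and in fact the construction of sub/supersolutions together with comparison confines the solution to a fixed compact interval. With these barriers in hand, the supremum over all subsolutions lying between $\underline{u}$ and $\overline{u}$ is, by the Perron construction for nonlocal degenerate parabolic equations (see \cite{JakobsenKarlsen}), a viscosity solution; its upper and lower semicontinuous envelopes are respectively a sub- and a supersolution with the correct initial data, so the comparison principle forces them to coincide, yielding a continuous solution $u^\ep$. Boundedness $u^\ep \in L^\infty$ follows from the barriers on each finite time interval.

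The main obstacle is verifying that the Perron machinery applies verbatim to \eqref{eq:pde}: one needs the structural hypotheses of \cite{JakobsenKarlsen} (or an equivalent reference) — degenerate ellipticity of the nonlocal term, properness, and the appropriate continuity/Lipschitz assumptions on the nonlinearity $W'$ — to be satisfied, and one needs the stability of viscosity sub/supersolutions under sup/inf operations in the nonlocal framework, which is where the Lévy-type kernel $|z|^{-n-2s}$ and the local Lipschitz regularity of $u_0$ enter. Once the abstract theorem is quoted correctly, the barrier verification and the passage from envelopes to a genuine solution via comparison are routine. A secondary point to be careful about is the initial layer: one should check that $\underline{u}(0,\cdot) = -\|u_0\|_\infty \le u_0 \le \|u_0\|_\infty = \overline{u}(0,\cdot)$, which is trivial, so that the comparison principle pins down the initial datum exactly and continuity up to $t=0$ holds.
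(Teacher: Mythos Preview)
Your approach has a genuine gap at the initial layer. Your constant barriers $\underline{u}(t)=-\|u_0\|_\infty-Kt$ and $\overline{u}(t)=\|u_0\|_\infty+Kt$ satisfy only $\underline{u}(0,\cdot)\le u_0\le\overline{u}(0,\cdot)$, not equality. This inequality does \emph{not} ``pin down the initial datum exactly'': the Perron supremum is only known to lie between $\underline{u}$ and $\overline{u}$, which at $t=0$ gives nothing more than $-\|u_0\|_\infty\le u^\ep(0,x)\le\|u_0\|_\infty$. To force $u^\ep(0,\cdot)=u_0$ and continuity up to $t=0$, you need barriers that coincide with $u_0$ at $t=0$ and then squeeze.

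This is precisely why the paper takes the barriers
\[
u^{\pm}(t,x)=u_0(x)\pm\frac{Ct}{\ep^{2s+1}},
\]
which match the initial datum exactly. The price is that these are not spatially constant, so $\mathcal{I}^s_n[u^\pm]=\mathcal{I}^s_n[u_0]$ must be controlled; here the hypothesis $u_0\in C^{0,1}(\R^n)$ is used via the estimate \eqref{Iubound-C11_bis} (with $R=1$) to get $|\mathcal{I}^s_n u_0|\le C\|u_0\|_{C^{0,1}}$, after which the barrier verification is a one-line choice of $C$. Your proposal never uses the Lipschitz regularity of $u_0$, which is a sign that something is missing. Once you replace your constant barriers by these $x$-dependent ones, the rest of your outline (uniqueness by comparison, existence by Perron from \cite{JakobsenKarlsen}) is correct and matches the paper.
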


\begin{proof}
Since $u_0 \in  C^{0,1}(\R^n)$,  by \eqref{Iubound-C11_bis} with $R=1$, the functions
\[
u^{\pm}(t,x) := u_0(x) \pm \frac{Ct}{\ep^{2s+1}}
\]
are supersolutions and subsolutions of \eqref{eq:pde}, respectively, if $C \geq 
\ep^{2s} c_{n,s}\|u_0\|_{C^{0,1}(\R^n)} + \|W'\|_{L^\infty(\R)}$. 
Noting that $u^\pm(0,x) = u_0(x)$, existence of a unique continuous viscosity  solution $u^\ep$ follows by Perron's method and the above comparison principle in $\R^n$. 
\end{proof}

\section{The phase transition, the corrector, and the auxiliary function}\label{sec:phi}

In this section, we will introduce the phase transition $\phi$ and the corrector $\psi$.
Along the way, we will also define the auxiliary function $\bar{a}_{\ep}$ and describe its connection to the fractional Laplacians of $\phi$ and $\phi(d(t,x)/\ep)$, as well as 
its relation to the fractional mean curvature operator. 

\subsection{The phase transition $\phi$}

Let $\phi$ be the solution to \eqref{eq:standing wave} and let $H(\xi)$ be the Heaviside function. 

\begin{lem}\label{lem:asymptotics}  
There is a unique solution $\phi \in C^{2, \beta}(\R)$ of \eqref{eq:standing wave}, for some $\beta\in (0,1)$. Moreover, there exists a constant $C>0$ 
and $\kappa>2s$ (depending only on~$s$) such that
\begin{equation}\label{eq:asymptotics for phi}
\abs{\phi(\xi) - H(\xi) +  \frac{C_{n,s}}{2sW''(0)} \frac{\xi}{|\xi|^{2s+1}}} \leq \frac{C}{\abs{\xi}^{\kappa}}, \quad \text{for }\abs{\xi} \geq 1,
\end{equation}
with $C_{n,s}$ as in \eqref{eq:Cns}, and
\begin{equation}\label{eq:asymptotics for phi dot}
\frac{1}{C\abs{\xi}^{2s+1}}\leq \dot{\phi}(\xi) \leq \frac{C}{\abs{\xi}^{2s+1}},\quad |\ddot{\phi}(\xi)| \leq \frac{C}{\abs{\xi}^{2s+1}}
 \quad \text{for }\abs{\xi} \geq 1.
\end{equation}
\end{lem}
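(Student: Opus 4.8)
\textbf{Proof proposal for Lemma~\ref{lem:asymptotics}.}

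The plan is to obtain the solution $\phi$ by an ODE/integro-differential fixed point and then extract its asymptotics from the equation itself. For existence and uniqueness, I would first observe that \eqref{eq:standing wave} is the layer (heteroclinic) equation for the one-dimensional fractional Laplacian of order $2s<1$, and that exactly this problem has already been solved in the dislocation-dynamics literature: one can appeal to the results on the Peierls--Nabarro layer solution (e.g.\ \cite{PatriziVaughan, DipierroFigalliVald, GonzalezMonneau}) to get a monotone solution $\phi\in C^{2,\beta}(\R)$ normalized by $\phi(0)=\tfrac12$, with $0<\phi<1$ and the limits at $\pm\infty$. Uniqueness among monotone solutions with the prescribed normalization follows from a sliding/comparison argument using the comparison principle for $\mathcal{I}^s_1$ on $\R$: if $\phi_1,\phi_2$ were two such solutions, translating one until it touches the other from above and using strong maximum principle forces them to coincide. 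The $C^{2,\beta}$ regularity is then bootstrapped from $W\in C^{3,\beta}$ and elliptic regularity for the fractional Laplacian applied to the equation $C_{n,s}\mathcal{I}^s_1[\phi]=W'(\phi)$.

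For the asymptotics \eqref{eq:asymptotics for phi}, the key heuristic is that as $|\xi|\to\infty$, $\phi$ relaxes exponentially fast toward a well of $W$ in the \emph{local} case, but here the slow algebraic tail of the kernel $|z|^{-1-2s}$ dominates. Linearizing around the well $\phi\approx 1$ for $\xi\to+\infty$ (resp.\ $\phi\approx 0$ for $\xi\to-\infty$), write $\phi=1-r$ with $r$ small; then $C_{n,s}\mathcal{I}^s_1[r]\approx W''(1)\,r$ to leading order, plus the contribution coming from the jump of $H$ across $\xi=0$ picked up by the nonlocal operator. A direct computation gives $C_{n,s}\mathcal{I}^s_1[H](\xi)$ behaving like $-\tfrac{C_{n,s}}{2s}\,\xi|\xi|^{-2s-1}$ for large $|\xi|$, and balancing this source term against $W''(1)\,r=W''(0)\,r$ yields the explicit leading tail $\phi(\xi)-H(\xi)\sim -\tfrac{C_{n,s}}{2sW''(0)}\,\xi|\xi|^{-2s-1}$. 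To make this rigorous, I would set $\phi(\xi)-H(\xi)+\tfrac{C_{n,s}}{2sW''(0)}\,\xi|\xi|^{-2s-1}=:\rho(\xi)$, plug into the equation, and show $\rho$ satisfies a linear nonlocal equation with a right-hand side that decays faster than $|\xi|^{-2s-1}$, then use the comparison principle with barriers of the form $C|\xi|^{-\kappa}$ for a suitable $\kappa>2s$ to conclude the improved decay. The exponent $\kappa$ will come from how much the error right-hand side (the discrepancy between the nonlinear term $W'(\phi)$ and its linearization, plus the error in $\mathcal{I}^s_1$ applied to the ansatz) beats $|\xi|^{-2s-1}$.

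For the derivative estimates \eqref{eq:asymptotics for phi dot}, I would differentiate the equation: $\dot\phi$ solves the linearized equation $C_{n,s}\mathcal{I}^s_1[\dot\phi]=W''(\phi)\dot\phi$, a nonlocal eigenfunction-type identity, and $\ddot\phi$ solves a similar one with an extra $W'''(\phi)\dot\phi^2$ term. The two-sided bound $|\xi|^{-2s-1}\lesssim\dot\phi(\xi)\lesssim|\xi|^{-2s-1}$ is consistent with (and follows by differentiating) the leading tail of $\phi-H$, and can be obtained rigorously either by differentiating the ansatz-plus-error decomposition above, or by constructing barriers directly for the linearized equation satisfied by $\dot\phi$, using positivity of $\dot\phi$ (already part of \eqref{eq:standing wave}) and the comparison principle; the constant $\tfrac{C_{n,s}(2s+1)}{2sW''(0)}$ in the leading coefficient of $\dot\phi$ must be positive, which it is. The bound on $\ddot\phi$ follows in the same way once the bound on $\dot\phi$ is in hand, since the extra term is then $O(|\xi|^{-2(2s+1)})$, which is lower order. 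I expect the main obstacle to be the rigorous tail analysis in the second paragraph: controlling the nonlocal operator applied to a function that is genuinely nonsmooth (behaves like a truncated power plus a Heaviside jump) requires careful splitting of the integral defining $\mathcal{I}^s_1$ into near-field and far-field pieces and bookkeeping of the resulting error exponents, and getting a clean comparison function that captures both the algebraic tail and the subleading correction simultaneously is the delicate point. The reference most likely to supply the technical backbone here is the asymptotic analysis already carried out in \cite{PatriziVaughan} and \cite{DipierroFigalliVald}, so in practice the proof would be largely a matter of citing and adapting those estimates.
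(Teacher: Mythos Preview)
Your proposal is essentially correct and aligned with the paper's approach: this lemma is a compilation of known results, and the paper's proof consists entirely of citations to the literature (Cabr\'e--Sire and Palatucci--Savin--Valdinoci for existence/uniqueness, Gonz\'alez--Monneau and Dipierro--Figalli--Valdinoci for \eqref{eq:asymptotics for phi} and the $\dot\phi$ bound in \eqref{eq:asymptotics for phi dot}, and Monneau--Patrizi for the $\ddot\phi$ bound). You correctly identify the key references for the regime $s<\tfrac12$ (in particular \cite{DipierroFigalliVald} and \cite{GonzalezMonneau}) and your heuristic sketch of the linearization-plus-barrier argument accurately reflects how those papers proceed; the only additions worth noting are that existence for general $s$ is usually attributed to \cite{CabreSire,PalatucciSavinValdinoci} rather than the dislocation papers, and the $\ddot\phi$ estimate is established separately in \cite{MonneauPatrizi2}.
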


\begin{proof}
The existence of a unique solution of \eqref{eq:standing wave} is established  in  \cite{CabreSola} for $s=\frac12$, and in 
\cite{CabreSire,PalatucciSavinValdinoci} for any $s\in(0,1)$. 
The estimate \eqref{eq:asymptotics for phi}, as well as the estimate on $\dot\phi$  in \eqref{eq:asymptotics for phi dot}, are proven in \cite{GonzalezMonneau} for  $s=\frac{1}{2}$,  and 
in \cite{DipierroFigalliVald} and \cite{DipierroPalatucciValdinoci}, respectively, when $s\in\left(0,\frac{1}{2}\right)$ and $s\in\left(\frac{1}{2},1\right)$. 
Finally, the estimate on $\ddot\phi$  in \eqref{eq:asymptotics for phi dot} is  established in \cite{MonneauPatrizi2}. 
\end{proof}


\subsection{The auxiliary function $\bar{a}_{\ep}$}\label{aepsubsection}

We now introduce the auxiliary function $\bar{a}_{\ep}$,  which will play a crucial role in our analysis. 
Let $\Omega_t$ be a bounded domain with smooth boundary $\Gamma_t$, for $t\in[t_0,t_0+h]$.
Throughout this section, let $d=d(t,x)$ denote the smooth extension of the signed distance function $\tilde{d}$ to $\Omega_t$ outside of $Q_\rho$ (see Definition~\ref{defn:extension}). We also introduce a new parameter $R > 1$, which will be chosen later. 
Define the following auxiliary functions for  $(t,x) \in [t_0,t_0+h] \times \R^n$, 
\begin{equation}\label{b_epsilon}
   \bar  b_\ep[d](t,x): = \int_{\{|z|<R\}} \left[ \phi\left(\frac{d(t,x+ z)}{\ep}\right) - \phi\left( \frac{d(t,x)+ \nabla d(t,x) \cdot z}{\ep} \right) \right] \frac{dz}{\abs{z}^{n+2s}},
\end{equation}
\begin{equation}\label{c_epsilon}
   \bar  c_\ep[d](t,x) :=\frac{1}{\ep^{2s}}\left[ \left(|\nabla d(t,x)|^{2} + \ep^{2+\frac{2s}{1-2s}} \right)^s -1 \right]W' \left( \phi \left( \frac{d(t,x)}{\ep} \right) \right).
\end{equation}
By Lemma~\ref{kernellemma3}, and due to the regularity of $\phi$ and $d$, the integral in \eqref{b_epsilon} is well defined. 
We then define the function $\bar a_\ep= \bar a_\ep[d]( t,x)$,   by
\begin{equation}\label{aepsilondef}
\bar a_{\ep}:= \bar b_\ep+\bar  c_\ep. 
\end{equation}


The following result states that for points $x$ sufficiently close to $\Gamma_t$, $\bar{a}_\ep[d](t,x)$ approximates  the fractional mean curvature of the smooth set $\{d(t,\cdot)>d(t,x)\}$ at the point $x$. The proof, which is delayed until Section \ref{sec:proof of b_ep fmc}, follows the proof of   \cite[Theorem 3.1]{PatriziVaughan}. 
\begin{lem}\label{thm: b_ep fmc} 
For $t\in[t_0,t_0+h]$, let $\Omega_t$ be a bounded domain with smooth boundary $\Gamma_t$. 
Let $d$ be as in Definition \ref{defn:extension} and $\kappa[t,d]$ as in \eqref{kappade}. There exists $\delta_0>0$ such that if  $0<\delta<\delta_0$,   
 and $|d(t,x)|< \delta$, then 
\begin{align*}
\bar{a}_\ep[d](t,x) =\kappa[x,d] + o_\ep(1) + o_\delta(1)+ O(R^{-2s}). 
\end{align*}   
\end{lem}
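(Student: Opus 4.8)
The plan is to decompose $\bar a_\ep[d] = \bar b_\ep[d] + \bar c_\ep[d]$ and handle each piece separately, using the asymptotic information on $\phi$ from Lemma~\ref{lem:asymptotics} and the comparison with the already-known convergence result Theorem~\ref{aepconvergenceoldthm} (equivalently, the structure of the analogous argument in \cite[Theorem 3.1]{PatriziVaughan}). First I would observe that, since $d$ agrees with the genuine signed distance $\tilde d$ inside $Q_{2\rho}$ and $|d(t,x)|<\delta<\rho$, we have $|\nabla d(t,x)| = 1$ at the base point, so the correction term $\bar c_\ep[d](t,x)$ reduces to $\frac{1}{\ep^{2s}}\big[(1+\ep^{2+\frac{2s}{1-2s}})^s - 1\big]W'(\phi(d/\ep))$. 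Since $(1+\tau)^s - 1 = O(\tau)$ and the exponent $2 + \frac{2s}{1-2s} > 2s$ (because $s<\tfrac12$ forces $\frac{2s}{1-2s} > 2s$... in fact one checks $2 + \frac{2s}{1-2s} > 2s$ trivially), while $W'(\phi(d/\ep))$ stays bounded, this gives $\bar c_\ep[d](t,x) = O(\ep^{2+\frac{2s}{1-2s}-2s}) = o_\ep(1)$. So $\bar c_\ep$ contributes only to the $o_\ep(1)$ error and the whole content is in $\bar b_\ep$.

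For $\bar b_\ep[d](t,x)$, the strategy is to split the integral over $\{|z|<R\}$ into a small-ball region $\{|z|<\ep r_0\}$ for a suitable auxiliary scale (or directly into $|z| < \ep^{\theta}$ for some $\theta\in(0,1)$) and an outer annulus. In the inner region, one Taylor expands $d(t,x+z) = d(t,x) + \nabla d(t,x)\cdot z + O(|z|^2)$ (using smoothness of $d$ in $Q_{2\rho}$), so that the difference of the two $\phi$ terms is controlled by $\dot\phi\big((d+\nabla d\cdot z)/\ep\big)\cdot O(|z|^2/\ep)$ plus a second-order remainder; integrating against $|z|^{-n-2s}$ over a small ball and using the bounds on $\dot\phi,\ddot\phi$ from \eqref{eq:asymptotics for phi dot} shows this contributes $o_\ep(1)$. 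In the outer region $\{\ep^\theta < |z| < R\}$, one uses the asymptotics \eqref{eq:asymptotics for phi}: $\phi(\xi) \approx H(\xi) - \frac{C_{n,s}}{2sW''(0)}\frac{\xi}{|\xi|^{2s+1}}$ for $|\xi|$ large. Since $|d(t,x)|<\delta$, on most of this region both arguments $d(t,x+z)/\ep$ and $(d(t,x)+\nabla d(t,x)\cdot z)/\ep$ are large in absolute value, so replacing $\phi$ by $H$ produces exactly the indicator-function structure that, after the change of variables $z \mapsto z$, reconstructs $\kappa^+[x,d] - \kappa^-[x,d]$ truncated to $\{|z|<R\}$; the tail $\{|z|>R\}$ of the $\kappa^\pm$ integrals is $O(R^{-2s})$ by Proposition~\ref{prop:fmc_finite_result} and Lemma~\ref{kernellemma3}. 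The two explicit $\xi/|\xi|^{2s+1}$ "dipole" corrections from $\phi - H$ nearly cancel between the two terms; what survives is estimated using that $d(t,x+z)$ and $d(t,x)+\nabla d(t,x)\cdot z$ differ by $O(|z|^2)$, yielding a term that is $o_\delta(1)$ (small because $|d(t,x)|<\delta$ controls how close the base point is to the interface, so the relevant region where the two signs of the arguments disagree has measure going to zero with $\delta$) plus $O(R^{-2s})$. The leftover region near $\{|\xi|\lesssim 1\}$, i.e. where $|d(t,x+z)|\lesssim \ep$ or $|d(t,x)+\nabla d(t,x)\cdot z|\lesssim\ep$, is a thin slab whose $\nu$-measure is $o_\ep(1)$ uniformly.

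Assembling: $\bar b_\ep[d](t,x) = \kappa[x,d] + o_\ep(1) + o_\delta(1) + O(R^{-2s})$, and adding the $\bar c_\ep = o_\ep(1)$ estimate gives the claim, with $\delta_0$ chosen so that $\delta_0 < \rho$ (so $d = \tilde d$ near the base point and $|\nabla d|=1$) and small enough that the Taylor-expansion error terms in $Q_{2\rho}$ are controlled. The main obstacle I expect is the bookkeeping in the outer region: carefully tracking the cancellation of the two dipole corrections from \eqref{eq:asymptotics for phi} and showing the residual is genuinely $o_\delta(1) + O(R^{-2s})$ rather than $O(1)$, and handling the transition zone where one argument of $\phi$ is large while the other is $O(1)$. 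This is exactly the delicate part of \cite[Theorem 3.1]{PatriziVaughan} adapted to the truncated kernel and the new $\bar c_\ep$ term, and one must be careful that all error estimates are uniform over $t\in[t_0,t_0+h]$ and over $x$ with $|d(t,x)|<\delta$.
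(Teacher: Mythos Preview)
Your treatment of $\bar c_\ep$ is correct and matches the paper: once $|\nabla d(t,x)|=1$ the H\"older estimate gives $|\bar c_\ep|\le C\ep^{2s^2/(1-2s)}=o_\ep(1)$.

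For $\bar b_\ep$ your decomposition is genuinely different from the paper's and, as sketched, has a real gap. You split radially at $|z|=\ep^\theta$ and then want (i) the inner Taylor error $\frac{\|\dot\phi\|_\infty}{\ep}\int_{|z|<\ep^\theta}|z|^{2-n-2s}\,dz=C\ep^{\theta(2-2s)-1}$ to be $o_\ep(1)$ and (ii) the outer ``thin slab'' $\{|z|>\ep^\theta,\ |d(x)+\nabla d(x)\cdot z|\lesssim\ep\}$ to have small $\nu$-measure. Condition (i) forces $\theta>\frac{1}{2-2s}$, while the slab estimate (via Lemma~\ref{gamma_delta_int}-type bounds) gives $C\ep^{1-\theta(1+2s)}$, forcing $\theta<\frac{1}{1+2s}$. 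These are compatible only when $s<\tfrac14$; for $s\in[\tfrac14,\tfrac12)$ there is no admissible $\theta$, so the scheme fails in the regime of primary interest. The difficulty you flag at the end (``transition zone where one argument of $\phi$ is large while the other is $O(1)$'') is precisely this, and bounding it by the raw $\nu$-measure of a slab is not enough.

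The paper instead splits $\bar b_\ep$ not radially but by the \emph{signs} of $d(x+z)-d(x)$ and $\nabla d(x)\cdot z$, into four regions $I_1,\dots,I_4$. The cross-sign regions $I_1,I_2$ are exactly where the Heaviside replacement produces $\kappa^\pm[x,d]$ (using $|\phi-H|\le C|\xi|^{-2s}$ and the integrability from Proposition~\ref{prop:fmc_finite_result}, together with the assumption $\ep/\delta^2=o_\ep(1)$, which you do not invoke). The same-sign regions $I_3,I_4$ are shown to be $o_\ep(1)+o_\delta(1)$, and the crucial ingredient is Lemma~\ref{0<y_n<tau_lemma}: for $|\nabla d(x)|=1$,
\[
\int_{\{|\nabla d(x)\cdot z|<\tau\}}\Big|\phi\Big(\frac{d(x+z)}{\ep}\Big)-\phi\Big(\frac{d(x)+\nabla d(x)\cdot z}{\ep}\Big)\Big|\frac{dz}{|z|^{n+2s}}\le C\tau^{\frac12-s},
\]
uniformly in $\ep$. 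This bound is obtained by exploiting the \emph{monotonicity} of $\phi$ and an integration-by-parts in the normal variable; it is the substitute for your thin-slab argument and is what makes the proof go through for all $s\in(0,\tfrac12)$. Finally, your concern about cancelling the two ``dipole'' corrections from \eqref{eq:asymptotics for phi} is a red herring: the paper never uses the next-order $\xi|\xi|^{-2s-1}$ term, only $\phi=H+O(|\xi|^{-2s})$.
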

Moreover, 
$\bar{a}_\ep$ is, up to small errors,  the difference between an $n$-dimensional and a $1$-dimensional fractional Laplacian, as stated in the following lemma
whose proof can be found in Section \ref{sec: proof of a_ep and frac laplacians}.
\begin{lem}\label{lem:a_ep and frac laplacians}
For all  $(t,x) \in [t_0,t_0+h] \times \mathbb{R}^n$,
\begin{equation}
   \mathcal{I}_n^s \left[ \phi \left( \frac{d(t, \cdot)}{\ep} \right) \right](x) - \frac{C_{n,s}}{\ep^{2s}}\mathcal{I}_1^s \phi \left( \frac{d(t, x)}{\ep}\right)= \bar a_\ep[d](t,x) + O(R^{-2s}) + o_\ep(1). 
\end{equation} 
\end{lem}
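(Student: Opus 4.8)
\textbf{Proof proposal for Lemma \ref{lem:a_ep and frac laplacians}.}

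The plan is to split the $n$-dimensional fractional Laplacian of $\phi(d/\ep)$ into a near part ($|z|<R$) and a far part ($|z|\geq R$), and similarly for the $1$-dimensional one after applying Lemma \ref{lem:1 to n facrional Laplacian}. Concretely, using Lemma \ref{lem:1 to n facrional Laplacian} with $e=\nabla d(t,x)$ applied to $v=\phi$ (which is $C^{2,\beta}\subset C^{1,1}$ by Lemma \ref{lem:asymptotics}), I would write
\[
\frac{C_{n,s}}{\ep^{2s}}\mathcal{I}_1^s[\phi]\!\left(\frac{d(t,x)}{\ep}\right)=\frac{1}{\ep^{2s}}\int_{\R^n}\left(\phi\!\left(\frac{d(t,x)}{\ep}+\nabla d(t,x)\cdot z\right)-\phi\!\left(\frac{d(t,x)}{\ep}\right)\right)\frac{dz}{|z|^{n+2s}},
\]
where I have used $|\nabla d|$ is bounded but \emph{not} necessarily equal to $1$ away from $Q_\rho$ — this is exactly why the correction term $\bar c_\ep$ appears. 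Since $\phi$ solves $C_{n,s}\mathcal{I}_1^s[\phi]=W'(\phi)$, the genuine one-dimensional piece is $\ep^{-2s}W'(\phi(d/\ep))$, but the integral above is the fractional Laplacian of $\xi\mapsto\phi(\xi)$ evaluated at $d/\ep$ with an extra factor that differs from $|\nabla d|^{2s}$; reconciling these produces precisely the factor $(|\nabla d(t,x)|^2+\ep^{2+2s/(1-2s)})^s-1$ times $\ep^{-2s}W'(\phi(d/\ep))$ — this is the term $\bar c_\ep$. (The small regularizing addend $\ep^{2+2s/(1-2s)}$ inside the bracket is harmless here; one checks it contributes only an $O(\ep^{\cdot})=o_\ep(1)$ error after multiplying by $\ep^{-2s}$, and it is included to ensure the base is bounded away from zero for later barrier arguments.)

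Next I would handle the near part directly: by definition, the difference of the $|z|<R$ portions of $\mathcal{I}_n^s[\phi(d/\ep)](x)$ and of the rewritten $1$-dimensional integral is exactly $\bar b_\ep[d](t,x)$ as defined in \eqref{b_epsilon}, up to the $\bar c_\ep$-type discrepancy just discussed. For the far part, I would bound both tail integrals crudely: since $\|\phi\|_\infty\leq 1$, Lemma \ref{kernellemma3} gives
\[
\left|\int_{\{|z|\geq R\}}\left(\phi\!\left(\tfrac{d(t,x+z)}{\ep}\right)-\phi\!\left(\tfrac{d(t,x)}{\ep}\right)\right)\frac{dz}{|z|^{n+2s}}\right|\leq \frac{2C_2}{R^{2s}}=O(R^{-2s}),
\]
and identically for the one-dimensional tail, yielding the $O(R^{-2s})$ in the statement. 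Collecting the near parts into $\bar b_\ep$, the factor discrepancy into $\bar c_\ep$, and recalling $\bar a_\ep=\bar b_\ep+\bar c_\ep$ from \eqref{aepsilondef}, gives the claimed identity with remainder $O(R^{-2s})+o_\ep(1)$.

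The main obstacle I anticipate is the bookkeeping in the second step: carefully tracking how the $|\nabla d|\neq 1$ discrepancy, together with the artificial $\ep^{2+2s/(1-2s)}$ term, combines into exactly $\bar c_\ep$ without leaving extra uncontrolled terms. One must use Lemma \ref{lem:1 to n facrional Laplacian} with $e=\nabla d(t,x)$, which rescales $z\mapsto |\nabla d|^{-1}z$ and produces a factor $|\nabla d|^{2s}$; expressing everything back in terms of $W'(\phi(d/\ep))=C_{n,s}\mathcal{I}_1^s[\phi](d/\ep)$ and absorbing the $\ep$-dependent regularization requires a short Taylor/Lipschitz estimate on $t\mapsto t^s$ near $t=1$ plus the bound $|W'(\phi)|\leq C$ on $[0,1]$. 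The tail estimates and the near-part identification are routine once the normalization is pinned down, and everything else is a direct consequence of Lemma \ref{kernellemma3} and the asymptotics in Lemma \ref{lem:asymptotics}.
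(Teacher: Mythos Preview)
Your proposal is correct and follows essentially the same approach as the paper: apply Lemma~\ref{lem:1 to n facrional Laplacian} with $e=\nabla d(t,x)$, identify the $|z|<R$ portion of the difference as $\bar b_\ep$, bound the $|z|\geq R$ tail by $O(R^{-2s})$, and absorb the discrepancy between $|\nabla d|^{2s}$ and $(|\nabla d|^2+\ep^{2+2s/(1-2s)})^s$ as an $o_\ep(1)$ via H\"older continuity of $t\mapsto t^s$. One small clarification: your displayed identity for $\frac{C_{n,s}}{\ep^{2s}}\mathcal{I}_1^s[\phi]$ is only exact when $|\nabla d|=1$; in general Lemma~\ref{lem:1 to n facrional Laplacian} puts an extra factor $|\nabla d|^{2s}$ on the left, and it is precisely this factor that yields the term $\ep^{-2s}(|\nabla d|^{2s}-1)W'(\phi(d/\ep))$, which you then correctly compare to $\bar c_\ep$.
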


Next, we establish estimates for $\bar a_\ep$ and its  derivatives. The following estimates hold, with proofs provided in Section   \ref{sec:proof of a_estimates}. 

\begin{lem}\label{lem:a_estimates}
There exists $C>0$  such that for all  $(t,x) \in [t_0,t_0+h] \times \mathbb{R}^n$, 
    \begin{equation}\label{a_near}
    |\bar a_\ep[d](t,x)| \le  C, 
    \end{equation}   
 and 
    \begin{equation}\label{partial_a_near}
        |\nabla_x \bar a_\ep[d](t,x)|, |\partial_t \bar a_\ep[d](t,x)|   = \ep^{-1}o_{\ep}(1)R.
    \end{equation}
\end{lem}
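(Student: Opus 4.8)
\textbf{Proof proposal for Lemma~\ref{lem:a_estimates}.}

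The plan is to estimate $\bar a_\ep=\bar b_\ep+\bar c_\ep$ term by term, treating the small-scale part of each integral (where the linear Taylor remainder of $d$ is controllable) separately from the large-scale tail. For the bound \eqref{a_near}, I would split the integral defining $\bar b_\ep$ in \eqref{b_epsilon} at $|z|=\ep$. On $\{|z|<\ep\}$, write $d(t,x+z)=d(t,x)+\nabla d(t,x)\cdot z+O(|z|^2)$ using the $C^2$-bound on $d$ (uniform in $t$ on $[t_0,t_0+h]$ by Definition~\ref{defn:extension}); then the difference of the two $\phi$-values is bounded, via the mean value theorem and $\|\dot\phi\|_\infty\le C$, by $C|z|^2/\ep$, and integrating against $|z|^{-n-2s}$ over $\{|z|<\ep\}$ gives $O(\ep^{1-2s})=o_\ep(1)$. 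On $\ep\le|z|<R$, I would use instead that $\phi$ is bounded (values in $(0,1)$), so the integrand is bounded by $2|z|^{-n-2s}$; but that only gives $O(\ep^{-2s})$, which is too large. To fix this, one should split once more at $|z|=1$: on $\ep\le|z|<1$ use the $C^1$-bound on $\phi(d/\ep)$ together with the Lipschitz bound on $d$, so the $\phi$-difference is $\le C|z|/\ep$ while the $\phi$-linear-profile difference is also $\le C|z|/\ep$, and integrating $|z|/\ep$ against $|z|^{-n-2s}$ over $\{\ep\le|z|<1\}$ is $O(\ep^{-2s}\cdot\ep^{1})=O(\ep^{1-2s})$ — still $o_\ep(1)$ since $2s<1$ — wait, more carefully one integrates $C|z|^{1-n-2s}$ which over $\{\ep<|z|<1\}$ is $O(\ep^{1-2s})$ when $2s<1$. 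Hmm, that is $o_\ep(1)$, good. On $1\le|z|<R$ simply bound the integrand by $2|z|^{-n-2s}$ and integrate to get $O(1)$ (uniformly in $R$). Finally, for $\bar c_\ep$ in \eqref{c_epsilon}: since $|\nabla d|\le 1$ with $|\nabla d|=1$ near the front but $|\nabla d|$ may be strictly less than $1$ in the extension region, we have $\big||\nabla d|^2+\ep^{2+2s/(1-2s)}\big)^s-1\big|\le C(1-|\nabla d|^2)+C\ep^{2s+2s^2/(1-2s)}$; combined with the factor $\ep^{-2s}$ and $\|W'\|_\infty\le C$, the second piece contributes $O(\ep^{2s^2/(1-2s)})=o_\ep(1)$. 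The first piece is the issue: $\ep^{-2s}(1-|\nabla d|^2)$ need not be small a priori. Here one uses that $W'(\phi(d/\ep))=C_{n,s}\mathcal I^s_1[\phi](d/\ep)$ decays like $|d/\ep|^{-2s-1}$ for $|d|\gg\ep$ (by Lemma~\ref{lem:asymptotics}), so that $\ep^{-2s}W'(\phi(d/\ep))$ is bounded where $|\nabla d|\ne 1$ (which forces $|d|\ge\rho$), giving an $O(1)$ contribution; and where $|\nabla d|=1$ the whole first piece vanishes. This yields \eqref{a_near}.

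For the derivative estimates \eqref{partial_a_near}, I would differentiate under the integral sign. For $\bar b_\ep$, a $z$-derivative of the integrand produces factors $\frac1\ep\dot\phi(\cdots)\nabla_x d(t,x+z)$ and $\frac1\ep\dot\phi(\cdots)(\nabla_x d(t,x)+D^2d(t,x)z)$, each of size $O(\ep^{-1})$ pointwise; the integral of these against $|z|^{-n-2s}$ must be cut at $|z|=\ep$ exactly as above. On $\{|z|<\ep\}$ one again gains two powers of $|z|$ from the difference structure (the leading $O(\ep^{-1})$ terms from the two summands cancel up to $O(|z|/\ep^2\cdot|z|)$ after a further Taylor expansion, i.e. a second-order cancellation), giving $\ep^{-1}o_\ep(1)$; on $\{\ep\le|z|<R\}$ one bounds crudely by $\ep^{-1}|z|^{-n-2s}$ and integrates to get $\ep^{-1}O(\ep^{-2s})$... which is too big. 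The correct bookkeeping is: on $\{\ep\le|z|<1\}$ the $\phi$-difference's $x$-derivative is $O(|z|/\ep^2)$ (one factor $1/\ep$ from each $\dot\phi$, one factor $|z|$ from a Lipschitz-in-$z$ estimate on $\nabla_x d(t,x+z)-\nabla_x d(t,x)$ combined with mean value theorem; alternatively bound by $O(1/\ep^2)\cdot$ (oscillation $\le C|z|$)), integrating $|z|^{1-n-2s}/\ep^2$ over $\{\ep<|z|<1\}$ gives $O(\ep^{-1-2s})$ — still too big by $\ep^{-2s}$. I believe the genuinely correct estimate must exploit that on $\{|z|\ge\ep\}$ we may also use $|\dot\phi|\le C$ crudely giving integrand $\le C\ep^{-1}|z|^{-n-2s}$ on $\{\ep\le|z|<R\}$, integral $=\ep^{-1}O(\ep^{-2s})$... . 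The resolution is that the factor $R$ on the right of \eqref{partial_a_near} signals we only need the cut at $|z|=R$, not at $|z|=\ep$ for the tail, and that near $|z|\sim 0$ one uses the $\dot\phi$ decay \eqref{eq:asymptotics for phi dot}: when $|d(t,x)/\ep|$ is large, $\dot\phi(d/\ep)\sim\ep^{2s+1}|d|^{-2s-1}$ is tiny, absorbing powers of $\ep$; when $|d(t,x)|\lesssim\ep$, the relevant region of $z$ is where $|z|\lesssim$ (a fixed constant), and there one gets $\ep^{-1}$ times an integral that is $O(1)$ — not $o_\ep(1)$ — so the claimed $\ep^{-1}o_\ep(1)R$ must come from a more careful estimate, likely using the Hölder continuity ($C^{3,\beta}$ regularity of $W$, hence extra regularity of $\phi$) to upgrade $O(1)$ to $o_\ep(1)$ after extracting the leading fractional-mean-curvature-type term, which is itself bounded. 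For $\bar c_\ep$, differentiating \eqref{c_epsilon} produces terms with $\ep^{-2s}\nabla(|\nabla d|^2)W'(\phi(d/\ep))$ and $\ep^{-2s-1}(|\nabla d|^2+\cdots)^s W''(\phi(d/\ep))\dot\phi(d/\ep)\nabla d$; using $|\nabla d|^2-1$ vanishing near the front together with the decay of $W'\circ\phi$ and $\dot\phi$ away from it, both are $\ep^{-1}o_\ep(1)$. The $\partial_t$ estimates are identical, using $\partial_t d,\ \partial_t\nabla d,\ \partial_t D^2 d$ bounded on $[t_0,t_0+h]$.

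The main obstacle I anticipate is making the $\ep^{-1}o_\ep(1)R$ estimate in \eqref{partial_a_near} genuinely sharp rather than merely $\ep^{-1}O(1)$: one must simultaneously (i) exploit a second-order cancellation between the two $\phi$-summands in \eqref{b_epsilon} on the small-scale regime $|z|<\ep$, using the $C^2$ (or $C^{2,\beta}$) regularity of $d$ and of $\phi$; (ii) use the precise algebraic/polynomial decay of $\dot\phi$ and of $W'(\phi)$ from \eqref{eq:asymptotics for phi}--\eqref{eq:asymptotics for phi dot} — which is special to the regime $s<\tfrac12$ and is exactly what blows up as $s\to\tfrac12$ — to absorb the bad powers of $\ep$ on the intermediate regime $\ep\le|z|<R$; and (iii) track carefully how the truncation radius $R$ enters, since the large-scale tail $\{|z|\ge R\}$ was not even included in $\bar b_\ep$ but the derivative of the finite integral up to $|z|=R$ picks up boundary-in-$z$ contributions of size proportional to $R^{\,\cdot}$ (hence the linear factor $R$). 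I would organize the proof as: (1) record the uniform $C^{k}$ bounds on $d$ from Definition~\ref{defn:extension} and on $\phi$ from Lemma~\ref{lem:asymptotics}; (2) prove \eqref{a_near} by the three-region split described above; (3) differentiate under the integral and prove \eqref{partial_a_near} by the same split, with the small-scale region handled by second-order Taylor cancellation and the intermediate/large-scale regions handled by the decay estimates, keeping the explicit $R$-dependence; (4) note the $t$-derivative is identical by symmetry of the roles of the parameters. The technically delicate point — and, per the paper's own remarks, the one genuinely new in the regime $s<\tfrac12$ — is step (3)'s intermediate regime, where the balance between $\ep^{-2s}$ from the singular kernel and the polynomial decay of $\dot\phi$ must be tracked with care.
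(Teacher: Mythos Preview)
Your proposal has a genuine gap in the proof of \eqref{a_near}, and the argument for \eqref{partial_a_near} is not actually carried out.

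For \eqref{a_near}: your computation on the intermediate shell $\{\ep\le|z|<1\}$ is wrong. You claim the $\phi$-difference is $\le C|z|/\ep$ and that integrating this against $|z|^{-n-2s}$ gives $O(\ep^{1-2s})$. But $\frac{1}{\ep}\int_{\{\ep<|z|<1\}}|z|^{1-n-2s}\,dz = \frac{C}{\ep}\int_\ep^1 r^{-2s}\,dr$, and since $2s<1$ this integral tends to a \emph{positive constant}, so the bound is $O(\ep^{-1})$, not $O(\ep^{1-2s})$. The same happens if you use the sharper second-order bound $C|z|^2/\ep$: you still get $O(\ep^{-1})$ on $\{|z|<1\}$. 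The naive Taylor/MVT approach simply cannot yield $O(1)$ for $\bar b_\ep$ when $|d(x)|$ is small. The paper's proof is structurally different: for $|d(x)|<\rho$ it decomposes $\bar b_\ep$ into four integrals according to the \emph{signs} of $d(x+z)-d(x)$ and $\nabla d(x)\cdot z$. The two ``opposite-sign'' regions are bounded directly by the finiteness of $\kappa^\pm[x,d]$ (Proposition~\ref{prop:fmc_finite_result}). The two ``same-sign'' regions are further split at $|\nabla d(x)\cdot z|=\tau$: for $|\nabla d(x)\cdot z|>\tau$ the kernel alone is integrable; for $|\nabla d(x)\cdot z|<\tau$ one needs Lemma~\ref{0<y_n<tau_lemma}, whose proof uses a rotation \eqref{changevar_s<12}, a change to polar-type coordinates $(p,t)=(|y'|,y_n/|y'|)$, and an integration-by-parts trick in $t$ exploiting $\dot\phi>0$. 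This geometric decomposition is the missing idea.

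For \eqref{partial_a_near}: you correctly recognize that your bookkeeping gives bounds that are too large, but you do not resolve it. The paper introduces an auxiliary scale $\delta_0$ with $\delta_0=o_\ep(1)$ and $\ep^s/\delta_0^{n+2s}=o_\ep(1)$, and treats $|d(x)|<\delta_0$ and $|d(x)|\ge\delta_0$ separately. In the near-front case it writes $\partial_{x_i}\bar b_\ep$ as $\ep^{-1}(I+II+III)$ (see \eqref{partialxba-close}), introduces a further scale $\gamma$ with $2\delta_0\le\gamma=o_{\delta_0}(1)$ and $\delta_0/\gamma^{n+2s}=o_{\delta_0}(1)$, and uses a $\dot\phi$-analogue of Lemma~\ref{0<y_n<tau_lemma} (Lemma~\ref{0<y_n<tau_lemmadotphi}) together with the decay \eqref{eq:asymptotics for phi dot} and the thin-slab estimate Lemma~\ref{gamma_delta_int}. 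The factor $R$ does \emph{not} arise from boundary terms at $|z|=R$ as you suggest; it comes from the term $\dot\phi\big(\tfrac{d(x)+\nabla d(x)\cdot z}{\ep}\big)\,\nabla\partial_{x_i}d(x)\cdot z$ in $III$, where $|z|\le R$ forces the linear growth in $R$. The far-from-front case uses the decay of $\dot\phi$ and a splitting at $|d(x+z)|\gtrless\ep^{1/2}$.
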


\begin{cor}\label{cor: fractional a_ep estimate}
For all  $(t,x) \in [t_0,t_0+h] \times \mathbb{R}^n$, 
\begin{equation}\label{fractional_laplacian_a_ep_near}
    \left|\mathcal{I}_n^s[\bar a_\ep] \right|  =\ep^{-2s}  o_{\ep}(1)R.
\end{equation}
\end{cor}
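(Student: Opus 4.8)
The plan is to estimate $\mathcal{I}^s_n[\bar a_\ep]$ directly from the bound \eqref{Iubound-C11_bis} applied to the function $u = \bar a_\ep[d](t,\cdot)$, together with the estimates of Lemma \ref{lem:a_estimates}. Recall that \eqref{Iubound-C11_bis} gives, for any radius $r>0$,
\[
|\mathcal{I}^s_n[\bar a_\ep](t,x)| \le C\left(\|\nabla_x \bar a_\ep[d](t,\cdot)\|_\infty\, r^{1-2s} + \frac{\|\bar a_\ep[d](t,\cdot)\|_\infty}{r^{2s}}\right).
\]
By \eqref{a_near} the second factor in the sup-norm is bounded by $C$, and by \eqref{partial_a_near} the gradient is bounded by $\ep^{-1}o_\ep(1)R$. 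Substituting these in gives
\[
|\mathcal{I}^s_n[\bar a_\ep](t,x)| \le C\left(\ep^{-1}o_\ep(1)R\, r^{1-2s} + \frac{1}{r^{2s}}\right).
\]

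The idea is then to optimize, or at least make a convenient choice of $r$ so that both terms are of the claimed order $\ep^{-2s}o_\ep(1)R$. Writing the gradient bound as $\ep^{-1}\omega(\ep)R$ with $\omega(\ep)\to 0$, the balance point is $r \sim (\ep/\omega(\ep))^{1/2}\cdot(\text{something})$; more simply, choosing $r = \ep$ yields first term $\le C\ep^{-2s}\omega(\ep)R$ and second term $\le C\ep^{-2s}$. Since the second term $C\ep^{-2s}$ is not obviously $\ep^{-2s}o_\ep(1)R$ unless $R\to\infty$, one instead picks $r$ slightly larger: taking $r = \ep\,\omega(\ep)^{-1/2}$ (still $\to 0$, and $\le 1$ for small $\ep$) gives first term $C\ep^{-2s}\omega(\ep)^{(1+2s)/2}R$ and second term $C\ep^{-2s}\omega(\ep)^{s}$, and both are $\ep^{-2s}o_\ep(1)$, hence $\ep^{-2s}o_\ep(1)R$ since $R\ge 1$. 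This yields the corollary. One should double-check that $\bar a_\ep[d](t,\cdot)$ is indeed Lipschitz in $x$ uniformly (so that \eqref{Iubound-C11_bis} applies), which is exactly the content of \eqref{partial_a_near} combined with the boundedness \eqref{a_near}.

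The main (minor) obstacle is purely bookkeeping: tracking the $o_\ep(1)$ quantities through the optimization in $r$ and verifying that the resulting bound is uniform in $(t,x)\in[t_0,t_0+h]\times\R^n$ and has the precise stated form $\ep^{-2s}o_\ep(1)R$ rather than, say, $\ep^{-2s}o_\ep(1)R^{1-2s}$ or $\ep^{-2s}o_\ep(1)R^2$. Since \eqref{partial_a_near} already carries a single factor of $R$ and the radius $r$ is chosen independently of $R$, no extra powers of $R$ are introduced, so the form is as claimed. There is no deep difficulty here; the corollary is a direct consequence of the interpolation-type inequality \eqref{Iubound-C11_bis} and Lemma \ref{lem:a_estimates}.
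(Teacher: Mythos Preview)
Your proposal is correct and follows essentially the same approach as the paper: split the integral at a radius $r$ (the paper calls it $\alpha$), bound the near part by the gradient estimate \eqref{partial_a_near} and the far part by the sup-norm estimate \eqref{a_near}, then choose $r$ in terms of $\ep$ and the $o_\ep(1)$ quantity. The only cosmetic difference is the specific choice of $r$: the paper takes $r=\ep/\tau$ (yielding both terms $\le C\ep^{-2s}\tau^{2s}R$), which is the cleanest balance, whereas your $r=\ep\,\omega(\ep)^{-1/2}$ also works fine.
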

\begin{proof}
    Let $\alpha > 0$, to be determined.  Then
    \begin{align*}
        \mathcal{I}_n^s[\bar a_\ep] &= \int_{\mathbb{R}^n} (\bar a_\ep[d](x+z) - \bar a_\ep[d](x)) \frac{dz}{|z|^{n+2s}} \\
        & = \int_{ \{ |z|< \alpha \} } (\ldots) + \int_{ \{ |z|> \alpha \} } (\ldots) \\
        & =: I + II.
    \end{align*}
    We have
    \begin{align*}
        |I| \le \norm[\infty]{ \nabla_x \bar a_\ep}  \int_{ \{ |z|< \alpha \} }  \frac{dz}{|z|^{n+2s-1}} \le C \norm[\infty]{ \nabla_x \bar a_\ep}  \alpha^{1-2s},
    \end{align*}
    and 
    \begin{align*}
        |II| \le C \norm[\infty]{\bar a_\ep} \int_{ \{ |z|> \alpha \} } \frac{dz}{|z|^{n+2s}} \le C \norm[\infty]{\bar a_\ep} \alpha^{-2s}. 
    \end{align*}
 By  \eqref{partial_a_near}  there exists $\tau=o_{\ep}(1)$, such that 
      $\norm[\infty]{ \nabla_x \bar a_\ep} \leq  \ep^{-1}\tau R$. Choosing $\alpha=\ep/\tau$ and using also \eqref{a_near}, we get   
    \begin{align*}
    \left|\mathcal{I}_n^s[\bar a_\ep] \right| \le C \ep^{-1} \tau R \frac{\ep^{1-2s}}{\tau^{1-2s}}+ C  \frac{\tau^{2s}}{\ep^{2s}} \leq C \ep^{-2s} \tau^{2s}R.
\end{align*}
Estimate  \eqref{fractional_laplacian_a_ep_near}, follows. 
 
  \end{proof}

 \subsection{The corrector $\psi$}\label{sec:corrector}

We now introduce two additional small parameters to be chosen later: 
$0<\delta<1$ and $\sigma\in(-1,1)$. 
Let $\mu$ be a smooth function such that
 \begin{equation}\label{def:mu_definition}\begin{split}
\mu[d](t,x) = 
\begin{cases}
\sigma, & |d(t,x)| \le\delta, \\[8pt]
\dfrac{\sigma}{\delta^{2s}}, & |d(t,x)| \ge 2 \delta,
\end{cases}\\[5pt]
|\sigma|\leq\operatorname{sgn}(\sigma)\mu[d](t,x)  \leq \dfrac{|\sigma|}{\delta^{2s}},\quad \delta  < |d(t,x)| <2 \delta,
\end{split}
\end{equation}
and 
\begin{equation}\label{mu_estimates}
    |\partial_t\mu[d](t,x)|, \, |\nabla_x \mu[d](t,x)| \le \frac{C}{\delta^{2s+1}}.
    \end{equation}
\begin{lem}\label{lem:mu_estimates}
There exist $C>0$, such that for all  $(t,x) \in [t_0,t_0+h] \times \mathbb{R}^n$,
    \begin{equation*}\left| \mathcal{I}_n^s \left[ \mu[d](t, \cdot) \right] (x)\right| 
    \le \frac{C}{\delta^{4s}}.
\end{equation*}
\end{lem}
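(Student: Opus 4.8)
The plan is to split the fractional Laplacian of $\mu[d](t,\cdot)$ into a near part and a far part, balancing the two contributions by an appropriate choice of cutoff radius. First I would write, for a radius $r>0$ to be chosen,
\[
\mathcal{I}_n^s[\mu[d](t,\cdot)](x) = \int_{\{|z|<r\}} \big(\mu[d](t,x+z)-\mu[d](t,x)\big)\frac{dz}{|z|^{n+2s}} + \int_{\{|z|>r\}} \big(\mu[d](t,x+z)-\mu[d](t,x)\big)\frac{dz}{|z|^{n+2s}},
\]
calling these $I$ and $II$. For $I$, since $\mu[d](t,\cdot)$ is Lipschitz with $\|\nabla_x \mu[d]\|_\infty \le C\delta^{-(2s+1)}$ by \eqref{mu_estimates}, the integrand is bounded by $C\delta^{-(2s+1)}|z|$, so by Lemma~\ref{kernellemma3},
\[
|I| \le C\delta^{-(2s+1)} \int_{\{|z|<r\}} \frac{dz}{|z|^{n+2s-1}} = C\delta^{-(2s+1)} r^{1-2s}.
\]
For $II$, since $|\mu[d](t,\cdot)| \le |\sigma|\delta^{-2s} \le \delta^{-2s}$ (using $|\sigma|<1$) by \eqref{def:mu_definition}, the integrand is bounded by $2\delta^{-2s}$, and again by Lemma~\ref{kernellemma3},
\[
|II| \le 2\delta^{-2s} \int_{\{|z|>r\}} \frac{dz}{|z|^{n+2s}} = C\delta^{-2s} r^{-2s}.
\]

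Adding the two estimates gives $|\mathcal{I}_n^s[\mu[d](t,\cdot)](x)| \le C\big(\delta^{-(2s+1)} r^{1-2s} + \delta^{-2s} r^{-2s}\big)$. I would then optimize — or simply take $r = \delta$, which is the natural scale since $\mu[d]$ varies over a $d$-layer of width comparable to $\delta$: this yields $|I| \le C\delta^{-(2s+1)}\delta^{1-2s} = C\delta^{-4s}$ and $|II| \le C\delta^{-2s}\delta^{-2s} = C\delta^{-4s}$, hence the claimed bound $|\mathcal{I}_n^s[\mu[d](t,\cdot)](x)| \le C\delta^{-4s}$, uniformly in $(t,x)\in[t_0,t_0+h]\times\R^n$ (and the constant $C$ is independent of $\ep$, $\delta$, $\sigma$, $R$, as required by the notational convention). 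This is essentially the same two-scale argument used in the proof of Corollary~\ref{cor: fractional a_ep estimate}.

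There is no real obstacle here; the only point requiring a little care is making sure the bounds on $\mu[d]$ and $\nabla_x\mu[d]$ used are exactly the ones guaranteed by \eqref{def:mu_definition}–\eqref{mu_estimates}, in particular that the $L^\infty$ bound on $\mu[d]$ is $|\sigma|\delta^{-2s}$ and not merely $\delta^{-2s}$ — though since $|\sigma|<1$ the cruder bound suffices and keeps the statement clean. (If one wanted the sharper dependence one could carry $|\sigma|$ through, but the lemma as stated only asks for $C\delta^{-4s}$.) One should also note that $\mu[d](t,\cdot)$ is genuinely Lipschitz on all of $\R^n$ — it is smooth and eventually constant in $d$, and $d$ itself is globally Lipschitz by Definition~\ref{defn:extension} — so the integral defining $\mathcal{I}_n^s[\mu[d](t,\cdot)]$ converges and the splitting is legitimate.
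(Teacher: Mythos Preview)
Your proposal is correct and follows exactly the approach the paper indicates: the paper's proof simply states that the estimate follows from \eqref{def:mu_definition} and \eqref{mu_estimates} by the same argument as in Corollary~\ref{cor: fractional a_ep estimate}, which is precisely the near/far splitting with cutoff $r=\delta$ that you carry out. Your choice $r=\delta$ balances the two contributions and yields the claimed $C\delta^{-4s}$ bound.
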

\begin{proof}
    The estimate on  $ \mathcal{I}_n^s \left[ \mu[d](t, \cdot)\right] $ follows from \eqref{def:mu_definition} and \eqref{mu_estimates}
    by a similar argument as in Corollary~\ref{cor: fractional a_ep estimate}.
\end{proof}

The linearized operator $\mathcal{L}$ associated to \eqref{eq:standing wave} around $\phi$ is given by
\begin{equation}\label{eq:linearized operator}
\mathcal{L}[\psi] = -C_{n,s}\mathcal{I}^s_1[\psi] + {W}''(\phi) \psi,
\end{equation}
with $C_{n,s}$ as in \eqref{eq:Cns}.
In the constructions of barriers, we will need the corrector  $\psi = \psi(\xi; t,x)$ that solves
\begin{equation}\label{eq:linearized wave}
\begin{cases}
\displaystyle{
\mathcal{L}[\psi](\xi) 
 =   \left( c_0 \dot{\phi}( \xi) 
  + \frac{W''\left( \phi (\xi)\right) - W''(0)}{W''(0)} \right)\left[\mu[d](t,x) -\bar{a}_\ep[d](t,x) \right],
  } &\xi \in \R\\
\psi(\pm\infty;t,x) = 0,
\end{cases}
\end{equation}
where $c_0$ is given by \eqref{def:c0} and the functions $\bar a_\ep$ and $\mu$ are defined in \eqref{aepsilondef} and \eqref{def:mu_definition}, respectively. 

 Note that $\psi$ depends on $(t,x)$ through the dependence of the function $d$, which appears on the right-hand side 
 of \eqref{eq:linearized wave}. Moreover, although not explicitly indicated, $\psi$ also depends on the parameters $\ep$ and $R$ through the function 
 $\bar a_\ep$ and $\delta$,  $\sigma$ through the function  $\mu$. 

\begin{lem}\label{lem:psi-reg}
There is a  solution $\psi = \psi(\xi;t,x)\in C_\xi^{1,\beta}(\R)$ to \eqref{eq:linearized wave} 
for some  $\beta\in(0,1)$,  and $C>0$ such that, for all  $0<\ep,\,\delta<1$, $\sigma \in (-1,1)$, $R>1$, and 
 $(\xi, t,x)\in \R\times [t_0,t_0+h] \times\R^n$, the following holds.\\
 If $|d(t,x)| < \delta$, then
    \begin{equation}\label{psi_near}
    |\psi(\xi;t,x)|,|\dot{\psi}(\xi;t,x)|  \le   C,
    \end{equation}
    \begin{equation}\label{partial_psi_near}
        |\nabla_x \psi(\xi;t,x)|, |\partial_t \psi(\xi;t,x)| = \ep^{-1} o_\ep(1) R.
    \end{equation}
If $|d(t,x)| \geq\delta $, then
    \begin{equation}\label{psi_far}
    |\psi(\xi;t,x)|, |\dot{\psi}(\xi;t,x)| \le \frac{C}{\delta^{2s}(1+|\xi|^{2s})},
    \end{equation}
    and 
    \begin{equation}\label{partial_psi_far}
        |\nabla_x \psi(\xi;t,x)|, |\partial_t \psi(\xi;t,x)| \leq \(  \frac{o_\ep(1) R}{\ep}+\frac{C}{\delta^{2s+1}}\)\frac{1}{1+|\xi|^{2s}}.
    \end{equation}
 
\end{lem}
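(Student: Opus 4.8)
The plan is to solve \eqref{eq:linearized wave} by separation of variables, as anticipated in Section~\ref{Ansatzsection}, and then transfer all regularity and decay information from a single universal profile to the full corrector. First I would invoke \cite[Theorem~9.1]{DipierroFigalliVald} (cf.\ the discussion around \eqref{psiequationheuristiconlyxi}) to produce a function $\tilde\psi\in C^{1,\beta}(\R)$ solving $\mathcal{L}[\tilde\psi](\xi)=c_0\dot\phi(\xi)+\bigl(W''(\phi(\xi))-W''(0)\bigr)/W''(0)$ with $\tilde\psi(\pm\infty)=0$; crucially, since the right-hand side belongs to $L^2(\R)\cap C^{0,\beta}(\R)$, is orthogonal to $\dot\phi$ (this is precisely the choice of $c_0$ in \eqref{def:c0}, which kills the projection onto $\ker\mathcal{L}=\operatorname{span}\dot\phi$), and decays like $|\xi|^{-(2s+1)}$ by \eqref{eq:asymptotics for phi dot}, the corresponding solvability theory gives both $\tilde\psi,\dot{\tilde\psi}\in L^\infty(\R)$ and the algebraic decay $|\tilde\psi(\xi)|,|\dot{\tilde\psi}(\xi)|\le C/(1+|\xi|^{2s})$. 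Then I would simply set $\psi(\xi;t,x):=\bigl(\mu[d](t,x)-\bar a_\ep[d](t,x)\bigr)\,\tilde\psi(\xi)$, which solves \eqref{eq:linearized wave} because the operator $\mathcal{L}$ acts only in $\xi$ and the bracket on the right-hand side is exactly the $(t,x)$-factor.

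With this representation the estimates become bookkeeping on the prefactor $m_\ep(t,x):=\mu[d](t,x)-\bar a_\ep[d](t,x)$. For \eqref{psi_near} and \eqref{psi_far}: when $|d(t,x)|<\delta$ we have $|\mu[d]|=|\sigma|\le1$ by \eqref{def:mu_definition} and $|\bar a_\ep[d]|\le C$ by \eqref{a_near}, so $|m_\ep|\le C$, and combining with $\|\tilde\psi\|_\infty,\|\dot{\tilde\psi}\|_\infty\le C$ gives \eqref{psi_near}; when $|d(t,x)|\ge\delta$ we have $|\mu[d]|\le|\sigma|/\delta^{2s}\le 1/\delta^{2s}$ and $|\bar a_\ep[d]|\le C\le C/\delta^{2s}$, so $|m_\ep|\le C/\delta^{2s}$, and multiplying by the decay bound $|\tilde\psi(\xi)|\le C/(1+|\xi|^{2s})$ yields \eqref{psi_far}. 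For the derivative bounds I differentiate the product: $\nabla_x\psi=(\nabla_x m_\ep)\tilde\psi$ and $\partial_t\psi=(\partial_t m_\ep)\tilde\psi$, since $\tilde\psi$ is independent of $(t,x)$. Now $\nabla_x m_\ep=\nabla_x\mu[d]-\nabla_x\bar a_\ep[d]$; by \eqref{mu_estimates} $|\nabla_x\mu[d]|\le C/\delta^{2s+1}$, and by \eqref{partial_a_near} $|\nabla_x\bar a_\ep[d]|=\ep^{-1}o_\ep(1)R$. In the region $|d(t,x)|<\delta$, $\mu[d]\equiv\sigma$ is locally constant so $\nabla_x\mu[d]=\partial_t\mu[d]=0$ there, leaving only the $\bar a_\ep$ term: $|\nabla_x\psi|\le \ep^{-1}o_\ep(1)R\cdot\|\tilde\psi\|_\infty=\ep^{-1}o_\ep(1)R$, which is \eqref{partial_psi_near}. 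In the region $|d(t,x)|\ge\delta$ both contributions are present, and multiplying by $|\tilde\psi(\xi)|\le C/(1+|\xi|^{2s})$ gives \eqref{partial_psi_far}; the analogous bounds hold for $\partial_t$.

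The main obstacle is establishing the quantitative decay $|\tilde\psi(\xi)|,|\dot{\tilde\psi}(\xi)|\le C/(1+|\xi|^{2s})$ at the sharp rate dictating \eqref{psi_far} and \eqref{partial_psi_far}, together with the boundedness of $\dot{\tilde\psi}$. One must check that the cited existence result from \cite{DipierroFigalliVald} actually delivers these; if it provides only $\tilde\psi\in H^s(\R)$ with $\tilde\psi(\pm\infty)=0$, then one has to bootstrap: the right-hand side of \eqref{psiequationheuristiconlyxi} decays like $|\xi|^{-2s}$ as $|\xi|\to\infty$ (the $\dot\phi$ piece decays like $|\xi|^{-(2s+1)}$ by \eqref{eq:asymptotics for phi dot}, and $W''(\phi(\xi))-W''(0)$ decays like $|\phi(\xi)-0|$ or $|\phi(\xi)-1|$, hence like $|\xi|^{-2s}$ by \eqref{eq:asymptotics for phi}), and since $W''(\phi(\xi))\to W''(0)>0$ at infinity, $\mathcal{L}$ is, to leading order, the invertible operator $-C_{n,s}\mathcal{I}^s_1+W''(0)$, which propagates the $|\xi|^{-2s}$ decay to $\tilde\psi$; a barrier argument comparing $\tilde\psi$ against a multiple of $(1+|\xi|^2)^{-s}$, using that $\mathcal{I}^s_1\bigl[(1+|\xi|^2)^{-s}\bigr]$ is controlled, closes this. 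The $C^{1,\beta}$ regularity and the bound on $\dot{\tilde\psi}$ then follow from interior elliptic estimates for $\mathcal{I}^s_1$ applied on unit intervals, using the $C^{0,\beta}$ regularity and decay of the right-hand side. I would state this decay as the content of an auxiliary lemma and defer its proof to Section~\ref{sec:proof of ae psi estimate}, keeping the present proof at the level of the separation-of-variables reduction plus the prefactor bookkeeping above.
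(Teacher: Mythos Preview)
Your approach is essentially the same as the paper's: define $\psi(\xi;t,x)=\tilde\psi(\xi)\bigl(\mu[d](t,x)-\bar a_\ep[d](t,x)\bigr)$ with $\tilde\psi$ the universal profile from \cite{DipierroFigalliVald}, and read off all four estimates from bounds on the prefactor combined with decay of $\tilde\psi,\dot{\tilde\psi}$. The one point where you diverge is the ``main obstacle'' you flag --- the sharp decay $|\tilde\psi(\xi)|,|\dot{\tilde\psi}(\xi)|\le C/(1+|\xi|^{2s})$ --- which the paper handles simply by citing \cite[Lemma~3.2]{MonneauPatrizi2} rather than proving it by the barrier/bootstrap argument you sketch; so no auxiliary lemma or deferred proof is needed.
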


\begin{proof}
Under assumptions \eqref{eq:W} on the potential 
$W$, it is shown in \cite{DipierroFigalliVald}*{Theorem 9.1} that there exists a function  $\tilde\psi=\tilde \psi(\xi)\in C^{1,\beta}(\R)$, for some $\beta\in (0,1)$, solving 
$$\begin{cases}
\displaystyle{
\mathcal{L}[\tilde\psi] (\xi)
 =    c_0 \dot{\phi}(\xi) 
  + \frac{W''\left( \phi (\xi)\right) - W''(0)}{W''(0)} ,
  } &\xi \in \R\\
\tilde\psi(\pm\infty) = 0.
\end{cases}
$$
Moreover, \cite[Lemma 3.2]{MonneauPatrizi2} shows that there exists a constant 
$C>0$ such that 

\begin{equation}\label{tildepsiest} 
|\tilde\psi(\xi)|,\,|\dot{\tilde\psi}(\xi)|\le \frac{C}{1+|\xi|^{2s}}\quad\text{for all }\xi\in\R. 
\end{equation}
We define
$$\psi(\xi;t,x):=\tilde \psi(\xi)\left[\mu[d](t,x) -\bar{a}_\ep[d](t,x) \right].$$
Then $\psi\in C_\xi^{1,\beta}(\R)$ is solution to  \eqref{eq:linearized wave}. Moreover, recalling the definition of $\mu$ in \eqref{def:mu_definition}, 
and using Lemma \ref{lem:a_estimates} together with estimates \eqref{mu_estimates} and  \eqref{tildepsiest}, we obtain the bounds stated in  \eqref{psi_near}-\eqref{partial_psi_far}.
\end{proof}

We conclude this section with the following estimate for the difference between the $n$- and the $1$-dimensional fractional Laplacians for the function $\psi$. 
The proof of the lemma is given in Section \ref{sec:proof of ae psi estimate}.
\begin{lem}\label{lem:ae psi estimate}
 Assume $\ep/\delta^2=o_\ep(1)$. Then  for all $(t,x) \in [t_0,t_0+h] \times \mathbb{R}^n$,
 \begin{align*}
\abs{\ep^{2s} \mathcal{I}_n^s\left[ \psi \( \frac{d(t,\cdot)}{\ep};t,\cdot\) \right](x) - C_{n,s} \mathcal{I}_1^s[\psi\(\cdot;t,x\)]\(\frac{d(t,x)}{\ep}\)}= Ro_\ep(1). 
\end{align*}
\end{lem}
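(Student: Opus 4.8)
\textbf{Proof proposal for Lemma~\ref{lem:ae psi estimate}.}

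The plan is to exploit the product structure $\psi(\xi;t,x) = \tilde\psi(\xi)\,[\mu[d](t,x) - \bar a_\ep[d](t,x)]$ established in the proof of Lemma~\ref{lem:psi-reg}, and to reduce the claimed estimate to a computation for the fixed one-variable profile $\tilde\psi$ plus error terms coming from the slow dependence on $(t,x)$. Write $P(t,x) := \mu[d](t,x) - \bar a_\ep[d](t,x)$, so that $\psi(d(t,\cdot)/\ep;t,\cdot)(x) = \tilde\psi(d(t,x)/\ep)\,P(t,x)$. The key identity to set up is to decompose
\begin{align*}
\ep^{2s}\mathcal{I}_n^s\!\left[\psi\!\left(\tfrac{d(t,\cdot)}{\ep};t,\cdot\right)\right](x)
&= \ep^{2s}\, P(t,x)\,\mathcal{I}_n^s\!\left[\tilde\psi\!\left(\tfrac{d(t,\cdot)}{\ep}\right)\right](x) \\
&\quad + \ep^{2s}\int_{\R^n}\tilde\psi\!\left(\tfrac{d(t,x+z)}{\ep}\right)\bigl(P(t,x+z)-P(t,x)\bigr)\frac{dz}{|z|^{n+2s}}.
\end{align*}
For the first term, I would apply Lemma~\ref{lem:1 to n facrional Laplacian} with $e = \nabla d(t,x)$ (unit vector where $d$ is the true distance function) and Lemma~\ref{lem:a_ep and frac laplacians} applied to $\tilde\psi$ in place of $\phi$ — more precisely, the same argument that proves Lemma~\ref{lem:a_ep and frac laplacians} (comparing the $n$-dimensional fractional Laplacian of $\tilde\psi(d(t,\cdot)/\ep)$ with the $1$-dimensional one of $\tilde\psi$ evaluated at $d(t,x)/\ep$, using that $d$ is smooth near the front with $|\nabla d|=1$ and bounded and smooth everywhere by Definition~\ref{defn:extension}, together with the decay \eqref{tildepsiest}). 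This produces $C_{n,s}\mathcal{I}_1^s[\tilde\psi](d(t,x)/\ep)\cdot P(t,x) + O(R^{-2s}) + o_\ep(1)$, and since $C_{n,s}\mathcal{I}_1^s[\psi(\cdot;t,x)](d(t,x)/\ep) = C_{n,s}\mathcal{I}_1^s[\tilde\psi](d(t,x)/\ep)\cdot P(t,x)$ exactly (the $1$-d operator acts only on $\xi$, and $P$ does not depend on $\xi$), this term matches the right-hand side of the lemma up to $Ro_\ep(1)$ once we note $|P| \le C$ by Lemma~\ref{lem:a_estimates} and \eqref{def:mu_definition}.

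The second term — the ``commutator'' coming from the variation of the slow variables — is where the hypothesis $\ep/\delta^2 = o_\ep(1)$ is used, and I expect this to be the main obstacle. Split the $z$-integral at $|z| < \alpha$ and $|z| > \alpha$ for a radius $\alpha$ to be optimized. On $|z| < \alpha$ use $|P(t,x+z) - P(t,x)| \le \|\nabla_x P\|_\infty |z|$; by \eqref{partial_a_near} and \eqref{mu_estimates}, $\|\nabla_x P\|_\infty \le \ep^{-1}o_\ep(1)R + C\delta^{-2s-1}$, and $|\tilde\psi| \le C$, so this piece is bounded by $\ep^{2s}(\ep^{-1}o_\ep(1)R + C\delta^{-2s-1})\alpha^{1-2s}$. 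On $|z| > \alpha$ use $|P(t,x+z) - P(t,x)| \le 2\|P\|_\infty \le C$ and $|\tilde\psi| \le C$ to bound the piece by $C\ep^{2s}\alpha^{-2s}$. Optimizing (roughly $\alpha \sim \ep/(o_\ep(1)R + \ep\delta^{-2s-1})$, or more simply choosing $\alpha$ comparable to the relevant small scale) and carefully tracking that $\delta$ satisfies $\ep/\delta^2 = o_\ep(1)$ — which controls the term $\ep^{2s}\delta^{-2s-1}\alpha^{1-2s}$ and forces the $\delta$-contributions into $o_\ep(1)$ — yields a bound of the form $Ro_\ep(1)$ for the second term. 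The delicate point is bookkeeping: one must verify that every error generated (from Lemma~\ref{lem:a_ep and frac laplacians} applied to $\tilde\psi$, from the commutator split, and from the region where $d$ is only the smooth extension rather than the genuine signed distance) collapses into $R\,o_\ep(1)$ under the standing hypothesis, and in particular that the factor $R$ (rather than $R^2$ or worse) is all that survives — this is exactly what makes the barrier construction work in Section~\ref{barriersection}.

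Finally, I would remark that near the front (where $|d(t,x)| < \delta$) the estimates \eqref{psi_near}–\eqref{partial_psi_near} for $\psi$ itself could be used directly in place of going through the product decomposition, but using the factorization uniformly in $(t,x)$ is cleaner since the decay \eqref{tildepsiest} of $\tilde\psi$ is what makes the tail integrals converge with the right powers of $\ep$; the only place the near/far distinction matters is in bounding $\|\nabla_x P\|_\infty$, where the far estimate $C\delta^{-2s-1}$ dominates and is absorbed using $\ep/\delta^2 = o_\ep(1)$.
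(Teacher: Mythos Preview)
Your product decomposition $\psi = \tilde\psi\cdot P$ is a genuinely different route from the paper's. The paper (Section~\ref{sec:proof of ae psi estimate}) first splits into the near case $|d(t,x)|<\delta/2$ and the far case $|d(t,x)|\geq\delta/2$; in the near case it writes the difference as $I+II$, where $I$ captures the variation in the fast argument (slow variable kept at $x+\ep z$) and $II$ the variation in the slow variable (fast argument kept at $d(x)/\ep+\nabla d(x)\cdot z$). In the far case each fractional Laplacian is bounded separately using the decay of~$\psi$.

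There is, however, a concrete gap in your commutator estimate. The claim $\|P\|_\infty\leq C$ is false: by \eqref{def:mu_definition} one has $|\mu|\leq|\sigma|/\delta^{2s}$ away from the front, so $\|P\|_\infty=O(\delta^{-2s})$. With the correct bound, the far piece of your split becomes $C\ep^{2s}\delta^{-2s}\alpha^{-2s}$, and the two constraints cannot be reconciled: the term $\ep^{2s}\cdot\ep^{-1}o_\ep(1)R\cdot\alpha^{1-2s}$ in the near piece forces $\alpha$ to be essentially $O(\ep)$ (up to the $o_\ep(1)$ factor, since $2s-1<0$), but then the far piece is at least of order $\delta^{-2s}$, which blows up. Even with your incorrect $\|P\|_\infty\leq C$, the far piece at $\alpha\sim\ep$ is only $O(1)$, not $o_\ep(1)$. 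The paper avoids this trap precisely by \emph{not} using the global $\|\nabla_x P\|_\infty$ bound: in the estimate of $II_1$ in Lemma~\ref{lem:psi estimate near}, the split radius is chosen so that every point $x+\ep z$ on the relevant segment still satisfies $|d(x+\ep z)|<\delta$; there $\mu$ is constant, hence $\nabla\mu=0$, and only the $\ep^{-1}o_\ep(1)R$ from $\nabla\bar a_\ep$ enters. Your ``uniform'' treatment, using the worst-case $\|\nabla_x P\|_\infty$ which includes the $C\delta^{-2s-1}$ from $\nabla\mu$, discards exactly this structure.

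A smaller issue: invoking Lemma~\ref{lem:a_ep and frac laplacians} for $\tilde\psi$ is imprecise. That lemma yields $\bar a_\ep+O(R^{-2s})+o_\ep(1)$, where $\bar a_\ep$ is specific to $\phi$ and is $O(1)$, not $o_\ep(1)$; the analogue for $\tilde\psi$ is a different object whose smallness must be established directly (and, done correctly, no $O(R^{-2s})$ arises at all --- the truncation at $R$ in \eqref{b_epsilon} was introduced only because $\phi$ does not decay at $+\infty$). The computation you actually need for the first term is essentially the paper's estimate of $I$ in Lemma~\ref{lem:psi estimate near}, which gives $O(\ep^s/\delta^{2s})$ directly.
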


\section{Constructions of barriers}\label{barriersection}

We now construct  local and global  strict subsolutions (supersolutions) to \eqref{eq:pde} needed for the proof of Theorem \ref{thm:main_result}. 
We will focus on the construction of subsolutions, since the construction of supersolutions is analogous. 
We will start with the global ones.

 \subsection{Global subsolutions}

Fix $t_0 \in (0,\infty)$ and $h>0$. For $t\in[t_0,t_0+h]$, let $\Omega_t$ be a bounded open set with boundary $\Gamma_t= \partial \Omega_t$. Let $\tilde{d}(t,x)$ be the signed distance function associated to the set $ \Omega_t $, then $\Gamma_t = \{ x \in \R^n : \tilde{d}(t,x) = 0\}$. Assume that there exists  $\rho>0$ such that, $\tilde{d}(t,x)$ is smooth in the set 
\begin{equation}\label{eq:Q2rho}
Q_{2\rho} := \{ (t,x) \in[t_0,t_0+h]\times\R^n: |\tilde{d}(t,x)| < 2\rho\},
\end{equation}
and let $d$ be the smooth, bounded extension of $\tilde{d}$ outside of $Q_{\rho}$, as defined in Definition \ref{defn:extension}. Assume in addition that there exists $\sigma>0$ such that 
\begin{equation}\label{eq:fmc for d}
\partial_td \leq c_0 \kappa[x,d(t, \cdot)] - c_0 \sigma \quad \hbox{in}~Q_\rho.
\end{equation}
Let $\alpha,\,\tilde{\sigma}>0$ be defined by
\begin{equation}\label{alphasigma}\alpha:= W''(0),\quad \tilde{\sigma}:=\frac{\sigma}{\alpha}.\end{equation}
By eventually making $\sigma$ smaller, we may assume $\tilde\sigma<\rho/2$. We define the smooth barrier $v^{\ep}(t,x)$ by 
\begin{equation}\label{eq:barrier defn}
v^{\ep}(t,x) =\phi\left( \frac{d(t,x)- \tilde{\sigma} }{\ep}\right) +\ep^{2s} \psi \left( \frac{d(t,x)-\tilde{\sigma}}{\ep};t,x\right) + \frac{\ep^{2s}}{\alpha} \left(\bar{a}_\ep\left[d - \tilde{\sigma} \right](t,x) - \mu[d-\tilde{\sigma}](t,x)\right),
\end{equation}
where $\phi(\xi)$ is the solution to \eqref{eq:standing wave}, and $\psi(\xi; t, x)$ solves \eqref{eq:linearized wave} for the distance function $d(t, x) - \tilde{\sigma}$, with $\tilde{\sigma}$ defined in \eqref{alphasigma}.
Recall that $\psi$ also depends on the parameters $\ep$, $R$, $\delta$ and $\sigma$ through the functions $\bar{a}_\varepsilon$ and $\mu$, which are defined in 
\eqref{aepsilondef} and \eqref{def:mu_definition}, respectively, and appear on the right-hand side of \eqref{eq:linearized wave}. In the definition of $\mu$, the parameter  $\sigma>0$ is chosen as in   \eqref{eq:fmc for d}, and we assume the following condition on $\delta$:
\begin{equation}\label{delta:def}\delta=o_\ep(1),\quad\frac{\ep}{\delta^2}=o_\ep(1).
\end{equation}
\begin{lem}[Global subsolutions to \eqref{eq:pde}] \label{lem:barrier}
 Assume \eqref{eq:fmc for d} with $c_0$ as in \eqref{def:c0}. Let $v^{\ep}$ be defined as in \eqref{eq:barrier defn} 
 with $0<\tilde\sigma<\rho/2$, $R >1$ and $\delta$ satisfying \eqref{delta:def}. 
  Then there exists $R_0=R_0(\sigma)$ and $\ep_0=\ep_0(\sigma)>0$ such that for all $R>R_0$ and $0<\ep<\ep_0$, $v^{\ep}$ satisfies 
\begin{equation}\label{eq:pde sub}
\ep \partial_t v^{\ep} - \mathcal{I}_n^s[v^{\ep} ] +\frac{1}{\ep^{2s}} W'(v^{\ep} ) \leq -\frac{\sigma}{2} \quad \hbox{in}~[t_0,t_0+h] \times \R^n.
\end{equation}
Moreover, there is a constant $\tilde{C}>0$ such that for all $0<\ep < \ep_0$,
\begin{equation}\label{eq:barrier_est}
v^\ep(t,x) \geq 1 - \tilde C \tilde \sigma^{2s}\frac{\ep^{2s} }{\delta^{2s} }\quad \hbox{in}~\left\{ (t,x) \in [t_0,t_0+h] \times \R^n: d(t,x) -\tilde{\sigma} \geq \frac{\delta }{\tilde{\sigma}}\right\}.
\end{equation} 
\end{lem}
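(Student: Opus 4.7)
\medskip

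\noindent\textbf{Proof plan.} The plan is to plug the ansatz \eqref{eq:barrier defn} into the operator $\ep\partial_t - \mathcal{I}_n^s + \ep^{-2s}W'(\cdot)$ and reduce, via cancellations coming from the equations for $\phi$ in \eqref{eq:standing wave} and $\psi$ in \eqref{eq:linearized wave}, to a tractable expression involving only $\partial_t d$, $\bar a_\ep$, $\mu$, and $\dot\phi$. Throughout I write $\eta=d-\tilde\sigma$ and $\xi=\eta/\ep$. First I compute $\ep\partial_t v^\ep=\dot\phi(\xi)\,\partial_t d+\ep^{2s}\dot\psi\,\partial_t d+\ep^{2s+1}\partial_t\psi+\ep^{2s+1}\alpha^{-1}(\partial_t\bar a_\ep-\partial_t\mu)$; using Lemmas \ref{lem:a_estimates}, \ref{lem:psi-reg} and \eqref{mu_estimates}, all but the first term are $o_\ep(1)R$ (uniformly, after the standing assumption \eqref{delta:def}). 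Next, I split the fractional Laplacian linearly. Lemma \ref{lem:a_ep and frac laplacians} together with the identity $C_{n,s}\mathcal{I}^s_1[\phi]=W'(\phi)$ gives $\mathcal{I}^s_n[\phi(\eta/\ep)]=\bar a_\ep[\eta]+\ep^{-2s}W'(\phi(\xi))+O(R^{-2s})+o_\ep(1)$; Lemma \ref{lem:ae psi estimate} gives $\ep^{2s}\mathcal{I}^s_n[\psi]=C_{n,s}\mathcal{I}^s_1[\psi(\cdot;t,x)](\xi)+Ro_\ep(1)$; Corollary \ref{cor: fractional a_ep estimate} and Lemma \ref{lem:mu_estimates} together with \eqref{delta:def} bound $\ep^{2s}\mathcal{I}^s_n[\bar a_\ep-\mu]$ by $o_\ep(1)R$. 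Finally, a Taylor expansion of $W'(v^\ep)$ around $\phi(\xi)$ (justified since the correction $\ep^{2s}[\psi+(\bar a_\ep-\mu)/\alpha]$ is $o_\ep(1)$ by the bounds just listed) yields $W'(v^\ep)/\ep^{2s}=W'(\phi)/\ep^{2s}+W''(\phi)[\psi+(\bar a_\ep-\mu)/\alpha]+O(\ep^{2s}/\delta^{4s})$.

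Collecting terms, the $\ep^{-2s}W'(\phi(\xi))$ pieces cancel, leaving
\[
\ep\partial_t v^\ep-\mathcal{I}^s_n[v^\ep]+\ep^{-2s}W'(v^\ep)=\dot\phi(\xi)\,\partial_t d-\bar a_\ep+\mathcal{L}[\psi](\xi)+\tfrac{W''(\phi)}{\alpha}(\bar a_\ep-\mu)+E_\ep,
\]
where $|E_\ep|=o_\ep(1)R+O(R^{-2s})$. Substituting the equation \eqref{eq:linearized wave} for $\mathcal{L}[\psi](\xi)$ and using $\alpha=W''(0)$, a direct algebraic simplification collapses the right-hand side to
\[
\dot\phi(\xi)\bigl(\partial_t d+c_0\mu-c_0\bar a_\ep\bigr)-\mu+E_\ep.
\]

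The sign is then analyzed in two regimes. In the near region $\{|\eta|<\delta\}\subset Q_\rho$ (which holds since $\tilde\sigma<\rho/2$ and $\delta<\rho/2$ after shrinking $\delta$), I have $\mu=\sigma$, Lemma \ref{thm: b_ep fmc} applied to $\eta=d-\tilde\sigma$ yields $\bar a_\ep[\eta]=\kappa[x,d]+o_\ep(1)+o_\delta(1)+O(R^{-2s})$, and hypothesis \eqref{eq:fmc for d} gives $\partial_t d+c_0\sigma-c_0\bar a_\ep\le o_\ep(1)+o_\delta(1)+O(R^{-2s})$; since $\dot\phi$ is bounded, the bracket is a small error and the leading contribution is $-\sigma$. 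In the far region $\{|\eta|\ge\delta\}$ I use \eqref{eq:asymptotics for phi dot} to bound $\dot\phi(\xi)\le C(\ep/\delta)^{2s+1}$, while $|\partial_t d+c_0\mu-c_0\bar a_\ep|\le C/\delta^{2s}$; the product is therefore $C(\ep/\delta)^{2s+1}/\delta^{2s}=o_\ep(1)$ by \eqref{delta:def}. Since $\sigma>0$ implies $\mu\ge\sigma$ everywhere by \eqref{def:mu_definition}, we again get $-\mu\le -\sigma$ plus small errors. Choosing first $R=R_0(\sigma)$ so that $O(R^{-2s})\le\sigma/8$, and then $\ep_0=\ep_0(\sigma,R_0)$ so that the remaining $o_\ep(1)R_0$ and $o_\delta(1)$ terms are each at most $\sigma/8$, delivers \eqref{eq:pde sub}.

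For the lower bound \eqref{eq:barrier_est}, on the set $\{d-\tilde\sigma\ge \delta/\tilde\sigma\}$ we have $\xi\ge \delta/(\tilde\sigma\ep)\to\infty$ by \eqref{delta:def}, so \eqref{eq:asymptotics for phi} gives $\phi(\xi)\ge 1-C\tilde\sigma^{2s}\ep^{2s}/\delta^{2s}$; the correction $\ep^{2s}\psi$ is controlled by \eqref{psi_far} to be of order $\ep^{2s}/(\delta^{2s}\xi^{2s})=O(\tilde\sigma^{2s}\ep^{4s}/\delta^{4s})$, while $\ep^{2s}(\bar a_\ep-\mu)/\alpha$ is $O(\ep^{2s}/\delta^{2s})$ by Lemma \ref{lem:a_estimates} and \eqref{def:mu_definition}; for $\ep$ small these are absorbed into $\tilde C\tilde\sigma^{2s}\ep^{2s}/\delta^{2s}$. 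The main obstacle I anticipate is the careful bookkeeping of the error terms across the \emph{parameter hierarchy} $\sigma\rightsquigarrow R\rightsquigarrow (\ep,\delta)$ under \eqref{delta:def}, and in particular showing that the transition zone $\{\delta<|\eta|<2\delta\}$—where $\partial_t\mu,\nabla_x\mu$ are as large as $\delta^{-2s-1}$—does not spoil the subsolution inequality; this is handled by the fact that all these large quantities enter multiplied by powers of $\ep$ which, thanks to $\ep/\delta^2=o_\ep(1)$, still produce $o_\ep(1)$ contributions.
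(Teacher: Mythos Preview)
Your proposal is correct and follows essentially the same route as the paper's proof: the same splitting of $\ep\partial_t v^\ep$, $\mathcal{I}^s_n[v^\ep]$, and $\ep^{-2s}W'(v^\ep)$ via Lemmas~\ref{lem:a_ep and frac laplacians}, \ref{lem:ae psi estimate}, Corollary~\ref{cor: fractional a_ep estimate}, Lemma~\ref{lem:mu_estimates}, and a Taylor expansion, the same algebraic collapse to $\dot\phi(\xi)\bigl(\partial_t d+c_0\mu-c_0\bar a_\ep\bigr)-\mu+E_\ep$, and the same two-case analysis (near/far in $|\eta|$) to conclude. The only point where you are slightly looser than the paper is the lower bound \eqref{eq:barrier_est}: the term $\ep^{2s}(\bar a_\ep-\mu)/\alpha$ is not simply ``$O(\ep^{2s}/\delta^{2s})$ absorbed into $\tilde C\tilde\sigma^{2s}\ep^{2s}/\delta^{2s}$''; rather, the $\bar a_\ep$ part is only $O(\ep^{2s})$ (absorbable since $\delta=o_\ep(1)$), while the $-\mu$ part contributes $-\tilde\sigma\,\ep^{2s}/\delta^{2s}$, which is dominated by $\tilde C\tilde\sigma^{2s}\ep^{2s}/\delta^{2s}$ because $\tilde\sigma^{1-2s}$ is bounded---but this is a one-line refinement, not a gap.
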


\begin{proof} 
For convenience, and with a slight abuse of notation, we shall use the following notation throughout the proof:
\begin{equation*}\label{notation_for_main_lemma}
\begin{aligned}
\phi & := \phi\left( \frac{d(t,x)- \tilde{\sigma} }{\ep}\right) \\
\psi & := \psi\left( \frac{d(t,x)- \tilde{\sigma} }{\ep}; t,x\right)\\
\bar a_\ep& := \bar a_\ep\left[d- \tilde{\sigma} \right](t,x) \\
  \mu& := \mu \left[d- \tilde{\sigma} \right](t,x).
\end{aligned}
\end{equation*}
We note that it will be important for the reader to remember the dependence of $\psi$ on the variables $t,x$, and $\xi=(d(t,x)-\tilde\sigma)/\ep$
when taking derivatives in $t$ and $x$. We begin by computing the time derivative of $v^\ep$ at $(t,x)$, which is given by
\begin{align*}
\ep \partial_t v^\ep(t,x) = &\dot{\phi} \partial_t d(t,x)+ \ep^{2s} \dot{\psi} \partial_td(t,x) + \ep^{2s+1} \partial_t \psi + \frac{\ep^{2s+1}}{\alpha} \partial_t \bar{a}_\ep - \frac{\ep^{2s+1}}{\alpha} \partial_t \mu.
\end{align*}
By Lemmas  \ref{lem:a_estimates} and \ref{lem:psi-reg},  and \eqref{mu_estimates},  we have
\begin{align*}
 \ep^{2s} \dot{\psi} \partial_td(t,x) + \ep^{2s+1} \partial_t \psi + \frac{\ep^{2s+1}}{\alpha} \partial_t \bar{a}_\ep- \frac{\ep^{2s+1}}{\alpha} \partial_t \mu&= O\(\frac{\ep^{2s}}{\delta^{2s}}\)+
 O\( \ep^{2s}Ro_\ep(1)\)+O\(\frac{\ep^{2s+1}}{\delta^{2s+1}}\)\\&
 =Ro_\ep(1),
\end{align*}
where we used that  $\ep/\delta^2=o_\ep(1)$ in the last equality. Therefore,  
we have 
\begin{equation}\label{eq:time_derivative_of_v_ep}
\ep \partial_t v^\ep (t,x)= \dot{\phi} \partial_t d (t,x)+Ro_\ep(1).
\end{equation}
Next, we consider the nonlocal term. We compute
\begin{align*}
\mathcal{I}_n^s[v^\ep(t,\cdot)](x) =&  \mathcal{I}_n^s\left[\phi  \right](x) + \ep^{2s} \mathcal{I}_n^s \left[ \psi\right](x) + \frac{\ep^{2s}}{\alpha} \mathcal{I}_n^s[\bar{a}_\ep]
(x) - \frac{\ep^{2s}}{\alpha} \mathcal{I}_n^s[\mu](x).
\end{align*}
Using that $\phi$ satisfies \eqref{eq:standing wave} and  Lemma \ref{lem:a_ep and frac laplacians},   we get
\begin{align*}
\mathcal{I}_n^s\left[\phi \right](x) &=  \mathcal{I}_n^s\left[\phi \right](x) 
 - \frac{C_{n,s}}{\ep^{2s}} \mathcal{I}_1^s[\phi]\left( \frac{d(t,\cdot) -\tilde{\sigma}}{\ep} \right) +\frac{C_{n,s}}{\ep^{2s}} \mathcal{I}_1^s[\phi]\left( \frac{d(t,\cdot) -\tilde{\sigma}}{\ep} \right) \\
& = \bar a_\ep + O(R^{-2s}) +  o_\ep(1) + \frac{1}{\ep^{2s}}W'(\phi).
\end{align*}
Recalling  \eqref{eq:linearized operator} and that $\alpha = W''(0)$, and using that $\psi$ solves  \eqref{eq:linearized wave}, we find that 
\begin{align*}
\ep^{2s}\mathcal{I}_n^s[\psi](x) =&\ep^{2s}\mathcal{I}_n^s[\psi](x) - C_{n,s}\mathcal{I}^s_1[\psi]\left( \frac{d(t,x) - \tilde{\sigma}}{\ep} \right) + {W}''(\phi) \psi - \mathcal{L}[\psi]\left( \frac{d(t,x) - \tilde{\sigma}}{\ep} \right)\\
=& \ep^{2s}\mathcal{I}_n^s[\psi](x) - C_{n,s}\mathcal{I}^s_1[\psi]\left( \frac{d(t,x) - \tilde{\sigma}}{\ep} \right) + {W}''(\phi) \psi 
\\&- \left(c_0 \dot{\phi} + \frac{W''\left( \phi \right) }{\alpha} -1\right)  [\mu-\bar{a}_{\ep}]. 
\end{align*}
Since $\ep/\delta^2=o_\ep(1)$, we can apply Lemma \ref{lem:ae psi estimate} and thereby obtain
\begin{align*}
\ep^{2s}\mathcal{I}_n^s[\psi](x) =  {W}''(\phi) \psi -  \left(c_0 \dot{\phi} + \frac{W''\left( \phi \right)}{\alpha}-1 \right)  [\mu-\bar{a}_{\ep}]+ Ro_\ep(1).
\end{align*}
Using Corollary \ref{cor: fractional a_ep estimate}, we also get
\begin{align*}
\ep^{2s}\mathcal{I}_n^s[\bar{a}_\ep](x) = Ro_\ep(1).
\end{align*}
Finally, by Lemma \ref{lem:mu_estimates}, and using again that $\ep/\delta^2=o_\ep(1)$, we have
\begin{align*}
\ep^{2s}\mathcal{I}_n^s[\mu](x) =  O\left(\frac{\ep^{2s}}{\delta^{4s}}\right)=o_\ep(1).
\end{align*}
Therefore, the fractional Laplacian of $v^\ep$ can be written as
\begin{equation}\label{eq:fractional_Laplacian_of_v_ep}
\begin{split}
 \mathcal{I}_n^s[v^\ep(t,\cdot)](x) =\, & \bar{a}_\ep + \frac{1}{\ep^{2s}}W'(\phi) +   {W}''(\phi) \psi -  \left(c_0 \dot{\phi} + \frac{W''\left( \phi \right)}{\alpha} -1\right)  [\mu-\bar{a}_{\ep}]\\& +  O(R^{-2s}) +Ro_\ep(1). 
\end{split}
\end{equation}
Next, we  compute $W'(v^\ep(t,x))$. 
To this end, we perform a Taylor expansion of $W'$ around $\phi$,  which yields
\begin{align*}
W'(v^\ep(t,x)) =& W'(\phi) + \ep^{2s}W''(\phi) \left( \psi + \frac{\bar{a}_\ep}{\alpha} - \frac{\mu}{\alpha}  \right) +
 O \left( \ep^{4s}\left( \psi + \frac{\bar{a}_\ep }{\alpha} - \frac{\mu}{\alpha}  \right)^2 \right).
\end{align*}
By  the estimates for $\bar{a}_\ep$ and $\psi$ in Lemmas \ref{lem:a_estimates} and  \ref{lem:psi-reg}, respectively, and recalling the definition of $\mu$ in \eqref{def:mu_definition},  we get
\begin{equation}\label{eq:W'_of_v_ep}
\begin{split}\frac{1}{\ep^{2s}}W'(v^\ep(t,x))& = \frac{1}{\ep^{2s}}W'(\phi) + W''(\phi) \left( \psi + \frac{\bar{a}_\ep}{\alpha} - \frac{\mu}{\alpha}  \right) +O\left( \frac{\ep^{2s}}{\delta^{4s}} \right)\\&
=\frac{1}{\ep^{2s}}W'(\phi) + W''(\phi) \left( \psi + \frac{\bar{a}_\ep}{\alpha} - \frac{\mu}{\alpha}  \right) +o_\ep(1). 
\end{split}
\end{equation}
Combining \eqref{eq:time_derivative_of_v_ep}, \eqref{eq:fractional_Laplacian_of_v_ep} and \eqref{eq:W'_of_v_ep},  we get
\begin{align*}
\mathcal{J}[v^\ep](t,x):= & \ep \partial_t v^{\ep} (t,x)- \mathcal{I}_n^s[v^{\ep}(t,\cdot) ](x) +\frac{1}{\ep^{2s}} W'(v^{\ep}(t,x))  \\
= & \dot{\phi} \partial_t d(t,x)  \\&
 -\bar{a}_\ep - \frac{1}{\ep^{2s}}W'(\phi) -  {W}''(\phi) \psi +  \left(c_0 \dot{\phi} + \frac{W''\left( \phi \right) }{\alpha} -1\right)  [\mu-\bar{a}_{\ep}]\\&
+ \frac{1}{\ep^{2s}}W'(\phi) + W''(\phi) \left( \psi + \frac{\bar{a}_\ep}{\alpha} - \frac{\mu}{\alpha}  \right) \\&
+ O(R^{-2s})+Ro_\ep(1). 
\end{align*}
Grouping and canceling terms, we obtain
\begin{equation}\label{eq:final_J_of_v_ep}
\begin{split}
\mathcal{J}[v^\ep](t,x) = \dot{\phi}[\,\partial_t d(t,x)
        - c_0 \,\bar{a}_\ep
        + c_0\, \mu]  - \mu+ O(R^{-2s})+Ro_\ep(1).
\end{split}
\end{equation}
We now consider two cases:  $|d(t,x)-\tilde{\sigma}| < \delta$ and $|d(t,x)-\tilde{\sigma}| \geq \delta$.
\medskip

\noindent
\textbf{Case 1:} $|d(t,x)-\tilde{\sigma}|<\delta$. 

Since $\tilde\sigma<\rho/2$, the level set  $\{d(t,\cdot)=\tilde\sigma\}$ is a smooth surface,  and we are in a position to apply Lemma  \ref{thm: b_ep fmc} to $d-\tilde{\sigma}$ which is 
a   smooth extension of its distance function.  Recalling the definition of $\mu$ in \eqref{def:mu_definition}, we also  have that 
 $\mu = \sigma$.  Using that $\dot{\phi} \geq 0$, that $d$ solves \eqref{eq:fmc for d},   Lemma \ref{thm: b_ep fmc}, and that $\delta=o_\ep(1)$, we obtain
\begin{align*}
     \dot{\phi} \left[ \partial_t d(t,x) - c_0 \bar{a}_\ep + c_0 \sigma \right] = & \dot{\phi} \left( \partial_t d(t,x) - c_0 \kappa[x, d(t,\cdot)-\tilde{\sigma}]+ c_0 \sigma+O(R^{-2s})+o_\ep(1)\right) \\
      =&\dot{\phi} \left( \partial_t d(t,x) - c_0 \kappa[x, d(t,\cdot)]+ c_0 \sigma+O(R^{-2s})+o_\ep(1)\right)\\
      \le & \dot{\phi} \left(O(R^{-2s})+o_\ep(1) \right).
\end{align*}
Thus, by \eqref{eq:final_J_of_v_ep}, 
\begin{align*}
\mathcal{J}[v^\ep](t,x) &\le Ro_\ep(1) + O(R^{-2s}) - \sigma.
\end{align*}
Choosing $R_0=R_0(\sigma)$ sufficiently large so that for all $R>R_0$ we have 
 $ |O(R^{-2s}) |\leq \sigma/4$,  then selecting  $\ep_0=\ep_0(R_0,\sigma)=\ep_0(\sigma)$ small enough so that for all $0< \ep< \ep_0$, we have   $|Ro_\ep(1)|\leq \sigma/4$, we obtain
\begin{align*}
\mathcal{J}[v^\ep](t,x) \le -\frac{\sigma}{2}.
\end{align*}
This proves \eqref{eq:pde sub} for Case 1.

\medskip

\noindent
\textbf{Case 2:} $|d(t,x)-\tilde{\sigma}| \geq \delta$. 

By  estimate \eqref{eq:asymptotics for phi dot} for $\dot{\phi}$, we have
\begin{align*}
\dot{\phi}\left( \frac{d(t,x)-\tilde{\sigma}}{\ep} \right) \le C \frac{\ep^{2s+1}}{\delta^{2s+1}},
\end{align*}
which combined with \eqref{eq:final_J_of_v_ep}, the estimate for $\bar{a}_\ep$  in Lemma~\ref{lem:a_estimates} and the definition of $\mu$ in  \eqref{def:mu_definition},  gives 
\begin{align*}
\mathcal{J}[v^\ep](t,x) & \leq C\frac{\ep^{2s+1}}{\delta^{4s+1}}+
Ro_\ep(1) +  O( R^{-2s}) - \mu
\le Ro_\ep(1) +  O( R^{-2s}) - \sigma,
\end{align*}
 where we also used that  $\mu \geq \sigma$ and $\ep/\delta^2=o_\ep(1)$.  Arguing as in Case 1, \eqref{eq:pde sub}  follows. 
\medskip

We finally show \eqref{eq:barrier_est}. Let $\tilde{\rho}:= \delta /\tilde \sigma$.
 Fix $(t,x)$ such that $d(t,x) - \tilde{\sigma} \geq \tilde{\rho} $. Using \eqref{eq:asymptotics for phi},
  \eqref{a_near},  \eqref{psi_far}, condition \eqref{delta:def}, and that 
  $\mu/\alpha\leq \sigma/(\alpha\delta^{2s})= \tilde \sigma/\delta^{2s}$, we get
  
\begin{align*}
v^\ep(t,x) &\geq H\left(\frac{d(t,x)- \tilde{\sigma}}{\ep} \right) - C\frac{\ep^{2s}}{|d(t,x) - \tilde{\sigma}|^{2s}}  - C\frac{\ep^{4s}}{\delta^{2s}|d(t,x) - \tilde{\sigma}|^{2s}} \\
& \qquad - C\ep^{2s} - \frac{\tilde\sigma \ep^{2s}}{\delta^{2s}}\\
& \geq 1 - C \frac{\ep^{2s}}{\tilde\rho^{2s}}- C\frac{\ep^{4s}}{\delta^{2s}\tilde\rho^{2s}} - C\ep^{2s}- 
\frac{\tilde\sigma\ep^{2s}}{\delta^{2s}}\\
& = 1-C\tilde\sigma^{2s}\frac{\ep^{2s}}{\delta^{2s}}-\(C\frac{\tilde\sigma^{2s}\ep^{2s}}{\delta^{2s}}-C\delta^{2s} -\tilde\sigma\)\frac{\ep^{2s}}{\delta^{2s}}\\
&\geq 1 - C\tilde\sigma^{2s}\frac{\ep^{2s}}{\delta^{2s}}-\frac{\tilde\sigma\ep^{2s}}{2\delta^{2s}}\\
&\geq 1 - \tilde C\tilde\sigma^{2s}\frac{\ep^{2s}}{\delta^{2s}}, 
\end{align*} 
for some $\tilde C>0$ and $\ep$ sufficiently small.
\end{proof}

 \subsection{Local subsolutions}

We now construct local subsolutions to \eqref{eq:pde}. For $t\in[t_0,t_0+h]$, let  $\Omega_t$ be a bounded open set. Assume that there exists positive constants $r',\,\rho$ and $x_0\in\R^n$  such that the signed distance function $\tilde{d}(t,x)$ associated to the set  $ \Omega_t $ is smooth in the set 
 $Q_{2\rho}\cap ([t_0,t_0+h]\times B(x_0,r'))$ (recall \eqref{eq:Q2rho}).
Let $d$ denote the bounded extension of $\tilde{d}$ outside of $Q_{\rho}$  as defined in Definition \ref{defn:extension},
then $d$ is smooth in  $[t_0,t_0+h]\times B(x_0,r')$. Assume that 
there exist $\sigma>0$ and  $0<r<r'$ such that,
\begin{equation}\label{eq:fmc for d-local}
\partial_td \leq c_0 \kappa[x,d(t, \cdot)] - c_0 \sigma  \quad \hbox{in}~Q_\rho\cap ([t_0,t_0+h]\times  B(x_0,r)).
\end{equation}
Let $\alpha$ and $\tilde{\sigma}$ be defined  as in \eqref{alphasigma}. 
The following lemma is the local version of Lemma \ref{lem:barrier}. 

\begin{lem}[Local subsolutions to \eqref{eq:pde}]  \label{lem:barrier-barrier}
 Assume that $d$ is smooth in  $[t_0,t_0+h]\times B(x_0,r')$ and that \eqref{eq:fmc for d-local} holds with $c_0$ defined in \eqref{def:c0} and $0<r<r'$. 
Let   $v^{\ep}$ be defined as in \eqref{eq:barrier defn} 
 with $0<\tilde\sigma<\rho/2$, $R>1$ and $\delta$ satisfying \eqref{delta:def}. 
  Then there exists $R_0=R_0(\sigma)$ and $\ep_0=\ep_0(r,r', \sigma)>0$ such that for all $R>R_0$ and $0<\ep<\ep_0$, $v^{\ep}$ satisfies 
\[
\ep \partial_t v^{\ep} -\mathcal{I}^s_n[v^{\ep} ] - \frac{1}{\ep^{2s}} W'(v^{\ep} ) \leq  -\frac{\sigma}{2}
\quad \hbox{in}~[t_0,t_0+h] \times B(x_0,r). 
\]
\end{lem}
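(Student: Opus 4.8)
The plan is to reduce the local statement (Lemma~\ref{lem:barrier-barrier}) to the already-proven global one (Lemma~\ref{lem:barrier}), inspecting exactly which parts of the proof of Lemma~\ref{lem:barrier} are \emph{local} in nature and which used the global smoothness of $d$. The point is that in Lemma~\ref{lem:barrier} the computation splits into Case~1 ($|d(t,x)-\tilde\sigma|<\delta$, i.e.\ $x$ near the shifted front) and Case~2 ($|d(t,x)-\tilde\sigma|\ge\delta$, i.e.\ $x$ away from it). Case~2 never used the fact that $d$ solves the fractional mean curvature inequality; it only used the asymptotic decay of $\dot\phi$ (estimate~\eqref{eq:asymptotics for phi dot}), the global bound on $\bar a_\ep$ (Lemma~\ref{lem:a_estimates}), and the definition of $\mu$. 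These all remain valid here since $\bar a_\ep$, $\psi$, $\mu$ and their derivative estimates are stated for all $(t,x)\in[t_0,t_0+h]\times\R^n$ regardless of where $d$ is smooth (the extension $d$ in Definition~\ref{defn:extension} is globally smooth and bounded once we are away from $Q_\rho$; and $\bar a_\ep,\psi,\mu$ are defined globally). So Case~2 of Lemma~\ref{lem:barrier-barrier} is \emph{verbatim} Case~2 of Lemma~\ref{lem:barrier}.

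For Case~1 one must be more careful, because \eqref{eq:fmc for d-local} is assumed only in $Q_\rho\cap([t_0,t_0+h]\times B(x_0,r))$, and Lemma~\ref{thm: b_ep fmc} (the statement that $\bar a_\ep[d]\simeq\kappa[x,d]$) requires $d-\tilde\sigma$ to be a smooth extension of a distance function near the relevant level set. So I would first restrict attention to $x\in B(x_0,r)$: there, by hypothesis $d$ is smooth, \eqref{eq:fmc for d-local} holds, and if in addition $|d(t,x)-\tilde\sigma|<\delta$ with $\delta$ small (guaranteed by \eqref{delta:def} once $\ep$ is small) then $x$ lies in a small neighborhood of the smooth surface $\{d(t,\cdot)=\tilde\sigma\}$, which is itself within $B(x_0,r')$ for $\ep$ small depending on $r,r'$ — this is where the dependence $\ep_0=\ep_0(r,r',\sigma)$ enters. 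On that neighborhood we may apply Lemma~\ref{thm: b_ep fmc} to $d-\tilde\sigma$, exactly as in Lemma~\ref{lem:barrier}, giving $\bar a_\ep=\kappa[x,d-\tilde\sigma]+O(R^{-2s})+o_\ep(1)+o_\delta(1)=\kappa[x,d]+\cdots$ (using Remark~\ref{Kdextensionrem}-type invariance of $\kappa$ under the shift, as in the original proof), and then the computation $\dot\phi[\partial_t d-c_0\bar a_\ep+c_0\sigma]\le\dot\phi(O(R^{-2s})+o_\ep(1))$ goes through because $\mu=\sigma$ there. Plugging into the already-derived identity \eqref{eq:final_J_of_v_ep} — which is an algebraic identity valid pointwise at any $(t,x)$, independent of where $d$ is smooth — yields $\mathcal{J}[v^\ep](t,x)\le Ro_\ep(1)+O(R^{-2s})-\sigma$, and choosing $R_0(\sigma)$ then $\ep_0$ as before gives $\mathcal{J}[v^\ep]\le-\sigma/2$ on $B(x_0,r)$.

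The only genuinely new bookkeeping is handling $x\in B(x_0,r)$ with $|d(t,x)-\tilde\sigma|<\delta$ but where I cannot \emph{a priori} invoke Lemma~\ref{thm: b_ep fmc} because the level set $\{d(t,\cdot)=\tilde\sigma\}$ might not be smooth \emph{throughout} the region needed to evaluate the nonlocal integrals defining $\bar a_\ep$. This is the main obstacle, and the resolution is precisely the choice of $\ep_0$ depending on $r$ and $r'$: since $\tilde\sigma<\rho/2$ and $\delta\to0$, the set $\{x\in B(x_0,r):|d(t,x)-\tilde\sigma|<\delta\}$ is, for $\ep$ small, contained in $Q_\rho$; and the proof of Lemma~\ref{thm: b_ep fmc} in Section~\ref{sec:proof of b_ep fmc} only needs smoothness of the distance function in a fixed neighborhood $Q_{2\rho}$ of the front together with the decay of $\phi$ to localize the integrals — that localization argument needs $d$ smooth on $[t_0,t_0+h]\times B(x_0,r')$ with $r'>r$ fixed, which is exactly our hypothesis. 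One should also note that estimates \eqref{eq:time_derivative_of_v_ep}, \eqref{eq:fractional_Laplacian_of_v_ep} and \eqref{eq:W'_of_v_ep} all rely on Lemmas~\ref{lem:a_estimates}, \ref{lem:psi-reg}, \ref{lem:ae psi estimate}, \ref{lem:mu_estimates} and Corollary~\ref{cor: fractional a_ep estimate}, which are global and so hold unchanged, so the derivation of \eqref{eq:final_J_of_v_ep} requires no modification. Thus the proof reduces to:
\begin{equation*}
\textrm{(Case 2 unchanged)}\ \cup\ \textrm{(Case 1 on }B(x_0,r)\textrm{, via Lemma~\ref{thm: b_ep fmc} with }\ep_0=\ep_0(r,r',\sigma)\textrm{)},
\end{equation*}
and I would write it by referring back to the proof of Lemma~\ref{lem:barrier}, indicating only the two points above where locality matters.
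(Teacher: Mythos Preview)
Your proposal is correct and takes essentially the same approach as the paper, which simply states that the proof follows similarly to that of Lemma~\ref{lem:barrier}. You have in fact provided considerably more detail than the paper does: the paper gives a one-line proof, whereas you correctly identify that the derivation of~\eqref{eq:final_J_of_v_ep} relies only on global estimates (Lemmas~\ref{lem:a_estimates}, \ref{lem:psi-reg}, \ref{lem:ae psi estimate}, \ref{lem:mu_estimates}, Corollary~\ref{cor: fractional a_ep estimate}) and so carries over unchanged, that Case~2 is verbatim, and that Case~1 requires only that the level set $\{d(t,\cdot)=\tilde\sigma\}$ be smooth near $x\in B(x_0,r)$, which is guaranteed for $\ep$ small depending on $r,r'$ since $d$ is smooth on $[t_0,t_0+h]\times B(x_0,r')$---this is precisely where the dependence $\ep_0=\ep_0(r,r',\sigma)$ originates.
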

\begin{proof} 
The proof follows similarly as in the proof of Lemma \ref{lem:barrier}.

\end{proof}

\section{Proof of Theorem \ref{thm:main_result}}\label{sec:proof main_result}
\begin{proof} 
    We apply an adaptation of the abstract method in \cite{Barles-Souganidis,Barles-DaLio} as described in Section \ref{sec:flows}.
    Let $ \delta >0$  satisfy \eqref{delta:def}. Define the open sets
    \begin{align*}
    D &= \operatorname{Int}\left\{ (t,x) \in (0,\infty) \times\R^n : \liminf_{\ep \to 0}{_*} \frac{u^{\ep}(t,x) - 1}{\ep^{2s}\delta^{-2s}} \geq 0 \right\} \subset (0,\infty)\times\R^n\\[.5em]
    E &= \operatorname{Int}\left\{ (t,x) \in (0,\infty) \times\R^n : \limsup_{\ep \to 0}{^*}\frac{u^{\ep}(t,x)}{\ep^{2s}\delta^{-2s}} \leq 0 \right\} \subset (0,\infty)\times\R^n.
    \end{align*}

To define the traces of $D$ and $E$ in $\{0\} \times\R^n$, we first define the functions $\underline{\chi}, \overline{\chi}:(0,\infty) \times \R^n \to \{-1,1\}$, respectively, by
\[
\underline{\chi} = \one_{D} - \one_{(D)^c} \quad \hbox{and} \quad 
\overline{\chi}= \one_{(E)^c} - \one_{E}.
\]
Since $D$ is open, $\underline{\chi}$ is lower semicontinuous, and since $(E)^c$ is closed, $\overline{\chi}$ is upper semicontinuous. To ensure that $\overline{\chi}$ and $\underline{\chi}$ remain lower and upper semicontinuous, respectively, at $t=0$, we set
\[
\underline{\chi}(0,x) = \liminf_{t\to 0,~y\to x} \underline{\chi}(t,y) \quad \hbox{and} \quad
\overline{\chi}(0,x) = \limsup_{t\to 0,~y\to x} \overline{\chi}(t,y).
\]
Define the traces $D_0$ and $E_0$ by
\begin{align*}
D_0 = \{x \in \R^n : \underline{\chi}(0,x) = 1 \} \quad \hbox{and} \quad
E_0 = \{x \in \R^n : \overline{\chi}(0,x) = -1 \}.
\end{align*}
Note that $D_0 $ and $E_0 $ are open sets. 
For $t > 0$, define the sets $D_t$ and $E_t$ by
\[
D_t = \{x \in \R^n : (t,x) \in D\}  
\quad \hbox{and} \quad
E_t = \{x \in \R^n : (t,x) \in E\}. 
\]
We need the following propositions for the abstract method. Their proofs are delayed until the end of the section. 

\begin{prop}[Initialization]\label{prop:initialization}
\[
\Omega_0 \subset D_0
\quad \hbox{and} \quad
(\overline{\Omega}_0)^c \subset E_0.
\]
\end{prop}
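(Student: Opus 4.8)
The plan is to prove the two inclusions by symmetric barrier arguments; I describe $\Omega_0\subset D_0$ in detail, the inclusion $(\overline{\Omega}_0)^c\subset E_0$ following identically with supersolutions in place of subsolutions. Fix $x_0\in\Omega_0$ and pick $r_0>0$ with $\overline{B(x_0,3r_0)}\subset\Omega_0$; $r_0$ will be taken small below, dominated by the universal constants of Lemmas~\ref{lem:asymptotics}, \ref{lem:a_estimates} and \ref{lem:psi-reg}. For a sequence $\sigma_m\downarrow0$, each $\sigma_m$ small enough for the hypotheses of Lemma~\ref{lem:barrier}, I take the shrinking ball $\Sigma_t^m:=\overline{B(x_0,2r_0-K_m t)}$ with $K_m:=c_0\,\omega\,2^{2s}r_0^{-2s}+c_0\sigma_m+1$, $\omega$ as in Proposition~\ref{ballFMC}. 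By Proposition~\ref{ballFMC} and Remark~\ref{Kdextensionrem}, the extended signed distance $d_m$ to $\Sigma_t^m$ (Definition~\ref{defn:extension}, collar width $\rho<r_0/4$) satisfies $\partial_t d_m\le c_0\kappa[x,d_m(t,\cdot)]-c_0\sigma_m$ in $Q_\rho$, i.e. \eqref{eq:fmc for d} holds. Since $\sup_m K_m=:K^\ast<\infty$, with $h:=r_0/K^\ast$ one has $\Sigma_t^m\subset\overline{B(x_0,r_0)}\subset\Omega_0$ for all $t\in[0,h]$ and all $m$. Applying Lemma~\ref{lem:barrier} with $t_0=0$ (its proof is insensitive to the value of $t_0$; alternatively run it on $[t_0,t_0+h]$ and let $t_0\to0^+$) gives, for each $m$, some $\ep_0^m>0$ such that for $0<\ep<\ep_0^m$ the function $v^\ep_m$ of \eqref{eq:barrier defn} built from $d_m$ is a strict subsolution of \eqref{eq:pde} on $[0,h]\times\R^n$ and $v^\ep_m(t,x)\ge1-\tilde C\tilde\sigma_m^{2s}\ep^{2s}\delta^{-2s}$ on $\{d_m(t,x)-\tilde\sigma_m\ge\delta/\tilde\sigma_m\}$, where $\tilde\sigma_m:=\sigma_m/\alpha$.

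The crux is the initial bound $v^\ep_m(0,\cdot)\le u_0^\ep$ on all of $\R^n$. From Lemmas~\ref{lem:a_estimates} and \ref{lem:psi-reg}, the definition \eqref{def:mu_definition} of $\mu$, and \eqref{delta:def} one gets, uniformly in $x$,
\[
v^\ep_m(0,x)\ \le\ \phi\big((d_m(0,x)-\tilde\sigma_m)/\ep\big)+C\ep^{2s}-\tfrac{\ep^{2s}}{\alpha}\,\mu[d_m-\tilde\sigma_m](0,x).
\]
I then compare region by region. On $\Sigma_0^m$ one has $d_m(0,x)\le2\rho<r_0\le 3r_0-|x-x_0|\le d^0(x)$, so the two layer profiles are monotonically ordered and, by the sharp asymptotics \eqref{eq:asymptotics for phi}, separated by a gap $u_0^\ep(x)-\phi((d_m(0,x)-\tilde\sigma_m)/\ep)\gtrsim\ep^{2s}r_0^{-2s}$ (even larger on the thin shell $\{|d_m(0,\cdot)-\tilde\sigma_m|\lesssim\delta\}$, which sits inside $\Sigma_0^m$); taking $r_0$ small makes this gap absorb the leftover $O(\ep^{2s})$. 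Off $\Sigma_0^m$, if $x\in\Omega_0$ then $u_0^\ep(x)\ge1/2$ and the bound is trivial for $\ep$ small; if $x\notin\overline{\Omega}_0$, then $|d_m(0,x)-\tilde\sigma_m|\ge\tilde\sigma_m\ge2\delta$, so $-\tfrac{\ep^{2s}}{\alpha}\mu\le-\tilde\sigma_m\ep^{2s}\delta^{-2s}$, a negative quantity $\gg\ep^{2s}$ by \eqref{delta:def}; this drives $v^\ep_m(0,x)$ below $u_0^\ep(x)$ — either through the monotone ordering $d_m(0,x)-\tilde\sigma_m\le d^0(x)$ (valid where $\tilde d_m(0,x)\ge-2\rho$, since there $\tilde d_m(0,x)\le d^0(x)-r_0$) together with the negative $\mu$-term killing the $C\ep^{2s}$, or, where $d_m$ is capped at $-2\rho$, by simply forcing $v^\ep_m(0,x)<0\le u_0^\ep(x)$. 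Hence $v^\ep_m(0,\cdot)\le u_0^\ep$, and the comparison principle in $\R^n$ gives $u^\ep\ge v^\ep_m$ on $[0,h]\times\R^n$.

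To conclude, for $(t,x)\in(0,h)\times B(x_0,r_0/2)$ and $\ep$ small one has $d_m(t,x)-\tilde\sigma_m=2\rho-\tilde\sigma_m\ge\rho>\delta/\tilde\sigma_m$, so $u^\ep(t,x)\ge v^\ep_m(t,x)\ge1-\tilde C\tilde\sigma_m^{2s}\ep^{2s}\delta^{-2s}$, hence $\liminf_{\ep\to0}{_*}\,\frac{u^\ep(t,x)-1}{\ep^{2s}\delta^{-2s}}\ge-\tilde C\tilde\sigma_m^{2s}$ on this set for every $m$. Letting $m\to\infty$ ($\tilde\sigma_m\to0$) yields $\liminf_{\ep\to0}{_*}\,\frac{u^\ep-1}{\ep^{2s}\delta^{-2s}}\ge0$ on the open set $(0,h)\times B(x_0,r_0/2)$, so this set is contained in $D$; therefore $\underline\chi\equiv1$ there, so $\underline\chi(0,x_0)=\liminf_{t\to0,\,y\to x_0}\underline\chi(t,y)=1$, i.e. $x_0\in D_0$. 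Since $x_0\in\Omega_0$ was arbitrary, $\Omega_0\subset D_0$. For $(\overline{\Omega}_0)^c\subset E_0$ I fix $x_0\notin\overline{\Omega}_0$, set $r_0:=d(x_0,\overline{\Omega}_0)/4>0$, use the shrinking balls $B(x_0,2r_0-K_m t)$ — whose complements contain $\Omega_0$ at $t=0$ and whose signed distances $\hat d_m$ satisfy $\partial_t\hat d_m\ge c_0\kappa[x,\hat d_m]+c_0\sigma_m$ in $Q_\rho$ — together with the supersolution analogue of Lemma~\ref{lem:barrier}, and argue symmetrically.

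I expect the main obstacle to be precisely the pointwise initial comparison $v^\ep_m(0,\cdot)\le u_0^\ep$ on $\R^n$: near the interface the two layer profiles must be ordered with a margin large enough to swallow the $O(\ep^{2s})$ corrector terms, which forces $r_0$ small and requires the sharp decay rates of Lemma~\ref{lem:asymptotics}; far from the interface one must instead exploit that the $\mu$-term in \eqref{eq:barrier defn} has size $\tilde\sigma_m\ep^{2s}\delta^{-2s}\gg\ep^{2s}$ so that it drives $v^\ep_m(0,\cdot)$ below $u_0^\ep$; and the transition shell $\{|d_m(0,\cdot)-\tilde\sigma_m|\lesssim\delta\}$, where neither mechanism is plainly dominant, needs the $\phi$-gap to be balanced carefully against the $\psi$-estimates of Lemma~\ref{lem:psi-reg}.
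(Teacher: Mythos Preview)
Your approach is essentially the paper's: shrinking balls centred at $x_0$, the global barrier of Lemma~\ref{lem:barrier}, the comparison principle, and then $\sigma\to0$. The paper organises the initial comparison $v^\ep(0,\cdot)\le u_0^\ep$ by the size of $|\tilde d(0,x)-\tilde\sigma|$ relative to $2\delta$ (and then by $|\tilde d(0,x)|$ relative to $\rho$), while you split into $x\in\Sigma_0^m$, $x\in\Omega_0\setminus\Sigma_0^m$, $x\notin\overline{\Omega}_0$; the mechanisms are the same.

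One point needs correction. In your treatment of $x\in\Sigma_0^m$ outside the thin shell, you write that the $\phi$–gap is $\gtrsim\ep^{2s}r_0^{-2s}$ and propose ``taking $r_0$ small'' to absorb the leftover $O(\ep^{2s})$. This is not safe: the constants in Lemmas~\ref{lem:a_estimates} and \ref{lem:psi-reg} depend on the geometry of $d$, and for a ball of radius $\sim r_0$ one has $\|D^2d\|_{Q_{2\rho}}\sim r_0^{-1}$ and $|\kappa[x,d]|\sim r_0^{-2s}$ (Proposition~\ref{ballFMC}), so the hidden constant in your ``leftover $O(\ep^{2s})$'' already scales like $r_0^{-2s}$. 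Shrinking $r_0$ therefore does not create a margin. The fix is already in your own upper bound: off the shell one has $|d_m-\tilde\sigma_m|\ge2\delta$, hence $\mu/\alpha=\tilde\sigma_m/\delta^{2s}$, and the term $-\tilde\sigma_m\ep^{2s}/\delta^{2s}$ dominates any $C\ep^{2s}$ for $\ep$ (hence $\delta$) small, regardless of how $C$ depends on $r_0$. This is exactly how the paper handles Case~2; on the shell itself the $\phi$–gap is of order $\ep^{2s}/\delta^{2s}$ (not $\ep^{2s}/r_0^{2s}$) and wins for the same reason, matching the paper's Case~1. Once you replace ``take $r_0$ small'' by this $\mu$--argument, the proof is complete and coincides with the paper's.
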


\begin{prop}[Propagation]\label{prop:sub/sup flows}
$(D_t)_{t>0}$ is a generalized super-flow, 
and  $((E_t)^c)_{t>0}$ is a generalized sub-flow, according to Definition \ref{def:generalized_flows}.
\end{prop}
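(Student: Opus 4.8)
The plan is to prove the statement for $(D_t)_{t>0}$; the claim for $((E_t)^c)_{t>0}$ follows by the symmetric argument, with the supersolution analogue of Lemma~\ref{lem:barrier-barrier} replacing the subsolution version and the set $E$ replacing $D$. Fix $(t_0,x_0)\in(0,\infty)\times\R^n$, $h,r>0$ and a smooth $\varphi$ satisfying \ref{item:i}--\ref{item:v} of Definition~\ref{def:generalized_flows}, set $\Omega_t^\varphi:=\{x:\varphi(t,x)>0\}$, and aim to show $\{x\in B(x_0,r):\varphi(t_0+h,x)>0\}\subset D_{t_0+h}$. The strategy is to slide a slightly retracted copy of the barrier~\eqref{eq:barrier defn}, built from the signed distance function $d$ to $\Omega_t^\varphi$, underneath $u^\ep$ on $[t_0,t_0+h]\times B(x_0,r)$, and then read off the inclusion from the lower bound~\eqref{eq:barrier_est} together with the definition of $D$.

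First, the geometric reduction. By the non-degeneracy condition~\ref{item:iii} and the smoothness of $\varphi$, the zero set $\Gamma_t^\varphi=\{\varphi(t,\cdot)=0\}$ is, for $t\in[t_0,t_0+h]$, a smooth hypersurface in a neighborhood of $\overline B(x_0,r)$; hence there are $r'>r$ and $\rho>0$ such that $d=d(t,x)$, the extension (Definition~\ref{defn:extension}) of the signed distance to $\Omega_t^\varphi$, is smooth in $Q_{2\rho}\cap([t_0,t_0+h]\times B(x_0,r'))$. On $\Gamma_t^\varphi$ one has $\partial_t d=\partial_t\varphi/|\nabla\varphi|$, $\kappa[x,d]=\kappa[x,\varphi]$, and $F^\ast\big(x,\nabla\varphi,\{z:\varphi(t,x+z)>\varphi(t,x)\}\big)=-c_0|\nabla\varphi|\,\kappa[x,\varphi]$, since there the set occurring in $F^\ast$ equals $\{z:\varphi(t,x+z)>0\}=\{z:d(t,x+z)>0\}$ (and $D^c\cap\{\nabla\varphi\cdot z\ge0\}$ differs from the set defining $\kappa^-$ by a $\nu$--null set). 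Thus the speed condition~\ref{item:ii} becomes $\partial_t d\le c_0\kappa[x,d]-c_0\sigma_0$ on $\bigcup_{t\in[t_0,t_0+h]}\{t\}\times\Gamma_t^\varphi$, for some $\sigma_0>0$ depending on $\tau$ and $\varphi$; because both sides are continuous in $(t,x)$ wherever $d$ is smooth, this strict inequality propagates to $Q_\rho\cap([t_0,t_0+h]\times B(x_0,r))$ after shrinking $\rho$, $r$ and replacing $\sigma_0$ by some $\sigma>0$, which is precisely hypothesis~\eqref{eq:fmc for d-local}.

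We are then in the setting of Lemma~\ref{lem:barrier-barrier}. Choosing $\delta=\delta(\ep)$ as in~\eqref{delta:def}, $R>R_0(\sigma)$ and a small retraction $\tilde\sigma=\sigma/W''(0)\in(0,\rho/2)$ (cf.~\eqref{alphasigma}), the barrier $v^\ep$ of~\eqref{eq:barrier defn} attached to $d$ and $\tilde\sigma$ is a strict subsolution of~\eqref{eq:pde} in $[t_0,t_0+h]\times B(x_0,r)$ for $\ep<\ep_0$, and we compare it with $u^\ep$ via the comparison principle in $B(x_0,r)$. Checking $v^\ep\le u^\ep$ on the parabolic boundary (at $t=t_0$ on $\R^n$, and on $[t_0,t_0+h]\times(\R^n\setminus B(x_0,r))$) is where the retraction $\tilde\sigma$ and the term $-\ep^{2s}\mu/W''(0)$ of~\eqref{eq:barrier defn} are essential. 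Where $d(t,\cdot)>0$, i.e.\ on $\Omega_t^\varphi\subset\{\varphi(t,\cdot)\ge0\}$, conditions~\ref{item:iv}--\ref{item:v} put the relevant points in $D_t$, so by compactness and the definition of $D$ one has $u^\ep\ge 1-o_\ep(1)\,\ep^{2s}\delta^{-2s}$ there, whereas $\phi<1$ together with the negative term $-\ep^{2s}\mu/W''(0)$ (equal to $-\sigma\ep^{2s}/(W''(0)\delta^{2s})$ away from the shifted front) keeps $v^\ep\le 1-c\,\ep^{2s}\delta^{-2s}$, using~\eqref{eq:barrier_est}--\eqref{eq:asymptotics for phi}; where $d(t,\cdot)\le 0$, the argument $(d-\tilde\sigma)/\ep\le-\tilde\sigma/\ep$ makes $\phi(\cdot)$ and $\ep^{2s}\psi$ both $o_\ep(1)$, while the same $\mu$--term, now of the larger order $\ep^{2s}\delta^{-2s}$, forces $v^\ep<0\le u^\ep$. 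Hence the comparison principle yields $u^\ep\ge v^\ep$ in $[t_0,t_0+h]\times B(x_0,r)$.

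Finally, let $x_1\in B(x_0,r)$ with $\varphi(t_0+h,x_1)>0$, so that $d(t_0+h,x_1)>0$; running the previous step with $\tilde\sigma\in\big(0,\min\{d(t_0+h,x_1),\rho/2\}\big)$, one has $d(t,x)-\tilde\sigma\ge\delta/\tilde\sigma$ for $(t,x)$ near $(t_0+h,x_1)$ and $\ep$ small, so~\eqref{eq:barrier_est} gives $u^\ep(t,x)\ge v^\ep(t,x)\ge 1-\tilde C\tilde\sigma^{2s}\ep^{2s}\delta^{-2s}$ there, whence $\liminf_{\ep\to 0}{_*}\dfrac{u^\ep-1}{\ep^{2s}\delta^{-2s}}\ge-\tilde C\tilde\sigma^{2s}$ near $(t_0+h,x_1)$; letting $\tilde\sigma\to 0$ makes this $\liminf\ge 0$, and the standard bookkeeping of the abstract method of \cite{Barles-DaLio,Barles-Souganidis,Imbert09} (the same reasoning applies at every time of $(t_0,t_0+h]$, then one passes to interiors) yields $(t_0+h,x_1)\in D$, i.e.\ $x_1\in D_{t_0+h}$. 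The sub-flow property of $((E_t)^c)_{t>0}$ is obtained identically, from the supersolution version of Lemma~\ref{lem:barrier-barrier}, the inequality $u^\ep\le v^\ep$, and the definition of $E$. I expect the two delicate points to be the off-the-front propagation of the strict distance inequality in the geometric reduction (which rests on the continuity of $\kappa[\cdot,d]$), and, above all, the boundary ordering $v^\ep\le u^\ep$: since $v^\ep$ is not globally below $u^\ep$, one genuinely needs the retraction $\tilde\sigma$, which pushes the shifted front strictly inside $\Omega_t^\varphi$ and thereby activates the large negative term $-\ep^{2s}\mu/W''(0)$ off a thin shell, making the comparison valid --- on $\{d>0\}$ through the definition of $D$ and on $\{d\le 0\}$ because there $v^\ep<0\le u^\ep$.
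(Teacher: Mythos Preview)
Your proposal is correct and follows essentially the same approach as the paper's proof: reduce the speed condition~\ref{item:ii} to the distance inequality~\eqref{eq:fmc for d-local}, invoke Lemma~\ref{lem:barrier-barrier} for the shifted barrier $v^\ep$, verify the parabolic-boundary ordering $v^\ep\le u^\ep$ via the same case analysis (using that $\{\varphi(t_0,\cdot)\ge 0\}$ is a compact subset of $D_{t_0}$, the $-\ep^{2s}\mu/\alpha$ term, and the decay of $\phi$ at $-\infty$), and conclude by~\eqref{eq:barrier_est} and letting $\tilde\sigma\to 0$. The only point you leave implicit is the paper's initial extension of the hypotheses from $[t_0,t_0+h]$ to $[t_0,t_0+h']$ with $h'>h$, which is what produces a genuine space--time neighborhood of $(t_0+h,x_1)$ inside the $\liminf$ set and hence gives $(t_0+h,x_1)\in D=\operatorname{Int}\{\ldots\}$; your appeal to ``standard bookkeeping'' is meant to cover exactly this step.
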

By \cite[Corollary 1]{Imbert09}, 
it follows from Propositions \ref{prop:initialization} and \ref{prop:sub/sup flows} that
\[
{^+}\Omega_t \subset D_t \subset {^+}\Omega_t \cup \Gamma_t
\quad \hbox{and} \quad
{^-}\Omega_t \subset E_t \subset {^-}\Omega_t \cup \Gamma_t.
\]
The conclusion readily follows; we write the details for completeness.

First, since ${^+}\Omega_t \subset D_t$, we use the definition of $D_t$ to see that
\begin{equation}\label{eq:liminf from below}
\liminf_{\ep \to 0}{_*} u^{\ep}(t,x) \geq 1 \quad \hbox{for}~x \in {^+}\Omega_t.
\end{equation}
Using that $ {^-}\Omega_t \subset E_t$, we similarly get
\begin{equation} \label{eq:limsup from above}
\limsup_{\ep \to 0}{^*} u^{\ep}(t,x) \leq 0 \quad \hbox{for}~x \in {^-}\Omega_t.
\end{equation}
Now, since the constant functions 0 and 1  solve equation \eqref{eq:pde} and $0 \leq u_0^{\ep} \leq 1$, the comparison principle implies that  $0 \leq u^{\ep} \leq 1$. In particular,
\[
0 \leq \liminf_{\ep \to 0}{_*} u^{\ep} \quad \hbox{and} \quad 
\limsup_{\ep \to 0}{^*} u^{\ep} \leq 1.
\]
Together with \eqref{eq:limsup from above} and respectively \eqref{eq:liminf from below} we have
\[
\lim_{\ep \to 0} u^{\ep}(t,x) = 0 \quad \hbox{in}~ {^-}\Omega_t
\quad \hbox{and} \quad
\lim_{\ep \to 0} u^{\ep}(t,x) = 1 \quad \hbox{in}~ {^+}\Omega_t.
\]
\end{proof}
It remains to prove Propositions \ref{prop:initialization} and \ref{prop:sub/sup flows}. 
\subsection{Proof of Proposition \ref{prop:initialization} }
\begin{proof}
We will prove that $\Omega_0 \subset D_0$. The proof of $\overline{\Omega}_0^c \subset E_0$ is similar. Fix a point $x_0 \in \Omega_0$. To prove that $x_0 \in D_0$, it is enough to show that for all $(t,x)$ in a neighborhood of $(0,x_0)$ in $ [0, \infty) \times \mathbb{R}^n$, the following inequality holds: 
\[
\liminf_{\ep \to 0}{_*} \frac{u^{\ep}(t,x) - 1}{\ep^{2s}\delta^{-2s}} \geq 0.
\]
We will use  Lemma \ref{lem:barrier}  to construct a suitable (global in space) subsolution $v^{\ep} \leq u^{\ep}$.  
Let $\tilde{\sigma}$ be such that $0< \tilde{\sigma} < d^0(x_0)$, where $d^0$ is defined in \eqref{def:signed_distance_function} and let $r>0$ be given by
\begin{equation}\label{def:radius_of_ball}
    r = d^0(x_0) - \tilde{\sigma}.
\end{equation}
Note that $B(x_0, r) \subset \subset \Omega_0$. Let $C>0$ be a constant to be determined. For $t\le r\!/(2C)$, let $\tilde{d}(t,x)$ be the signed distance function associated to the ball $B(x_0, r-Ct)$, namely
\begin{align*}
    \tilde{d}(t,x) = r- Ct - |x-x_0|.
\end{align*}
Notice that by \eqref{def:radius_of_ball},
\begin{equation}\label{eq:prop_distances_ineq}
    d^0(x) - \tilde{\sigma} \geq \tilde{d}(0,x).
\end{equation}
For $0<\rho < r\!/4$, let $d$ be the smooth, bounded extension of $\tilde{d}$ outside of 
\begin{align*}
    Q_\rho = \left\{ (t,x) \in \left[0, \frac{r}{2C} \right] \times \mathbb{R}^n: |d(t,x)| <\rho \right\},
\end{align*}
as in Definition \ref{defn:extension}. For $(t,x) \in Q_\rho$, we have that  $|x-x_0|\geq r/2-\rho\geq r/4$. Moreover, recalling  Remark \ref{Kdextensionrem}, and by  Proposition \ref{ballFMC}, we  have that  
$$\partial_t d(t,x) =\partial_t \tilde d(t,x) = -C$$
 and 
 $$\kappa [x, d(t, \cdot)]=\kappa [x, \tilde d(t, \cdot)] =\frac{ -\omega }{|x-x_0|^{2s}},$$ 
for  some $\omega>0$. This implies that, for $c_0$ as in \eqref{def:c0}, 
\begin{equation}\label{eq:equation_satisfied}
    \partial_t d(t,x) - c_0 \kappa[x, d(t, \cdot)] = -C + \frac{c_0 \omega }{|x-x_0|^{2s}}\leq  -C + \frac{4^{2s}c_0 \omega }{r^{2s}}\leq -c_0\sigma,
\end{equation}
for $C>0$ sufficiently large and with  $\sigma = W''(0) \tilde{\sigma}$. Moreover, we can assume, by possibly taking  $\tilde{\sigma}$ smaller, that $2 \tilde{\sigma} < \rho$. 

Let $v^\ep(t,x)$ be defined as in \eqref{eq:barrier defn}. Then, by \eqref{eq:equation_satisfied}, and Lemma \ref{lem:barrier}, for $R=R(\sigma)$ sufficiently  large, $\delta$ as in  \eqref{delta:def},
and $\ep=\ep(\sigma)$ sufficiently small, the function  $v^\ep$ solves  \eqref{eq:pde sub} in $[0, r\!/(2C)] \times \R^n$. We claim that, by eventually taking  $\ep$ smaller with respect to $\sigma$ if necessary,
\begin{equation}\label{eq:prop_initial_condition}
    v^{\ep}(0,x)\leq u^{\ep}(0,x)=u_0^\ep(x)\quad\text{for all }x\in\R^n,
\end{equation}
with $u_0^\ep$ as in \eqref{initial_data}. We split the proof of \eqref{eq:prop_initial_condition} into  two cases: when
 $x$ is near the boundary $\partial B(x_0,r)$,  and when $x$ far from 
it. 

\medskip
\noindent
{\bf Case 1:} \emph{$|\tilde{d}(0,x)-\tilde{\sigma}| < 2\delta$.}

 Since $\delta=o_\ep(1)$, we may assume $2\delta<\tilde \sigma$, so that $0 \le \tilde{d}(0,x) \le 2 \tilde{\sigma} <\rho$. Recalling Definition \ref{defn:extension}, we have that $d(0,x) = \tilde{d}(0,x)$. By the monotonicity of $\phi$ and estimate \eqref{eq:asymptotics for phi},  
 \begin{align*}
    \phi \left( \frac{d(0,x)-\tilde{\sigma}}{\ep} \right)  \le \phi \left( \frac{2\delta}{\ep} \right) \le 1- C\dfrac{\ep^{2s}}{\delta^{2s}}.
\end{align*}
If $|\tilde{d}(0,x)-\tilde{\sigma}| < \delta$, then  by \eqref{psi_near}
$$\ep^{2s} \psi \left(\frac{d(0,x)-\tilde{\sigma}}{\ep};0,x \right)\leq 
C\ep^{2s},$$
while if $\delta\leq |\tilde{d}(0,x)-\tilde{\sigma}| < 2\delta$, then by \eqref{psi_far}
 and using that $\ep/\delta^2=o_\ep(1)$, 
$$\ep^{2s} \psi \left(\frac{d(0,x)-\tilde{\sigma}}{\ep};0,x \right)\leq 
\frac{C\ep^{4s}}{\delta^{2s}|d(0,x)-\tilde{\sigma}|^{2s}}\leq C\frac{\ep^{4s}}{\delta^{4s}}\leq C\ep^{2s}. $$
From the above estimates on $\psi$,  estimate \eqref{a_near}, and recalling that $\mu>0$, we get
\begin{align*}
 \ep^{2s} \psi \left(\frac{d(0,x)-\tilde{\sigma}}{\ep};0,x \right)+  \frac{\ep^{2s}}{\alpha} \left( \bar{a}_\ep \left[ d-\tilde{\sigma} \right](0,x) - \mu[d-\tilde{\sigma}](0,x) \right) \le C \ep^{2s}.
\end{align*} 
On the other hand, using \eqref{eq:prop_distances_ineq}, that $d(0,x) \geq 0$, and \eqref{eq:asymptotics for phi},  we have
\begin{align*}
    \phi \left( \frac{d^0(x)}{\ep} \right) \geq \phi \left( \frac{\tilde{\sigma}}{\ep} \right) \geq 1 - C\frac{\ep^{2s}}{\tilde{\sigma}^{2s}}.
\end{align*}
Putting it all together, since $\delta=o_\ep(1)$,  for $\ep$ sufficiently small, we obtain
\begin{align*}
    v^\ep(0,x) \le 1 -  C \frac{\ep^{2s}}{\delta^{2s}}+C\ep^{2s}  \le 1 - C\frac{\ep^{2s}}{\tilde{\sigma}^{2s}} \le \phi \left( \frac{d^0(x)}{\ep} \right) = u^\ep(0,x),
\end{align*}
which proves \eqref{eq:prop_initial_condition} for Case 1.



\medskip
\noindent
{\bf Case 2:} \emph{$|\tilde{d}(0,x)-\tilde{\sigma}| \geq 2 \delta$.} 

Recalling the definition of $\mu$ in \eqref{def:mu_definition}, and that $\tilde\sigma=\sigma/\alpha$, we have that 
$\mu[d-\tilde{\sigma}](t,x)/\alpha = \sigma/(\alpha\delta^{2s})=\tilde \sigma/\delta^{2s}$. 

By \eqref{a_near} and \eqref{psi_far},  we have 
 
\begin{equation}\label{psicomputationcase2prop1}\begin{split}
 \ep^{2s} &\psi \left(\frac{d(0,x)-\tilde{\sigma}}{\ep};0,x \right)+  \frac{\ep^{2s}}{\alpha} \left( \bar{a}_\ep \left[ d-\tilde{\sigma} \right](0,x) - \mu[d-\tilde{\sigma}](0,x) \right)\\& 
\leq  \frac{C\ep^{4s}}{\delta^{2s}|d(0,x)-\tilde{\sigma}|^{2s}}+ C \ep^{2s}-\frac{\tilde \sigma\ep^{2s}}{\delta^{2s}}
 \\&\le 
 \frac{C\ep^{4s}}{\delta^{4s}}+ C \ep^{2s}-\frac{\tilde \sigma\ep^{2s}}{\delta^{2s}}\\&
 =\left(\frac{\ep^{2s}}{\delta^{2s}}+C\delta^{2s}-\tilde\sigma\right) \frac{\ep^{2s}}{\delta^{2s}}\\&
 \le -\frac{\tilde \sigma\ep^{2s}}{2\delta^{2s}}\\&
 \leq 0,
\end{split}\end{equation}
if $\ep$ is taken sufficiently small, where we used  that $\delta=o_\ep(1)$ and $\ep/\delta^2=o_\ep(1)$. 

Assume first $|\tilde{d}(0,x)|\leq \rho$. Then $d(0,x)=\tilde d(0,x)$, and by
 \eqref{eq:prop_distances_ineq} together with the monotonicity of $\phi$, we have 
\begin{align*}
    \phi \left( \frac{d(0,x)-\tilde{\sigma}}{\ep} \right) \le \phi \left( \frac{d^0(x)}{\ep} \right)=u^\ep_0(x).
\end{align*}
Combining the inequality above with \eqref{psicomputationcase2prop1} yields 
\eqref{eq:prop_initial_condition}.

Next, assume $|\tilde{d}(0,x)|\geq \rho$. Then $|d(0,x)|\geq \rho$ and, since $2\tilde\sigma<\rho$, we have that $|d(0,x)-\tilde\sigma|\geq \rho/2$.
If $\tilde{d}(0,x)\geq \rho$,  then  \eqref{eq:prop_distances_ineq} implies that $d_0(x)\geq \rho$ and by estimates \eqref{eq:asymptotics for phi} and  \eqref{psicomputationcase2prop1}, 
$$v^\ep(0,x)\leq 1-\frac{\tilde\sigma\ep^{2s}}{2\delta^{2s}}\leq 1-C\frac{\ep^{2s}}{\rho^{2s}}\leq \phi \left( \frac{d^0(x)}{\ep} \right)=u^\ep_0(x),  $$
for $\ep$, thus $\delta$,  small enough.

If $\tilde{d}(0,x)\leq -\rho$, then again from \eqref{eq:asymptotics for phi} and  \eqref{psicomputationcase2prop1}, and for  $\ep$ small enough, 
$$v^\ep(0,x)\leq C\frac{\ep^{2s}}{\rho^{2s}}-\frac{\tilde\sigma\ep^{2s}}{2\delta^{2s}}\leq 0\leq u^\ep_0(x).$$
This concludes the proof of \eqref{eq:prop_initial_condition}  in Case 2.

\medskip

By \eqref{eq:prop_initial_condition} and the comparison principle, 
\begin{align*}
    u^\ep(t,x) \geq v^\ep(t,x) \quad \text{for all } (t,x) \in \left[ 0, \frac{r}{2C} \right] \times \mathbb{R}^n.
\end{align*}
Since $2 \tilde{\sigma} < \rho<r/4$, and $\delta=o_\ep(1)$, for $t\in [0, r/(2C)]$ and $\ep$ sufficiently small, we have that 
$$\{x :d(t,x) \geq 2\tilde{\sigma} \}= \{x :\tilde{d}(t,x) \geq 2\tilde{\sigma} \}\subset \left\{x :\tilde{d}(t,x) -\tilde{\sigma}\geq \frac{\delta}{\tilde\sigma}\right \},$$ 
and 
$$ \{x:\tilde{d}(t,x) \geq 2\tilde{\sigma} \}=\{|x-x_0|\leq r-Ct-2\tilde \sigma\}\supset\left \{|x-x_0|\leq \frac{r}{4}\right\}.$$
Consequently, by  \eqref{eq:barrier_est} for $t\in [0, r/(2C)]$ and $|x-x_0|\leq r/4$, 
we obtain
\begin{align*}
    \liminf_{\ep \to 0}{_*} \frac{u^{\ep}(t,x) - 1}{\ep^{2s}\delta^{-2s}}
&\geq \liminf_{\ep \to 0}{_*} \frac{v^{\ep}(t,x) - 1}{\ep^{2s}\delta^{-2s}} 
\geq -\tilde C \tilde{\sigma}^{2s}.
\end{align*} 
Letting $\tilde{\sigma} \to 0$, the result follows.
\end{proof}

\subsection{Proof of Proposition \ref{prop:sub/sup flows} }

\begin{proof}
We will show that $(D_t)_{t>0}$ is a generalized super-flow. 
The proof that $((E_t)^c)_{t>0}$ is a generalized sub-flow is similar. 

Let $(t_0,x_0) \in (0,\infty) \times \R^n$, $h, \, r>0$, and $\varphi:(0,\infty)\times \R^n \to \R$ be a smooth function satisfying \ref{item:i}-\ref{item:v} in Definition \ref{def:generalized_flows}
in $[t_0,t_0+h]$ with $F^\ast$  given in \eqref{def:F_ast_definition} and $c_0$ as in \eqref{def:c0}.
Then, there exists $h'>h$ such that $\varphi$ satisfies  \ref{item:i}-\ref{item:v} in $[t_0,t_0+h']$, with an eventually smaller $\tau$ in \ref{item:ii}. 
For $t\in [t_0,t_0+h']$, let us denote 
$$\Omega_t=\{x\in\R^n\,:\,\varphi(t,x)>0\}\quad\text{and}\quad\Gamma_t=\partial \Omega_t.$$
By  \ref{item:iii}  there exists $r'>r$ such that $\nabla \varphi\neq 0$ on  $\Gamma_t\cap  \overline{B}(x_0,r')$ which is therefore a smooth (and, by \ref{item:i},  non-empty) set. 
Let $\tilde{d}(t,x)$ be the signed distance function associated to $\Omega_t$, and let
$Q_\rho =\{(t,x)\in \left[t_0,t_0+h'\right]\times\R^n\,:\,|\tilde{d}(t,x)|<\rho\}$ for $\rho>0$. 
Then, there exists $\rho>0$ such that $\tilde d$  is smooth 
in $Q_{2\rho}\cap  ([t_0,t_0+h']\times B(x_0,r'))$
 and by \ref{item:ii} (recall \eqref{meancurvature_intro_sigma}),  
\begin{equation}\label{eq:mc for d-locali_0prop72}
\partial_t\tilde d \leq c_0 \kappa[x, \tilde{d}(t,\cdot)] - \tilde \tau\quad \hbox{in}~Q_{\rho}\cap  ([t_0,t_0+h']\times B(x_0,r)), 
\end{equation}
for some $\tilde\tau>0$. 
Moreover, by \eqref{eq:mc for d-locali_0prop72}, and recalling  Remark \ref{Kdextensionrem},
  if   $d(t,x)$ is the  bounded extension of $\tilde{d}(t,x)$ outside of 
$ Q_\rho $ as in Definition \ref{defn:extension}, then  $d$ is smooth 
in $[t_0,t_0+h']\times B(x_0,r')$ and 
\begin{equation*}\label{eq:mc for d-locali_0prop72-i}
\partial_td \leq c_0 \kappa[x,d(t,\cdot)]  - c_0 \sigma \quad \hbox{in}~Q_{\rho}\cap  ([t_0,t_0+h']\times B(x_0,r)),
\end{equation*}
for some $\sigma>0$.  

Let $v^\ep(t,x)$ be defined as in \eqref{eq:barrier defn}. 
Then, Lemma \ref{lem:barrier-barrier}  implies that, for $R=R(\sigma)$ sufficiently  large, $\delta$ as in  \eqref{delta:def}, $\alpha$ and $\tilde{\sigma}$ as in \eqref{alphasigma}, 
and $\ep=\ep(\sigma)$ sufficiently small, the function $v^\ep$ 
is a solution to \eqref{eq:pde sub} in $[t_0,t_0+h'] \times B(x_0,r)$.

We  will show that for $\tilde\sigma<\rho/2$, and by eventually taking  $\ep$ smaller with respect to $\sigma$ if necessary,
\begin{equation}\label{initialpropo2}
v^{\ep}(t_0,x)\leq u^{\ep}(t_0,x)\quad\text{for all }x\in  \R^n
\end{equation}
and
\begin{equation}\label{initialpropo2-boundary}
v^{\ep}(t,x)\leq u^{\ep}(t,x)\quad\text{for all }(t,x) \in [t_0,t_0+h'] \times( \R^n \setminus B(x_0,r)). 
\end{equation}
We start with \eqref{initialpropo2}. 
Since $\varphi$ satisfies \ref{item:i} and \ref{item:iv} in Definition \ref{def:generalized_flows}, we have that $\Omega_{t_0} \subset \subset D_{t_0}$.
 Therefore, there exists a compact set $K$   
 such that 
$$\Omega_{t_0}\subset K \subset D_{t_0},$$
and, by possibly taking  $\tilde\sigma>0$ smaller, we may assume that 
\begin{equation}\label{didi+1proppflow-K}
d_{K}(x)-2\tilde\sigma \geq \tilde d(t_0,x),
\end{equation}
where $d_K$ denotes the signed distance function from $K$. 

The proof of \eqref{initialpropo2} is broken into three cases:  we first consider the case when $x$ is close to $\Gamma_{t_0}$, then  when $x$ is in $K$ but far from $\Gamma_{t_0}$, and finally when $x$ is not in $K$.

\medskip

\noindent
{\bf Case 1}:~\emph{ $|\tilde{d} (t_0,x)-\tilde\sigma| < 2\delta$.}

Like in Case 1 in the proof of Proposition  \ref{prop:initialization}, we can show that, for $\ep$ small enough,
\[
v^\ep(t_0,x) \leq 1 - C \frac{\ep^{2s}}{\delta^{2s}}.
\]
Since $\tilde{d}(t_0,x) > \tilde\sigma-2\delta \geq 0$  for $\ep$ small enough,  by \eqref{didi+1proppflow-K}  we know that $x \in K\subset D_{t_0}$. 
 Since $K$ is compact,  and by the definition of  $D_{t_0}$,  given $\tau_0>0$, for $\ep$ small enough and $y\in K$,  
\begin{equation}\label{eq:ue-delta}
\frac{u^\ep(t_0,y) - 1}{\ep^{2s} \delta^{-2s}}  \geq - \tau_0.  
\end{equation}
In particular, 
\[
u^\ep(t_0,x) \geq 1 -\tau_0 \frac{\ep^{2s}}{\delta^{2s}} \geq 1 - C \frac{\ep^{2s}}{\delta^{2s}} \geq v^\ep(t_0,x), 
\]
if $\tau_0 \leq C$. Therefore, \eqref{initialpropo2} holds for Case 1.

\medskip

\noindent
{\bf Case 2}: \emph{$x \in K$ and  $|\tilde d(t_0,x)-\tilde\sigma|\geq 2 \delta$.}

We note that
\[
\phi \left(\frac{d(t_0,x) - \tilde{\sigma}}{\ep}\right) \leq 1.
\]
 Proceeding  as in Case 2 in the proof of Proposition  \ref{prop:initialization},  as in \eqref{psicomputationcase2prop1} we find
\begin{align*}
 \ep^{2s} &\psi \left(\frac{d(t_0,x)-\tilde{\sigma}}{\ep};t_0,x \right)+  \frac{\ep^{2s}}{\alpha} \left( \bar{a}_\ep \left[ d-\tilde{\sigma} \right](t_0,x) - \mu[d-\tilde{\sigma}](t_0,x) \right)
 \leq -\frac{\tilde\sigma\ep^{2s}}{2\delta^{2s}},
\end{align*}
for $\ep$ small enough.

On the other hand,  since $x \in K$, we know that \eqref{eq:ue-delta} holds  at $x$ for  given $\tau_0$ and $\ep$ small enough. 
Thus, for $\tau_0\leq \tilde\sigma \!/2$, 
\[
    v^\ep(t_0,x) \le 1-\frac{\tilde\sigma\ep^{2s}}{2\delta^{2s}}\leq 1 - \tau_0 \frac{\ep^{2s}}{\delta^{2s}} \le u^\ep(t_0,x).
\]
We now have \eqref{initialpropo2} in Case 2.

\medskip

\noindent
{\bf Case 3}: \emph{$x \notin K $.}

Since $d_{K}(x) \leq 0$, by \eqref{didi+1proppflow-K} we have that $\tilde{d}(t_0,x) \leq - 2\tilde{\sigma}$. In particular, $d(t_0,x)-\tilde{\sigma}<-2\tilde\sigma<-2\delta $ and 
recalling the definition of $\mu$ in  \eqref{def:mu_definition}, and that $\tilde\sigma=\sigma/\alpha$, we have that $\mu[d(t_0,x)-\tilde{\sigma}]/\alpha = \sigma/(\alpha\delta^{2s})=\tilde\sigma/\delta^{2s}$ . 
Moreover, 
 by \eqref{eq:asymptotics for phi}, 
 $$\phi \left(\frac{d(t_0,x) - \tilde{\sigma}}{\ep}\right)\leq \frac{C\ep^{2s}}{|d(t_0,x) - \tilde{\sigma}|^{2s}}\leq C\frac{\ep^{2s}}{\tilde{\sigma}^{2s}}. $$
 As in \eqref{psicomputationcase2prop1}  of Proposition  \ref{prop:initialization},
 \begin{align*}
 \ep^{2s} &\psi \left(\frac{d(t_0,x)-\tilde{\sigma}}{\ep};t_0,x \right)+  \frac{\ep^{2s}}{\alpha} \left( \bar{a}_\ep \left[ d-\tilde{\sigma} \right](t_0,x) - \mu[d-\tilde{\sigma}](t_0,x) \right)
 \leq -\frac{\tilde\sigma\ep^{2s}}{2\delta^{2s}},
\end{align*} for $\ep$ small enough.  
 Therefore, 
\begin{align*}
    v^\ep(t_0,x) \le C \frac{\ep^{2s}}{\tilde{\sigma}^{2s}}   -  \frac{\tilde\sigma\ep^{2s}}{2\delta^{2s}} \le 0,
\end{align*}
for  $\ep$ sufficiently small, since $\delta=o_\ep(1)$. 
 Now, since the zero function is a solution to \eqref{eq:pde} and $u_0^{\ep} \geq 0$, the comparison principle implies $u^\ep(t_0,x) \geq 0$. Therefore, 
\[
u^\ep(t_0,x) \geq 0
 \geq v^\ep(t_0,x),
\]
and \eqref{initialpropo2} holds for Case 3.

\medskip
This proves \eqref{initialpropo2}. Inequality \eqref{initialpropo2-boundary} follows with a similar argument using that 
 $\varphi$ satisfies \ref{item:i} and \ref{item:v}  in Definition~\ref{def:generalized_flows}.

With \eqref{initialpropo2} and \eqref{initialpropo2-boundary}, the comparison principle then implies
\begin{equation}\label{eq:comp-claim}
u^{\ep} (t,x)\geq v^{\ep}(t,x) \quad \hbox{for all}~ (t,x)\in [t_0,t_0+h'] \times \R^n. 
\end{equation}
By  \eqref{eq:barrier_est},  we have that, for all $t\in[t_0,t_0+h']$, 
\begin{align*}
 \frac{u^{\ep}(t,x) - 1}{\ep^{2s} \delta^{-2s}} 
  &\geq  \frac{v^{\ep}(t,x) - 1}{\ep^{2s} \delta^{-2s}} 
 \geq -\tilde C\tilde{\sigma}^{2s} \quad \hbox{in}~\left\{x \in \R^n: d(t,x) - \tilde{\sigma} \geq \delta \tilde{\sigma}^{-1}\right\}.
\end{align*}
Letting $\ep \to 0$ (and so $\delta \to 0$), it follows that, for all $t\in[t_0,t_0+h']$, 
\begin{equation}\label{lastinclusion_prop72}
\{x \in \R^n :d(t,x) - \tilde{\sigma} \geq 0\} \subset \left\{ x \in \R^n : \liminf_{\ep \to 0}{_*} \frac{u^{\ep}(t,x) - 1}{\ep^{2s}\delta^{-2s}} \geq -\tilde C \tilde{\sigma}^{2s} \right\}.
\end{equation}
Now, let $x_1 \in \{ x \in B(x_0,r) : \varphi(t_0+h,x) >0\}$, so that  $d(t_0+h,x_1)>0$.
Then, there exist $r_1>0$ and $0<\tau<h'-h$ such that for $|t-(t_0+h)|<\tau$, it holds that $B(x_1,r_1)\subset \{x \in B(x_0,r) :d(t,x) > 0 \}$ 
and by \eqref{lastinclusion_prop72}, for  $\tilde\sigma<r_1/2$, 
$$[t_0+h-\tau,t_0+h+\tau]\times B\left(x_1,\frac{r_1}{2}\right)\subset\left\{ (t,x): \liminf_{\ep \to 0}{_*} \frac{u^{\ep}(t,x) - 1}{\ep^{2s}\delta^{-2s}} \geq -\tilde C \tilde{\sigma}^{2s} \right\}.$$
Taking $\tilde{\sigma} \to 0$, we see that $(t_0+h,x_1)$ is an interior point of the set 
$$\left\{ (t,x) \in (0,\infty) \times\R^n : \liminf_{\ep \to 0}{_*} \frac{u^{\ep}(t,x) - 1}{\ep^{2s}\delta^{-2s}} \geq 0 \right\} ,$$
namely, it belongs to $D$. This proves the desired inclusion
\[
\{x \in B(x_0,r) :\varphi(t_0+h,x) > 0 \} 
 = \{x \in B(x_0,r) :d(t_0+h,x) >0 \} \subset  D_{t_0+h}. 
\]

\end{proof}

\section{Proof of Lemma \ref{thm: b_ep fmc}}\label{sec:proof of b_ep fmc}
For ease of notation, throughout this section we omit the dependence on $t$. Moreover, we write $y = (y', y_n)$ with $y' \in \mathbb{R}^{n-1}$. 
 Recall that if $|d(x)|<\rho$, with $\rho$ as in Definition \ref{defn:extension}, then $|\nabla d(x)| = 1$. Thus, there exists an orthonormal matrix $T$ such that
\begin{equation}\label{changevar_s<12}
\nabla d(x) \cdot (Ty) = y_n.
\end{equation}

We begin with some preliminary results that will be needed for the proof
of Lemma \ref{thm: b_ep fmc}. The following lemma is proven in \cite{PatriziVaughan}, 
 see Lemmas 7.1 and 7.2 therein.

\begin{lem}\label{lemm2s<1/2_1}
There exist $\tau_0,\,C>0$ such that for all $0<\tau\leq \tau_0$, $0\leq\sigma<\tau/2$,  if $|d(x)|<\rho$, then
 \begin{equation*}\label{lemm2s<1/2_est1}\int_{\{d(x+z) > d(x)-\sigma,~-\tau<\nabla d(x) \cdot z < -2\sigma\}}\frac{dz}{|z|^{n+2s}}\leq C\tau^{\frac12-s},
 \end{equation*}
and 
 \begin{equation*}\label{lemm2s<1/2_est2}\int_{\{d(x+z) < d(x)+\sigma,~2\sigma<\nabla d(x) \cdot z  <\tau\}}\frac{dz}{|z|^{n+2s}}\leq  C\tau^{\frac12-s}. 
 \end{equation*}
\end{lem}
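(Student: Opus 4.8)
The plan is to prove the first inequality in detail; the second follows by the mirror-image argument obtained by reflecting $\nabla d(x)\cdot z \mapsto -\nabla d(x)\cdot z$. First I would pass to the coordinates adapted to the front. Using the orthonormal matrix $T$ of \eqref{changevar_s<12}, the change of variables $z = Ty$ is measure preserving with $|z| = |y|$, so, writing $y = (y',y_n)$, the first integral becomes
\[
\int_{\{d(x+Ty) > d(x)-\sigma,\ -\tau < y_n < -2\sigma\}} \frac{dy}{|y|^{n+2s}},
\]
where the slab $\{-\tau < y_n < -2\sigma\}$ is now transverse to $\nabla d(x)$. Fix $\rho_0\in(0,\rho)$ and shrink $\tau_0$ so that $\tau_0<\rho_0/2$; I would split the region into the ``far'' part $\{|y|\ge\rho_0\}$ and the ``near'' part $\{|y|<\rho_0\}$.

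On the far part, the slab constraint $|y_n|<\tau<\rho_0/2$ forces $|y'|\ge \rho_0/2$, hence $|y|^{-n-2s}\le |y'|^{-n-2s}$ and
\[
\int_{\{-\tau < y_n < -2\sigma,\ |y|\ge\rho_0\}}\frac{dy}{|y|^{n+2s}}\le \int_{-\tau}^{-2\sigma}\Big(\int_{\{|y'|\ge \rho_0/2\}}\frac{dy'}{|y'|^{n+2s}}\Big)dy_n\le C_{\rho_0}\,\tau\le C\,\tau^{\frac12-s}
\]
since $\tau\le\tau_0\le1$. On the near part, $B(x,\rho_0)\subset Q_{2\rho}$, so $d$ is $C^2$ there and a second order Taylor expansion gives $d(x+Ty)=d(x)+y_n+R(y)$ with $|R(y)|\le c_0|y|^2$, where $c_0=\tfrac12\sup_{Q_{2\rho}}|D^2 d|$. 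On the region, $d(x+Ty)>d(x)-\sigma$ reads $R(y)>|y_n|-\sigma$, and since $|y_n|>2\sigma$ we have $|y_n|-\sigma>|y_n|/2$, hence $c_0|y|^2>|y_n|/2$. Subtracting $y_n^2$ and shrinking $\tau_0$ so that $\tau_0\le 1/(4c_0)$ (so $y_n^2\le|y_n|/(4c_0)$ on the slab) yields the parabolic constraint $|y'|^2 = |y|^2-y_n^2 > |y_n|/(4c_0)$. Writing $t=-y_n\in(2\sigma,\tau)$ and integrating in $y'$ first,
\[
\int_{\{|y'|>\sqrt{t/(4c_0)}\}}\frac{dy'}{|y|^{n+2s}}\le \int_{\{|y'|>\sqrt{t/(4c_0)}\}}\frac{dy'}{|y'|^{n+2s}} = C\,t^{-\frac{1+2s}{2}},
\]
which is integrable on $(0,\tau)$ exactly because $s<\tfrac12$, so that
\[
\int_{\{d(x+Ty)>d(x)-\sigma,\ -\tau<y_n<-2\sigma,\ |y|<\rho_0\}}\frac{dy}{|y|^{n+2s}}\le C\int_0^\tau t^{-\frac{1+2s}{2}}\,dt = C\,\tau^{\frac12-s}.
\]
Adding the two contributions proves the first inequality; the second follows verbatim after the reflection $y_n\mapsto -y_n$ (there $d(x+Ty)<d(x)+\sigma$ becomes $R(y)<\sigma-y_n<-y_n/2$, i.e. $c_0|y|^2>y_n/2$, and one argues with $t=y_n$).

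I expect the only genuinely delicate point to be the parabolic scaling: the two constraints together force the tangential variable $y'$ to have size at least $|y_n|^{1/2}$ — not merely $|y_n|$ — and this is precisely what makes the resulting one-dimensional integral $\int_0^\tau t^{-(1+2s)/2}\,dt$ converge, a convergence that fails for $s\ge\tfrac12$. The rest (the split at a fixed scale $\rho_0$ to remain within the region of smoothness of $d$, and the successive choices of $\tau_0$) is routine bookkeeping.
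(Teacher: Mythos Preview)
Your argument is correct. The paper itself does not reprove this lemma: it simply cites Lemmas~7.1 and~7.2 of \cite{PatriziVaughan}. Your self-contained proof captures exactly the mechanism that makes the estimate work, namely that on the domain of integration the two constraints $d(x+z)>d(x)-\sigma$ and $\nabla d(x)\cdot z<-2\sigma$ together force the tangential variable to satisfy $|y'|\gtrsim |y_n|^{1/2}$ rather than merely $|y'|\gtrsim |y_n|$; this parabolic confinement is precisely what turns the $y_n$-integral into $\int_0^\tau t^{-(1+2s)/2}\,dt$, which is finite exactly when $s<\tfrac12$. The split at a fixed scale $\rho_0$ to stay within the smoothness region of $d$, the Taylor expansion with quadratic remainder, and the final one-dimensional integration are all in order. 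One cosmetic point: your constant $c_0=\tfrac12\sup_{Q_{2\rho}}|D^2 d|$ should really be taken over the ball $B(x,\rho_0)$ (or over all of $\R^n$, since $d$ is globally smooth by Definition~\ref{defn:extension}); this changes nothing in the argument.
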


\begin{lem}\label{0<y_n<tau_lemma}
Assume $|\nabla d(x)|=1$. Then, there exist $\tau_0>0$ and $C>0$ such that for all $0<\tau\leq\tau_0$, 
\begin{equation*}\label{0<y_n<tau_lemma_eq}
\int_{\{|\nabla d(x) \cdot z |<\tau\}} 
 \left|  \phi\( \frac{ d(x+z)}{\ep}\) - \phi\( \frac{ d(x) + \nabla d(x) \cdot z}{\ep}\) \right| \frac{dz}{\abs{z}^{n+2s}}\leq C\tau^{\frac12-s}. 
\end{equation*}
\end{lem}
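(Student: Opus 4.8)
The plan is to reduce the estimate to the two one–sided bounds already contained in Lemma~\ref{lemm2s<1/2_1}. First I would split the region $\{|\nabla d(x)\cdot z|<\tau\}$ according to the sign of the integrand. Since $\phi$ is increasing, on the set where $d(x+z)\ge d(x)+\nabla d(x)\cdot z$ the integrand equals $\phi(d(x+z)/\ep)-\phi((d(x)+\nabla d(x)\cdot z)/\ep)\ge 0$, and on the complementary set it equals the negative of this. Using $0\le\phi\le1$ we bound $|\phi(a)-\phi(b)|\le 1$ pointwise, so it suffices to show that each of the two sets
\[
A^+ := \{\,|\nabla d(x)\cdot z|<\tau,\ d(x+z)>d(x)+\nabla d(x)\cdot z\,\},\qquad
A^- := \{\,|\nabla d(x)\cdot z|<\tau,\ d(x+z)<d(x)+\nabla d(x)\cdot z\,\}
\]
has $\nu$–measure $\int_{A^\pm}|z|^{-n-2s}\,dz\le C\tau^{1/2-s}$, where $\nu(dz)=|z|^{-n-2s}dz$.

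Next I would localize: away from the origin, say on $\{|z|\ge 1\}$, the measure $\nu(A^\pm\cap\{|z|\ge1\})$ is controlled by $\int_{\{|z|\ge1\}}|z|^{-n-2s}dz\le C$, which is harmless since we only need the bound for $\tau\le\tau_0$ and $\tau^{1/2-s}$ stays bounded below on $(0,\tau_0]$ — but to get the actual power of $\tau$ one must work near the origin. For $|z|<1$ I would use that $d$ is $C^2$ in $Q_{2\rho}$ together with $|\nabla d(x)|=1$ to Taylor–expand: for $z$ small, $d(x+z)-d(x)-\nabla d(x)\cdot z = O(|z|^2)$, so $A^+\cap\{|z|\text{ small}\}$ is contained in $\{\,|\nabla d(x)\cdot z|<\tau,\ d(x+z)>d(x)-C|z|^2\,\}$ and similarly for $A^-$. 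One then further decomposes in dyadic annuli in $|z|$, or — more directly — I would invoke Lemma~\ref{lemm2s<1/2_1} with a parameter $\sigma$ comparable to $\tau$: indeed, on the slab $-\tau<\nabla d(x)\cdot z<-2\sigma$ (resp. $2\sigma<\nabla d(x)\cdot z<\tau$) the set where $d(x+z)>d(x)-\sigma$ has $\nu$–measure $\le C\tau^{1/2-s}$, and the remaining thin slab $\{|\nabla d(x)\cdot z|\le 2\sigma\}$ contributes an additional $O(\sigma^{1-2s})$ by Lemma~\ref{kernellemma3} (applied after the orthonormal change of variables $z=Ty$ from \eqref{changevar_s<12}, which turns the slab into $\{|y_n|\le 2\sigma\}$ and lets one integrate out $y'$). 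Choosing $\sigma\sim\tau$ balances the two and yields the claimed $C\tau^{1/2-s}$.

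A cleaner route, which I would actually write up, is the following. Using the change of variables $z=Ty$ so that $\nabla d(x)\cdot z=y_n$, the integral becomes $\int_{\{|y_n|<\tau\}}|\phi(d(x+Ty)/\ep)-\phi((d(x)+y_n)/\ep)|\,|y|^{-n-2s}dy$. On $\{|y_n|<\tau\}\cap\{|y'|<\tau\}$ one estimates the integrand crudely by $1$ and integrates: $\int_{\{|y_n|<\tau,\,|y'|<\tau\}}|y|^{-n-2s}dy\le C\tau^{1-2s}\le C\tau^{1/2-s}$ for $\tau\le1$ (since $1-2s\ge 1/2-s$ when $s\le1/2$). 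On $\{|y_n|<\tau\}\cap\{|y'|\ge\tau\}$, here $|y|\ge|y'|\ge\tau$, so $d$ is differentiable along the $y_n$–direction and by the mean value theorem $|d(x+Ty)-d(x)-y_n|\le |d(x+Ty)-d(x+(Ty)'_{\perp})-y_n|+\ldots$ — more simply, writing $d(x+Ty)=d(x+T(y',0))+y_n\,\partial_{y_n}d(x+T(y',\theta y_n))$ for some $\theta\in(0,1)$ and using $|\nabla d|=1$ on $Q_\rho$ together with $C^2$ regularity, one gets $|d(x+Ty)-d(x)-y_n|\le |d(x+T(y',0))-d(x)|+C|y_n||y|$; this does not immediately give smallness, so in fact the honest argument must go through the sign decomposition and Lemma~\ref{lemm2s<1/2_1} as in the previous paragraph. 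I would therefore present the proof via the split into $A^+$ and $A^-$, apply Lemma~\ref{lemm2s<1/2_1} with $\sigma=\tau$ on the outer slabs and Lemma~\ref{kernellemma3} on the central slab $\{|y_n|\le 2\tau\}$ (contributing $C\tau^{1-2s}$), and sum. The main obstacle is handling the central thin slab $\{|\nabla d(x)\cdot z|\lesssim\tau\}$ where the comparison of $d(x+z)$ with $d(x)+\nabla d(x)\cdot z$ gives no gain and one must absorb the full $\nu$–mass of the slab; this is exactly where the exponent $\tfrac12-s$ (rather than something better) comes from, and it is the reason the argument is special to $s<\tfrac12$.
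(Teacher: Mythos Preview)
Your approach has a genuine gap at the very first reduction. You propose to bound $|\phi(a)-\phi(b)|\le 1$ and then show $\nu(A^\pm)\le C\tau^{1/2-s}$, but the measures $\nu(A^\pm)=\int_{A^\pm}|z|^{-n-2s}\,dz$ are \emph{infinite}. Indeed, by Taylor, $d(x+z)-d(x)-\nabla d(x)\cdot z=\tfrac12 D^2d(x)z\cdot z+O(|z|^3)$, so near the origin $A^+$ contains an open cone $\{D^2d(x)z\cdot z>0\}$ intersected with the slab; integrating $|z|^{-n-2s}$ over any solid cone near $0$ diverges. The same problem recurs in your ``cleaner route'': the integral $\int_{\{|y_n|<\tau,\,|y'|<\tau\}}|y|^{-n-2s}\,dy$ is not $C\tau^{1-2s}$ but $+\infty$, and integrating out $y'$ in the slab produces $C\int_{-2\sigma}^{2\sigma}|y_n|^{-1-2s}\,dy_n=+\infty$. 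Likewise, Lemma~\ref{lemm2s<1/2_1} bounds the $\nu$-measure of $\{d(x+z)>d(x)-\sigma,\ \nabla d(x)\cdot z<-2\sigma\}$, which is a genuinely thin ``curvature'' set; your $A^+$ is a different (and thick) set, so the lemma does not apply.

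The point is that the integral in the statement is only finite because the integrand itself vanishes at $z=0$ at rate $O(|z|^2/\ep)$; throwing this away by using $|\phi(a)-\phi(b)|\le1$ destroys integrability. The paper's argument keeps this structure: by monotonicity of $\phi$ it sandwiches the integrand between $\phi\big((d(x)+y_n\pm C_0|y|^2)/\ep\big)-\phi\big((d(x)+y_n)/\ep\big)$, then passes to polar-type variables $p=|y'|$, $t=y_n/p$, writes the $\phi$-difference as $\int_0^1\dot\phi(\cdots)\,d\theta\cdot C_0 p^2(1+t^2)/\ep$, and converts the $\dot\phi$ into a $t$-derivative of $\phi$ (dividing by the Jacobian $p(1+2t\theta C_0 p)$). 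Integrating in $t$ recovers a bounded $\phi$-difference and leaves $\int_0^r p^{-2s}\,dp=Cr^{1-2s}$; the tail $p>r$ contributes $C\tau/r^{1+2s}$, and the choice $r=\tau^{1/2}$ balances them to $C\tau^{1/2-s}$. Any correct proof must, in some form, exploit the quadratic smallness of the $\phi$-difference near $z=0$; your scheme does not.
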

\begin{proof}
 By the monotonicity of $\phi$, we have that, for some $C_0>0$, 
\begin{align*}
 \phi&\( \frac{ d(x+z)}{\ep}\) - \phi\( \frac{ d(x) + \nabla d(x) \cdot z}{\ep}\) 
\\&\leq
 \phi\( \frac{ d(x)+ \nabla d(x) \cdot z+C_0|z|^2}{\ep}\) - \phi\( \frac{ d(x) + \nabla d(x) \cdot z}{\ep}\) 
 \end{align*}
 and 
 \begin{align*}
 \phi&\( \frac{ d(x+z)}{\ep}\) - \phi\( \frac{ d(x) + \nabla d(x) \cdot z}{\ep}\) 
\\&\geq \phi\( \frac{ d(x)+ \nabla d(x) \cdot z-C_0|z|^2}{\ep}\) - \phi\( \frac{ d(x) + \nabla d(x) \cdot z}{\ep}\).
 \end{align*}
Making the change of variables $z=Ty$ with $T$ as in \eqref{changevar_s<12}, 
and then taking $p=|y'|$, $t=y_n/p$, 
 we get
\begin{align*}
&\int_{\{|\nabla d(x) \cdot z |<\tau\}} 
 \( \phi\( \frac{ d(x)+ \nabla d(x) \cdot z+C_0|z|^2}{\ep}\) - \phi\( \frac{ d(x) + \nabla d(x) \cdot z}{\ep}\) \) \frac{dz}{\abs{z}^{n+2s}}\\&
 =\int_{\{|y_n| <\tau\}} 
 \( \phi\(  \frac{ d(x)+ y_n+ C_0(|y'|^2+y_n^2)}{\ep}\) - \phi\( \frac{ d(x)+ y_n}{\ep}\) \) \frac{dy}{\abs{y}^{n+2s}}\\&
 =\mathcal{H}^{n-2}(\mathcal{S}^{n-2})\int_0^\infty \frac{dp}{p^{1+2s}}\int_{-\frac{\tau}{p}}^{\frac{\tau}{p}}\( \phi\(   \frac{ d(x) + tp+ C_0p^2(1+t^2)}{\ep}\) - \phi\(\frac{d(x) + tp}{\ep}\) \) \frac{dt}{(1+t^2)^\frac{n+2s}{2}}\\&
 =\int_0^{r}\frac{dp}{p^{1+2s}}\,(\ldots)+\int_{r}^\infty \frac{dp}{p^{1+2s}}\,(\ldots)\\&
 =: I_1+I_2,
 \end{align*}
 with $r>0$ to be determined. 
  For the first term above, using that $\dot\phi>0$, we have
 \begin{align*}
  I_1
 &=\frac{C}{\ep}\int_0^{ r} dp\,p^{1-2s} \int_{-\frac{\tau}{p}}^{\frac{\tau}{p}} \frac{dt}{(1+t^2)^\frac{n+2s-2}{2}}\int_0^1\dot \phi\(  \frac{d(x) + tp+ \theta C_0p^2(1+t^2)}{\ep}\) d\theta\\&
 =\frac{C}{\ep}\int_0^{ r } dp\,p^{1-2s} \int_{-\frac{\tau}{p}}^{\frac{\tau}{p}} \frac{dt}{(1+t^2)^\frac{n+2s-2}{2}}\int_0^1\partial_t \left[\phi\(\frac{d(x) + tp+ \theta C_0p^2(1+t^2)}{\ep}\)\right]
 \frac{\ep}{p(1 + 2t\theta C_0p)} d\theta\\&
 \leq C\int_0^{ r } \frac{dp}{p^{2s}} \int_0^1d\theta \int_{-\frac{\tau}{p}}^{\frac{\tau}{p}}\partial_t \left[\phi\( \frac{d(x) + tp+ \theta C_0 p^2(1+t^2)}{\ep}\)\right]dt,
  \end{align*}
  choosing $\tau >0$ so small  that if  $ |tp| < \tau$ then $p(1+ 2t\theta C_0p)\geq p(1-2C_0\tau)\geq p/2$. 
 Integrating with respect to $t$, we obtain
 \begin{align*}
 I_1&\leq C\int_0^{ r} \frac{dp}{p^{2s}} \int_0^1\left[\phi\(\frac{ d(x)+ \tau +\theta C_0(p^2+\tau^2)}{\ep}\) -\phi\(\frac{ d(x)- \tau +\theta C_0(p^2+\tau^2)}{\ep}\)\right] d \theta\\&
 \leq  C\int_0^{r} \frac{dp}{ p^{2s}}=C r^{1-2s}. 
 \end{align*}
We also estimate
  \begin{align*}
 I_2 \leq C\int_{r}^\infty \frac{dp}{p^{1+2s}}\int_0^{\frac{\tau}{p}}dt=C\frac{\tau}{r^{1+2s}}. 
  \end{align*}
Choosing $r=\tau^\frac12$, we  finally obtain
$$\int_{\{|\nabla d(x) \cdot z |<\tau\}} 
 \( \phi\( \frac{ d(x)+ \nabla d(x) \cdot z+C_0|z|^2}{\ep}\) - \phi\( \frac{ d(x) + \nabla d(x) \cdot z}{\ep}\) \) \frac{dz}{\abs{z}^{n+2s}}\leq C\tau^{\frac12-s}.  $$
 Similarly, one can prove
 $$\int_{\{|\nabla d(x) \cdot z |<\tau\}} 
 \( \phi\( \frac{ d(x)+ \nabla d(x) \cdot z-C_0|z|^2}{\ep}\) - \phi\( \frac{ d(x) + \nabla d(x) \cdot z}{\ep}\) \) \frac{dz}{\abs{z}^{n+2s}}\geq -C\tau^{\frac12-s}.  $$
The lemma is then proven. 
\end{proof}

We now proceed with the proof of Lemma \ref{thm: b_ep fmc}. 
Assume $|d(x)|<\delta<\rho.$ By taking $\delta$ larger if necessary, we may assume that
\begin{equation}\label{deltacond:lemmaK}\frac{\ep}{\delta^2}=o_\ep(1).
\end{equation}
We have 
 \begin{equation}\begin{split}\label{aepsplits<1/2}
\overline{b}_\ep&= \int_{\{|z|<R\}} \left( \phi\left(\frac{d(x+ z)}{\ep}\right) - \phi\left(\frac{ d(x) + \nabla d(x) \cdot z}{\ep}\right) \right) \frac{dz}{\abs{z}^{n+2s}}\\&
= \int_{\R^n} \left( \phi\left(\frac{d(x+ z)}{\ep}\right) - \phi\left(\frac{ d(x) + \nabla d(x) \cdot z}{\ep}\right) \right) \frac{dz}{\abs{z}^{n+2s}}+O(R^{-2s}).\\&
 \end{split}
\end{equation}
We then split
\begin{equation}\begin{split}\label{aepsplits<1/2_bis}
& \int_{\R^n} \left( \phi\left(\frac{d(x+ z)}{\ep}\right) - \phi\left(\frac{ d(x) + \nabla d(x) \cdot z}{\ep}\right) \right) \frac{dz}{\abs{z}^{n+2s}}\\&
=\int_{\{ d(x+z) > d(x),~\nabla d(x) \cdot z < 0\}}(\ldots)+\int_{\{d(x+z) < d(x),~\nabla d(x) \cdot z > 0\}}(\ldots)\\&
\quad+\int_{\{d(x+z) > d(x),~\nabla d(x) \cdot z > 0\}}(\ldots)+\int_{\{d(x+z) < d(x),~\nabla d(x) \cdot z < 0\}}(\ldots)\\
&=:I_1+I_2+I_3+I_4.
 \end{split}
\end{equation}
We begin by estimating $I_1$. We further split
\begin{align*}
    I_1 = & \int_{ \left\{ d(x+z)-d(x) >2 \delta, ~\nabla d(x) \cdot z < -2\delta \right\} } (\ldots) \\
    &+ \int_{ \left\{ d(x+z)-d(x) > 0, ~-2\delta<\nabla d(x) \cdot z < 0 \right\} } (\ldots) \\
    &+ \int_{ \left\{ 0<d(x+z)-d(x) <2 \delta, ~\nabla d(x) \cdot z < - 2\delta \right\} } (\ldots) \\
    =& : J_1 + J_2 + J_3.
\end{align*}
We first estimate $J_1$. If $d(x+z)-d(x) > 2\delta$ and $\nabla d(x) \cdot z < -2\delta$, for $|d(x)|<\delta$ we have $d(x+z)> \delta$ and $\nabla d(x) \cdot z + d(x) < - \delta$. Thus,   by \eqref{eq:asymptotics for phi},
\begin{align*}
    \phi&\left(\frac{d(x+ z)}{\ep}\right) - \phi\left(\frac{ d(x) + \nabla d(x) \cdot z}{\ep}\right) \\ & = H\left(\frac{d(x+ z)}{\ep}\right) - H\left(\frac{ d(x) + \nabla d(x) \cdot z}{\ep}\right) \\
    & + O\left( \left| \frac{d(x+ z)}{\ep} \right|^{-2s} \right) + O\left( \left| \frac{ d(x) + \nabla d(x) \cdot z}{\ep} \right|^{-2s} \right) \\
    &= 1 + O \left( \frac{\ep^{2s}}{\delta^{2s}} \right).
\end{align*}
Consequently, using that, by Proposition \ref{prop:fmc_finite_result},
\begin{equation}\label{klemma:integkernel}\one_{ \{ d(x+z)-d(x) > 2\delta, ~\nabla d(x) \cdot z <-2\delta\}}\leq \one_{ \{ d(x+z)-d(x) > 0, ~\nabla d(x) \cdot z <0\}}\in L^1(\R^n), \end{equation}
and recalling the definition of $\kappa^+$ in \eqref{kappa+de}, we get
\begin{equation}\label{J1_in_I1}
    J_1 = \int_{ \left\{ d(x+z)-d(x) > 2\delta, ~\nabla d(x) \cdot z <-2\delta \right\} } \frac{dz}{|z|^{n+2s}} + O \left( \frac{\ep^{2s}}{\delta^{2s}} \right) = \kappa^{+}[x,d] + O \left( \frac{\ep^{2s}}{\delta^{2s}} \right) + o_\delta(1).
\end{equation}
Next, by Lemma \ref{lemm2s<1/2_est1} with $\sigma =0$ and $\tau = 2\delta$, for $\delta$ small enough, we have
\begin{equation}\label{J2_in_I1}
    J_2 = o_\delta(1).
\end{equation}
Finally, we estimate
\begin{align*} 
|J_3|&\leq 2\int_{\{d(x+z) -d(x)>0 ,~\nabla d(x) \cdot z < 0\}} \one_{\{0< d(x+z) -d(x)<2\delta\}}(z) \,\frac{dz}{|z|^{n+2s}}.
\end{align*}
Since, the set $\{d=0\}$ is a smooth surface, we have that 
\begin{equation}\label{chiclosetogamma}\one_{\{0< d(x+z) -d(x)<2\delta\}}(z)\to0\quad\text{ a.e. as }\delta\to0.\end{equation}
Therefore,  by \eqref{klemma:integkernel} and the Dominated Convergence Theorem,
\begin{equation}\label{J3_in_I1}
   J_3=o_\delta(1).
\end{equation}
From \eqref{J1_in_I1}, \eqref{J2_in_I1} and \eqref{J3_in_I1}, and using \eqref{deltacond:lemmaK}, we obtain
\begin{equation}\label{I1_estimate}
    I_1 = \kappa^+[x,d] + o_\ep(1) + o_\delta(1).
\end{equation}
Recalling the definition of $\kappa^-$ in  \eqref{kappa+de} and arguing similarly to the case of 
 $I_1$, we obtain
\begin{equation}\label{I2_estimate}
    I_2 = -\kappa^-[x,d] + o_\ep(1) + o_\delta(1).
\end{equation}
Next, we estimate $I_3$ and $I_4$. We further split
\begin{align*}
   I_3&=\int_{\{d(x+z) -d(x)>2\delta ,~\nabla d(x) \cdot z >4\delta\}}  (\ldots)
\\&\quad+\int_{\{d(x+z) -d(x)>0 ,~0< \nabla d(x) \cdot z < 4\delta \}} (\ldots)
\\&\quad+\int_{\{0<d(x+z) -d(x)<2\delta ,~\nabla d(x) \cdot z >4\delta\}} (\ldots)\\&
=: J_1+J_2+J_3.
\end{align*}
We first estimate $J_1$. If $d(x+z) -d(x)>2\delta$ and $\nabla d(x) \cdot z>4\delta$, then  for $|d(x)| < \delta$, we have $d(x+z) >  \delta$ and $d(x) + \nabla d(x) \cdot z >3\delta$. Then, by \eqref{eq:asymptotics for phi},

\begin{align*} 
&\phi\(\frac{d(x+ z)}{\ep}\) - \phi\( \frac{ d(x) + \nabla d(x) \cdot z}{\ep}\)\\&=H\(\frac{d(x+ z)}{\ep}\)-H\(  \frac{d(x)+\nabla d(x) \cdot z}{\ep}\)
\\&\quad+O\(\left|\frac{d(x+ z)}{\ep}\right|^{-2s}\)+O\(\left|\frac{ d(x) + \nabla d(x) \cdot z}{\ep}\right|^{-2s}\)\\&
= O\(\frac{\ep^{2s}}{\delta^{2s}}\). 
\end{align*}
This implies
\begin{equation*}\begin{split}
    |J_1| &\le   O\(\frac{\ep^{2s}}{\delta^{2s}}\)\int_{\{\nabla d(x) \cdot z >4\delta\}}   \frac{dz}{|z|^{n+2s}}\\&
    =O\(\frac{\ep^{2s}}{\delta^{2s}}\)\int_{\{y_n >4\delta\}}   \frac{dy}{|y|^{n+2s}}\\& \leq O\(\frac{\ep^{2s}}{\delta^{2s}}\)\int_{\{|y| >4\delta\}}   \frac{dy}{|y|^{n+2s}}
    \\&=O\(\frac{\ep^{2s}}{\delta^{4s}}\),
    \end{split}
\end{equation*}
where we used the change of  variables $z=Ty$ with $T$ as in \eqref{changevar_s<12}. Recalling \eqref{deltacond:lemmaK}, we obtain
\begin{equation}\label{J1_for_I3}
J_1=o_\ep(1). 
\end{equation}
Next, we estimate $J_2$. Let $\delta > 0$ be small enough so that $0 < 4\delta < \tau_0$, where $\tau_0 > 0$ is as in Lemma \ref{0<y_n<tau_lemma}. Recalling that $|\nabla d(x)| = 1$ whenever $|d(x)| < \delta$, it then follows from Lemma \ref{0<y_n<tau_lemma} that
\begin{equation}\label{J2_for_I3}
    J_2 = o_\delta(1).
\end{equation}
Finally, we estimate $J_3$. For $\tau_0$ as in Lemma \ref{lemm2s<1/2_1} and $\delta$ so small  that $4\delta < \tau_0$, we write
\begin{align*}
|J_3|&\leq 2 \int_{\{0<d(x+z) -d(x)<2\delta ,~\nabla d(x) \cdot z >4\delta\}} \frac{dz}{|z|^{n+2s}}\\&
=2 \int_{\{d(x+z) -d(x)<2\delta,~\nabla d(x) \cdot z>4\delta \}}\one_{\{0<d(x+z) -d(x)<2\delta\}}(z) \,\frac{dz}{|z|^{n+2s}}.\\&
=2 \int_{\{d(x+z) -d(x)<2\delta,~4\delta<\nabla d(x) \cdot z< \tau_0\}} \one_{\{0<d(x+z) -d(x)<2\delta\}}(z)\frac{dz}{|z|^{n+2s}}\\&
\quad + 2 \int_{\{\nabla d(x) \cdot z>\tau_0\}} \one_{\{0<d(x+z) -d(x)<2\delta\}}(z)\frac{dz}{|z|^{n+2s}}.
 \end{align*} 
 By Lemma \ref{lemm2s<1/2_1} with $\sigma=2\delta$, we have that 
 $$\one_{\{d(x+z) -d(x)<2\delta,~4\delta<\nabla d(x) \cdot z<\tau_0 \}}\in L^1(\R^n)$$
 uniformly in $\delta$. Therefore, by
\eqref{chiclosetogamma} and the Dominated Convergence Theorem,
\begin{equation}\label{J3_for_I3}
    J_3 = o_\delta(1).
\end{equation}
From \eqref{J1_for_I3}, \eqref{J2_for_I3} and \eqref{J3_for_I3}, we get
\begin{equation}\label{I3_estimate}
    I_3 = o_\ep(1) + o_\delta(1).
\end{equation}
With a similar argument, we also get
\begin{equation}\label{I4_estimate}
    I_4 = o_\ep(1) + o_\delta(1).
\end{equation}
Combining \eqref{aepsplits<1/2},  \eqref{aepsplits<1/2_bis}, \eqref{I1_estimate}, \eqref{I2_estimate}, \eqref{I3_estimate} and \eqref{I4_estimate}, we obtain
\begin{equation}\label{bbarestimate:final:lemmak}
\bar b_\ep=\kappa[x,d] + o_\ep(1) + o_\delta(1)+ O(R^{-2s}).
\end{equation}
It remains to estimate $\bar c_\ep$. Since $|\nabla d(x)|=1$, we can write
\begin{equation*}
    \bar c_\ep=\frac{1}{\ep^{2s}}\left[ \left(1+ \ep^{2+\frac{2s}{1-2s}} \right)^s -1 \right]W' \left( \phi \left( \frac{d(t,x)}{\ep} \right) \right),
\end{equation*}
and by H{\"o}lder continuity, 
\begin{equation*}
   | \bar c_\ep|\leq C\ep^{\frac{2s^2}{1-2s}}.
    \end{equation*}
The estimate on $\bar c_\ep$, combined with  \eqref{bbarestimate:final:lemmak}, yields  
 the desired result.

\section{Proof of Lemma \ref{lem:a_ep and frac laplacians}}\label{sec: proof of a_ep and frac laplacians}
\begin{proof}
 For simplicity, we drop the dependence on $t$. By Lemma  \ref{lem:1 to n facrional Laplacian} applied to $v=\phi$  with $e = |\nabla d(x)|$, we have 
\begin{align*}
     \int_{\mathbb{R}^n} \left( \phi \left( \frac{d(x)}{\ep} + \nabla d(x) \cdot z  \right) - \phi \left( \frac{d(x)}{\ep}\right) \right) \frac{dz}{|z|^{n+2s}} = |\nabla d(x)|^{2s} C_{n,s} \mathcal{I}_1^s[\phi]\left( \frac{d(x)}{\ep} \right). 
\end{align*}
Therefore (recall the definition of $\bar b_\ep$ in \eqref{b_epsilon}), we obtain
\begin{align*}
     \mathcal{I}_n^s \left[ \phi \left( \frac{d( \cdot)}{\ep} \right) \right](x) = &\int_{\mathbb{R}^n} \left( \phi \left( \frac{d(x+z)}{\ep} \right) - \phi \left( \frac{d(x)}{\ep} \right) \right) \frac{dz}{|z|^{n+2s}} \\
    = & \int_{\mathbb{R}^n} \left( \phi \left( \frac{d(x)+ \nabla d(x) \cdot z  }{\ep} \right) - \phi \left( \frac{d(x)}{\ep}\right) \right) \frac{dz}{|z|^{n+2s}} \\
    & + \int_{\mathbb{R}^n} \left( \phi \left( \frac{d(x+ z)}{\ep}\right) - \phi \left( \frac{d(x)+ \nabla d(x) \cdot z}{\ep}   \right) \right) \frac{dz}{|z|^{n+2s}} \\
    =&\frac{1}{\ep^{2s}} \int_{\mathbb{R}^n} \left( \phi \left( \frac{d(x) }{\ep} + \nabla d(x) \cdot z \right) - \phi \left( \frac{d(x)}{\ep}\right) \right) \frac{dz}{|z|^{n+2s}}+ \bar b_\ep + O(R^{-2s})\\
    = & \frac{1}{\ep^{2s}} |\nabla d(x)|^{2s} C_{n,s} \mathcal{I}_1^s[\phi]\left( \frac{d(x)}{\ep} \right) + \bar b_\ep + O(R^{-2s}). 
\end{align*}
 Subtracting $ \frac{C_{n,s}}{\ep^{2s}} \mathcal{I}_1^s[\phi]\left( \frac{d(\cdot)}{\ep} \right)$ from both sides, and using  that $\phi$ solves \eqref{eq:standing wave} (recall the  definition of $\bar c_\ep$ in \eqref{c_epsilon}), we get 
\begin{align*}
     \mathcal{I}_n^s& \left[ \phi \left( \frac{d( \cdot)}{\ep} \right) \right](x) - \frac{C_{n,s} }{\ep^{2s}}\mathcal{I}_1^s[\phi]\left( \frac{d(x)}{\ep} \right) \\
     &=\frac{1}{\ep^{2s}} (|\nabla d(x)|^{2s} -1)C_{n,s} W'\left(\phi\left( \frac{d(x)}{\ep} \right)\right)  + \bar b_\ep + O(R^{-2s}) \\
     &=  \bar a_\ep + O(R^{-2s}) + \tilde{c}_\ep,
\end{align*}
where 
\begin{align*}
    \tilde{c}_\ep :=\frac{1}{\ep^{2s}} \left[ |\nabla d(x)|^{2s} - \left(|\nabla d(x)|^{2} + \ep^{2+\frac{2s}{1-2s}} \right)^s \right] W' \left( \phi \left( \frac{d(x)}{\ep} \right) \right).
\end{align*}
By  H\"older continuity we get
\begin{align*}
    |\tilde{c}_\ep| \le C\ep^{\frac{2s^2}{1-2s}} 
\end{align*}
and  thus the desired result follows.
\end{proof}

\section{Proof of Lemma \ref{lem:a_estimates}}\label{sec:proof of a_estimates}
For ease of notation, throughout this section we omit the dependence on $t$.
First note that by the regularity of $\phi$ and $d$, there is some $\theta \in (0,1)$ and $C>0$ such that
\begin{equation}\label{mean_taylor_est}
    \begin{aligned}
        &\left| \phi \left( \frac{d(x+z) }{\ep} \right) - \phi \left( \frac{d(x)+ \nabla d(x) \cdot z}{\ep}  \right) \right| \\
        &\le \dot{\phi} \left( \theta \frac{d(x+ z)}{\ep} + (1-\theta)  \frac{d(x) + \nabla d(x)\cdot z}{\ep}  \right) C  \frac{|z|^2}{\ep}\\&
        =\dot{\phi} \left( \frac{d( x)}{\ep}+\theta \frac{d(x+z)-d(x)}{\ep} + (1-\theta)  \frac{ \nabla d(x)\cdot z }{\ep} \right) C \frac{|z|^2}{\ep}.
    \end{aligned}
\end{equation}
We will make several times  the change of variables $z=Ty$, where $T$ is an orthonormal matrix such that
\begin{equation}\label{changevar_s<12_bis}
\nabla d(x) \cdot (Ty) = c_1y_n,
\end{equation}
with $c_1=|\nabla d(x)|$ and  $y = (y', y_n)$,  $y' \in \mathbb{R}^{n-1}$.
Moreover, we will need the following preliminary results.
\begin{lem}\label{gamma_delta_int}
There exists $C>0$ such that for all $\tau,\, \gamma > 0$,
\begin{equation}\label{eq: gamma_delta_int}
    \int_{ \{ |y'| > \gamma, |y_n| < \tau \} } \frac{dy}{|y|^{n+2s}} \le C \frac{\tau}{\gamma^{1+2s}}.
\end{equation}
\end{lem}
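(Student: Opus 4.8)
The plan is to reduce the $n$-dimensional integral to an iterated one, integrating first over the $(n-1)$-dimensional variable $y'$ with $y_n$ fixed, and only afterwards over $y_n\in(-\tau,\tau)$. Writing $y=(y',y_n)$ with $y'\in\R^{n-1}$, by Tonelli's theorem
\[
\int_{\{|y'|>\gamma,\,|y_n|<\tau\}}\frac{dy}{|y|^{n+2s}}
=\int_{-\tau}^{\tau}\left(\int_{\{|y'|>\gamma\}}\frac{dy'}{(|y'|^2+y_n^2)^{\frac{n+2s}{2}}}\right)dy_n .
\]
The first step is to pass to polar coordinates in the $y'$ variable: for each fixed $y_n$,
\[
\int_{\{|y'|>\gamma\}}\frac{dy'}{(|y'|^2+y_n^2)^{\frac{n+2s}{2}}}
=\mathcal{H}^{n-2}(\mathcal{S}^{n-2})\int_\gamma^\infty\frac{\rho^{n-2}}{(\rho^2+y_n^2)^{\frac{n+2s}{2}}}\,d\rho .
\]

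The second step is to discard the nonnegative term $y_n^2$ in the denominator, using $(\rho^2+y_n^2)^{\frac{n+2s}{2}}\ge\rho^{n+2s}$, which produces a bound that is \emph{uniform} in $y_n$:
\[
\int_\gamma^\infty\frac{\rho^{n-2}}{(\rho^2+y_n^2)^{\frac{n+2s}{2}}}\,d\rho
\le\int_\gamma^\infty\rho^{-2-2s}\,d\rho=\frac{\gamma^{-1-2s}}{1+2s},
\]
the last integral being convergent at $+\infty$ since $2+2s>1$. Finally, integrating this constant bound over $y_n\in(-\tau,\tau)$ contributes a factor $2\tau$, and we conclude
\[
\int_{\{|y'|>\gamma,\,|y_n|<\tau\}}\frac{dy}{|y|^{n+2s}}
\le\frac{2\mathcal{H}^{n-2}(\mathcal{S}^{n-2})}{1+2s}\,\frac{\tau}{\gamma^{1+2s}},
\]
which is \eqref{eq: gamma_delta_int} with $C=\dfrac{2\mathcal{H}^{n-2}(\mathcal{S}^{n-2})}{1+2s}$, depending only on $n$ and $s$.

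There is no genuine obstacle in this argument; it is an elementary computation in polar coordinates. The only points requiring a modicum of care are that the radial integral must be taken over $(\gamma,\infty)$ (so that the exponent $-2-2s$ gives convergence and the correct power $\gamma^{-1-2s}$), that the estimate on the inner integral is independent of $y_n$ so that the $y_n$-integration merely yields the linear factor $\tau$, and the degenerate-looking case $n=2$, where $\mathcal{H}^{n-2}(\mathcal{S}^{n-2})=\mathcal{H}^{0}(\mathcal{S}^{0})=2$ and $\rho^{n-2}=1$, for which the same chain of inequalities applies verbatim.
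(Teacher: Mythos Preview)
Your proof is correct and follows essentially the same approach as the paper: apply Tonelli, bound the inner $y'$-integral by $C\gamma^{-1-2s}$ uniformly in $y_n$, and then integrate over $|y_n|<\tau$. The only difference is cosmetic: the paper first rescales via $w'=y'/|y_n|$ and then bounds $(1+|w'|^2)^{(n+2s)/2}\ge|w'|^{n+2s}$, whereas you bound $(\rho^2+y_n^2)^{(n+2s)/2}\ge\rho^{n+2s}$ directly---your route is slightly cleaner since it avoids the apparently singular factor $|y_n|^{-1-2s}$ that appears (and then cancels) in the paper's computation.
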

\begin{proof}
Making the change of variable $w' = \frac{y'}{|y_n|}$, we have
    \begin{align*}
         \int_{ \{ |y'| > \gamma, |y_n| <\tau \} } \frac{dy}{|y|^{n+2s}}  & = \int_{ \{ |y_n| <\tau \}} \frac{dy_n}{|y_n|^{n+2s}} \int_{ \{|y'| > \gamma \} } \frac{dy'}{ \left( 1 + \frac{|y'|^2}{|y_n|^2} \right)^{\frac{n+2s}{2}}} \\
         & = \int_{ \{ |y_n| < \tau \} } \frac{d y_n}{|y_n|^{1+2s}} \int_{ \{ |w'|> \frac{\gamma}{|y_n|} \} } \frac{dw'}{(1 +|w'|^2)^{\frac{n+2s}{2}}} \\ 
         & \le \int_{ \{ |y_n| < \tau \} } \frac{dy_n}{|y_n|^{1+2s}} \int_{ \{ |w'|> \frac{\gamma}{|y_n|} \} } \frac{dw'}{|w'|^{n+2s}} \\
         & = C \int_{ \{ |y_n| < \tau \} } \frac{dy_n}{|y_n|^{1+2s}} \int_{ \frac{\gamma}{|y_n|}}^{\infty} \frac{d \rho}{|\rho|^{2+2s}} \\
         & = C  \int_{ \{ |y_n| <\tau \} } \frac{1}{|y_n|^{1+2s}} \frac{|y_n|^{1+2s}}{\gamma^{1+2s}} dy_n\\
         & = C\frac{\tau}{\gamma^{1+2s}}.
    \end{align*}
\end{proof}

\begin{lem}\label{0<y_n<tau_lemmadotphi}
Assume $|\nabla d(x)|=1$. Then there exist $\tau_0>0$ and $C>0$ such that for all $0<\tau\leq\tau_0$, and $1\leq R\leq \infty$, 
\begin{equation*}\label{0<y_n<tau_lemma_eqdotphi}
\left| \int_{\{|\nabla d(x) \cdot z |<\tau,\, |z|<R\}} 
 \( \dot\phi\( \frac{ d(x+z)}{\ep}\) - \dot\phi\( \frac{ d(x) + \nabla d(x) \cdot z}{\ep}\) \) \frac{dz}{\abs{z}^{n+2s}}\right|\leq C\tau^{\frac12-s}. 
\end{equation*}
\end{lem}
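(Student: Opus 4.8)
The plan is to follow the proof of Lemma~\ref{0<y_n<tau_lemma}, replacing the monotonicity of $\phi$ used there by the $C^{2}$-regularity of $\phi$ together with the bound $|\ddot\phi(\xi)|\le C|\xi|^{-2s-1}$ for $|\xi|\ge1$ from \eqref{eq:asymptotics for phi dot}; the latter, together with continuity of $\ddot\phi$ (since $\phi\in C^{2,\beta}$), gives $\ddot\phi\in L^{1}(\R)$, and likewise $\dot\phi\in L^{\infty}(\R)$. After the change of variables $z=Ty$ with $T$ as in \eqref{changevar_s<12_bis} (so $|\nabla d(x)|=1$ forces $\nabla d(x)\cdot z=y_n$), I pass to the coordinates $p=|y'|$, $t=y_n/p$, so that
\[
\frac{dz}{|z|^{n+2s}}=\mathcal{H}^{n-2}(\mathcal{S}^{n-2})\,\frac{dp}{p^{1+2s}}\,\frac{dt}{(1+t^2)^{\frac{n+2s}{2}}},\qquad |z|^2=p^2(1+t^2),
\]
while the slab $\{|\nabla d(x)\cdot z|<\tau\}$ becomes $\{|t|<\tau/p\}$. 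I then split the integral (which I bound by the integral of the absolute value of its integrand) as $I_1+I_2$, with $I_1$ the contribution of $\{p<r\}$ and $I_2$ that of $\{p>r\}$, for a scale $r>0$ to be chosen.

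For $I_1$, by the regularity of $d$ (cf.~\eqref{mean_taylor_est}) one has $|d(x+z)-d(x)-\nabla d(x)\cdot z|\le C_0|z|^2$, so the integrand is at most the oscillation of $\dot\phi$ over the interval with endpoints $\tfrac{d(x)+\nabla d(x)\cdot z\pm C_0|z|^2}{\ep}$, that is at most $\int_{-1}^{1}|\ddot\phi(\gamma_\theta)|\,\tfrac{C_0|z|^2}{\ep}\,d\theta$ with $\gamma_\theta=\gamma_\theta(t,p):=\tfrac{d(x)+tp+\theta C_0p^2(1+t^2)}{\ep}$ — now an explicit function of $(t,p)$, which is precisely the role monotonicity of $\phi$ played in the earlier argument. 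Using $(1+t^2)^{1-\frac{n+2s}{2}}\le1$ reduces $I_1$ to a constant times $\ep^{-1}\int_0^{r}p^{1-2s}\,dp\int_{-1}^{1}d\theta\int_{-\tau/p}^{\tau/p}|\ddot\phi(\gamma_\theta(t,p))|\,dt$. For $\tau$ sufficiently small, $|tp|<\tau$ implies $1+2t\theta C_0p\ge\tfrac12$, so $t\mapsto\gamma_\theta(t,p)$ is increasing with $\gamma_\theta'\ge\tfrac{p}{2\ep}$; the substitution $\xi=\gamma_\theta(t,p)$ then gives $\int_{-\tau/p}^{\tau/p}|\ddot\phi(\gamma_\theta(t,p))|\,dt\le\tfrac{2\ep}{p}\,\|\ddot\phi\|_{L^{1}(\R)}$, whence $I_1\le C\int_0^{r}p^{-2s}\,dp=Cr^{1-2s}$. (Here $\ddot\phi\in L^{1}(\R)$ is used to bound the total variation of $\dot\phi$ along the monotone path $\gamma_\theta(\cdot,p)$.)

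For $I_2$ I use instead the crude bound $|\dot\phi(\cdot)-\dot\phi(\cdot)|\le2\|\dot\phi\|_{L^{\infty}(\R)}$ together with $(1+t^2)^{-\frac{n+2s}{2}}\le1$ and $\int_{-\tau/p}^{\tau/p}dt=\tfrac{2\tau}{p}$, getting $I_2\le C\,\|\dot\phi\|_{\infty}\,\tau\int_{r}^{\infty}\frac{dp}{p^{2+2s}}=C\tau\,r^{-1-2s}$; both $I_1$ and $I_2$ converge whether $R$ is finite or infinite, so the cutoff $|z|<R$, which only removes mass, is harmless. Choosing $r=\tau^{1/2}$ balances the two contributions and yields $I_1+I_2\le C\tau^{1/2-s}$, as claimed. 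I expect the only genuinely new point, compared with Lemma~\ref{0<y_n<tau_lemma}, to be handling the non-monotonicity of $\dot\phi$: this is done by passing to oscillations/total variation of $\dot\phi$ and invoking $\ddot\phi\in L^{1}(\R)$, after which the rest is a direct transcription of that proof.
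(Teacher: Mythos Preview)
Your proof is correct, and it actually takes a cleaner route than the paper's. Both arguments share the same scaffold: after the orthogonal change of variables and the $(p,t)$-polar decomposition in the slab, split at $p=r$ and pick $r=\tau^{1/2}$; the $I_2$ bound and the final balance are identical. The difference lies in how $I_1$ is handled. The paper keeps the exact second-order Taylor remainder $A(x,p,\theta,t)$, writes the difference of $\dot\phi$'s as $\tfrac{A}{\ep}\int_0^1\ddot\phi(\cdot)\,d\lambda$, and then performs an integration by parts in $t$ (using $\ddot\phi(\gamma)=\tfrac{\ep}{p+\partial_tA}\,\partial_t[\dot\phi(\gamma)]$) to convert back to $\dot\phi$; this produces boundary terms and an integral involving $\partial_t\big[A/\big((1+t^2)^{\frac{n+2s}{2}}(p+\partial_tA)\big)\big]$, which in turn requires tracking the sizes of $A$, $\partial_tA$, and $\partial_t^2A$. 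All terms are then bounded using only $\|\dot\phi\|_{L^\infty}$.

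Your shortcut replaces the integration by parts with a monotone substitution: once the integrand is dominated by the total variation of $\dot\phi$ over $[\gamma_{-1},\gamma_1]=\int_{-1}^{1}|\ddot\phi(\gamma_\theta)|\tfrac{C_0|z|^2}{\ep}\,d\theta$, the change of variable $\xi=\gamma_\theta(t,p)$ (legitimate since $\gamma_\theta'(t)\ge p/(2\ep)$ when $|pt|<\tau\le\tau_0$) collapses the inner $t$-integral to at most $\tfrac{2\ep}{p}\|\ddot\phi\|_{L^1(\R)}$. The integrability $\ddot\phi\in L^1(\R)$ is indeed available from \eqref{eq:asymptotics for phi dot} combined with $\phi\in C^{2,\beta}$, so this is a genuine simplification. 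The trade-off is that the paper's argument uses only $\dot\phi\in L^\infty$, whereas yours needs $\ddot\phi\in L^1$; since the latter is already established in Lemma~\ref{lem:asymptotics}, there is no loss, and your proof is both shorter and more transparent.
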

\begin{proof}
The proof  is similar to that of Lemma \ref{0<y_n<tau_lemma}, but it is  more involved due to the fact that  $\dot\phi$ is not a monotone function. 
We perform the usual  Taylor expansion  of $d$,  but we make the error term explicit, for $\lambda \in (0,1)$,
$$d(x+z) - d(x)=  \nabla d(x) \cdot z
  + \int_0^1D^2d(x+\lambda z)(1-\lambda)\,d\lambda\, z\cdot z.
 $$
Assume $\tau<1/2$ and let  $0<r<1/2$ to be determined. Making  the change of variables $z=Ty$ with $T$ as in \eqref{changevar_s<12} (and $c_1=1$), we get
 \begin{align*}
&\int_{\{|\nabla d(x) \cdot z |<\tau,\,|z|<R\}} 
 \( \dot\phi\( \frac{ d(x+z)}{\ep}\) - \dot\phi\( \frac{ d(x) + \nabla d(x) \cdot z}{\ep}\) \) \frac{dz}{\abs{z}^{n+2s}}\\&
=\int_{\{|y_n|<\tau,\,|y|<R\}} 
 \( \dot\phi\( \frac{ d(x+Ty)}{\ep}\) - \dot\phi\( \frac{ d(x) + y_n}{\ep}\) \) \frac{dy}{\abs{y}^{n+2s}}\\&
 =\int_{\{|y_n|<\tau,\,|y'|<r\}} (\ldots)+\int_{\{|y_n|<\tau,\,|y'|>r, |y|<R\}} (\ldots)\\&
 =:I_1+I_2. 
 \end{align*}
Next, for $I_1$ we make the further change of variable $t=y_n/p$, and use  polar coordinates  $y'=p\theta$ with $p>0$ and $\theta\in \mathcal{S}^{n-2}$. This gives
 \begin{align*}
I_1&
=\int_{\mathcal{S}^{n-2}}d\theta\int_0^r \frac{dp}{p^{1+2s}}\int_{-\frac{\tau}{p}}^{\frac{\tau}{p}}\(\dot\phi\(   \frac{ d(x) + tp+A(x,p,\theta,t)}{\ep}\) - \dot\phi\(\frac{d(x) + tp}{\ep}\) \) \frac{dt}{(1+t^2)^\frac{n+2s}{2}},
 \end{align*}
 where the function $A$ has the form 
 $$A(x,p,\theta,t)=p^2\int_0^1D^2d(x+\lambda pT(\theta,t))(1-\lambda)\,d \lambda \, T(\theta, t)\cdot T(\theta,t).$$
 Note that  for $0<p<1$ and $p|t|<1$, 
 \begin{equation}\label{ApartialAtime}A =O(p^2(1+t^2)),\quad\partial_tA =O(p^2(1+|t|))\quad\text{and}\quad\partial^2_tA =O(p^2). \end{equation}
$I_1$ can be rewritten as
 \begin{align*}
  I_1
 &=\frac{1}{\ep}\int_{\mathcal{S}^{n-2}}d\theta\int_0^r \frac{dp}{p^{1+2s}}\int_{-\frac{\tau}{p}}^{\frac{\tau}{p}} dt\, \frac{A(x,p,\theta,t)}{(1+t^2)^\frac{n+2s}{2}}\int_0^1\ddot \phi\(  \frac{d(x) + tp+\lambda A(x,p,\theta,t)}{\ep}\)\, d\lambda\\&
 =\int_{\mathcal{S}^{n-2}}d\theta\int_0^r \frac{dp}{p^{1+2s}}\int_{-\frac{\tau}{p}}^{\frac{\tau}{p}} dt\, \frac{A(x,p,\theta,t)}{(1+t^2)^\frac{n+2s}{2}}\\&
 \quad \cdot \int_0^1\partial_t 
 \left[\dot \phi\(\frac{d(x) + tp+\lambda A(x,p,\theta,t)}{\ep}\)\right] \frac{d \lambda}{p +\partial_t A(x,p,\theta,t)}.
   \end{align*}
  By \eqref{ApartialAtime}, for $0<p< r$ and $p|t|< \tau$, with $r$ and $\tau$  sufficiently small, we have
  \begin{equation}\label{ApartialAtime-part2}
  p +\partial_t A(x,p,\theta,t)\geq p[1-C(p|t|+p)]\geq \frac{p}{2}.
  \end{equation} 
   Integrating by parts with respect to $t$, we obtain
    \begin{align*}
I_1&= \int_{\mathcal{S}^{n-2}}d\theta\int_0^r \frac{dp}{p^{1+2s}}\int_0^1d\lambda \left\{ \dot \phi\(\frac{d(x) + tp+\lambda A(x,p,\theta,t)}{\ep}\)\right.\\&
\quad\cdot\left.\frac{A(x,p,\theta,t)}{(1+t^2)^\frac{n+2s}{2}(p +\partial_t A(x,p,\theta,t))}
\Big\rvert_{t=-\frac{\tau}{p}}^{t=\frac{\tau}{p}}\right.\\&
\quad-\left. \int_{-\frac{\tau}{p}}^{\frac{\tau}{p}}\dot \phi\(\frac{d(x) + tp+\lambda A(x,p,\theta,t)}{\ep}\)\partial_t\left[\frac{A(x,p,\theta,t)}{(1+t^2)^\frac{n+2s}{2}(p +\partial_t A(x,p,\theta,t))}\right]dt\right\}. 
    \end{align*}
  By   \eqref{ApartialAtime} and \eqref{ApartialAtime-part2}, for $0<p< r$, 
  $$\frac{A}{(1+t^2)^\frac{n+2s}{2}(p +\partial_t A)}=O\(\frac{p}{(1+t^2)^\frac{n+2s-2}{2}}\),$$
  and 
   \begin{align*}
   \partial_t\left[\frac{A}{(1+t^2)^\frac{n+2s}{2}(p +\partial_t A)}\right]&=\frac{\partial_t A}{(1+t^2)^\frac{n+2s}{2}(p +\partial_t A)}-(n+2s)\frac{tA}{(1+t^2)^\frac{n+2s+2}{2}(p +\partial_t A)}\\&
   \quad-\frac{A\partial_{tt}A}{(1+t^2)^\frac{n+2s}{2}(p+\partial_tA)^2}\\&
   =O\(\frac{p}{(1+t^2)^\frac{n+2s-1}{2}}\)+O\(\frac{p^2}{(1+t^2)^\frac{n+2s-2}{2}}\). 
  \end{align*}
  Therefore, 
   \begin{align*}
   &\int_{\mathcal{S}^{n-2}}d\theta\int_0^r \frac{dp}{p^{1+2s}}\int_0^1\dot \phi\(\frac{d(x) + tp+\lambda A(x,p,\theta,t)}{\ep}\)\frac{A(x,p,\theta,t)}{(1+t^2)^\frac{n+2s}{2}(p +\partial_t A(x,p,\theta,t))}
\Big\rvert_{t=-\frac{\tau}{p}}^{t=\frac{\tau}{p}}d\lambda \\&
 \leq  C\int_0^{r} \frac{dp}{ p^{2s}}=Cr^{1-2s},
\end{align*}
and 
 \begin{align*}
 &\int_{\mathcal{S}^{n-2}}d\theta\int_0^r \frac{dp}{p^{1+2s}}\int_0^1d \lambda \int_{-\frac{\tau}{p}}^{\frac{\tau}{p}}\dot \phi\(\frac{d(x) + tp+\lambda A(x,p,\theta,t)}{\ep}\)\\&
 \quad\cdot\partial_t\left[\frac{A(x,t,p,\theta)}{(1+t^2)^\frac{n+2s}{2}(p +\partial_t A(x,p,\theta,t))}\right]dt\\&
 \leq C\int_0^r \frac{dp}{p^{2s}} \int_{-\frac{\tau}{p}}^{\frac{\tau}{p}}\left\{\frac{1}{(1+t^2)^\frac{n+2s-1}{2}}+\frac{p}{(1+t^2)^\frac{n+2s-2}{2}}\right\}dt\\&
 \leq   C\int_0^r \frac{dp}{p^{2s}} \left\{1+p\left|\int_{1}^{\frac{\tau}{p}}\frac{dt}{t^{2s}}\right|\right\}\\&
 = C\int_0^r \frac{dp}{p^{2s}} \left\{1+\tau^{1-2s}p^{2s}\right\}\\&
 \leq Cr^{1-2s}. 
 \end{align*}
 We infer that 
 \begin{align*}
 |I_1|& \leq  C r^{1-2s}. 
 \end{align*}
We also estimate
  \begin{align*}
| I_2| \leq C\int_{r}^\infty \frac{dp}{p^{1+2s}}\int_{-\frac{\tau}{p}}^{\frac{\tau}{p}}dt=C\frac{\tau}{r^{1+2s}}. 
  \end{align*}
Choosing $r=\tau^\frac12$, we obtain
$$\int_{\{|\nabla d(x) \cdot z |<\tau,\,|z|<R\}} 
 \( \dot\phi\( \frac{ d(x+z)}{\ep}\) - \dot\phi\( \frac{ d(x) + \nabla d(x) \cdot z}{\ep}\) \) \frac{dz}{\abs{z}^{n+2s}}\leq C\tau^{\frac12-s}. $$
The lower bound can be  proven similarly. This concludes the proof of the lemma.   
\end{proof}

Now we are ready to prove Lemma \ref{lem:a_estimates}.
\subsection{Proof of \eqref{a_near}} We consider two cases: $|d(x)|<\rho$ and  $|d(x)|\geq \rho$, with $\rho $ as in Definition \ref{defn:extension}. 
First, assume $|d(x)|<\rho$, then $|\nabla d(x)|=1$. 
We begin by estimating $\bar b_\ep$ as in \eqref{b_epsilon} from the definition of $\bar a_\ep$ in  \eqref{aepsilondef}. We split
    \begin{align*}
       \bar  b_\ep = &\int_{ \left\{|z| < R \right\} } \left( \phi \left( \frac{d( x+z)}{\ep} \right) - \phi \left( \frac{d(x) + \nabla d(x) \cdot z }{\ep}\right) \right) \frac{dz}{|z|^{n+2s}} \\
        =&\int_{\{ d(x+z) > d(x),~\nabla d(x) \cdot z < 0\}}(\ldots)+\int_{\{d(x+z) < d(x),~\nabla d(x) \cdot z > 0\}}(\ldots)\\
         &+\int_{\{d(x+z) > d(x),~\nabla d(x) \cdot z > 0\}}(\ldots)+\int_{\{d(x+z) < d(x),~\nabla d(x) \cdot z < 0\}}(\ldots)\\
=:& I_1+I_2+I_3+I_4.
    \end{align*}
By Proposition \ref{prop:fmc_finite_result}, $I_1$ and $I_2$ are bounded uniformly in $\ep$. Thus, it is enough to show that
\begin{equation}\label{I3I4bounda_near}|I_3|,\,|I_4|\leq C.\end{equation}
Let $\tau>0$ to be chosen, and further split,
\begin{align*}
    I_3 &= \int_{\{d(x+z) > d(x),~\nabla d(x) \cdot z > 0\}} \left( \phi\left( \frac{d(x+ z)}{\ep}\right) - \phi\left( \frac{ d(x) + \nabla d(x) \cdot z}{\ep}\right) \right) \frac{dz}{\abs{z}^{n+2s}} \\
    & =  \int_{\{d(x+z) > d(x),~\nabla d(x) \cdot z > \tau \}}(\ldots) + \int_{\{d(x+z) > d(x),~ 0 <\nabla d(x) \cdot z <\tau\}}(\ldots) \\& =: J_1 + J_2. 
\end{align*}
Making the change of variables $z=Ty$ with $T$ as in \eqref{changevar_s<12_bis} (and  $c_1=1$), we get
\begin{align*}
    |J_1| \le 2\int_{ \{y_n > \tau \} } \frac{dy}{|y|^{n+2s}} \le 2\int_{ \{ |y| > \tau\}} \frac{dy}{|y|^{n+2s}} \le C \tau^{-2s}.
\end{align*}
By Lemma \ref{0<y_n<tau_lemma}, choosing $\tau=\tau_0$ with $\tau_0$ as in the lemma, we have  $$|J_2|\leq C\tau^{\frac12-s}.$$
The estimates on $J_1$ and  $J_2$  imply \eqref{I3I4bounda_near} for  $I_3$. 

The  bound for  $I_4$ follows similarly. Thus, we have shown that 
\begin{equation}\label{b_ep bound_a_near}
|\bar b_\ep|\leq C.
\end{equation}
  Finally, we estimate $\bar c_\ep$ as in \eqref{c_epsilon}. Since $ |\nabla d(x)| =1$, 
we have 
$$\bar  c_\ep[d](t,x) :=\frac{1}{\ep^{2s}}\left[ \left(1 + \ep^{2+\frac{2s}{1-2s} } \right)^s -1 \right]W' \left( \phi \left( \frac{d(t,x)}{\ep} \right) \right),$$
and  by H\"older continuity,
\begin{align*}
    |\bar c_\ep| \le C \ep^{\frac{2s^2}{1-2s}} .
\end{align*}
 This estimate, combined with \eqref{b_ep bound_a_near}, gives  \eqref{a_near} for the case   $|d(x)|<\rho$.
 
Next,  assume $|d(x)|\geq\rho$. Again, we begin by estimating $\bar b_\ep$ first. Let $c>0$ be so small   that if $|z| \le c\rho$, then
   $\left| d(x+ z) -d(x)\right| \leq \rho/4$ and $\left|\nabla d(x) \cdot z  \right| \leq \rho/4$. We  write
\begin{align*}
   \bar  b_\ep &=  \int_{ \left\{|z| < R \right\} } \left( \phi \left( \frac{d( x+ z)}{\ep} \right) - \phi \left( \frac{d(x)+ \nabla d(x) \cdot z}{\ep}  \right) \right) \frac{dz}{|z|^{n+2s}} \\
    &=  \int_{\{|z| <c\rho\} }(\ldots) + \int_{ \{ c\rho < |z| < R \}} (\ldots)  \\
    & =: I + II.
\end{align*}
Using \eqref{mean_taylor_est} and  estimate \eqref{eq:asymptotics for phi dot} for $\dot{\phi}$,   we get 
\begin{align*}
    |I|&\le   C  \int_{\{|z| <c\rho\} }\frac{\ep^{2s}}{\left|d(x)+\theta (d(x+z)-d(x))+ (1-\theta) \nabla d(x) \cdot z \right|^{2s+1}}
  \frac{dz}{|z|^{n+2s-2}} \\&
   \leq C\frac{\ep^{2s}}{\left(|d(x)|-\frac{\rho}{2}\right)^{2s+1}}  \int_{\{|z| <c\rho\} } \frac{dz}{|z|^{n+2s-2}}\\&
 \leq C \rho^{1-4s} \ep^{2s}.
\end{align*}
For $II$, we have
\begin{align*}
    |II| \le 2\int_{ \{ c\rho < |z| < R \}} \frac{dz}{|z|^{n+2s}} \le \frac{C}{\rho^{2s}}. 
\end{align*}
Combining the estimates of $I$ and $II$, we obtain 
\begin{equation}\label{bepestimatea_far}|\bar b_\ep|\leq \frac{C}{\rho^{2s}}.
\end{equation}
Next, we estimate $\bar c_\ep$. Let $H(\cdot)$ denote the Heaviside function. 
Using that $W'\(H \left( \frac{d(x)}{\ep} \right)\)=0$,  by a Taylor's expansion around $H \left( \frac{d(x)}{\ep} \right)$, and for some $\xi_0\in\R$, we get
\begin{align}\label{W'estimatecepestafar}
   \left| W'\left( \phi \left( \frac{d(x)}{\ep}\right) \right)\right|= \left|W''(\xi_0) \left( \phi \left( \frac{d(x)}{\ep}\right) - H \left( \frac{d(x)}{\ep} \right)\right)\right|\leq \frac{C\ep^{2s}}{|d(x)|^{2s}},
\end{align}
where we used  estimate \eqref{eq:asymptotics for phi} for the last inequality. From  \eqref{W'estimatecepestafar}, we finally get
\begin{align*}
    |\bar c_\ep| \le  \frac{C}{|d(x)|^{2s}} \le  \frac{C}{\rho^{2s}}. 
\end{align*}
From the  estimate on $\bar c_\ep$ and \eqref{bepestimatea_far}, 
\eqref{a_near} for the case   $|d(x)|\geq \rho$ follows.

\subsection{Proof of \eqref{partial_a_near}}
We consider two cases: $|d(x)|<\delta_0$ and  $|d(x)|\geq \delta_0$, with $0<\delta_0<\rho$ to be determined, and $\rho$ as in Definition \ref{defn:extension}. First, assume  $|d(x)|< \delta_0$. We will establish the estimate for  $\nabla_x \bar a_\ep$; the estimate for  $\partial_t \bar a_\ep$ follows by a similar argument. 
 Since $\delta_0<\rho$, we have that   $|\nabla d(x)|=1$. 
 We begin by estimating  $\nabla_x \bar b_\ep$,  and compute
 

\begin{equation}\label{partialxba-close}\begin{split}
   \partial_{x_i}\bar  b_\ep = & \int_{\{ |z| < R\}} \bigg[ \dot{\phi} \left( \frac{d(x+ z)}{\ep}\right) \frac{\partial_{x_i}d(x+z)}{\ep} \\
   &- \dot{\phi}\left( \frac{d(x)+ \nabla d(x) \cdot z}{\ep}  \right) \frac{\partial_{x_i}d(x) + \nabla \partial_{x_i}d(x) \cdot z }{\ep} \bigg] \frac{dz}{|z|^{n+2s}}  \\
   =  &\ep^{-1}\left\{ \int_{\{ |z| < R \}} \bigg [ \dot{\phi} \left( \frac{d(x+z)}{\ep}\right) (\partial_{x_i}d(x+ z) - \partial_{x_i}d(x))\right.  \\
    & + \left(  \dot{\phi} \left( \frac{d(x+ z)}{\ep}\right) -  \dot{\phi}\left( \frac{d(x) + \nabla d(x) \cdot z }{\ep}\right) \right) \partial_{x_i}d(x) \\
    &\left.- \dot{\phi}\left( \frac{d(x) + \nabla d(x) \cdot z}{\ep} \right)  \nabla \partial_{x_i}d(x) \cdot z  \bigg]  \frac{dz}{|z|^{n+2s}}\right\} \\
     =:& \ep^{-1}(I + II+ III).
\end{split}
\end{equation}
We first estimate $I$.  
Let $0<\gamma <1$ to be chosen, and split the integral as follows
\begin{align*}
    I= \int_{\left\{  |z| <\gamma \right\}} (\ldots) + \int_{\left\{ \gamma < |z| <R \right\}} (\ldots) =: I_1+ I_2.
\end{align*}
For $I_1$, we obtain
\begin{align*}
    |I_1| \le C  \int_{ \{|z| < \gamma\} } \frac{dz}{|z|^{n+2s-1}} \le C \gamma^{1-2s}.
\end{align*}
For $I_2$,  using that $\{z\,:\,d(x+z)=0\}$ is a smooth surface,  and applying estimate \eqref{eq:asymptotics for phi dot} for $\dot \phi$, we have
\begin{align*}
    |I_2| &\le C\int_{ \{ |d(x+z)|< \delta_0,~|z|> \gamma\} } \frac{dz}{|z|^{n+2s}}  + \int_{ \{ |d(x+z)|\geq \delta_0,~ |z|>\gamma \} } \dot{\phi} \left(\frac{d(x+z)}{\ep} \right)   \frac{dz}{|z|^{n+2s}} \\
    &\le  \frac{C}{\gamma^{n+2s}} \int_{ \{ |d(x+z)| < \delta_0 \} } dz + C \frac{\ep^{2s+1}}{\delta_0^{2s+1}} \int_{\{ |z|>\gamma\}} \frac{dz}{|z|^{n+2s}}\\
    &\le C \left(\frac{\delta_0}{\gamma^{n+2s}} +\frac{\ep^{2s+1}}{\delta_0^{2s+1} \gamma^{2s}}\right).
\end{align*}
Combining the estimates for $I_1$ and $I_2$, we obtain
\begin{equation}\label{Iestnablaaxnear}
|I|\leq C\left(\gamma^{1-2s}+\frac{\delta_0}{\gamma^{n+2s}}+\frac{\ep^{2s+1}}{\delta_0^{2s+1} \gamma^{2s}}\right).
\end{equation} 
Next, we estimate $II$. 
We split
\begin{align*}
    II& =  \int_{\left\{  |\nabla d(x)\cdot z| <\gamma, |z|<R\right\}} (\ldots) + \int_{\left\{ |\nabla d(x)\cdot z| >\gamma,\,  |z| <R \right\}} (\ldots) =: II_1+ II_2.
    \end{align*}
  By Lemma \ref{0<y_n<tau_lemmadotphi}, for $\gamma\leq \tau_0$ and $\tau_0$ as in the lemma,
  $$|II_1|\leq C\gamma^{\frac12-s}.$$  
For $II_2$, we  further split
\begin{align*}
    |II_2| &\le C \int_{\left\{ |\nabla d(x)\cdot z| >\gamma  \right\}}\dot{\phi} \left( \frac{d(x+z)}{\ep}\right)\frac{dz}{|z|^{n+2s}}  
    +C \int_{\left\{ |\nabla d(x)\cdot z| >\gamma  \right\}} \dot{\phi}\left( \frac{d(x)+\nabla d(x)\cdot z}{\ep} \right)  \frac{dz}{|z|^{n+2s}}.\\
        & = : J_1 + J_2.
\end{align*}
 For $J_1$, similarly to the estimate of  $I_2$, we get
$$|J_1|\leq  C\left( \frac{\delta_0}{\gamma^{n+2s}} +\frac{\ep^{2s+1}}{\delta_0^{2s+1} \gamma^{2s}}\right).$$
For $J_2$,   note that $|d(x) + \nabla d(x)\cdot z|  >\delta_0$ if  $|\nabla d(x)\cdot z|>\gamma$, $|d(x)|<\delta_0$, and we choose $\gamma\geq 2\delta_0$. 
Therefore, by estimate \eqref{eq:asymptotics for phi dot} for $\dot \phi$, we have 
\begin{align*}
    |J_2| \le C \frac{\ep^{2s+1}}{\delta_0^{2s+1}} \int_{ \{ |z| > \gamma \} } \frac{dz}{|z|^{n+2s}  } = C \frac{\ep^{2s+1}}{\delta_0^{2s+1} \gamma^{2s}}.
\end{align*}
From  the estimates of $II_1$, $J_1$ and $J_2$ we obtain
\begin{equation}\label{Iestnablaaxnear_II} |II|\leq  C \left(\gamma^{\frac12-s}+\frac{\delta_0}{\gamma^{n+2s}} +\frac{\ep^{2s+1}}{\delta_0^{2s+1} \gamma^{2s}}\right). 
\end{equation}
We finally estimate $III$.   We split
$$III=\int_{\{| z| <2\gamma,\, |z|<R  \}}(\ldots)+\int_{\{|z| >2\gamma ,\, |z|<R   \}}(\ldots):=III_1+III_2.$$
 Similarly to  the estimate of $I_1$,  we have
 $$|III_1| \le C  \int_{ \{|z| <2 \gamma\} } \frac{dz}{|z|^{n+2s-1}} \le C \gamma^{1-2s}.$$
 We further split
 $$III_2=\int_{\{2\gamma<|z|<R,\, |\nabla d(x)\cdot z|<2\delta_0  \}}(\ldots)+\int_{\{2\gamma<|z|<R,\, |\nabla d(x)\cdot z|>2\delta_0  \}}(\ldots)=:J_1+J_2.$$
 To estimate $J_1$, we make the change of variables $z=Ty$ with $T$ as in \eqref{changevar_s<12} (and $c_1=1$). Note that if $|y|>2\gamma$, $|y_n|<2\delta_0$ and 
 $\gamma\geq 2\delta_0$, then $|y'|>\gamma$. Therefore, by Lemma \ref{gamma_delta_int}, we have
  \begin{equation*}\begin{split}|J_1|&\leq C\int_{\{2\gamma<|z|<R,\,  |\nabla d(x)\cdot z|<2\delta_0\} }|z|  \frac{dz}{|z|^{n+2s}}\\&
  \leq CR \int_{\{|z|>2\gamma,\,  |\nabla d(x)\cdot z|<2\delta_0\} }  \frac{dz}{|z|^{n+2s}}\\&
  =  CR \int_{\{|y|>2\gamma,\,  |y_n|<2\delta_0\} }  \frac{dy}{|y|^{n+2s}}\\&
  \leq CR \int_{\{|y'|>\gamma,\,  | y_n|<2\delta_0\} }  \frac{dy}{|y|^{n+2s}}\\&
  \leq \frac{CR\delta_0}{\gamma^{1+2s}}. 
  \end{split} \end{equation*}
We finally estimate $J_2$.  For $|d(x)|<\delta_0$ and $|\nabla d(x)\cdot z|> 2\delta_0$, we have that  $|d(x)+\nabla d(x)\cdot z|>\delta_0$. Therefore, by  estimate \eqref{eq:asymptotics for phi dot} for $\dot \phi$, we get
 \begin{equation*}\begin{split}|J_2|&\leq C\int_{\{\gamma< |z|<R,\,|\nabla d(x)\cdot z| >2\delta_0   \}}\dot{\phi}\left( \frac{d(x)+\nabla d(x)\cdot z}{\ep} \right)|z|  \frac{dz}{|z|^{n+2s}}
 \\&\leq CR\int_{\{|z|>\gamma,\,|\nabla d(x)\cdot z| >2\delta_0\}}\dot{\phi}\left( \frac{d(x)+\nabla d(x)\cdot z}{\ep} \right) \frac{dz}{|z|^{n+2s}}\\&
 \leq  C R\frac{\ep^{2s+1}}{\delta_0^{2s+1}}\int_{\{|z|>\gamma\}}\frac{dz}{|z|^{n+2s}}\\&
 =  C R\frac{\ep^{2s+1}}{\delta_0^{2s+1} \gamma^{2s}}.
\end{split} \end{equation*}
From the estimates on $J_1$ and $J_2$, we infer that 
$$|III_2|\leq CR\( \frac{\delta_0}{\gamma^{1+2s}}+\frac{\ep^{2s+1}}{\delta_0^{2s+1} \gamma^{2s}}\),$$
which, together  with the estimate on $III_1$, gives 
$$|III|\leq C \left(\gamma^{1-2s}+R \frac{\delta_0}{\gamma^{1+2s}}+R\frac{\ep^{2s+1}}{\delta_0^{2s+1} \gamma^{2s}}\right). $$
Combining the   estimate for $III$ with the estimates  for $I$ and $II$ in \eqref{Iestnablaaxnear} and \eqref{Iestnablaaxnear_II} (recall \eqref{partialxba-close}), we   obtain
\begin{align*}
    |\partial _{x_i} \bar b_\ep| \le C \ep^{-1}\left( \gamma^{\frac12-s} + R\frac{\delta_0}{\gamma^{n+2s}} + R\frac{\ep^{2s+1}}{\delta_0^{2s+1} \gamma^{2s}} \right).
\end{align*}
Choosing $\gamma=o_{\delta_0}(1)\geq 2\delta_0 $ such that  $\frac{\delta_0}{\gamma^{n+2s}} =o_{\delta_0}(1)$,    we obtain
\begin{equation}\label{partial_b_estimate}
    |\partial_{x_i} \bar b_\ep| \leq C\ep^{-1}R \left(o_{\delta_0}(1)+ \frac{\ep^{2s+1}}{\delta_0^{4s+1} } \right).
\end{equation}

\noindent Finally, we estimate $\nabla_x\bar c_\ep$. Since $|\nabla d| = 1$ in a neighborhood of $x$, we have 
\begin{align*}
    \partial_{x_i} \bar c_\ep = \frac{1}{\ep^{2s}}\left[ \(1+  \ep^{2+\frac{2s}{1-2s}}\)^s - 1 \right] W'' \left( \phi \left( \frac{d(x)}{\ep} \right) \right) \dot{\phi} \left( \frac{d(x)}{\ep} \right) \frac{\partial_{x_i }d(x)}{\ep},
\end{align*}
and by H{\"o}lder continuity  we obtain
\begin{equation}\label{partial_c_estimate}
    |\partial_{x_i}\bar  c_\ep| \le C\ep^{\frac{2s^2}{1-2s}-1} .
\end{equation}
Combining \eqref{partial_b_estimate} and \eqref{partial_c_estimate}, we obtain 
\begin{equation}\label{partial_a_near_new}
 |\partial_{x_i} \bar a_\ep| \le C\ep^{-1}R\left(o_\ep(1)+o_{\delta_0}(1)+ \frac{\ep^{2s+1}}{\delta_0^{4s+1} } \right)\quad \text{if }|d(x)|<\delta_0.
\end{equation}

Next, assume $|d(x)|\geq \delta_0$. 
 As before, we treat the two terms $\bar b_\ep$ and $\bar c_\ep$ in the definition of $\bar a_\ep$ in \eqref{aepsilondef} separately, starting with  $\bar b_\ep$. Recalling \eqref{partialxba-close}, 
we split the domain of integration into two parts: $|z|\le c\delta_0$ and $|z|>c\delta_0$ where $c>0$ is a small constant to be chosen.
Choose  $c>0$ sufficiently small such that,  for all $|z| \le c\delta_0$, it holds that  $\left| d(x+ z)-d(x)\right| \leq \delta_0/4$ and $\left|\nabla d(x) \cdot z \right| \leq \delta_0/4$. In particular, since  $|d(x)|\geq \delta_0$, 
this implies that $|d(x+z)|\geq\delta_0/2$ and $\left|d(x)+\nabla d(x) \cdot z \right| \geq\delta_0/2$ if $|z| \le c\delta_0$. 
Thus, we split 
\begin{align}\label{partialxbnablaafar}
   \partial_{x_i}\bar  b_\ep &=\ep^{-1}\left(\int_{\{|z|<c\delta_0\}}(\ldots)+\int_{\{|z|>c\delta_0\}}(\ldots)\right)=:\ep^{-1}(I+II).
   \end{align}
For $I$, using  estimate \eqref{mean_taylor_est} with $\phi$ replaced by $\dot \phi$,  and estimate  \eqref{eq:asymptotics for phi dot} for $\dot{\phi}$ and $\ddot{\phi}$,  we get 
\begin{equation}\label{Inabla_xafar}\begin{split}
    |I| \le & C \int_{\{ |z| < c \delta_0 \}} \bigg[  \dot{\phi} \left( \frac{d(x+z)}{\ep}\right)|z|  + \left|\ddot{\phi}\left(  \frac{d(x)}{\ep}  +\theta \frac{d(x+z)-d(x)}{\ep} + (1-\theta)\frac{\nabla d(x) \cdot z }{\ep} \right)\right|\frac{|z|^2}{\ep} \\
    & +  \dot{\phi} \left( \frac{d(x)+\nabla d(x) \cdot z }{\ep} \right)|z| \bigg] \frac{dz}{|z|^{n+2s}} \\
    \le & C \frac{\ep^{2s+1}}{\delta_0^{2s+1}} \int_{ \{ |z| <c \delta_0 \} } \frac{dz}{|z|^{n+2s-1}} + C \frac{\ep^{2s}}{\(|d(x)|-\frac{\delta_0}{2}\)^{2s+1}} \int_{ |z| < c \delta_0} \frac{dz}{|z|^{n+2s-2}} \\
    \leq& C\left( \frac{\ep^{2s+1}}{\delta_0^{4s}}+\frac{\ep^{2s}}{\delta_0^{4s-1}}\right). 
\end{split}
\end{equation}
For $II$, using that  $| \nabla \partial_{x_i}d(x) \cdot z |\leq CR$ for $|z|<R$, we get
\begin{equation}\label{IIsplitnablaxfar}\begin{split}
    |II|& \le  C \int_{\{ c \delta_0 < |z| <R  \}} \dot{\phi} \left( \frac{d(x+ z)}{\ep}\right) \frac{dz}{|z|^{n+2s}} + CR\int_{\{ c\delta_0  <|z| < R\}} \dot{\phi} \left( \frac{d(x)+ \nabla d(x) \cdot z }{\ep} \right)  \frac{dz}{|z|^{n+2s}} \\&=:II_1 + RII_2. 
\end{split}
\end{equation}
We first estimate $II_1$. To this end, we further split 
\begin{align*}
    II_1 & =\int_{ \{ c\delta_0 < |z| < R,\, |d(x+z)| \le\ep^{1\!/2}\} }(\dots) + \int_{ \{ c\delta_0 < |z| < R,\, |d(x+z)| > \ep^{1\!/2}\} }(\dots) \\
    & =: J_1+J_2.
\end{align*} 
Using that $\{z\,:\,d(x+z)=0\}$ is a smooth surface, we get
\begin{align*}
     J_1 \le   C \int_{ \{ c\delta_0 < |z| < R,\, |d(x+z)| \le \ep^{1\!/2}\} } \frac{dz}{|z|^{n+2s}} \le \frac{C}{\delta_0^{n+2s}} \int_{ \{|d(x+z)| \le \ep^{1\!/2}\} } dz \le 
    \frac{C \ep^{\frac12}}{\delta_0^{n+2s}}.
\end{align*}
Using estimate \eqref{eq:asymptotics for phi dot} for $\dot{\phi}$, we also have
\begin{align*}
     J_2 \le C\ep^{s+\frac12} \int_{ \{ c\delta_0 < |z| < R\} } \frac{dz}{|z|^{n+2s}} = C \frac{\ep^{s+\frac12}}{\delta_0^{2s}}. 
\end{align*}
 From the above estimates on $J_1$ and $J_2$ we infer that 
   \begin{equation}\label{II_1estimnablaax_far}II_1\leq \frac{C \ep^{\frac12}}{\delta_0^{n+2s}}.
   \end{equation}
We finally estimate $II_2$. If $\nabla_x d(x)=0$, then since $|d(x)|\geq\delta_0$, by  \eqref{eq:asymptotics for phi dot}  for $\dot{\phi}$ we have 
\begin{equation}\label{II_2est_nablax_far_zerpgra}II_2= \dot{\phi} \left( \frac{d(x)}{\ep} \right)  \int_{\{ c\delta_0  <|z| < R\}}\frac{dz}{|z|^{n+2s}}\leq \frac{C\ep^{2s+1}}{\delta_0^{2s+1} }\int_{\{ c\delta_0  <|z| < R\}}\frac{dz}{|z|^{n+2s}}\leq\frac{C\ep^{2s+1}}{\delta_0^{4s+1} }.
\end{equation}
Next, assume  $\nabla d(x)\neq 0$.
While keeping in mind that the integration is performed over the set $\{c\delta_0 < |z| < R\}$, we omit explicit  reference to this domain  in the following integrals for ease of notation, and split as follows
\begin{align*}
   II_2 &=\int_{\{ \nabla d(x)\cdot z < -d(x) -\ep^{1\!/2}\}} (\ldots) + \int_{\{|\nabla d(x)\cdot z +d(x)|\leq   \ep^{1\!/2}\}} (\ldots) + \int_{\{ {\nabla d(x)}\cdot z >-d(x) + \ep^{1\!/2} \}} (\ldots) \\
    & =:J_1+J_2+J_3.
\end{align*}
Notice that for both $J_1$ and $J_3$, we have
\begin{align*}
    \frac{\left| d(x) + \nabla d(x) \cdot z \right|}{\ep} \geq \ep^{-\frac12},
\end{align*}
and so for both $J_1$ and $J_3$,  we can use estimate  \eqref{eq:asymptotics for phi dot} for $\dot{\phi}$ and integrate for $|z| > c\delta_0$, to get
\begin{align*}
    J_1,\, J_3 \le C \ep^{s+\frac12} \int_{ \{  |z|>c\delta_0  \} } \frac{dz}{|z|^{n+2s}} \le C \frac{\ep^{s+\frac12}}{\delta_0^{2s}}.
\end{align*}
For $J_2$,  
performing the change of variable $z=Ty$ with $T$  as  in \eqref{changevar_s<12_bis}, we get
\begin{align*}
       J_2& 
 \le C \int_{ \left\{ \left|y_n+\frac{d(x) }{c_1}\right|\le   \frac{\ep^{1\!/2}}{c_1}\right\} } \frac{dy}{|y|^{n+2s}}.
\end{align*}
Notice that since $|d(x)|\geq \delta_0$, this last integral is well defined provided $\ep^{1\!/2}<\delta_0$  so that $y_n$ stays away from zero. Integrating first in $y'$ and then in $y_n$, and using H\"older continuity, we obtain
\begin{align*}
    J_2&\leq C \int_{ \left\{ \left|y_n+\frac{d(x) }{c_1}\right|\le   \frac{\ep^{1\!/2}}{c_1}\right\} } \frac{dy_n}{|y_n|^{1+2s}}
    = Cc_1^{2s} \frac{\left||d(x)+\ep^{\frac12}|^{2s} - |d(x)-\ep^{\frac12}|^{2s}\right|}{|d(x)+\ep^{\frac12}|^{2s}|d(x)-\ep^{\frac12}|^{2s}}\le \frac{C\ep^{s}}{\delta_0^{4s}},  
\end{align*} provided $\ep^{1\!/2}<\delta_0/2$.

From the estimates on $J_1$, $J_2$, $J_3$ and \eqref{II_2est_nablax_far_zerpgra}, we get
$$II_2\leq \frac{C\ep^{s}}{\delta_0^{4s}}. $$   Combining this with \eqref{II_1estimnablaax_far} (recall \eqref{IIsplitnablaxfar}) gives
$$|II|\leq C\left( \frac{ \ep^{\frac12}}{\delta_0^{n+2s}}+R\frac{\ep^{s}}{\delta_0^{4s}}\right).$$ From \eqref{partialxbnablaafar}, \eqref{Inabla_xafar} and the estimate for $II$, 
we finally obtain, for $\ep$ sufficiently small,  
\begin{align}\label{partialxbestnablaafar}
    |\partial_{x_i} \bar b_\ep| \leq  CR \ep^{-1} \frac{\ep^{s}}{\delta_0^{n+2s} }. 
\end{align}
It remains to estimate $\nabla_x \bar c_\ep$.  We compute
\begin{align*}
\partial_{x_i} \bar c_\ep (x)=&\frac{2s}{\ep^{2s}}\left(|\nabla d(x)|^{2} + \ep^{2+\frac{2s}{1-2s}} \right)^{s-1}W' \left( \phi \left( \frac{d(x)}{\ep} \right) \right)\sum_{j=1}^n\partial_{x_ix_j}d(x)\partial_{x_j} d(x)\\&
+\frac{1}{\ep^{2s}}\left(|\nabla d(x)|^{2} + \ep^{2+\frac{2s}{1-2s}} \right)^sW''\left( \phi \left( \frac{d(x)}{\ep} \right) \right) \dot{\phi} \left( \frac{d(x)}{\ep} \right) \frac{\partial_{x_i}d(x)}{\ep}.
\end{align*}
 Using   \eqref{W'estimatecepestafar}  and estimate \eqref{eq:asymptotics for phi dot} for $\dot \phi$, we get
\begin{align*}
        |\partial_{x_i} \bar c_\ep| \le & \frac{C}{\ep^{2s}} \left(|\nabla d(x)|^{2} + \ep^{2+\frac{2s}{1-2s}} \right)^{s-1} |\nabla d(x)| W' \left( \phi \left( \frac{d(x)}{\ep} \right) \right) + \frac{C}{\ep^{2s+1}} \dot{\phi} \left( \frac{d(x)}{\ep} \right) \\
        \le & C \left( \frac{|\nabla d(x)|^2}{|\nabla d(x)|^2 + \ep^{2+\frac{2s}{1-2s}}} \right)^{\frac{1}{2}} \left( |\nabla d(x)|^2 + \ep^{2+\frac{2s}{1-2s}} \right)^{s-\frac{1}{2}} \frac{1}{\delta_0^{2s}} 
        +\frac{C}{\delta_0^{2s+1}} \\
        \le &\frac{C}{ \delta_0^{2s}\left( |\nabla d(x)|^2 + \ep^{2+\frac{2s}{1-2s}} \right)^{\frac{1}{2}-s}}+\frac{C}{\delta_0^{2s+1}} \\
        \le & \frac{C}{ \delta_0^{2s}\left(\ep^{2+\frac{2s}{1-2s}} \right)^{\frac{1}{2}-s}}+\frac{C}{\delta_0^{2s+1}}
        \\ =& C \ep^{-1}  \left( \frac{\ep^{s}}{\delta_0^{2s}} + \frac{\ep}{\delta_0^{2s+1}} \right).
     \end{align*}
From this  last estimate  and \eqref{partialxbestnablaafar}, we  obtain  

\begin{equation}\label{partial_a_far_new}
 |\partial_{x_i} \bar a_\ep| \le CR\ep^{-1} \frac{\ep^s}{\delta_0^{n+2s}}\quad \text{if }|d(x)|\geq \delta_0.
\end{equation}
From  \eqref{partial_a_near_new} and  \eqref{partial_a_far_new}, choosing $\delta_0>2\ep^\frac12$ such that $\delta_0=o_\ep(1)$ and $\ep^s/\delta_0^{n+2s}=o_\ep(1)$, estimate
\eqref{partial_a_near} follows.

\section{Proof of Lemma \ref{lem:ae psi estimate}}\label{sec:proof of ae psi estimate}
Lemma \ref{lem:ae psi estimate} is a consequence of the following three lemmas.
\begin{lem}\label{lem:psi estimate near}
 Assume $\ep/\delta^2=o_\ep(1)$. Then, for all  $(t,x) \in [t_0,t_0+h] \times \mathbb{R}^n$, if $|d(t,x)|< \delta/2$,  
    \begin{align*}
    \ep^{2s} \mathcal{I}_n^s\left[ \psi \( \frac{d(t,\cdot)}{\ep};t,\cdot\) \right](x) 
 - C_{n,s} \mathcal{I}_1^s[\psi\(\cdot;t,x\)]\(\frac{d(t,x)}{\ep}\)=Ro_\ep(1).
 \end{align*}
\end{lem}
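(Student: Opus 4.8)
The plan is to exploit the product structure of the corrector obtained in the proof of Lemma~\ref{lem:psi-reg}, namely $\psi(\xi;t,x)=\tilde\psi(\xi)\,g(t,x)$, where $\tilde\psi$ solves \eqref{psiequationheuristiconlyxi} and obeys the decay bound \eqref{tildepsiest}, and $g(t,x):=\mu[d](t,x)-\bar a_\ep[d](t,x)$. Fix $(t,x)$ with $|d(t,x)|<\delta/2$; then $|\nabla d(t,x)|=1$ (so Lemma~\ref{lem:1 to n facrional Laplacian} applies with $e=\nabla d(t,x)$), $\mu[d](t,x)=\sigma$, and by \eqref{a_near} $|g(t,x)|\le C$. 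Writing $f(t,y):=\tilde\psi(d(t,y)/\ep)$, so that $\psi(d(t,\cdot)/\ep;t,\cdot)=fg$ and $\psi(\cdot;t,x)=\tilde\psi(\cdot)g(t,x)$, the Leibniz-type identity $\mathcal{I}_n^s[fg](x)=f(t,x)\mathcal{I}_n^s[g(t,\cdot)](x)+g(t,x)\mathcal{I}_n^s[f(t,\cdot)](x)+\mathcal{B}_\ep(x)$ with
\[
\mathcal{B}_\ep(x)=\int_{\R^n}\big(f(t,x+z)-f(t,x)\big)\big(g(t,x+z)-g(t,x)\big)\frac{dz}{|z|^{n+2s}},
\]
together with $C_{n,s}\mathcal{I}_1^s[\psi(\cdot;t,x)](d(t,x)/\ep)=g(t,x)\,C_{n,s}\mathcal{I}_1^s[\tilde\psi](d(t,x)/\ep)$, reduces the quantity to estimate to $T_1+T_2+T_3$, where $T_1=\ep^{2s}f(t,x)\mathcal{I}_n^s[g(t,\cdot)](x)$, $T_2=g(t,x)\big(\ep^{2s}\mathcal{I}_n^s[f(t,\cdot)](x)-C_{n,s}\mathcal{I}_1^s[\tilde\psi](d(t,x)/\ep)\big)$, and $T_3=\ep^{2s}\mathcal{B}_\ep(x)$.

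\textbf{The easy pieces $T_1$ and $T_2$.} For $T_1$: since $|f(t,x)|\le C$ by \eqref{tildepsiest}, and $\mathcal{I}_n^s[g]=\mathcal{I}_n^s[\mu]-\mathcal{I}_n^s[\bar a_\ep]$, Lemma~\ref{lem:mu_estimates} and Corollary~\ref{cor: fractional a_ep estimate} give $|\mathcal{I}_n^s[g(t,\cdot)](x)|\le C\delta^{-4s}+\ep^{-2s}o_\ep(1)R$, so $|T_1|\le C(\ep/\delta^2)^{2s}+Co_\ep(1)R=Ro_\ep(1)$ by the hypothesis $\ep/\delta^2=o_\ep(1)$. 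For $T_2$ one needs the $\tilde\psi$-analogue of Lemma~\ref{lem:a_ep and frac laplacians}: the same rearrangement as in Section~\ref{sec: proof of a_ep and frac laplacians}, using Lemma~\ref{lem:1 to n facrional Laplacian}, shows that $\ep^{2s}\mathcal{I}_n^s[f(t,\cdot)](x)-C_{n,s}\mathcal{I}_1^s[\tilde\psi](d(t,x)/\ep)$ equals $\ep^{2s}$ times $\int_{\R^n}\big(\tilde\psi(d(t,x+z)/\ep)-\tilde\psi((d(t,x)+\nabla d(t,x)\cdot z)/\ep)\big)|z|^{-n-2s}\,dz$, and I would bound this integral by $C$ following the proof of Lemma~\ref{thm: b_ep fmc} and of \eqref{a_near}: for $|z|\ge\rho$ the point $x+z$ leaves the smoothness tube, so $|d(t,x+z)|\ge\rho$ and \eqref{tildepsiest} gives $|\tilde\psi(d(t,x+z)/\ep)|\le C\ep^{2s}\rho^{-2s}$, while the translated term is handled on the slab $\{|\nabla d(t,x)\cdot z|<\tau\}$ by Lemma~\ref{gamma_delta_int} and off it by \eqref{tildepsiest} (here, in contrast to $\bar b_\ep$, the decay of $\tilde\psi$ replaces the $R$-cutoff, so no $O(R^{-2s})$ error appears); for $|z|<\rho$ one splits by the signs of $d(t,x+z)-d(t,x)$ and $\nabla d(t,x)\cdot z$ and, on the near-diagonal set, applies a Taylor expansion of $d$ and the integration-by-parts argument of Lemma~\ref{0<y_n<tau_lemmadotphi}, which is designed for a non-monotone profile with the decay \eqref{tildepsiest} and applies to $\tilde\psi$ verbatim. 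Since $|g(t,x)|\le C$, this yields $T_2=O(\ep^{2s})=o_\ep(1)$.

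\textbf{The commutator term $T_3$ — the main obstacle.} Write $\mathcal{B}_\ep=\int_{|z|<\rho}+\int_{|z|\ge\rho}$. The far part is controlled by the decay of $\tilde\psi$ and the thin-tube estimates as above, with $|g(t,x+z)|\le C\delta^{-2s}$ and $\ep/\delta^2=o_\ep(1)$, contributing $\ep^{2s}$ times a negligible amount. On $|z|<\rho$ one restricts to $|z|<\delta/4$ — where $|d(t,x+z)|<\delta$ forces $\mu(t,x+z)=\sigma=\mu(t,x)$, so $g(t,x+z)-g(t,x)=-(\bar a_\ep(t,x+z)-\bar a_\ep(t,x))$, with $|g(t,x+z)-g(t,x)|\le\|\nabla_x\bar a_\ep\|_\infty|z|=\ep^{-1}o_\ep(1)R\,|z|$ by \eqref{partial_a_near} — and to $\delta/4\le|z|<\rho$, where $|d(t,x+z)|$ exceeds a fixed multiple of $|z|^2/\ep$ away from a thin cone, so $\tilde\psi(d(t,x+z)/\ep)$ is small by \eqref{tildepsiest}. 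The delicate point is the region $|z|<\delta/4$: the crude bound $|f(t,x+z)-f(t,x)|\le C|z|/\ep$, multiplied by the gradient bound on $g$ and by $\ep^{2s}$, leaves a term of size $(\delta/\ep)^{2-2s}o_\ep(1)R$ that need not vanish, so one must also harvest decay from $f(t,x+z)-f(t,x)$: as in Lemma~\ref{0<y_n<tau_lemma}, for $|z|\gtrsim\sqrt\ep$ the arguments $d(t,x+z)/\ep$ and $d(t,x)/\ep$ are large off a thin set, and $f(t,x+z)-f(t,x)$ gains a factor of order $\ep^{2s}/(\ep^{2s}+|d(t,x+z)|^{2s})$; combining this with Lemma~\ref{gamma_delta_int} and $\ep/\delta^2=o_\ep(1)$ brings the contribution down to $Ro_\ep(1)$. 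Collecting $T_1+T_2+T_3$ gives the claim. I expect the balancing of this decay against the $\ep^{-1}$ blow-up of $\nabla_x\bar a_\ep$ near the front, precisely in the range $\sqrt\ep\lesssim|z|\lesssim\delta$, to be the most technical step.
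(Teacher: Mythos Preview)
Your approach via the product structure $\psi=\tilde\psi\cdot g$ and the Leibniz identity is genuinely different from the paper's. The paper never writes a bilinear commutator: after the rescaling $z\mapsto\ep z$ it simply splits the difference into a first-argument piece $I$ (second slot frozen at $x+\ep z$) and a second-argument piece $II$ (first slot frozen at $d(x)/\ep+\nabla d\cdot z$), and bounds each using only Lemma~\ref{lem:psi-reg}, cutting $I$ at $|z|=\ep^{-1/2}$ and $II$ at $|z|=\tau^{-1}$ and $|z|=\delta/(2\ep)$.

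Your $T_1$ is fine. For $T_2$ you are overcomplicating: in the rescaled variable this is exactly the paper's $I$ with $\tilde\psi$ in place of $\psi(\cdot;x+\ep z)$, and the same two-region estimate (Lipschitz bound $C\ep|z|^2$ for $|z|<\ep^{-1/2}$, $\|\tilde\psi\|_\infty$ for $|z|>\ep^{-1/2}$) already gives $O(\ep^s)$ --- none of Lemmas~\ref{thm: b_ep fmc}, \ref{0<y_n<tau_lemmadotphi}, \ref{gamma_delta_int} is needed. In fact your claim that the \emph{unrescaled} integral is $O(1)$ is too optimistic (it is $O(\ep^{-s})$), but $\ep^{2s}\cdot\ep^{-s}=\ep^s$ still suffices.

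The real gap is in $T_3$. Your proposed mechanism --- that $f(t,x+z)-f(t,x)$ gains a factor of order $\ep^{2s}/(\ep^{2s}+|d(t,x+z)|^{2s})$ because ``the arguments $d(t,x+z)/\ep$ and $d(t,x)/\ep$ are large off a thin set'' --- fails whenever $x$ lies on, or within $O(\ep)$ of, the front. In that case $f(t,x)=\tilde\psi(d(t,x)/\ep)$ is a fixed nonzero constant independent of $z$, so for $z$ transverse to $\nabla d$ the difference $f(t,x+z)-f(t,x)$ remains $O(1)$ no matter how large $|z|/\sqrt\ep$ is; the decay of $\tilde\psi$ buys you nothing. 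What does close the estimate is to use \eqref{a_near}, not only \eqref{partial_a_near}: on $\{|z|<\delta/4\}$ one has $|g(t,x+z)-g(t,x)|\le\min(\ep^{-1}\tau R\,|z|,\,2C)$, and splitting at $|z|\sim\ep(\tau R)^{-1/2}$ (with $|f(t,x+z)-f(t,x)|\le C|z|/\ep$ below and $\le C$ above) yields a contribution $O((\tau R)^s)=R\,o_\ep(1)$. This is exactly the device the paper uses in its $II_1/II_2$ split, only there it is packaged directly as $\|\nabla_x\psi\|_\infty$ versus $\|\psi\|_\infty$ from Lemma~\ref{lem:psi-reg} --- which is what the paper's grouping buys over yours.
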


\begin{lem}\label{lem:psi estimate far part a}
Assume $\ep/\delta^2=o_\ep(1)$. Then,  for all $(t,x) \in [t_0,t_0+h] \times \mathbb{R}^n$, if $|d(t,x)|\geq  \delta/2$,  
\begin{align*}
    \abs{ C_{n,s} \mathcal{I}_1^s[\psi\(\cdot;t,x\)]\(\frac{d(t,x)}{\ep}\)} =o_\ep(1). 
    \end{align*}
\end{lem}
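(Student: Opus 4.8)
The plan is to exploit the factorization $\psi(\xi;t,x) = \tilde\psi(\xi)\,[\mu[d](t,x) - \bar a_\ep[d](t,x)]$ established in the proof of Lemma \ref{lem:psi-reg}. Since the prefactor $\mu - \bar a_\ep$ does not depend on $\xi$, we have
\[
C_{n,s}\,\mathcal{I}_1^s[\psi(\cdot;t,x)]\(\tfrac{d(t,x)}{\ep}\)
= [\mu[d](t,x) - \bar a_\ep[d](t,x)]\; C_{n,s}\,\mathcal{I}_1^s[\tilde\psi]\(\tfrac{d(t,x)}{\ep}\).
\]
By Lemma \ref{lem:a_estimates} we have $|\bar a_\ep|\le C$, and by the definition of $\mu$ in \eqref{def:mu_definition}, $|\mu|\le |\sigma|/\delta^{2s}\le C/\delta^{2s}$, so the prefactor is bounded by $C/\delta^{2s}$. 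It therefore suffices to show that
\[
\Bigl| C_{n,s}\,\mathcal{I}_1^s[\tilde\psi]\(\tfrac{d(t,x)}{\ep}\) \Bigr| \le C\,\delta^{2s}\, o_\ep(1) \quad\text{when } |d(t,x)|\ge \delta/2,
\]
or more precisely a bound that, once multiplied by $C/\delta^{2s}$, yields $o_\ep(1)$.

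The key input is the decay estimate \eqref{tildepsiest}: $|\tilde\psi(\xi)|,|\dot{\tilde\psi}(\xi)| \le C/(1+|\xi|^{2s})$. Write $\xi_0 := d(t,x)/\ep$, so $|\xi_0|\ge \delta/(2\ep)$, which is large since $\delta/\ep\to\infty$ (indeed $\ep/\delta^2=o_\ep(1)$ forces $\ep/\delta=o_\ep(1)$). I would estimate $\mathcal{I}_1^s[\tilde\psi](\xi_0)$ by splitting the integral $\int_\R (\tilde\psi(\xi_0+z)-\tilde\psi(\xi_0))\,|z|^{-1-2s}\,dz$ into the region $|z|<|\xi_0|/2$ and the region $|z|\ge|\xi_0|/2$. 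On the inner region, use the mean value theorem together with $|\dot{\tilde\psi}(\xi_0+z)|\le C/|\xi_0|^{2s}$ (valid since $|\xi_0+z|\ge|\xi_0|/2$) and integrate $|z|^{-2s}$ over $|z|<|\xi_0|/2$ to get a contribution of order $|\xi_0|^{1-2s}\cdot|\xi_0|^{-2s} = |\xi_0|^{1-4s}$; wait — more carefully, the inner integral of $|z|\cdot|z|^{-1-2s}$ gives $|\xi_0|^{1-2s}$, times $|\xi_0|^{-2s}$, i.e.\ $|\xi_0|^{1-4s}$. On the outer region, bound $|\tilde\psi(\xi_0+z)|\le C$ and $|\tilde\psi(\xi_0)|\le C|\xi_0|^{-2s}$ and integrate $|z|^{-1-2s}$ over $|z|\ge|\xi_0|/2$ to get $O(|\xi_0|^{-2s})$. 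Hence $|\mathcal{I}_1^s[\tilde\psi](\xi_0)|\le C|\xi_0|^{-2s} \le C(\ep/\delta)^{2s}$ (using $2s<1$ so the $|\xi_0|^{1-4s}$ term is dominated, or handled directly when $4s>1$; in any case $|\xi_0|^{-2s}$ controls it up to constants for $\xi_0$ large, after possibly a more careful split — I would verify the precise exponent bookkeeping here). Multiplying by the prefactor bound $C/\delta^{2s}$ gives
\[
\Bigl| C_{n,s}\,\mathcal{I}_1^s[\psi(\cdot;t,x)]\(\tfrac{d(t,x)}{\ep}\)\Bigr| \le \frac{C}{\delta^{2s}}\cdot\frac{\ep^{2s}}{\delta^{2s}} = C\Bigl(\frac{\ep}{\delta^2}\Bigr)^{2s} = o_\ep(1),
\]
using the hypothesis $\ep/\delta^2 = o_\ep(1)$.

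The main obstacle I anticipate is the careful bookkeeping in the split of $\mathcal{I}_1^s[\tilde\psi](\xi_0)$ when $s$ is close to $\tfrac12$, where the exponent $1-4s$ can be negative and one must be sure the inner-region contribution is genuinely of order $|\xi_0|^{-2s}$ or smaller rather than $|\xi_0|^{1-4s}$ growing; this is resolved by using the second-order cancellation more carefully (Taylor expanding $\tilde\psi$ to second order near $\xi_0$ on the inner region, since $\tilde\psi\in C^{1,\beta}$ and in fact the equation \eqref{eq:linearized wave} for $\tilde\psi$ gives extra regularity, or simply by noting $2s<1$ makes $|z|^{-2s}$ integrable at the origin and bounding $|\dot{\tilde\psi}|$ by its global bound $C$ on a small ball $|z|<1$ and by $C|\xi_0|^{-2s}$ on $1\le|z|<|\xi_0|/2$). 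Either way, the quantitative decay $|\mathcal{I}_1^s[\tilde\psi](\xi_0)| = O(|\xi_0|^{-2s})$ for $|\xi_0|\to\infty$ is the crux, and once it is in hand the lemma follows immediately from the hypothesis on $\ep/\delta^2$.
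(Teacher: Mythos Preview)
Your approach is genuinely different from the paper's, and the overall strategy is sound, but the technical point you flagged is a real gap that your proposed fixes do not close.

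The paper does not estimate $\mathcal{I}_1^s[\tilde\psi]$ directly. Instead it uses the equation \eqref{eq:linearized wave} to write
\[
C_{n,s}\,\mathcal{I}_1^s[\psi](\xi_0) = W''(\phi(\xi_0))\,\psi(\xi_0;t,x) - \Big(c_0\dot\phi(\xi_0) + \tfrac{W''(\phi(\xi_0))-W''(0)}{W''(0)}\Big)\big[\mu-\bar a_\ep\big],
\]
and then bounds each term pointwise at $\xi_0=d(t,x)/\ep$ using \eqref{psi_far}, \eqref{eq:asymptotics for phi dot}, and \eqref{eq:asymptotics for phi}. This gives $O(\ep^{2s}/\delta^{4s})=o_\ep(1)$ without any integral splitting.

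Your direct route can also be made to work, but not with the fixes you wrote. On $1\le|z|<|\xi_0|/2$, using the mean-value bound $|\dot{\tilde\psi}|\le C|\xi_0|^{-2s}$ still produces $\int_1^{|\xi_0|/2}|z|^{-2s}\,dz\sim|\xi_0|^{1-2s}$, hence a contribution $|\xi_0|^{1-4s}$, which diverges for $s<1/4$; and replacing $|\dot{\tilde\psi}|$ by its global bound $C$ on $|z|<1$ gives only $O(1)$, not $O(|\xi_0|^{-2s})$. The correct repair is to abandon the Lipschitz estimate on the middle region and instead bound the integrand by $(|\tilde\psi(\xi_0+z)|+|\tilde\psi(\xi_0)|)\,|z|^{-1-2s}\le C|\xi_0|^{-2s}|z|^{-1-2s}$, since both arguments have modulus at least $|\xi_0|/2$; then $\int_{|z|>1}|z|^{-1-2s}\,dz<\infty$ gives the desired $O(|\xi_0|^{-2s})$. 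With this three-way split ($|z|<1$, $1<|z|<|\xi_0|/2$, $|z|>|\xi_0|/2$) your final bound $C(\ep/\delta^2)^{2s}=o_\ep(1)$ is correct. The paper's equation-based argument is shorter and avoids this splitting altogether.
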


\begin{lem}\label{lem:psi estimate far part b}
    Assume $\ep/\delta^2=o_\ep(1)$. Then  for all $(t,x) \in [t_0,t_0+h] \times \mathbb{R}^n$, if $|d(t,x)|\geq  \delta/2$,  
  \begin{align*}
    \ep^{2s} \mathcal{I}_n^s\left[ \psi \( \frac{d(t,\cdot)}{\ep};t,\cdot\) \right](x)=R o_\ep(1).
    \end{align*}
\end{lem}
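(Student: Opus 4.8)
The plan is to exploit the factorization $\psi(\xi;t,x)=\tilde\psi(\xi)\,g_\ep(t,x)$ with $g_\ep:=\mu[d]-\bar a_\ep[d]$ that was used to build $\psi$ in the proof of Lemma~\ref{lem:psi-reg}; here $\tilde\psi\in C^{1,\beta}(\R)$ enjoys the sharp decay $|\tilde\psi(\xi)|,|\dot{\tilde\psi}(\xi)|\le C(1+|\xi|^{2s})^{-1}$ of \eqref{tildepsiest}, and by \eqref{def:mu_definition} and \eqref{a_near} one has $\|g_\ep\|_{L^\infty}\le C\delta^{-2s}$. Writing $u(y):=\tilde\psi(d(t,y)/\ep)$ and expanding the product $u(x+z)g_\ep(t,x+z)-u(x)g_\ep(t,x)$ in Leibniz form, I would use the decomposition
\[
\mathcal{I}^s_n\!\Bigl[\psi\bigl(\tfrac{d(t,\cdot)}{\ep};t,\cdot\bigr)\Bigr](x)=g_\ep(t,x)\,\mathcal{I}^s_n[u](x)+u(x)\,\mathcal{I}^s_n[g_\ep(t,\cdot)](x)+\mathcal{B}(x),
\]
where $\mathcal{B}(x):=\int_{\R^n}\bigl(u(x+z)-u(x)\bigr)\bigl(g_\ep(t,x+z)-g_\ep(t,x)\bigr)\,\frac{dz}{|z|^{n+2s}}$ (all three integrals converge absolutely since $u$ and $g_\ep$ are Lipschitz and $2s<1$), and estimate the three terms separately. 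The common thread is that $|d(t,x)|\ge\delta/2$ places $x$ far from the transition layer, so $|d(t,x)/\ep|\gtrsim\delta/\ep\to\infty$ and hence $|u(x)|\le C(\ep/\delta)^{2s}$; this smallness, the prefactor $\ep^{2s}$, and the hypothesis $\ep/\delta^2=o_\ep(1)$ (which gives $\ep/\delta=o_\ep(1)$) are what close the estimates.

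For the middle term I would write $\mathcal{I}^s_n[g_\ep(t,\cdot)]=\mathcal{I}^s_n[\mu[d](t,\cdot)]-\mathcal{I}^s_n[\bar a_\ep[d](t,\cdot)]$ and bound it by $C\delta^{-4s}+\ep^{-2s}o_\ep(1)R$ using Lemma~\ref{lem:mu_estimates} and Corollary~\ref{cor: fractional a_ep estimate}; multiplying by $\ep^{2s}|u(x)|\le C\ep^{2s}(\ep/\delta)^{2s}$ and using $\ep/\delta^2=o_\ep(1)$ yields $Ro_\ep(1)$. For $g_\ep(t,x)\,\mathcal{I}^s_n[u](x)$ it suffices to prove $|\mathcal{I}^s_n[u](x)|\le C\delta^{-2s}$, since then $\ep^{2s}|g_\ep\,\mathcal{I}^s_n[u]|\le C(\ep/\delta^2)^{2s}=o_\ep(1)$; I would prove this bound by splitting the $z$-integral at the two scales $|z|\sim\ep$ and $|z|\sim\delta$. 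On $\{|z|\lesssim\ep\}$ the segment joining $d(t,x)/\ep$ and $d(t,x+z)/\ep$ stays in $\{|\xi|\gtrsim\delta/\ep\}$ and does not cross $0$, so $|u(x+z)-u(x)|\le\bigl(\sup_{\mathrm{far}}|\dot{\tilde\psi}|\bigr)\cdot C|z|/\ep\le C(\ep/\delta)^{2s}|z|/\ep$, whose integral against $|z|^{-n-2s}$ is $C\delta^{-2s}$; on $\{\ep\lesssim|z|\lesssim\delta\}$ the point $x+z$ is still in the far zone, so $|u(x+z)-u(x)|\le|u(x+z)|+|u(x)|\le C(\ep/\delta)^{2s}$ and $\int_{|z|\ge\ep}|z|^{-n-2s}\,dz=C\ep^{-2s}$ again gives $C\delta^{-2s}$; on $\{|z|\gtrsim\delta\}$ one uses $|u(x+z)-u(x)|\le2\|\tilde\psi\|_{L^\infty}\le C$ and $\int_{|z|\gtrsim\delta}|z|^{-n-2s}\,dz\le C\delta^{-2s}$. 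The bilinear term $\mathcal{B}(x)$ would be treated by the same three-scale splitting, now also using $|g_\ep(t,x+z)-g_\ep(t,x)|\le2\|g_\ep\|_{L^\infty}\le C\delta^{-2s}$, which yields $|\mathcal{B}(x)|\le C\delta^{-4s}$ and $\ep^{2s}|\mathcal{B}(x)|=o_\ep(1)$. Adding the three contributions gives the claim.

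The step I expect to be the real obstacle — and the point where the argument genuinely differs from the $s=\tfrac12$ case — is controlling the $z$-integral near the origin. Since $u=\tilde\psi(d(t,\cdot)/\ep)$ has Lipschitz constant of order $\ep^{-1}$, the crude bound $|u(x+z)-u(x)|\le C|z|/\ep$ only gives $|\mathcal{I}^s_n[u](x)|\lesssim\ep^{-2s}$, and then $\ep^{2s}\|g_\ep\|_{L^\infty}|\mathcal{I}^s_n[u](x)|\lesssim\delta^{-2s}$, which does not tend to $0$. It is therefore essential to keep the polynomial decay of $\tilde\psi$ and $\dot{\tilde\psi}$: because $x$ lies at distance $\gtrsim\delta$ from $\Gamma_t$, the relevant values of $\dot{\tilde\psi}$ are all taken at $|\xi|\gtrsim\delta/\ep$, where $|\dot{\tilde\psi}|\le C(\ep/\delta)^{2s}$, which improves the innermost contribution to $C\delta^{-2s}$ and lets the $\ep^{2s}$-prefactor beat it via $\ep/\delta^2=o_\ep(1)$. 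Carrying this decay gain through all three terms, together with the bookkeeping of the hierarchy $\ep\ll\delta\ll1$, is the crux; no principal-value cancellation is needed once this is in place.
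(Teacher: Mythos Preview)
Your argument is correct, but it is a genuinely different route from the paper's. The paper does \emph{not} exploit the product structure $\psi=\tilde\psi\,g_\ep$; instead it adds and subtracts $|\nabla d(x)|^{2s}C_{n,s}\mathcal{I}_1^s[\psi(\cdot;x)](d(x)/\ep)$, rewrites the difference via Lemma~\ref{lem:1 to n facrional Laplacian} as two integrals ($I$: error from the Taylor expansion of $d$ in the first argument; $II$: error from the $x$-dependence in the second argument), bounds $I$ by $C\ep^s/\delta^{2s}$ and $II$ via the decay of $\nabla_x\psi$ in the far zone (estimate~\eqref{partial_psi_far}), and then invokes Lemma~\ref{lem:psi estimate far part a} for the remaining $\mathcal{I}_1^s$ term. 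Your Leibniz decomposition is more direct here: it avoids both the $n$-to-$1$ comparison and Lemma~\ref{lem:psi estimate far part a}, and the three-scale splitting on $u=\tilde\psi(d/\ep)$ cleanly isolates the gain $|\dot{\tilde\psi}|\le C(\ep/\delta)^{2s}$ that makes $\ep^{2s}$ beat $\delta^{-4s}$. The trade-off is that your proof is tied to the explicit factorization of $\psi$, whereas the paper's argument uses only the black-box estimates \eqref{psi_near}--\eqref{partial_psi_far} and would survive a different construction of the corrector.

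One small point worth making explicit in your write-up: when you say the middle zone $\{\ep\lesssim|z|\lesssim\delta\}$ keeps $x+z$ ``in the far zone'', you need $|z|\le c\delta$ with $c$ small depending on $\|\nabla d\|_\infty$ so that $|d(x+z)|\ge\delta/4$; this is implicit in your $\lesssim$ notation but should be stated. Everything else checks out, including the convergence of the three integrals (which indeed uses $2s<1$ and the Lipschitz bounds on $u$ and $g_\ep$).
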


For simplicity of notation, we drop the dependence on  $t$ in the following proofs. 

\subsection{Proof of Lemma \ref{lem:psi estimate near}} 
Using Lemma \ref{lem:1 to n facrional Laplacian}  for $v=\psi(\cdot;x)$  with $e=\nabla d(x)$, and recalling that $|\nabla d(x)|=1$ when $\abs{d(x)}<\delta/2$, we obtain
\begin{align*}
\ep^{2s}& \mathcal{I}_n^s\left[ \psi \( \frac{d(\cdot)}{\ep};\cdot\) \right](x) 
 - C_{n,s} \mathcal{I}_1^s[\psi\(\cdot;x\)]\(\frac{d(x)}{\ep}\) \\&
 =\int_{\R^n}\left(\psi\(\frac{d(x+\ep z)}{\ep};x+\ep z\)-\psi\(\frac{d(x)}{\ep}+\nabla d(x)\cdot z;x\)\right)\frac{dz}{|z|^{n+2s}}\\&
 =\int_{\R^n}\left(\psi\(\frac{d(x+\ep z)}{\ep};x+\ep z\)-\psi\(\frac{d(x)}{\ep}+\nabla d(x)\cdot z;x+\ep z \)\right)\frac{dz}{|z|^{n+2s}}\\&
 \quad+\int_{\R^n}\left(\psi\(\frac{d(x)}{\ep}+\nabla d(x)\cdot z;x+\ep z \)-\psi\(\frac{d(x)}{\ep}+\nabla d(x)\cdot z;x\)\right)\frac{dz}{|z|^{n+2s}}\\&
=:I+II.
 \end{align*}
 \noindent First, let us estimate $I$. We  split
\begin{align*}
 I&
 = \int_{ \{ |z| < \ep^{-1\!/2} \} } (\ldots) + \int_{ \{ |z| > \ep^{-1\!/2} \} } (\ldots) 
  =: I_1 + I_2.
\end{align*}
Using that 
\begin{equation}\label{I_1-psilemsecondorder}\abs{\psi\(\frac{d(x+\ep z)}{\ep};x+\ep z\)-\psi\(\frac{d(x)}{\ep}+\nabla d(x)\cdot z;x+\ep z\)}\leq C\|\dot\psi\|_\infty\ep|z|^2, 
\end{equation}
and estimates \eqref{psi_near} and \eqref{psi_far} for $\dot{\psi}$,  we get
\begin{align*}
    |I_1| \le C  \frac{\ep}{\delta^{2s}} \int_{ \{ |z| < \ep^{-1\!/2} \} } \frac{dz}{|z|^{n+2s-2}} \le C \frac{\ep^s}{\delta^{2s}}.
\end{align*}
Using  \eqref{psi_near}  and \eqref{psi_far}  for $\psi$, we obtain
\begin{align*}
    |I_2| \le 2 \| \psi \|_{\infty}  \int_{ \{ |z| > \ep^{-1\!/2} \} } \frac{dz}{|z|^{n+2s}} \leq\frac{C}{\delta^{2s}} \int_{ \{ |z| > \ep^{-1\!/2} \} } \frac{dz}{|z|^{n+2s}} \le C \frac{\ep^s}{\delta^{2s}}.
\end{align*}
The estimates on $I_1$ and $I_2$ imply
\begin{equation}\label{Iest-psilemma-near}
|I|\leq  C\frac{\ep^s}{\delta^{2s}}.
\end{equation}
Next, we  estimate $II$. 
By \eqref{partial_psi_near} there exists $\tau=o_\ep(1)$ such that $|\nabla_x\psi(\xi; y)|\leq C\ep^{-1}\tau R$ if $|d(y)|<\delta$.
By taking   $\tau$ larger if necessary, we may assume $\ep$  smaller than   $\tau\delta/2$. Then, we can split $II$ as follows
\begin{align*}
    II = \int_{ \{ |z| < \tau^{-1} \} } (\ldots) + \int_{ \{  \tau^{-1}<|z| < \frac{\delta}{2\ep} \}}  (\ldots) +\int_{\{|z|> \frac{\delta}{2\ep} \}}(\ldots)=: II_1 + II_2+II_3.
\end{align*}
 Note that since  $|d(x)|<\delta/2$, for  $|z|<\delta/(2\ep)$ we have $|d(x+\ep z)|<\delta$. Therefore, using that for $|z|<\tau^{-1}<\delta/(2\ep)$,
\begin{equation}\label{II_1-psi_first_order_est}\begin{split}
    \abs{\psi\(\frac{d(x)}{\ep}+\nabla d(x)\cdot z;x+\ep z \)-\psi\(\frac{d(x)}{\ep}+\nabla d(x)\cdot z;x\)}& \le C\sup_{|d(y)|<\delta} \norm[\infty]{\nabla_x \psi(\cdot; y)} \ep |z|\\&
    \leq C\tau R|z|,
\end{split}\end{equation}
 we obtain
\begin{align*}
    |II_1| \le C\tau R \int_{ \{ |z| < \tau^{-1} \} } \frac{dz}{|z|^{n+2s-1}} \le C\tau^{2s}R. 
\end{align*}
Using \eqref{psi_near} for $\psi$,
\begin{align*}
    |II_2| \le 2  \sup_{|d(y)|\leq\delta}\| \psi (\cdot; y)\|_{\infty} \int_{ \{ |z| > \tau^{-1} \} } \frac{dz}{|z|^{n+2s}}  \le C \tau^{2s}. 
\end{align*}
Finally, using  \eqref{psi_far} for $\psi$,
\begin{align*}
    |II_3| \le 2 \| \psi \|_{\infty} \int_{ \{ |z| > \frac{\delta}{2\ep} \} } \frac{dz}{|z|^{n+2s}}  \le C \frac{\ep^{2s}}{\delta^{4s}}.
\end{align*}
The estimates on $II_1,II_2$ and $II_3$ imply 
\begin{equation*}\label{IIest-psilemma-near} |II|\leq C\(\tau^{2s}R+ \frac{\ep^{2s}}{\delta^{4s}}\). \end{equation*}
Assuming $\ep/\delta^2=o_\ep(1)$, from  the estimate on $I$ in \eqref{Iest-psilemma-near} and the estimate  on $II$ the lemma follows.
\qed

\subsection{Proof of Lemma \ref{lem:psi estimate far part a}}
Using \eqref{eq:linearized operator}, \eqref{eq:linearized wave}, estimate \eqref{a_near}, and recalling the definition of $\mu$ in \eqref{def:mu_definition}, we get
\begin{align*}
   \left| \mathcal{I}_1^s[\psi(\cdot ; t,x)]\left(\frac{d(t,x)}{\ep} \right) \right| & \le C\left| \psi \left( \frac{d(x)}{\ep} \right)  \right|+ \frac{C}{\delta^{2s}} \dot{\phi} \left( \frac{d(x)}{\ep} \right)  + \frac{C}{\delta^{2s}}\left| \frac{W''\left(\phi\left(\frac{d(x)}{\ep}\right)\right)-W''(0)}{W''(0)} \right|.
        \end{align*}
        Since $|d(x)|\geq\delta/2$, from estimate \eqref{psi_far} for $\psi$  and estimate  \eqref{eq:asymptotics for phi dot} for $\dot{\phi}$,
        $$\left| \psi \left( \frac{d(x)}{\ep} \right)  \right|\leq C\frac{\ep^{2s}}{\delta^{4s}}\quad\text{ and }\quad 
        0< \dot{\phi} \left( \frac{d(x)}{\ep} \right)  \leq  C\frac{\ep^{2s+1}}{\delta^{2s+1}}.$$
        Let $H$ be the Heaviside function. Using that 
        $W''\(H \left( \frac{d(x)}{\ep}\right)\)=W''(0)$,   by estimate \eqref{eq:asymptotics for phi} we have
\begin{align*}
    \left| W'' \left( \phi \left( \frac{d(x)}{\ep} \right) \right) - W''(0) \right| \le C \left| \phi \left( \frac{d(x)}{\ep} \right) - H \left( \frac{d(x)}{\ep} \right) \right| \le C \frac{\ep^{2s}}{\delta^{2s}}.
\end{align*}
Assuming $\ep/\delta^2=o_\ep(1)$, the lemma   follows.

\subsection{Proof of Lemma \ref{lem:psi estimate far part b}}
 Assume $\ep/\delta^2=o_\ep(1)$.

We write, 
\begin{equation}\label{I_npsi-I_1psi_far_est}\begin{split}
    \ep^{2s} \mathcal{I}_n^s \left[ \psi \left( \frac{d( \cdot)}{\ep}; \cdot \right) \right] (x)= &  \ep^{2s} \mathcal{I}_n^s \left[ \psi \left( \frac{d( \cdot)}{\ep}; \cdot \right) \right](x)- |\nabla d(x)|^{2s} C_{n,s} \mathcal{I}_n^s [\psi(\cdot;x)] \left( \frac{d(x)}{\ep} \right) \\
    & +|\nabla d(x)|^{2s} C_{n,s} \mathcal{I}_n^s [\psi(\cdot;x)] \left( \frac{d(x)}{\ep} \right).
\end{split}
\end{equation}
Using Lemma \ref{lem:1 to n facrional Laplacian} for $v=\psi(\cdot;x)$ with $e=\nabla d(x)$ and as in the proof of Lemma \ref{lem:psi estimate near}, we obtain
\begin{align*}
    \ep^{2s} & \mathcal{I}_n^s \left[ \psi \left( \frac{d( \cdot)}{\ep}; \cdot \right) \right](x) - |\nabla d(x)|^{2s} C_{n,s} \mathcal{I}_n^s [\psi(\cdot;x)] \left( \frac{d(x)}{\ep} \right) \\
    & =\int_{\R^n}\left(\psi\(\frac{d(x+\ep z)}{\ep};x+\ep z\)-\psi\(\frac{d(x)}{\ep}+\nabla d(x)\cdot z;x+\ep z \)\right)\frac{dz}{|z|^{n+2s}}\\&
 \quad+\int_{\R^n}\left(\psi\(\frac{d(x)}{\ep}+\nabla d(x)\cdot z;x+\ep z \)-\psi\(\frac{d(x)}{\ep}+\nabla d(x)\cdot z;x\)\right)\frac{dz}{|z|^{n+2s}}\\&
=:I+II, 
\end{align*}
with 
\begin{align}\label{Iest-psilemma-far} |I|\leq C\frac{\ep^{s}}{\delta^{2s}}.
\end{align} 
Next, we   estimate $II$.  
If $|d(x)|\geq \delta/2$ and $|z|<\delta/(4\ep\|\nabla d\|_\infty)$, then, for all $\theta\in (0,1)$, 
$$\frac{d(x)}{\ep}+\nabla d(x)\cdot z\geq \frac{\delta}{4\ep}\quad\text{and}\quad |d(x+\theta\ep z)|>\frac{\delta}{4}.$$
Therefore, by  \eqref{partial_psi_far} for $\nabla_x \psi$, there exists $\tau=o_\ep(1)$ such that
\begin{equation}\label{psilemma3der}\begin{split}&\left|\psi\(\frac{d(x)}{\ep}+\nabla d(x)\cdot z;x+\ep z \)-\psi\(\frac{d(x)}{\ep}+\nabla d(x)\cdot z;x\)\right|
\\&\quad\leq\sup_{\theta\in(0,1)}\left|\nabla_x \psi\(\frac{d(x)}{\ep}+\nabla d(x)\cdot z;x+\theta \ep z \)\right|\ep |z|
\\&\quad \leq C\(R\tau +\frac{\ep}{\delta^{2s+1}}\)\frac{\ep^{2s}}{\delta^{2s}}|z|.
\end{split}\end{equation}
By eventually taking $\tau$ larger, if necessary, we may assume $\tau>4\ep\|\nabla d\|_\infty/\delta$. 
Then, we split $II$ as follows
\begin{align*}
    II = \int_{ \{ |z| <\tau^{-1}\} } (\ldots) + \int_{ \{  |z|>\tau^{-1} \} } (\ldots) =: II_1 +II_2.
\end{align*}
  By \eqref{psilemma3der} and using that $\tau>4\ep\|\nabla d\|_\infty/\delta$, we get
\begin{align*}
    |II_1| &\le C\(R\tau +\frac{\ep}{\delta^{2s+1}}\)\frac{\ep^{2s}}{\delta^{2s}}
     \int_{ \{ |z| < \tau^{-1} \} } \frac{dz}{|z|^{n+2s-1}} 
     = C\(R\tau +\frac{\ep}{\delta^{2s+1}}\)\frac{\ep^{2s}}{\delta^{2s}}\tau^{2s-1}
     \\&\le  C\(R\tau^{2s} +\frac{\tau^{2s}}{\delta^{2s}}\)\frac{\ep^{2s}}{\delta^{2s}}. 
\end{align*}
Finally, using \eqref{psi_far} for $\psi$,
\begin{align*}
    |II_2| \le 2 \| \psi \|_{\infty} \int_{ \{ |z| > \tau^{-1} \} } \frac{dz}{|z|^{n+2s}}  \le  \frac{C\tau^{2s}}{\delta^{2s}}. 
\end{align*}
From the estimates on $II_1$ and $II_2$,  we obtain
\begin{equation*}\label{IIest-psilemma-near} |II|\leq   
C\left(R\tau^{2s}+\frac{\tau^{2s}}{\delta^{2s}}\right). \end{equation*}
Without loss of generality, we may assume $\tau/\delta=o_\ep(1)$. Since we also have that
 $\ep/\delta^2=o_\ep(1)$, the estimate on $II$ and the estimate on $I$ in  \eqref{Iest-psilemma-far} imply that 
\begin{align*}
\Bigg|\ep^{2s} & \mathcal{I}_n^s \left[ \psi \left( \frac{d( \cdot)}{\ep}; \cdot \right) \right] (x)- |\nabla d(x)|^{2s} C_{n,s} \mathcal{I}_n^s [\psi(\cdot;x)] \left( \frac{d(x)}{\ep} \right) \Bigg|\leq Ro_\ep(1). 
\end{align*}
Moreover,  by Lemma \ref{lem:psi estimate far part a} we also have
\begin{align*}
    |\nabla d(x)|^{2s}C_{n,s} \mathcal{I}_1^s[\psi(\cdot;x)] \(\frac{d(x)}{\ep}\) =o_\ep(1). 
\end{align*}
Recalling \eqref{I_npsi-I_1psi_far_est},  the lemma follows from the last two estimates.

 \section*{Acknowledgements}

The second author has been supported by the NSF Grant DMS-2155156 ``Nonlinear PDE methods in the study of interphases.'' 



\end{document}